\documentclass[11pt]{amsart}
\usepackage{graphicx} 

\usepackage[utf8]{inputenc}
\usepackage[margin=1in,letterpaper,portrait]{geometry}
\usepackage{ytableau}
\ytableausetup{notabloids}
\ytableausetup{centertableaux}
\usepackage{latexsym,amsmath,amssymb,verbatim, tikz}
\usepackage{graphicx}
\usepackage{hyperref}

\newtheorem{theorem}{Theorem}[section]
\newtheorem{proposition}[theorem]{Proposition}

\newtheorem{lemma}[theorem]{Lemma}
\newtheorem{corollary}[theorem]{Corollary}
\newtheorem{conjecture}[theorem]{Conjecture}
\newtheorem{problem}[theorem]{Problem}
\newtheorem{observation}[theorem]{Observation}

\newtheorem{definition}[theorem]{Definition}
\newtheorem{defn}[theorem]{Definition}
\newtheorem{example}[theorem]{Example}

\newtheorem{remark}[theorem]{Remark}

\numberwithin{equation}{section}

\newcommand{\todo}[1]{\vspace{5 mm}\par \noindent
	\marginpar{\textsc{ToDo}} \framebox{\begin{minipage}[c]{0.95
				\textwidth}
			#1 \end{minipage}}\vspace{5 mm}\par}

\newcommand{\Q}{{\mathcal {Q}}}

\newcommand{\CC}{{\mathbb {C}}}

\newcommand{\RR}{{\mathbb {R}}}
\newcommand{\ZZ}{{\mathbb {Z}}}

\newcommand{\spn}{{\operatorname{span}}}

\newcommand{\ve}{\varepsilon}

\newcommand{\rank}{{\operatorname{rank}}}

\newcommand{\Des}{{\operatorname{Des}}}

\newcommand{\ch}{{\operatorname{ch}}}

\newcommand{\SYT}{{\operatorname{SYT}}}

\newcommand{\Short}{{\operatorname{Short}}}

\newcommand{\sign}{{\operatorname{sign}}}

\newcommand{\Hilb}{{\operatorname{Hilb}}}

\newcommand{\symm}{{\mathfrak{S}}}

\newcommand{\DDD}{{\mathcal{D}}}
\newcommand{\EEE}{{\mathcal{E}}}
\newcommand{\UUU}{{\mathcal{U}}}

\newcommand{\F}{\mathcal{F}}

\newcommand{\uX}{\underline{X}}
\newcommand{\uY}{\underline{Y}}
\newcommand{\uYr}{\underline{Y_r}}
\newcommand{\uYi}{\underline{Y_i}}

\newcommand{\edgeright}{{\operatorname{minimal}}}

\newlength{\mysizetiny}
\setlength{\mysizetiny}{0.3em}
\newlength{\mysizesmall}
\setlength{\mysizesmall}{0.8em}
\newlength{\mysize}
\setlength{\mysize}{1.3em}
\newlength{\mysizelarge}
\setlength{\mysizelarge}{2em}

\title[Transitive and Gallai colorings]
{Transitive and Gallai colorings}

\author[R. M. Adin]{Ron M.\ Adin}
\address{Department of Mathematics, Bar-Ilan University, Ramat-Gan 52900, Israel}
\email{radin@math.biu.ac.il}
\author[A. Berenstein]{Arkady Berenstein}
\address{Department of Mathematics, University of Oregon, Eugene, OR 97403, USA}
\email{arkadiy@math.uoregon.edu}
\author[J. Greenstein]{Jacob Greenstein}
\address{Department of Mathematics, University of California, Riverside,
	CA 92521, USA}
\email{jacobg@ucr.edu}
\author[J.-R. Li]{Jian-Rong Li}
\address{Faculty of Mathematics, University of Vienna, Oskar-MorgensternPlatz 1, 1090 Vienna, Austria}
\email{lijr07@gmail.com} 
\author[A. Marmor]{Avichai Marmor}
\address{Department of Mathematics, Bar-Ilan University, Ramat-Gan 52900, Israel}
\email{avichai@elmar.co.il}
\author[Y. Roichman]{Yuval Roichman}
\address{Department of Mathematics, Bar-Ilan University, Ramat-Gan 52900, Israel}
\email{yuvalr@math.biu.ac.il}

\date{September 20, 2023}

\thanks{RMA and YR were partially supported by the Israel Science Foundation, grant no.~1970/18. 
	AB was partially supported by Simons foundation collaboration grant no.~636972. 
	JG was partially supported by Simons foundation collaboration grant no.~245735. JRL was partially supported by the Austrian Science Fund (FWF) no.~P-34602. AM was  partially supported by  the European Research Council under the ERC starting grant agreement no.~757731 (LightCrypt) and  by the Israel Science Foundation, grant no.~1970/18. 
}

\begin{document}
	
	\begin{abstract}
		A Gallai coloring of  the complete graph is an edge-coloring with no rainbow triangle.  
		This concept first appeared in the study of comparability graphs and anti-Ramsey theory. 
		We introduce a transitive analogue for acyclic directed graphs, and generalize both notions to Coxeter systems, matroids and commutative algebras.  
		
		It is shown that for any finite matroid (or oriented matroid), 
		the maximal number of colors is equal to the matroid rank. 
		This generalizes a result of  Erd\H{o}s-Simonovits-S\'os  for complete graphs. 
		The number of Gallai (or transitive) colorings of the matroid that use at most $k$ colors  
		is a polynomial in $k$. 
		Also, for any acyclic oriented matroid, represented over the real numbers, the number of transitive colorings 
		using at most 2 colors is equal  to the number of chambers in the dual hyperplane arrangement.  
		
		
		We count Gallai and transitive colorings of the root system of type $A$ using the maximal number of colors, 
		and show that, when equipped with a natural descent set map, the resulting quasisymmetric function is symmetric and Schur-positive.
		
		
	\end{abstract}
	
	\maketitle
	\tableofcontents

	\section{Introduction}

	\subsection{Gallai and transitive colorings of matroids}
	\label{sec:Gallai_enumeration}
	
	
	
	A {\em Gallai coloring} of  the complete graph  
	$K_n$ on $n$ vertices
	is an edge-coloring 
	which has no {\em rainbow triangle}, namely a triangle with edges of (three) different colors. 
	This concept was applied in a seminal paper of Gallai~\cite{Gallai} to characterize comparability graphs.
	It was named after Gallai by Gy\'arf\'as and Simonyi~\cite{GS04}.
	Various extensions of the definition to general graphs were offered; see, e.g.,~\cite{GS10}  
	and~\cite{Gouge_etal}. 
	In this paper we adopt the definition of Gouge et al.~\cite{Gouge_etal}, 
	which appeared implicitly already in~\cite{Haxell}, 
	and extend it to the context of matroids.
	
	For a positive integer $k$ denote $[k]:=\{1,2,\dots,k\}$.
	
	\begin{definition}\label{def:Gallai}
		Let $k$ be a positive integer, and let $M$ be a matroid on a finite set $E$. 
		A {\em Gallai $k$-coloring} of $M$ is a function $\ve: E \to [k]$ such that, for any circuit $X$ in $M$,
		\[
		|\{\ve(e) \,:\, e \in X\}| < |X|.
		\]
		In particular, a Gallai $k$-coloring of the graphic matroid corresponding to a graph $G =(V,E)$ is an edge coloring $\ve: E \to [k]$ with no rainbow cycle. 
	\end{definition}
	
	Gallai colorings were extensively studied; see the survey paper~\cite{FMO} and references therein. 
	For recent results  regarding the combinatorial structure and asymptotic enumeration of maximal Gallai colorings of the complete graph  see~\cite{Balogh, Bastos2, Bastos}. 
	
	
	
	Berenstein, Greenstein and Li~\cite{BGL} introduced, in their study of monomial braidings, the concept of a {\em $(\Gamma,C)$-transitive function} for any directed graph $\Gamma$ and set of colors $C$, see e.g.  Example~\ref{example:tournament} below.
	Motivated by this work, we define transitive colorings of general oriented matroids.
	
	\begin{definition}\label{def:transitive}
		Let $k$ be a positive integer and let $M$ be an oriented matroid on a finite set $E$. 
		A {\em transitive $k$-coloring} of $M$ is a function $\ve: E \to [k]$ 
		such that, for any signed circuit $X = (X^+,X^-)$ in $M$, 
		\[
		\{\ve(e):\ e\in X^+\} \cap \{\ve(e):\ e\in X^-\}\ne \varnothing,
		\]
		In particular, a transitive $k$-coloring of the oriented matroid corresponding to a directed graph $G =(V,E)$ is an edge-coloring $\ve: E \to [k]$ such that any cycle contains two directed edges with the same color but opposite orientations. 
	\end{definition}
	
	
	Observe that the set of transitive $k$-colorings of an oriented matroid may be identified with a proper subset of the set of Gallai $k$-colorings of the underlying (unoriented) matroid. 
	
	
	Here is a reformulation of Definition~\ref{def:transitive}  for {\em representable} oriented matroids.
	
	\begin{definition}\label{def:transitive_rep}
		Let $k$ be a positive integer, and let $E$ be a finite set (or multiset) of vectors in a vector space over an ordered field (say, the field $\RR$ of real numbers).
		A {\em transitive $k$-coloring} of $E$ is a function $\ve: E \to [k]$
		such that, for any two disjoint subsets $S,T \subseteq E$,
		\[
		\spn_{\RR_{> 0}}(S) \cap \spn_{\RR_{> 0}}(T) \ne \varnothing \,\Longrightarrow\, 
		\ve(S) \cap \ve(T) \ne \varnothing. 
		\]
	\end{definition}
	
	\begin{observation}\label{obs:111}
		Every transitive coloring of a 
		set of vectors
		satisfies the following condition: 
		\[
		u \in \spn_{\RR_{> 0}} \{v_1,\ldots,v_t\} \,\Longrightarrow\, 
		\ve(u) \in \{\ve(v_1),\ldots,\ve(v_t)\}.
		\]
		This condition is equivalent to the one in Definition~\ref{def:transitive_rep} in certain important cases 
		(e.g., tournaments and Coxeter root systems), 
		but not in general.
	\end{observation}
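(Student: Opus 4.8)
The plan is to derive the stated implication directly from Definition~\ref{def:transitive_rep} by exhibiting, for each positive-combination relation $u = \sum_{i} c_i v_i$ with all $c_i \ge 0$, a pair of disjoint subsets $S, T \subseteq E$ whose positive spans intersect and then reading off the conclusion. First I would discard the $v_i$ whose coefficient $c_i$ is zero, so that we may assume $u = \sum_{i} c_i v_i$ with all $c_i > 0$ and the $v_i$ are among the listed vectors; note $t \ge 1$ since $u$ lies in the positive span. Then I would set $S := \{u\}$ and $T := \{v_1,\dots,v_t\}$ (as subsets of the underlying multiset $E$). The vector $u = \sum_i c_i v_i \in \spn_{\RR_{>0}}(T)$ witnesses $\spn_{\RR_{>0}}(S) \cap \spn_{\RR_{>0}}(T) \ne \varnothing$, so Definition~\ref{def:transitive_rep} forces $\ve(S) \cap \ve(T) \ne \varnothing$, i.e. $\ve(u) \in \{\ve(v_1),\dots,\ve(v_t)\}$, which is the asserted condition.

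The one genuine subtlety — the step I expect to be the main obstacle — is the disjointness requirement in Definition~\ref{def:transitive_rep}: the sets $S$ and $T$ must be disjoint, so we need $u$ to be distinct (as an element of $E$, i.e.\ a distinct copy in the multiset) from each of $v_1,\dots,v_t$. If $u$ coincides with one of the $v_i$, say $u = v_1$, then the conclusion $\ve(u)\in\{\ve(v_1),\dots,\ve(v_t)\}$ holds trivially. Otherwise $u \notin \{v_1,\dots,v_t\}$ and the pair $(S,T)$ above is legitimately disjoint. One should also address the degenerate case $u = 0$: then $0 = \spn_{\RR_{>0}}(\varnothing)$-type reasoning is vacuous, but in the multiset-of-vectors setting one typically excludes the zero vector, or else the relation $0 \in \spn_{\RR_{>0}}\{v_1,\dots,v_t\}$ only occurs through a positive circuit, handled by the same argument with $S$ a singleton copy of the zero vector; I would simply note this case is either excluded or trivial.

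Finally, I would remark briefly on the asymmetry pointed out in the observation: the displayed condition uses a \emph{singleton} on the left, whereas Definition~\ref{def:transitive_rep} allows $S$ to have arbitrary size. Thus the implication is genuinely one-directional in general — the converse would require deducing, from the singleton-wise condition, the full statement for all disjoint pairs $(S,T)$ with intersecting positive spans, which fails once $|S|\ge 2$ is needed to certify the span intersection. For the special cases flagged (tournaments and Coxeter root systems) one would invoke the structure of circuits in those oriented matroids — every signed circuit has a support that pins down a single ``new'' vector against the rest — but since the observation only asserts the forward implication in general, the proof is complete with the two paragraphs above, and the parenthetical equivalence claims are substantiated later where those examples are treated.
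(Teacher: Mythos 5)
Your argument is correct and is exactly the reasoning the paper leaves implicit (the statement is given as an Observation with no written proof): apply Definition~\ref{def:transitive_rep} to the disjoint pair $S=\{u\}$, $T=\{v_1,\dots,v_t\}$, with the coincidence case $u\in\{v_1,\dots,v_t\}$ handled trivially. Your closing remark on the one-directionality is also consistent with the paper, which substantiates the non-equivalence by the example in Figure~\ref{fig:On} rather than by proof.
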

	
	In particular, every directed graph may be viewed as an oriented matroid represented over (any) ordered field.
	
	\begin{example}
		Consider the two 3-colorings of acyclic directed graphs depicted in Figure~\ref{fig:On}.   
		\begin{figure}[htb]
			\begin{center}
				\begin{tikzpicture}[scale=0.5]
				\draw[red]			(0,0)--(0,4);
				\draw[blue]			(4,0)--(4,4);
				\draw[red]			(0,4)--(4,4);
				\draw[green]			(4,0)--(0,0);
				
				\draw[black] (2,0) node {$>$}; 
				\draw[black] (2,4) node {$>$}; 				
				\draw[black] (0,2) node {$\wedge$};	
				\draw[black] (4,2) node {$\wedge$}; 
				
				\draw[red]			(8,0)--(8,4);
				\draw[blue]			(12,0)--(12,4);
				\draw[red]			(8,4)--(12,4);
				\draw[green]			(12,0)--(8,0);
				
				\draw[black] (10,0) node {$>$}; 
				\draw[black] (10,4) node {$<$}; 				
				\draw[black] (8,2) node {$\wedge$};	
				\draw[black] (12,2) node {$\wedge$}; 
				
				\end{tikzpicture}
			\end{center}
			\caption{A non-transitive coloring and a transitive coloring}\label{fig:On}
		\end{figure}
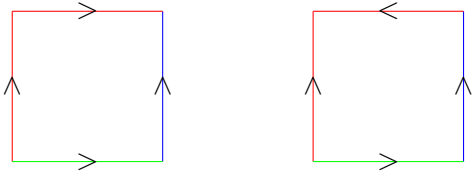
		The coloring on the left satisfies the condition in Observation~\ref{obs:111},
		but is not transitive.
		The coloring on the right is transitive. 
	\end{example}
	
	\begin{example}\label{example:tournament}
		Let $\overrightarrow K_n$ be the transitive tournament with vertex set $\{1,\dots,n\}$ 
		and edge set $\{(i,j)\,:\, i<j\}$. 
		This is an acyclic orientation of the complete graph. 
		An edge-coloring $\ve$ of $\overrightarrow K_n$ is {\em transitive} if and only if 
		\[
		\ve(i,k) \in \{\ve(i,j),\ve(j,k)\} 
		\qquad (\forall\, i<j<k).
		\] 
	\end{example}
	
	

	

	
	The anti-Ramsey problem posed by Erd\H{o}s, Simonovits and S\'os~\cite{ESS}
	asks for the maximal number $k$ of colors such that there exists an edge-coloring of the complete graph of order $n$, $K_n$, with exactly $k$ colors and without a rainbow complete subgraph $K_s$. They proved, in particular,  that the maximal number of edge colors of $K_n$ without a rainbow triangle is $n-1$. 
	We generalize this result to any matroid.
	
	
	\begin{observation}\label{t:must_be_loopless_intro} {\rm (Remark~\ref{rem:must_be_loopless_intro}(a) below)} 
		A matroid has a Gallai coloring if and only if it is loopless (i.e., has no circuit of size $1$). 
	\end{observation}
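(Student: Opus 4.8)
The plan is to prove both implications directly from Definition~\ref{def:Gallai}, since the statement is essentially an unwinding of the defining inequality in the extreme case $|X|=1$.

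First I would prove the easy (contrapositive of the forward) direction: if $M$ has a loop, then it has no Gallai coloring. A loop is precisely a circuit $X=\{e\}$ of size $1$. For any function $\ve\colon E\to[k]$ we have $|\{\ve(e')\,:\,e'\in X\}| = |\{\ve(e)\}| = 1$, so the required strict inequality $|\{\ve(e')\,:\,e'\in X\}| < |X| = 1$ fails. Hence no $\ve$ can be a Gallai $k$-coloring, for any $k$.

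For the converse I would exhibit an explicit coloring when $M$ is loopless, namely the constant coloring $\ve\equiv 1$ (a Gallai $1$-coloring). Here one needs the standard fact that every circuit of a matroid is nonempty, so in a loopless matroid every circuit $X$ satisfies $|X|\ge 2$. Then for each circuit $X$ we get $|\{\ve(e)\,:\,e\in X\}| = 1 < 2 \le |X|$, so $\ve$ is indeed a Gallai coloring. Combining the two directions gives the claim. There is no real obstacle here: the only point worth stating carefully is the equivalence ``$M$ loopless'' $\iff$ ``every circuit of $M$ has size $\ge 2$'', which is immediate from the definitions of loop and circuit.
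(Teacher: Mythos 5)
Your proof is correct and matches the paper's reasoning: the paper's Remark 2.1(a) makes exactly your observation that a loop would need to "use only $0$ colors," which is impossible, and the converse (the constant coloring works on a loopless matroid because every circuit has size at least $2$) is the same implicit argument the paper relies on when it asserts the equivalence at the start of Section 2.1. Your write-up is just a slightly more explicit version of what the paper does.
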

	
	For a loopless matroid $M$, let $g(M)$ be the maximal $k$ such that there exists a Gallai coloring of $M$ using exactly $k$ colors. 
	
	\begin{theorem}\label{t.Gallai_max_eq_rank_intro}
		{\rm (Theorem~\ref{t.Gallai_max_eq_rank} below)}
		For any loopless matroid $M$,  
		\[
		g(M) = \rank(M).
		\]
	\end{theorem}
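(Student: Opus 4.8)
The plan is to establish the two inequalities $g(M)\le\rank(M)$ and $g(M)\ge\rank(M)$ separately; both are short, and the real content is matching the ``rainbow circuit'' prohibition with the notion of matroid dependence.

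For the upper bound I would start from an arbitrary Gallai coloring $\ve\colon E\to[k]$ that uses exactly $k$ colors and choose, for each color $c\in[k]$, a single element $e_c\in E$ with $\ve(e_c)=c$. The elements $e_1,\dots,e_k$ are distinct and carry pairwise distinct colors, so the set $S=\{e_1,\dots,e_k\}$ cannot contain a circuit: a circuit $X\subseteq S$ would satisfy $|\{\ve(e):e\in X\}|=|X|$, contradicting the defining inequality of a Gallai coloring. Hence $S$ is independent and $k=|S|\le\rank(M)$; since every Gallai coloring uses at most $\rank(M)$ colors, $g(M)\le\rank(M)$.

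For the lower bound I would exhibit an explicit Gallai coloring using exactly $r:=\rank(M)$ colors. Fix a maximal chain of flats $\varnothing=F_0\subsetneq F_1\subsetneq\cdots\subsetneq F_r=E$ with $\rank(F_i)=i$ for all $i$; this exists because the lattice of flats of $M$ is graded by the matroid rank, the bottom flat $\operatorname{cl}(\varnothing)$ is empty by looplessness, and the only flat of rank $\rank(M)$ is $E$ itself. Color each element by the index of the layer containing it, i.e. set $\ve(e):=i$ when $e\in F_i\setminus F_{i-1}$. Every layer is non-empty (since $\rank(F_i)>\rank(F_{i-1})$), so $\ve$ uses exactly $r$ colors. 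To check it is Gallai, suppose some circuit $X$ were rainbow; then its elements have pairwise distinct colors, so there is a unique $e\in X$ whose color $i=\ve(e)$ is maximal on $X$, and every other element of $X$ has color $<i$ and hence lies in $F_{i-1}$. Since $F_{i-1}$ is a flat, $\operatorname{cl}(X\setminus\{e\})\subseteq F_{i-1}$, and since $X$ is a circuit, $e\in\operatorname{cl}(X\setminus\{e\})\subseteq F_{i-1}$, contradicting $e\notin F_{i-1}$. Thus $\ve$ is a Gallai coloring with exactly $r$ colors, so $g(M)\ge r$, and combining the two bounds gives $g(M)=\rank(M)$.

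I expect no serious obstacle here: the only step requiring care is the lower-bound construction, where one must correctly invoke the standard structure of the lattice of flats (gradedness by rank, emptiness of $\operatorname{cl}(\varnothing)$ for a loopless matroid, and $E$ being the unique flat of full rank) before running the short closure argument above. It is also worth recording explicitly why looplessness is needed at all for $g(M)$ to be defined: in a loopless matroid every circuit has size at least $2$, so already the constant coloring is Gallai.
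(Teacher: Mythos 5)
Your proof is correct. The upper bound is argued exactly as in the paper: a transversal of the color classes is rainbow, hence contains no circuit, hence is independent, giving $k\le\rank(M)$. For the lower bound the paper instead fixes a basis $B$, colors it bijectively by $[r]$, and assigns to each $e\notin B$ the smallest color occurring in its fundamental circuit $X_e\setminus\{e\}$; it then verifies the Gallai property by a minimal-counterexample argument (choosing a circuit with a unique minimally colored element and $|X\setminus B|$ minimal) that relies on the strong circuit elimination property, which the paper proves separately as Lemma~\ref{t:circuit_intersection}. Your construction via a complete flag of flats $\varnothing=F_0\subsetneq\cdots\subsetneq F_r=E$ produces essentially the same colorings --- taking each $F_j$ to be the closure of a suitable subset of a basis recovers the paper's coloring up to relabeling --- but your verification is genuinely different and shorter: once every element of a putative rainbow circuit $X$ other than the top-colored element $e$ lies in the flat $F_{i-1}$, the standard identity $e\in\operatorname{cl}(X\setminus\{e\})$ for circuits, together with closedness of $F_{i-1}$, forces $e\in F_{i-1}$, a contradiction. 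This trades the paper's strong elimination lemma and induction for the equally standard facts that the lattice of flats is graded by rank, that $\operatorname{cl}(\varnothing)=\varnothing$ in a loopless matroid, and that flats are closed. Both routes are sound; yours is arguably the more transparent one, and I see no gap in it.
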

	
	The following corollary generalizes the Erd\H{o}s-Sinonovits-S\'os result. 
	
	\begin{corollary}\label{t.Gallai_max_graph_intro}
		{\rm (Corollary~\ref{t.Gallai_max_graph} below)} 
		The maximal number of colors in a Gallai coloring of a graph $G$ on $n$ vertices with $c$ connected components is $n-c$.
	\end{corollary}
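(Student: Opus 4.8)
The plan is to deduce this directly from Theorem~\ref{t.Gallai_max_eq_rank_intro} applied to the graphic matroid $M(G)$ of $G$. First I would recall that the circuits of $M(G)$ are precisely the edge sets of the cycles of $G$, so that, by Definition~\ref{def:Gallai}, a function $\ve\colon E\to[k]$ is a Gallai $k$-coloring of $M(G)$ if and only if it is an edge-coloring of $G$ with no rainbow cycle. Hence the maximal number of colors in a Gallai coloring of $G$ equals $g(M(G))$ --- provided $G$ is loopless, so that Gallai colorings exist at all by Observation~\ref{t:must_be_loopless_intro}; if $G$ had a loop the statement would be vacuous, so we assume $G$ loopless throughout (multiple edges are harmless: a pair of parallel edges is a circuit of size $2$, which merely forces those two edges to share a color).

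Next I would invoke Theorem~\ref{t.Gallai_max_eq_rank_intro} to obtain $g(M(G)) = \rank(M(G))$, reducing the claim to the identification of the rank of the graphic matroid. This is the classical fact that $\rank(M(G))$ equals the number of edges in a spanning forest of $G$: each connected component on $m$ vertices contains a spanning tree with $m-1$ edges (an isolated vertex contributing $0$), and summing over the $c$ components gives $\sum_i (m_i-1) = n-c$. Combining the two equalities yields that the maximal number of colors in a Gallai coloring of $G$ is $\rank(M(G)) = n-c$.

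There is no serious obstacle here: the content is entirely in assembling standard matroid facts together with the already-established Theorem~\ref{t.Gallai_max_eq_rank_intro}. The only points needing a word of care are the loopless hypothesis (equivalently, the existence of a Gallai coloring) and the bookkeeping that isolated vertices are counted among the $c$ components while contributing nothing to the rank.
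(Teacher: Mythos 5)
Your proposal is correct and follows exactly the paper's own route: apply Theorem~\ref{t.Gallai_max_eq_rank_intro} to the graphic matroid of $G$ and identify its rank with the number of edges in a spanning forest, namely $n-c$. Your extra remarks on looplessness and parallel edges are sensible but not new content beyond the paper's one-line argument.
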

	
	A similar result holds for transitive colorings.
	
	
	
	
	\begin{observation}\label{t:must_be_acyclic_intro} 
		An oriented matroid has a transitive coloring if and only if it is acyclic (i.e., has no positive circuit). 
		In particular, a nonempty set of vectors $E$ has a transitive coloring (even with a single color) if and only if
		for every nonempty subset $S \subseteq E$,
		\[
		0 \not\in \spn_{\RR_{> 0}}(S);
		\]
		equivalently, if and only if $0$ is not in the convex hull of $E$.
	\end{observation}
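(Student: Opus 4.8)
The plan is to prove the two implications of the ``iff'' directly and then unwind the ``in particular'' clause from the general statement; the whole argument is short. For the \emph{only if} direction, suppose $M$ is an oriented matroid that is \emph{not} acyclic, i.e.\ it has a positive circuit --- a signed circuit $X=(X^+,X^-)$ with $X^-=\varnothing$ and $X^+\neq\varnothing$. If $\ve\colon E\to[k]$ were a transitive coloring, then Definition~\ref{def:transitive} applied to $X$ would give $\{\ve(e):e\in X^+\}\cap\{\ve(e):e\in X^-\}\neq\varnothing$, which is impossible since the right-hand set is empty. For the \emph{if} direction, assume $M$ is acyclic; the point is that acyclicity says precisely that every signed circuit $X=(X^+,X^-)$ has \emph{both} $X^+$ and $X^-$ nonempty (a circuit has nonempty support, and if $X^+=\varnothing$ then $-X=(X^-,\varnothing)$ would be a positive circuit). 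Hence the constant coloring $\ve\equiv 1$ is transitive, since $\{\ve(e):e\in X^+\}=\{1\}=\{\ve(e):e\in X^-\}$ for every signed circuit $X$; in particular an acyclic oriented matroid always admits a transitive $1$-coloring, which is the parenthetical claim.

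It then remains to translate this into Definition~\ref{def:transitive_rep} for a finite set (or multiset) $E$ of real vectors. Here a positive circuit of the associated oriented matroid is exactly a minimal positively dependent nonempty subset $S\subseteq E$, i.e.\ a minimal $S$ admitting coefficients $c_e>0$ with $\sum_{e\in S}c_e\,e=0$; and since every positive dependence contains such a minimal one (conformal decomposition of a vector of an oriented matroid into circuits), $M$ is non-acyclic if and only if $0\in\spn_{\RR_{>0}}(S)$ for some nonempty $S\subseteq E$. Together with the two implications above, this is the first ``in particular'' reformulation. Finally, rescaling a strictly positive dependence $\sum_{e\in S}c_e\,e=0$ by $1/\sum_{e\in S}c_e$ exhibits $0$ as a convex combination of vectors of $E$, and conversely the support of a vanishing convex combination of vectors of $E$ is such an $S$; this is the equivalence with ``$0$ lies in the convex hull of $E$''.

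I do not expect a genuine obstacle here; the only points needing care are bookkeeping ones --- matching the oriented-matroid notion of a positive circuit with that of a minimal positive linear dependence, including degenerate cases such as a zero vector in $E$ (which is a loop, hence itself a positive circuit of size $1$), and fixing empty-sum conventions for $\spn_{\RR_{>0}}$ so that the cases $X^{\pm}=\varnothing$ and $S=\varnothing$ are handled consistently.
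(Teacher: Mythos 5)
Your proof is correct, and it is the natural argument: the paper states this as an \emph{observation} with no written proof (unlike its Gallai counterpart, which gets a one\nobreakdash-line justification in Remark~2.1(a)), so your write-up simply supplies the details the authors treat as immediate --- a positive circuit has $X^-=\varnothing$ and so defeats every coloring, while acyclicity (together with sign-symmetry of circuits) makes the constant coloring transitive, and the vector-space reformulation is the standard translation of ``positive circuit'' into ``minimal positive linear dependence'' plus a rescaling to reach the convex hull. No gaps; your attention to loops (the zero vector) and to the empty-part conventions covers the only degenerate cases.
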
 
	
	For an acyclic oriented matroid $M$,
	let $t(M)$ be the maximal $k$ such that there exists a transitive coloring of $M$ using exactly $k$ colors. 
	
	\begin{theorem}\label{t:max_transitive_eq_rank_intro}
		{\rm (Theorem~\ref{t:max_transitive_eq_rank} below)}
		For any acyclic oriented matroid $M$,  
		\[
		t(M) = \rank(M).
		\]
	\end{theorem}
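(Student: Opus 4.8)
The plan is to establish the two inequalities $t(M)\le\rank(M)$ and $t(M)\ge\rank(M)$ separately. The first is essentially free from what has already been set up: if $\ve$ is a transitive coloring of $M$ using exactly $k$ colors, then $\ve$ is in particular a Gallai $k$-coloring of the underlying matroid $\underline M$ using exactly $k$ colors (every transitive coloring is a Gallai coloring of the underlying matroid); moreover $\underline M$ is loopless, since a loop $e$ of $M$ would be a positive circuit $(\{e\},\varnothing)$, contradicting acyclicity. Hence Theorem~\ref{t.Gallai_max_eq_rank_intro} applies and $k\le g(\underline M)=\rank(\underline M)=\rank(M)$. All the content is therefore in producing a single transitive coloring of $M$ that uses exactly $\rank(M)$ colors.

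For that I would induct on $r:=\rank(M)$, peeling off one color at a time along a positive cocircuit. (In the representable case this is the familiar move of slicing off a facet of $\mathrm{conv}(E)$: the elements of $E$ on the facet hyperplane form a corank-one flat, and the rest lie strictly on one side.) If $r=0$ then $E=\varnothing$ and the empty coloring works. If $r\ge1$, then, $M$ being acyclic, the all-positive sign vector $(+,\dots,+)$ on $E$ is a covector of $M$, and in fact a tope; decomposing this tope as a composition of cocircuits conformal to it produces a \emph{positive cocircuit} $Y$, i.e.\ one with $Y^-=\varnothing$ and $Y^+=\underline Y\ne\varnothing$. (Equivalently, one may simply invoke the standard fact that an acyclic oriented matroid of positive rank has a positive cocircuit.) Put $E_0:=E\setminus\underline Y$ and $E_1:=\underline Y$. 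Then $E_0$ is a hyperplane of $\underline M$, so the restriction $M|_{E_0}$ has rank $r-1$ and is again acyclic; by the inductive hypothesis it admits a transitive coloring $\ve_0\colon E_0\to[r-1]$ using all $r-1$ colors. Define $\ve\colon E\to[r]$ by letting $\ve$ agree with $\ve_0$ on $E_0$ and sending every element of $E_1$ to the new color $r$. Since $E_1\ne\varnothing$, and $E_0\ne\varnothing$ whenever $r\ge2$, the coloring $\ve$ uses exactly $r$ colors.

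It remains to verify that $\ve$ is transitive, which I would do by casework on a signed circuit $X=(X^+,X^-)$ of $M$. If $\underline X\subseteq E_0$, then $X$ is a signed circuit of $M|_{E_0}$, so $\ve(X^+)\cap\ve(X^-)=\ve_0(X^+)\cap\ve_0(X^-)\ne\varnothing$ by induction. If $X^+$ and $X^-$ both meet $E_1$, then $r\in\ve(X^+)\cap\ve(X^-)$. The only remaining case, up to swapping $X^+$ and $X^-$, is that $X^+$ meets $E_1$ while $X^-\subseteq E_0$; then $\underline X\cap\underline Y=X^+\cap E_1$ is nonempty and, as $Y$ is positive, every product $X_eY_e$ for $e\in\underline X\cap\underline Y$ equals $+$, which contradicts the orthogonality of the signed circuit $X$ with the signed cocircuit $Y$ (orthogonality forces the value $-$ to appear among these products whenever $+$ does). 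Hence this case cannot occur, every signed circuit is covered, and $\ve$ is transitive. This completes the induction, giving $t(M)\ge\rank(M)$, and combined with the first inequality, $t(M)=\rank(M)$.

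The one input that carries real weight is the existence of a positive cocircuit in an acyclic oriented matroid of positive rank — the engine of the induction; this is a standard fact (the oriented-matroid form of Farkas' lemma, also obtainable from the decomposition of the positive tope into conformal cocircuits as above). Beyond that, the argument is routine: the only subtleties are the degenerate ranks $r=0,1$, keeping track of which colors actually occur, and pinning down the sign convention in the orthogonality step of the last case.
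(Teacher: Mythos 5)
Your proof is correct and follows essentially the same route as the paper's: the lower bound is obtained by repeatedly splitting off a positive cocircuit (guaranteed by acyclicity), coloring it with a fresh color, and verifying transitivity via circuit--cocircuit orthogonality together with positivity of the cocircuit --- exactly the paper's construction, merely packaged as an induction on rank rather than an explicit chain $E=E_r\supset\cdots\supset E_0$. The only cosmetic difference is in the upper bound, where you cite the already-proved Gallai theorem (noting that a transitive coloring is a Gallai coloring of the underlying loopless matroid) instead of repeating the rainbow-independent-set argument, which the paper does verbatim; both are valid.
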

	
	\begin{corollary}\label{t.transitive_max_graph_intro}
		The maximal number of colors in a transitive coloring of the set of positive roots of a Coxeter group $W$ is equal to its rank.
	\end{corollary}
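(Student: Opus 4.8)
The plan is to recognize the set $\Phi^+$ of positive roots of $W$ as an acyclic oriented matroid of rank $\rank(W)$ and then quote Theorem~\ref{t:max_transitive_eq_rank_intro}. I would realize $\Phi^+$ inside the geometric (reflection) representation $V$ of the Coxeter system $(W,S)$, in which the simple roots $\{\alpha_s:s\in S\}$ form a basis of $V$ and every positive root is a nonnegative linear combination of them. Two observations are then immediate. First, since each $\alpha_s$ is itself a positive root, $\Phi^+$ spans $V$, so the oriented matroid represented by $\Phi^+$ has rank $\dim V=|S|=\rank(W)$. Second, the linear functional $f\in V^*$ with $f(\alpha_s)=1$ for all $s\in S$ takes the value $\sum_s c_s>0$ on any positive root $\sum_s c_s\alpha_s$, so $0$ lies outside the convex hull of $\Phi^+$; by Observation~\ref{t:must_be_acyclic_intro} this oriented matroid is acyclic. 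When $W$ is finite, $\Phi^+$ is a finite set of vectors and Theorem~\ref{t:max_transitive_eq_rank_intro} immediately yields $t(\Phi^+)=\rank(W)$.

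For an arbitrary (possibly infinite) Coxeter group I would prove the two inequalities directly, recycling the arguments behind Theorem~\ref{t:max_transitive_eq_rank_intro}. For the lower bound, fix a linear order $S=\{1,\dots,n\}$ and color each $\alpha=\sum_i c_i\alpha_i\in\Phi^+$ by $\ve(\alpha):=\min\{i:c_i>0\}$. This uses all $n$ colors since $\ve(\alpha_i)=i$, and it is transitive: if $S',T'\subseteq\Phi^+$ are disjoint with a common vector $v=\sum_{\alpha\in S'}\lambda_\alpha\alpha=\sum_{\beta\in T'}\mu_\beta\beta$ (all coefficients positive), then $v\ne 0$ by acyclicity, and the least index present in the simple-root expansion of $v$ equals both $\min_{\alpha\in S'}\ve(\alpha)$ and $\min_{\beta\in T'}\ve(\beta)$, so that index is a color shared by $S'$ and $T'$. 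For the upper bound, given a transitive coloring using colors $1,\dots,k$, pick a root $\beta_c$ of each color $c$; if $\{\beta_1,\dots,\beta_k\}$ were linearly dependent, a nontrivial relation $\sum_c a_c\beta_c=0$ would (since $0\notin\spn_{\RR_{>0}}(\Phi^+)$) split into two nonempty disjoint sets $S'=\{\beta_c:a_c>0\}$ and $T'=\{\beta_c:a_c<0\}$ whose positive spans meet, forcing $\ve(S')\cap\ve(T')\ne\varnothing$ — impossible, since the $\beta_c$ have pairwise distinct colors. Hence $k\le\dim V=\rank(W)$.

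The only real work is the dictionary in the first paragraph: verifying that $\Phi^+$ represents an acyclic oriented matroid and that its matroid rank equals the Coxeter rank. Once that is in place, Theorem~\ref{t:max_transitive_eq_rank_intro} does everything in the finite case, and no genuinely new idea is needed for infinite $W$ — only the elementary linear-algebra steps reproduced above, which are exactly the ones underlying that theorem.
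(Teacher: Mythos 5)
Your first paragraph is exactly the paper's (implicit) argument: the paper states this corollary with no separate proof, relying on the reader to observe that $\Phi^+$, realized in the reflection representation, is a finite set of vectors spanning a space of dimension $|S|$ and avoiding $0$ in its convex hull, so that Theorem~\ref{t:max_transitive_eq_rank_intro} applies; your verification of acyclicity via the functional $f(\alpha_s)=1$ and of the rank via the simple-root basis is the correct dictionary, and both steps check out. Your second paragraph goes beyond the paper: since the paper's matroid framework is set up only for finite ground sets, the corollary really only follows from Theorem~\ref{t:max_transitive_eq_rank_intro} when $W$ is finite, and you supply a self-contained argument covering infinite $W$ as well. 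Both halves of that argument are sound — the ``least index with a positive coefficient'' coloring is transitive because positive roots have nonnegative coordinates in the simple roots, so no cancellation can occur at the minimal index, and the upper bound is the standard ``a rainbow set is independent'' argument transplanted to the representable setting. This is a worthwhile strengthening: the recursive cocircuit construction in the paper's proof of Theorem~\ref{t:max_transitive_eq_rank} becomes, in this representable special case, your explicit one-line coloring, which is both more concrete and valid without finiteness of $\Phi^+$.
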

	
	
	
	
	The concept of Gallai partitions of complete graphs was introduced by K\"orner, Simonyi and Tuza~\cite{KST92}. It is naturally generalized to all graphs and matroids.
	
	\begin{definition}\label{def:Gallai_partition} 
		Let $M$ be a loopless matroid on a nonempty set $E$. 
		A {\em Gallai $k$-partition} of $M$ 
		is a partition of $E$ into $k$ disjoint non-empty subsets, also called {\em blocks}, $B_1, \ldots, B_k$, 
		such that for any circuit $X$ in $M$,
		$|X \cap B_i| \ge 2$ for at least one value of $i$.
	\end{definition}
	
	There is a transitive analogue.
	
	\begin{definition}\label{def:transitive_partition}
		Let $M$ be an acyclic oriented matroid on a nonempty set $E$. 
		A {\em transitive $k$-partition} of $M$ 
		is a partition of $E$ into $k$ disjoint non-empty subsets, also called {\em blocks}, $B_1, \ldots, B_k$, 
		such that for any signed circuit $X = (X^+,X^-)$ in $M$, both 
		$X^+ \cap B_i \ne \varnothing$ and $X^- \cap B_i \ne \varnothing$ for at least one value of $i$.
	\end{definition}
	
	
	The following result 
	resolves~\cite[Conjecture 3.5]{BGL} 
	as a special case.
	
	\begin{proposition}
		For any loopless (respectively, acyclic oriented) matroid $M$ on a nonempty set $E$ 
		there exists a polynomial $p_M(x) \in x\ZZ[x]$ 
		such that, for any positive integer $k$,  
		the number of Gallai (respectively, transitive)  colorings of $M$ using 
		$k$ colors is equal to $p_M(k)$.
		Specifically, 
		\[
		p_M(x) = \sum_{j \ge 1} a_j (x)_j,
		\]
		where $(x)_j := x(x-1) \cdots (x-j+1)$ and $a_j$ is the number of Gallai (respectively, transitive) $j$-partitions of $M$. 
	\end{proposition}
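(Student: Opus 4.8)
The plan is to set up, for each $j\ge 1$, a bijection between the Gallai (respectively, transitive) $k$-colorings of $M$ that use exactly $j$ distinct colors and the pairs consisting of a Gallai (respectively, transitive) $j$-partition of $M$ together with an injection of its $j$ blocks into $[k]$; summing over $j$ then gives the asserted formula directly.

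First I would record the elementary dictionary that makes this work: a function $\ve\colon E\to[k]$ determines the partition of $E$ into its nonempty fibers $\ve^{-1}(c)$, with $c$ ranging over the image $\ve(E)$, and conversely a partition of $E$ together with an injective labelling of its blocks by elements of $[k]$ determines such a function. The key claim is that $\ve$ is a Gallai $k$-coloring of $M$ if and only if this fiber partition is a Gallai $|\ve(E)|$-partition of $M$, and similarly in the transitive case. This is pure unwinding of Definitions~\ref{def:Gallai} and~\ref{def:Gallai_partition} (respectively, \ref{def:transitive} and~\ref{def:transitive_partition}): for a circuit $X$, the inequality $|\{\ve(e):e\in X\}|<|X|$ holds exactly when $\ve|_X$ is not injective, i.e.\ when some fiber meets $X$ in at least two elements; and for a signed circuit $X=(X^+,X^-)$, the condition $\{\ve(e):e\in X^+\}\cap\{\ve(e):e\in X^-\}\ne\varnothing$ holds exactly when some fiber meets both $X^+$ and $X^-$. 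Since each condition is imposed over all (signed) circuits simultaneously, the claim follows.

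Granting the claim, a Gallai (respectively, transitive) $k$-coloring using exactly $j$ colors is the same datum as a Gallai (respectively, transitive) $j$-partition $B_1,\dots,B_j$ of $M$ together with the injection $\{B_1,\dots,B_j\}\hookrightarrow[k]$ sending each block to its color, and there are precisely $(k)_j=k(k-1)\cdots(k-j+1)$ such injections. Hence the number of Gallai (respectively, transitive) $k$-colorings using exactly $j$ colors equals $a_j(k)_j$, and summing over $j$ — a finite sum, since $a_j=0$ once $j>|E|$ — shows that the number of Gallai (respectively, transitive) $k$-colorings of $M$ equals $\sum_{j\ge 1}a_j(k)_j=p_M(k)$. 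Finally, $p_M(x)=\sum_{j\ge1}a_j(x)_j$ lies in $x\ZZ[x]$, since every $(x)_j$ with $j\ge1$ is an integer polynomial divisible by $x$; indeed $a_1=1$, the one-block partition $E$ being a Gallai partition (every circuit of a loopless matroid has size at least $2$) and a transitive partition (by negation-closure of circuits, every signed circuit of an acyclic oriented matroid has $X^+$ and $X^-$ both nonempty).

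I do not expect a serious obstacle: once the definitions of coloring and partition are aligned, the argument reduces to a one-line bijection plus the standard count of injections between finite sets. The only points that need care are bookkeeping — checking that the fiber partition is genuinely a set partition into nonempty blocks indexed (as an unordered set) by the colors actually used, and noting that, once the count is written in the falling-factorial basis $\{(x)_j\}$, the resulting identity is automatically a polynomial identity valid for every positive integer $k$, not merely for $k\gg 0$. The case~\cite[Conjecture~3.5]{BGL} is recovered by taking $M$ to be the oriented matroid of the transitive tournament of Example~\ref{example:tournament}.
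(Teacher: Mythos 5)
Your argument is correct and is essentially the same as the paper's: both rest on the observation that whether $\ve$ is a Gallai (resp.\ transitive) coloring depends only on its fiber partition, and then count the $(k)_j$ injective labellings of each admissible $j$-block partition. The paper packages this as the dichotomy $n_P(k)\in\{0,(k)_j\}$ for the quantity $n_P(k)=|\{\ve\in\EEE_M(k):P_\ve=P\}|$, whereas you state the equivalence of the coloring and partition conditions explicitly, but the content is identical.
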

	
	See Propositions~\ref{t.polynomial} and~\ref{G.polynomial} below.
	
	\smallskip
	
	While counting Gallai $2$-colorings of a loopless matroid is easy (Proposition~\ref{prop:2_Gallai}), 
	the enumeration of transitive $2$-colorings 
	of an acyclic oriented matroid is more involved.
	
	\begin{theorem}
		{\rm (Theorem~\ref{thm:acyclic_2_colorings} below)}
		Let $M$ be an acyclic oriented matroid on a nonempty set $E$.
		The number of transitive 2-colorings of $M$ is equal to $2^c$ times the number of acyclic reorientations of $M$, 
		where $c$ is the number of connected components of $M$. 
	\end{theorem}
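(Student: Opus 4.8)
The plan is to translate transitive $2$-colorings into reorientations and then to count the fibers of that translation.

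\emph{Step 1: from transitive $2$-colorings to acyclic reorientations.} Identify a function $\ve\colon E\to\{1,2\}$ with the subset $A:=\ve^{-1}(2)$, which is a bijection between $2$-colorings and subsets of $E$. For $A\subseteq E$ and a signed circuit $X=(X^+,X^-)$ of $M$, the corresponding signed circuit of the reorientation ${}_AM$ is ${}_AX=\big((X^+\setminus A)\cup(X^-\cap A),\,(X^-\setminus A)\cup(X^+\cap A)\big)$. Since only two colors occur, $\ve(X^+)\cap\ve(X^-)=\varnothing$ holds exactly when $\ve$ is constant on $X^+$, constant on $X^-$, and the two constant values differ, i.e.\ exactly when $X^+\subseteq A,\ X^-\subseteq E\setminus A$, or $X^+\subseteq E\setminus A,\ X^-\subseteq A$. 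A one-line check with the displayed formula shows that the first alternative says precisely that ${}_AX$ is a negative circuit of ${}_AM$, and the second that ${}_AX$ is a positive circuit. Since an oriented matroid has a positive circuit if and only if it has a negative circuit (replace $X$ by $-X$), we conclude that $\ve$ is \emph{not} a transitive $2$-coloring of $M$ if and only if ${}_AM$ is not acyclic. Thus $\ve\mapsto\ve^{-1}(2)$ restricts to a bijection between the transitive $2$-colorings of $M$ and the set $\{A\subseteq E:\ {}_AM\text{ is acyclic}\}$.

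\emph{Step 2: the fibers of $A\mapsto{}_AM$.} Next I would show that for every reorientation $M'$ of $M$ the set $\{A\subseteq E:\ {}_AM=M'\}$ has exactly $2^c$ elements. Since ${}_AM={}_BM$ iff ${}_{A\triangle B}M=M$, it suffices to prove that $\{D\subseteq E:\ {}_DM=M\}$ consists exactly of the $2^c$ unions of connected components of $M$. Write $M=M_1\oplus\cdots\oplus M_c$ on $E=E_1\sqcup\cdots\sqcup E_c$; each circuit of $M$ lies in a single block $E_i$, so ${}_DM=M$ iff ${}_{D\cap E_i}M_i=M_i$ for all $i$. If $D\cap E_i$ is empty or all of $E_i$ this holds, because the circuit set of $M_i$ is closed under negation, so ${}_{E_i}M_i=-M_i=M_i$. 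Conversely, if $\varnothing\subsetneq D\cap E_i\subsetneq E_i$, then by connectedness of $M_i$ there is a circuit of $M_i$ whose support meets both $D\cap E_i$ and $E_i\setminus D$; reorienting that circuit by $D\cap E_i$ flips a nonempty proper subset of its support, producing a signed circuit that is neither the original nor its negative, so ${}_{D\cap E_i}M_i\ne M_i$. This yields the fiber count.

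\emph{Conclusion.} Acyclicity is a property of the oriented matroid ${}_AM$, so $\{A:\ {}_AM\text{ acyclic}\}$ is a union of fibers of $A\mapsto{}_AM$, each of size $2^c$; grouping the subsets $A$ by the oriented matroid they produce then gives that the number of transitive $2$-colorings of $M$ equals $\#\{A\subseteq E:\ {}_AM\text{ acyclic}\}=2^{c}\cdot\#\{\text{acyclic reorientations of }M\}$, an acyclic reorientation being counted as a distinct oriented matroid (equivalently, up to flipping entire connected components). I expect the fiber computation in Step~2 — specifically the connectedness argument showing that reorienting a nonempty proper subset of a connected oriented matroid genuinely changes it — to be the only point requiring care, Step~1 being a routine two-color case analysis. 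One should also keep in mind that the $2^c$ factor is exactly the discrepancy between counting reorientations as flip-sets and counting them as oriented matroids, which is why the statement is phrased in terms of the latter.
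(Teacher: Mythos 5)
Your proof is correct and follows essentially the same route as the paper: identify $2$-colorings with subsets $A\subseteq E$, show that transitivity of the coloring is equivalent to acyclicity of the reorientation ${}_{-A}M$, and verify that $A\mapsto{}_{-A}M$ is $2^c$-to-one by analyzing which sets $D$ satisfy ${}_{-D}M=M$ via the connected components. The one detail worth making explicit (as the paper does) is that $X^+\ne\varnothing$ and $X^-\ne\varnothing$ for every signed circuit of the acyclic matroid $M$; this is what makes your ``exactly when'' characterization of $\ve(X^+)\cap\ve(X^-)=\varnothing$ valid.
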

	
	For representable oriented matroids we prove the following. 
	
	\begin{theorem}\label{thm:Orlik} {\rm (Theorem~\ref{thm:Orlik1} below)}	
		For any acyclic oriented matroid $M$ represented over $\RR$, 
		the number of transitive $2$-colorings of $M$ is equal to the number of chambers in the dual hyperplane arrangement. 
	\end{theorem}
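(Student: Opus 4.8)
The plan is to construct an explicit bijection between transitive $2$-colorings of $M$ and the chambers of the dual arrangement, by reading the two colors off the sign vector of each chamber. Fix a representation of $M$ by a finite (multi)set of vectors $E$ in a real vector space $V$, which we may assume spans $V$, and let $\A=\{\ker v:v\in E\}$ be the arrangement of linear hyperplanes in the dual space $V^*$; this is the dual hyperplane arrangement of the statement. Its chambers are recorded by the realizable sign vectors $\sigma\in\{+,-\}^E$, i.e.\ those for which there is $\varphi\in V^*$ with $\varphi(v)>0$ when $\sigma(v)=+$ and $\varphi(v)<0$ when $\sigma(v)=-$; to such a $\sigma$ I attach the $2$-coloring $\ve_\sigma$ with color classes $C_1=\sigma^{-1}(+)$ and $C_2=\sigma^{-1}(-)$ (one of them possibly empty).

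The first step is a cone reformulation of transitivity, via Definition~\ref{def:transitive_rep}: a $2$-coloring $\ve$ with color classes $C_1,C_2$ is transitive if and only if $\spn_{\RR_{\ge 0}}(C_1)\cap\spn_{\RR_{\ge 0}}(C_2)=\{0\}$. Indeed, a common nonzero point $z$ of the two cones is a positive combination of some nonempty $S\subseteq C_1$ and of some nonempty $T\subseteq C_2$, so $\spn_{\RR_{>0}}(S)\cap\spn_{\RR_{>0}}(T)\neq\varnothing$ while $\ve(S)\cap\ve(T)=\{1\}\cap\{2\}=\varnothing$, contradicting transitivity; conversely, if the cones meet only in $0$ and disjoint nonempty $S\subseteq C_1$, $T\subseteq C_2$ have a common positive combination $z$, then $z\in\spn_{\RR_{\ge 0}}(C_1)\cap\spn_{\RR_{\ge 0}}(C_2)=\{0\}$, so $0\in\spn_{\RR_{>0}}(S)$, a positive circuit inside $S$, contradicting acyclicity of $M$ (Observation~\ref{t:must_be_acyclic_intro}). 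Empty color classes are covered by the convention $\spn_{\RR_{\ge 0}}(\varnothing)=\{0\}$, and it is again acyclicity that makes the two monochromatic colorings transitive.

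The second step shows $\sigma\mapsto\ve_\sigma$ is a bijection onto the set of transitive $2$-colorings. Injectivity is clear, since distinct chambers have distinct sign vectors. For well-definedness: if $\sigma$ is realized by $\varphi$ and $z=\sum_{v\in C_1}\lambda_v v=\sum_{v\in C_2}\mu_v v$ with $\lambda,\mu\ge 0$, then $\varphi(z)\ge 0$ (a sum of nonnegative terms) and $\varphi(z)\le 0$ (a sum of nonpositive terms), so each $\lambda_v\varphi(v)=0$, hence $\lambda_v=0$ for all $v\in C_1$; thus $z=0$, and $\ve_\sigma$ is transitive by step one. For surjectivity, given a transitive $2$-coloring $\ve$ with classes $C_1,C_2$, put $w_v=v$ for $v\in C_1$ and $w_v=-v$ for $v\in C_2$; by Gordan's theorem of the alternative, either some $\varphi\in V^*$ satisfies $\varphi(w_v)>0$ for all $v\in E$ — that is, $\ve=\ve_\sigma$ for the sign vector $\sigma$ equal to $+$ on $C_1$ and $-$ on $C_2$ — or there exist $y_v\ge 0$, not all zero, with $\sum_{v}y_v w_v=0$; in the latter case $\sum_{v\in C_1}y_v v=\sum_{v\in C_2}y_v v$ lies in $\spn_{\RR_{\ge 0}}(C_1)\cap\spn_{\RR_{\ge 0}}(C_2)=\{0\}$ by step one, so $\sum_{v\in C_1}y_v v=0$ and $\sum_{v\in C_2}y_v v=0$ with at least one positive coefficient, again a positive circuit contradicting acyclicity. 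Hence $\ve$ lies in the image, and counting both sides proves the theorem.

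I expect surjectivity to be the only real obstacle: it amounts to strictly separating the two color cones by a hyperplane once they are known to meet only at the origin, and this is exactly where acyclicity of $M$ is indispensable (for general pointed polyhedral cones meeting only at $0$ such a strict separation may fail). A Gordan/Stiemke-type theorem of the alternative handles it, as above; everything else is routine. One could instead combine Theorem~\ref{thm:acyclic_2_colorings} with the classical bijection between the chambers of a real central arrangement and the acyclic reorientations of its oriented matroid, but this forces one to account carefully for the factor $2^c$ (reorientation of whole connected components acts freely on the acyclic reorientations), so the direct approach above seems cleaner.
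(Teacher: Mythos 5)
Your proof is correct and takes essentially the same route as the paper's: you identify the chambers of the dual arrangement with realizable sign vectors, attach to each sign vector the corresponding $2$-coloring, and decide realizability by applying Gordan's lemma to the re-signed vectors $\ve(e)e$ --- exactly the mechanism of the paper's proof of Theorem~\ref{thm:Orlik1}. The only divergence is in how the failing Gordan alternative is matched with non-transitivity: the paper reads a nonnegative dependence $A_\ve y=0$ as a positive circuit of the reorientation $M_\ve$ and invokes the equivalence ``$M_\ve$ acyclic $\iff$ $\ve$ transitive'' from the proof of Theorem~\ref{thm:acyclic_2_colorings}, whereas you argue directly from the cone formulation in Definition~\ref{def:transitive_rep}; this makes your argument self-contained at the cost of leaning on the equivalence of Definitions~\ref{def:transitive} and~\ref{def:transitive_rep}, which the paper asserts but does not prove.
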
	
	
	In particular, for any finite Coxeter group $W$, the number of transitive $2$-colorings of the set $\Phi^+(W)$ of positive roots is equal to $|W|$; see Corollary~\ref{cor:2-Coxeter} below.
	Theorem~\ref{thm:Orlik} is closely related to a well-known result of Orlik and Terao~\cite{OT}; see the discussion in Section~\ref{sec:OT}.

	\subsection{Type $A$: enumeration and Schur-positivity}
	\label{sec:Descents_Schur}
	
	
	Gallai and transitive colorings of the root system of type $A_{n-1}$
	\[
	\Phi^+ (A_{n-1})= \{e_i-e_j:\ 1\le i<j\le n\}
	\]
	may be interpreted as edge-colorings of the undirected (respectively, directed) complete graph of order $n$. 
	Asymptotic results about the number of Gallai edge colorings of complete graphs were obtained recently~\cite{Balogh, Bastos2, Bastos}. 
	In particular, it was proved that, 
	for any fixed $k \ge 2$ and sufficiently large $n$, almost all Gallai colorings of $K_n$ using at most $k$ colors actually use only two colors. 
	Some results regarding precise counting were obtained by Gouge et al.~\cite{Gouge_etal}. 
	
	Recall Definitions~\ref{def:Gallai_partition} and~\ref{def:transitive_partition}. 
	A Gallai (transitive) partition is {\em maximal} if the number of blocks is maximal, namely 
	(by Theorems~\ref{t.Gallai_max_eq_rank_intro}  and~\ref{t:max_transitive_eq_rank_intro})
	equal to the rank of the matroid.  
	We prove the following.
	
	\begin{theorem}\label{t.number_max_Gallai_partitions1} {\rm(Theorem~\ref{t.number_max_Gallai_partitions} below)} 
		For every $n>1$, the number of maximal Gallai partitions of the set of edges of the complete graph $K_n$ is equal to the double factorial $(2n-3)!!$.
	\end{theorem}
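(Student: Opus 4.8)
The plan is to decompose a maximal Gallai partition of $K_n$ recursively along a distinguished ``bipartite'' block, and then solve the resulting generating-function recursion; the double factorial $(2n-3)!!$ appears as the coefficient sequence of $1-\sqrt{1-2x}$.

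First I would introduce the notion of a \emph{split} of a maximal Gallai partition $P$ of $K_n$: a partition $[n]=V_1\sqcup V_2$ into nonempty parts such that the set of all edges joining $V_1$ to $V_2$ is one block of $P$, and such that the restrictions of $P$ to $K_{V_1}$ and to $K_{V_2}$ (intersect each block with the edge set of the smaller clique and discard empty intersections) are again maximal Gallai partitions. Conversely, given any $[n]=V_1\sqcup V_2$ and maximal Gallai partitions $Q_1,Q_2$ of $K_{V_1},K_{V_2}$, the collection $Q_1\cup Q_2\cup\{\text{edges between }V_1\text{ and }V_2\}$ is a maximal Gallai partition of $K_n$ admitting $\{V_1,V_2\}$ as a split: it has $(|V_1|-1)+(|V_2|-1)+1=n-1$ blocks, and every cycle of $K_n$ either stays inside one $V_i$ (handled by $Q_i$) or uses at least two edges between $V_1$ and $V_2$. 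This sets up a candidate bijection, \emph{provided} every maximal Gallai partition has exactly one split.

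The heart of the argument is this existence-and-uniqueness claim. For existence, apply the classical Gallai structure theorem to the edge-coloring of $K_n$ whose colour classes are the blocks of $P$: it produces a partition $[n]=U_1\sqcup\cdots\sqcup U_m$, $m\ge2$, monochromatic between each pair $U_a,U_b$ and using at most two colours among all inter-part edges. Counting distinct blocks, and using that a Gallai coloring of $K_{|U_a|}$ uses at most $|U_a|-1$ colours (Corollary~\ref{t.Gallai_max_graph_intro}), we get $n-1\le\sum_a(|U_a|-1)+2=n-m+2$, so $m\in\{2,3\}$; since the count is exactly $n-1$, the equality case forces the blocks meeting the interior of $U_a$ to number exactly $|U_a|-1$ and to be pairwise distinct across the $U_a$'s, while the inter-part edges fall into exactly $m-1$ blocks. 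When $m=2$ this is a split. When $m=3$, the two inter-part blocks split the three ``super-edges'' of the reduced $K_3$ as $2+1$, and merging the two parts that are the endpoints of the lone super-edge exhibits a split of $P$. For uniqueness, suppose $\{V_1,V_2\}$ and $\{W_1,W_2\}$ are splits with ``crossing'' blocks $C$ and $D$. If $C=D$, then this block, viewed as a subgraph of $K_n$, is a connected complete bipartite graph with no isolated vertex and therefore has a unique bipartition, forcing $\{V_1,V_2\}=\{W_1,W_2\}$. If $C\ne D$ they are disjoint, and a short vertex-chase (each vertex of $[n]$ meets both $C$ and $D$, and the edges it carries in $C$ are disjoint from those it carries in $D$) shows that $K_n$ cannot contain two disjoint spanning complete bipartite subgraphs --- a contradiction. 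Hence the split is unique.

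Finally, the bijection $P\mapsto(\{V_1,V_2\},P|_{V_1},P|_{V_2})$ gives, with $a_n$ the number of maximal Gallai partitions of $K_n$ and the convention $a_1:=1$, the recursion
\[
a_n=\frac12\sum_{j=1}^{n-1}\binom{n}{j}\,a_j\,a_{n-j}\qquad(n\ge2).
\]
Thus the exponential generating function $A(x)=\sum_{n\ge1}a_n x^n/n!$ satisfies $A=x+\tfrac12A^2$, whence $A(x)=1-\sqrt{1-2x}$, and extracting coefficients yields $a_n=(2n-3)!!$ for all $n\ge1$, in particular for $n>1$. I expect the main obstacle to be the existence-and-uniqueness of the split --- equivalently, showing that $P$ has exactly one block that is a spanning complete bipartite subgraph of $K_n$, which is where the Gallai structure theorem and the rank bookkeeping are really needed; everything after that is routine.
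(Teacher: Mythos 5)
Your argument is correct, but it follows a genuinely different route from either of the paper's two proofs. The paper first proves the count by double counting pairs consisting of a maximal Gallai partition and a rainbow hamiltonian path (every maximal Gallai coloring has exactly $2^{n-1}$ such paths, and each fixed path is rainbow for exactly $C_{n-1}$ partitions, whence $p_n = n!\,C_{n-1}/2^{n-1} = (2n-3)!!$), and then gives a second proof by one-vertex-at-a-time induction, showing there are exactly $2n-3$ extensions of a maximal Gallai partition of $K_{n-1}$ to one of $K_n$. Your decomposition along the unique spanning complete bipartite block is instead a divide-and-conquer on the vertex set; the recursion $a_n=\tfrac12\sum_j\binom{n}{j}a_ja_{n-j}$ and the functional equation $A=x+\tfrac12A^2$ are exactly the count of binary total partition trees, and indeed the paper constructs precisely this bijection $\psi\colon G_{n,n-1}\to{\rm BTPT}_n$ later (Subsection~\ref{sec:bijective}), but only to track descent sets, not to prove Theorem~\ref{t.number_max_Gallai_partitions}. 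The structural input you need — existence and uniqueness of the ``split'' — is the paper's Lemma~\ref{lem:bipartite}, which it quotes from Gouge et al.\ and rederives from its rainbow-hamiltonian-path machinery; you rederive it from Gallai's classical structure theorem plus the rank bound of Corollary~\ref{t.Gallai_max_graph_intro}, which is a legitimate alternative (your uniqueness argument via the fact that two spanning complete bipartite subgraphs of $K_n$ must share an edge is easily checked). Two small points of hygiene: in your counting inequality the bound on inter-part colors should be $1$ when $m=2$ (the single pair of parts is monochromatic) rather than $2$ — with the uniform ``$+2$'' the case $m=2$ is not an equality case, so the forcing argument needs the sharper $m$-dependent bound $m-1$, which you do invoke a line later; and you should say explicitly that a block other than the crossing block cannot meet both $E(K_{V_1})$ and $E(K_{V_2})$ (this follows from the same block count $n-1=(|V_1|-1)+(|V_2|-1)+1$), so that restriction really is inverse to gluing. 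What your route buys is a self-contained generating-function proof that simultaneously explains \emph{why} the answer is a double factorial (it counts total partitions); what the paper's first proof buys is the additional refined statements (Lemmas~\ref{t.number_rainbow_hamiltonian_paths} and~\ref{t.number_partitions_per_hamiltonian_path}) that it reuses for the transitive/Catalan results.
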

	
	
	
	
	
	
	
	
	
	
	\begin{theorem}\label{thm:Catalan1} 
		{\rm(Theorem~\ref{thm:Catalan} below)} 
		For every $n>1$, the number of maximal transitive partitions of the set of edges of the transitive tournament ${\overrightarrow K_n}$ is equal to the Catalan number $C_{n-1}:=\frac{1}{n}\binom{2n-2}{n-1}$.
	\end{theorem}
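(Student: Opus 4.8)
The plan is to exhibit a recursive decomposition of maximal transitive partitions of $\overrightarrow K_n$ that reproduces the Catalan recursion. Throughout I use the reduction of the signed-circuit condition of Definition~\ref{def:transitive_partition} to the triangle condition for $\overrightarrow K_n$ (the partition analogue of Example~\ref{example:tournament}): a partition $\mathcal B$ of the edge set of $\overrightarrow K_n$ is transitive if and only if, for all $i<j<k$, the block containing $(i,k)$ also contains $(i,j)$ or $(j,k)$. This reduction follows from the usual cycle-chording induction, splitting an $m$-cycle ($m\ge 4$) into a triangle and an $(m-1)$-cycle sharing a chord.

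First I would record a structural lemma proved by a minimal-length argument: in any transitive partition of $\overrightarrow K_n$, every block contains at least one \emph{consecutive} edge $(i,i+1)$. Indeed, picking $(i,j)$ in a block $B$ with $j-i$ minimal, if $j-i\ge 2$ then the triangle $\{i,i+1,j\}$ forces $(i+1,j)\in B$ (since the consecutive edge $(i,i+1)$ cannot be in $B$ by minimality) — contradicting minimality. Since there are exactly $n-1$ consecutive edges, and by Theorem~\ref{t:max_transitive_eq_rank_intro} there are at most $n-1$ blocks, a transitive partition is maximal if and only if the map sending each consecutive edge to its block is a bijection onto the blocks. Running the same minimal-length argument \emph{inside} a block $c(l)$ (the block of $(l,l+1)$) of a maximal partition, and using that $(l,l+1)$ is its \emph{unique} consecutive edge, shows that every edge $(a,b)\in c(l)$ satisfies $a\le l<b$.

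The core step is the decomposition lemma. Fix a maximal transitive partition $\mathcal B$, let $B_0$ be the block of $(1,n)$, and let $c$ be the unique index with $(c,c+1)\in B_0$. Using the triangles $\{1,i,n\}$ and $\{i,j,n\}$ to propagate the information $(1,n)\in B_0$, together with the ranges from the previous paragraph (so $\phi(1,i)<i$ while $\phi(j,n)\ge j$, writing $\phi(e)=l$ when $e\in c(l)$), one shows $\phi(i,j)=c$ for all $i\le c<j$; hence $B_0=\{(i,j):1\le i\le c<j\le n\}$ exactly, and every other block has both endpoints in $\{1,\dots,c\}$ or both in $\{c+1,\dots,n\}$. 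Therefore $\mathcal B$ restricts to a transitive partition of $\overrightarrow K_{\{1,\dots,c\}}$ with $c-1$ blocks and of $\overrightarrow K_{\{c+1,\dots,n\}}$ with $n-c-1$ blocks, which by Theorem~\ref{t:max_transitive_eq_rank_intro} are maximal. Conversely, given $c\in\{1,\dots,n-1\}$ and maximal transitive partitions of $\overrightarrow K_{\{1,\dots,c\}}$ and $\overrightarrow K_{\{c+1,\dots,n\}}$, adjoining the block $\{(i,j):i\le c<j\}$ reassembles a maximal transitive partition of $\overrightarrow K_n$ — the triangle condition for a mixed triple $i\le c<k$ being immediate, since then $(i,k)\in B_0$ and whichever of $j\le c$ or $j>c$ holds puts $(j,k)$ or $(i,j)$ in $B_0$ as well. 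This establishes a bijection between maximal transitive partitions of $\overrightarrow K_n$ and triples $(c,\mathcal B_1,\mathcal B_2)$ of the stated form.

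Writing $M_n$ for the number of maximal transitive partitions of $\overrightarrow K_n$, with $M_1=1$ (the empty partition), the bijection gives $M_n=\sum_{c=1}^{n-1}M_cM_{n-c}$, which is the defining recursion of the Catalan numbers, whence $M_n=C_{n-1}$. The main obstacle is the decomposition lemma: the triangle condition is one-directional — it only constrains the block of the \emph{long} edge of a triple — so to show that every crossing edge $(i,j)$ with $i\le c<j$ lands in $B_0$ (and nowhere else) one must route all arguments through triangles containing the extreme vertices $1$ and $n$, and the two minimal-length arguments are the technical engine that lets this go through.
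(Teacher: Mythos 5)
Your proof is correct; I checked the consecutive-edge lemma, the range constraint $a\le l<b$ for edges of the block $c(l)$, and the crossing-block decomposition, and all the steps go through. The core of your argument is the same as the paper's: the block of the longest edge $(1,n)$ is a complete bipartite ``crossing'' block $\{(i,j): i\le c<j\}$, the two sides carry independent maximal partitions, and this yields the Catalan recursion $M_n=\sum_{c}M_cM_{n-c}$. Where you differ is the framework. The paper proves Theorem~\ref{thm:Catalan} by combining Lemma~\ref{lem:complete-Gallai-transitive} (maximal transitive colorings of $\overrightarrow K_n$ biject with maximal Gallai colorings of $K_n$ for which the path $1,2,\ldots,n$ is rainbow) with Lemma~\ref{t.number_partitions_per_hamiltonian_path}, whose proof runs the recursion in the undirected Gallai setting using Lemma~\ref{t.limited_color}. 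You instead stay entirely in the directed setting: your observation that \emph{every} block of \emph{any} transitive partition contains a consecutive edge replaces the rainbow-path bookkeeping (and incidentally re-proves the bound $t(\overrightarrow K_n)\le n-1$ without invoking the matroid rank theorem), your range constraint is the transitive analogue of Lemma~\ref{t.limited_color}(a), and your decomposition lemma is the content of Corollary~\ref{cor:bipartite_trans}, derived directly from the triangle criterion of Lemma~\ref{obs:trans} rather than transported from the Gallai side. What the paper's route buys is reuse: Lemmas~\ref{t.limited_color} and~\ref{t.number_partitions_per_hamiltonian_path} are also needed for the $(2n-3)!!$ count of Gallai partitions, so the transitive count comes almost for free. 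What your route buys is a self-contained and arguably cleaner argument for the directed case, since the distinguished path $(1,2),\ldots,(n-1,n)$ is canonically rainbow there and no existence lemma for rainbow hamiltonian paths is needed.
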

	
	For a $q$-analogue see Proposition~\ref{prop:q-analog}.

	
	\medskip
	
	We further consider quasisymmetric generating functions (i.e., refined counts with respect to a certain set-valued function), and prove that they are symmetric and Schur-positive for any number of colors. 
	
	\smallskip
	
	A symmetric function is called Schur-positive if all the coefficients in its expansion in the Schur basis
	are nonnegative (or polynomials with nonnegative coefficients).
	Deciding the Schur-positivity of a given symmetric function is equivalent, via the characteristic map, to showing
	that a given class function is actually a character, and is a frequently encountered problem in contemporary
	algebraic combinatorics; see, e.g.,~\cite[Ch. 3]{Stanley_problems}.
	
	Recall the 
	fundamental quasisymmetric function indexed by a subset $J\subseteq [n-1]$:
	\[
	\F_J(\bf x) :=
	\sum_{\substack{i_1 \le i_2 \le \ldots \le i_n \\
			{i_j<i_{j+1}\ \text{ if }\ j\in J}}} 
	x_{i_1} x_{i_2} \cdots x_{i_n}.
	\]
	For a set $A$ of combinatorial objects, equipped with a map $\Des: A \to 2^{[n-1]}$, let
	\[
	\Q(A):=\sum\limits_{a\in A} \F_{\Des(a)}.
	\]
	The quasisymmetric function $\Q(A)$ was introduced by Gessel in~\cite{Gessel}. 
	Gessel was motivated by a well-known conjecture of Stanley~\cite[III, Ch. 21]{Stanley_Memoir}, 
	which he reformulates as follows:
	if $A$ is the set of linear extensions of a labeled poset $P$, then $\Q(A)$ is symmetric if and only if $P$ is isomorphic to the
	poset determined by a skew semistandard Young tableau.
	The following problem was posed by Gessel and Reutenauer~\cite{GR93} in the context of permutation sets.
	
	\begin{problem}
		For which pairs $(A,\Des)$ is $\Q(A)$ symmetric and Schur-positive?
	\end{problem}
	
	
	
	\begin{definition} 
		The {\em descent set} of a Gallai (respectively, transitive) $k$-partition $p$ 
		of the complete graph $K_n$ (respectively, the transitive tournament $\overrightarrow K_n$) on the set of vertices $\{1, \ldots, n\}$ is 
		\[
		\Des(p) 
		:= \{ i \,:\, \text{the edge } (i,i+1) \text{ forms a singleton block in } p\}.
		\]
	\end{definition}
	
	\begin{example}
		Figure~\ref{fig:Des1} shows the descent sets of two Gallai partitions of $K_4$, where the edges of distinct blocks are colored by distinct colors. 
		Note that in the paritition on the right, the edge $(1,3)$ forms a singleton block, but is not a descent since its endpoints do not have consecutive labels.
		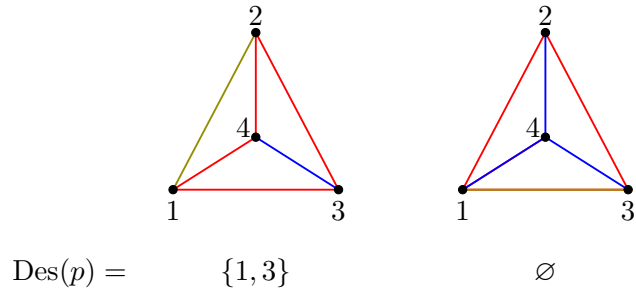
\begin{figure}[htb]
			\begin{center}
				\begin{tikzpicture}[scale=0.55]
				\draw[line width=0.25mm, red]			(2,1.27)--(0,0)--(4,0)--(2,3.8)--(2,1.27);
				\draw[line width=0.25mm, blue] (4,0)--(2,1.27);
				\draw[line width=0.25mm, olive] (0,0)--(2,3.8); 
				
				\draw[line width=0.25mm, red]			(9,1.27)--(7,0)--(11,0)--(9,3.8);
				\draw[line width=0.25mm, blue] (7,0)--(9,1.27)--(11,0);
				\draw [line width=0.25mm, blue](9,1.27)--(9,3.8); 
				\draw[line width=0.25mm, red] (9,3.8)--(7,0); 
				\draw[line width=0.25mm, olive] (7,0)--(11,0);
				
				\draw[fill] (0,0) circle (.1); 	\draw[fill] (4,0) circle (.1); 	\draw[fill] (2,3.8) circle (.1); \draw[fill] (2,1.27) circle (.1);
				\draw (0,-0.5) node {$1$}; 	
				\draw (4,-0.5) node {$3$}; 	
				\draw (2, 4.2) node {$2$}; 
				\draw (1.7, 1.5) node {$4$};
				
				\draw[fill] (7,0) circle (.1); 	\draw[fill] (11,0) circle (.1); 	\draw[fill] (9,3.8) circle (.1); \draw[fill] (9,1.27) circle (.1);
				\draw (7,-0.5) node {$1$}; 	
				\draw (11,-0.5) node {$3$}; 	\draw (9, 4.2) node {$2$}; 
				\draw (8.7, 1.5) node {$4$};
				
				\draw (-2.5,-2) node {$\Des(p)=$}; 	
				\draw (2,-2) node {$\{1,3\}$}; 
				\draw (9,-2) node {$\varnothing$};	
				\end{tikzpicture}
			\end{center}
			\caption{Descent sets of Gallai partitions}\label{fig:Des1}
		\end{figure}
	\end{example}
	
	
	Denote  the set of Gallai $k$-partitions of $K_n$ by $G_{n,k}$ and the set of transitive $k$-partitions of $\overrightarrow K_n$ by $T_{n,k}$. We prove the following.
	
	\begin{theorem}\label{m1g}
		For every $n>k\ge 1$, 
		the quasisymmetric functions 
		\[	
		\Q(G_{n,k}):=\sum\limits_{p\in G_{n,k}} \F_{\Des(p)}
		\]
		and	
		\[
		\Q(T_{n,k}):=\sum\limits_{p\in T_{n,k}} \F_{\Des(p)}
		\]
		are symmetric and Schur-positive. 
	\end{theorem}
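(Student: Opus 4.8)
The plan is to use the standard route to symmetry-plus-Schur-positivity of a quasisymmetric function: since $s_\lambda=\sum_{T\in\SYT(\lambda)}\F_{\Des(T)}$, it suffices to produce, for each of $G_{n,k}$ and $T_{n,k}$, a bijection onto a multiset union $\bigsqcup_{\lambda\vdash n} c_\lambda\cdot\SYT(\lambda)$ (with $c_\lambda\in\ZZ_{\ge 0}$) that preserves the descent set, or equivalently to exhibit nonnegative integers $c_\lambda$ with
\[
\#\{p:\Des(p)=J\}\ =\ \sum_{\lambda\vdash n} c_\lambda\cdot\#\{T\in\SYT(\lambda):\Des(T)=J\}\qquad(\forall\, J\subseteq[n-1]).
\]
Rather than exhibiting tableaux explicitly, I would arrange that $\Q(G_{n,k})$ and $\Q(T_{n,k})$ are built, via a recursion, out of pieces already known to be symmetric and Schur-positive, exploiting the two closure properties of this class: closure under nonnegative integer linear combinations, and closure under multiplication (by the Littlewood--Richardson rule). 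The multiplicative input is Gessel's identity $\F_S\cdot\F_T=\sum\F_{D}$, the sum running over shuffles, so that a ``shuffle of descent classes'' has quasisymmetric generating function equal to the product of the factors.

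The structural engine for $G_{n,k}$ is Gallai's modular/substitution decomposition theorem, which describes every Gallai coloring of $K_n$ recursively: partition $[n]$ into modules $V_1,\dots,V_m$ with $m\ge 2$, place an arbitrary $2$-coloring on the reduced complete graph $K_m$ using one or two ``new'' colors (a single new color when $m=2$), and recurse inside each $V_\ell$. In the language of edge-partitions this yields a canonical rooted tree with leaves labeled $1,\dots,n$, internal nodes decorated by the reduced partition data, and total block count equal to the sum of the local block counts. The descent set is almost local on this tree: $i\in\Des(p)$ either because $i,i+1$ lie in a common module $V_\ell$ and form a singleton block of the restriction $p|_{V_\ell}$ (an inherited descent), or because $i,i+1$ lie in distinct, necessarily singleton, modules joined in the reduced coloring by a color used only once (a top-level descent determined by the decoration). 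I would run the induction on $n$ along the top split: grouping together all decompositions sharing the combinatorial type of the reduced coloring and the module \emph{sizes}, the sum over the remaining data (the placement of the modules inside $[n]$ and the within-module partitions) should reorganize into a sum, over the reduced colorings, of products of the smaller generating functions $\Q(G_{n_\ell,k_\ell})$ — the placements of the modules accounting for the shuffles in Gessel's identity — with $k$ split as $k=\sum_\ell k_\ell+(\text{one or two})$. The transitive case runs in parallel, with Example~\ref{example:tournament} (the condition $\ve(i,k)\in\{\ve(i,j),\ve(j,k)\}$) replacing Gallai's theorem; here the canonical decomposition is into consecutive \emph{intervals} and the decomposition trees are plane trees with leaves in the order $1<\dots<n$, consistent with the Catalan count of maximal transitive partitions in Theorem~\ref{thm:Catalan1}. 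The base cases are immediate: a single block gives $\F_\varnothing=s_{(n)}$, and the unique Gallai/transitive $1$-partition of $K_2$/$\overrightarrow K_2$ gives $\F_{\{1\}}=s_{(1,1)}$.

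I expect the main obstacle to be verifying that the global descent statistic genuinely factors along the canonical decomposition compatibly with Gessel's shuffle identity. The delicate points are: (i) modules in Gallai's decomposition are not intervals of $[n]$, so the inherited descents of $p|_{V_\ell}$ occupy a scattered subset of $[n-1]$, and one must show that summing over the placements of the modules turns this scattering into exactly the shuffle sum and contributes precisely the right top-level descents; (ii) the reduced $2$-coloring contributes one or two new colors, and its effect on both $k$ and on $\Des(p)$ must be tracked uniformly across the recursion; (iii) one needs the transitive analogue of Gallai's structure theorem — an interval decomposition theorem for transitive colorings of $\overrightarrow K_n$ — which should follow from Example~\ref{example:tournament} but requires a proof. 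A cleaner alternative, which I would pursue in parallel, is to bypass the explicit recursion and instead verify directly that $G_{n,k}$ and $T_{n,k}$, with the given descent map, are \emph{fine sets} — for instance by producing a descent-preserving bijection onto a union of Knuth (plactic) classes in $\symm_n$ — from which symmetry and Schur-positivity follow at once.
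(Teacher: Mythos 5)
The central mechanism of your plan --- reorganizing the sum over module placements into Gessel's shuffle identity, so that $\Q$ factors as a product of the $\Q$'s of the pieces --- fails, and you can see this already for $n=3$, $k=2$. The three Gallai $2$-partitions of $K_3$ all have modular decomposition ``one module of size $2$, one of size $1$, reduced $K_2$ monochromatic, inner edge a singleton block,'' and the inner piece contributes $\F_{2,\{1\}}=s_{(1,1)}$ while the singleton module contributes $s_{(1)}$; the product is $s_{(1,1)}\cdot s_{(1)}=\F_{3,\{1\}}+\F_{3,\{2\}}+\F_{3,\{1,2\}}$, whereas the actual sum over the three placements is $\F_{3,\varnothing}+\F_{3,\{1\}}+\F_{3,\{2\}}$. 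The discrepancy is structural, not an accounting slip: a descent of $p|_{V_\ell}$ at a pair of elements that are consecutive in $V_\ell$ but not consecutive in $[n]$ simply \emph{disappears} (the edge is still a singleton block, but its endpoints are no longer of the form $i,i+1$), whereas in a genuine shuffle of $P$-partitions descents are redistributed according to value comparisons and none are lost. Equivalently, the product formula forces non-sparse descent sets such as $\{1,2\}$ to appear with positive multiplicity, but no Gallai or transitive partition has a non-sparse descent set (two adjacent singleton edges $(i,i+1)$, $(i+1,i+2)$ would create a rainbow triangle). So the descent statistic on these partitions is not shuffle-compatible, and the recursion you propose cannot close. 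Your fallback (a descent-preserving bijection to a union of Knuth classes) would indeed suffice, but you have not produced it, so the proposal as written does not constitute a proof.

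The paper's argument exploits exactly the two facts your counterexample turns on. First, $\Des(p)$ is always sparse (Observation~\ref{obs:sparse1}). Second, for any sparse $J$ the number of partitions with $J\subseteq\Des(p)$ depends only on $|J|$: if $i\in\Des(p)$ then $(i,i+1)$ is a singleton block, the no-rainbow-triangle condition forces $(i,j)$ and $(i+1,j)$ into a common block for every other $j$, and contracting the edge $(i,i+1)$ gives a bijection with Gallai (resp.\ transitive) $(k-1)$-partitions of $K_{n-1}$ (resp.\ $\overrightarrow{K}_{n-1}$); iterating gives $|\{p: J\subseteq\Des(p)\}|=g(n-|J|,k-|J|)$, resp.\ $t(n-|J|,k-|J|)$ (Lemma~\ref{lem:Des class cardinality}). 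These two properties are precisely the hypotheses of Marmor's criterion (Lemma~\ref{lem:Marmor1}), which then yields symmetry and Schur-positivity with no decomposition theorem needed. If you want to salvage a self-contained argument along your lines, the productive direction is not Gallai's modular decomposition but this contraction recursion: it is the analogue, for sparse descent sets, of the inclusion--exclusion computation of $\beta$-invariants, and it is what feeds the criterion.
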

	
	
	
	
	For maximal transitive and Gallai partitions we have the following explicit descriptions. 
	Here $\ch$ is the Frobenius characteristic map from the ring of class functions on symmetric groups to the ring of symmetric functions; for a definition see Section~\ref{sec:Schur2}.  
	
	\begin{theorem}\label{m3d}
		For every $n> 1$, 
		\[
		\Q(T_{n,n-1}) 
		= \ch \left( \chi^{(n-1,n-1)} \downarrow_{\symm_n}^{\symm_{2n-2}} \right),
		\]
		where $\chi^{(n-1,n-1)}$ is the irreducible $\symm_{2n-2}$-character indexed by $(n-1,n-1)$.  
	\end{theorem}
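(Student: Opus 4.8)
The plan is to compute the two sides of the asserted identity — each a symmetric function of degree $n$ — and to match them. I would introduce the linear functional $\varepsilon\colon\Lambda\to\ZZ$ determined by $\varepsilon(s_\lambda)=\#\SYT(\lambda)$; equivalently $\varepsilon(f)$ is the sum of the coefficients of $f$ in the fundamental quasisymmetric basis, so that $\varepsilon(\Q(A))=|A|$ for every finite set $A$ equipped with a descent map into $2^{[n-1]}$. Note $\varepsilon\ge 0$ on Schur-positive functions, with $\varepsilon(f)=0$ only if $f=0$. First I would recall the combinatorial model behind Theorem~\ref{thm:Catalan1}: maximal transitive partitions of $\overrightarrow K_n$ are in bijection with plane binary trees $T$ having $n$ leaves labelled $1,\dots,n$ from left to right, the block attached to an internal node $v$ being $B_v=\{(i,k):v\text{ is the least common ancestor of the leaves }i,k\}$. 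A block $B_v$ is a singleton iff both children of $v$ are leaves, and then those two leaves are consecutive, so the singleton blocks of the form $\{(i,i+1)\}$ correspond exactly to the \emph{cherries} of $T$ (pairs of sibling leaves $i,i+1$), and all singleton blocks arise this way. Consequently
\[
\Q(T_{n,n-1})=\sum_{T}\F_{\operatorname{Cherry}(T)},
\]
the sum over plane binary trees with $n$ leaves, where $\operatorname{Cherry}(T)\subseteq[n-1]$ records the positions of the cherries of $T$.

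\textbf{Specialization to two variables.} Every plane binary tree with $n\ge 2$ leaves has a cherry (take a deepest internal node), so $\operatorname{Cherry}(T)\ne\varnothing$. Since $\F_J(x_1,x_2)=0$ whenever $|J|\ge 2$ and $\F_{\{j\}}(x_1,x_2)=x_1^{\,j}x_2^{\,n-j}$, one gets $\Q(T_{n,n-1})(x_1,x_2)=\sum_{j=1}^{n-1}c_j\,x_1^{\,j}x_2^{\,n-j}$, where $c_j$ counts the trees whose unique cherry sits at position $j$. A tree has exactly one cherry precisely when every internal node has at most one internal child, i.e.\ when $T$ is a caterpillar; a caterpillar on $n$ leaves is specified by choosing, at each of its $n-2$ non-cherry internal nodes, whether the pendant leaf hangs on the left or the right, and its cherry sits at position $j$ exactly when $j-1$ of those pendant leaves lie to its left. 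Hence $c_j=\binom{n-2}{j-1}$ and
\[
\Q(T_{n,n-1})(x_1,x_2)=x_1x_2\,(x_1+x_2)^{n-2}.
\]

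\textbf{The right-hand side.} Iterating the branching rule down the chain $\symm_{2n-2}\supset\symm_{2n-3}\supset\cdots\supset\symm_n$ gives $\chi^{(n-1,n-1)}\downarrow_{\symm_n}^{\symm_{2n-2}}=\bigoplus_{\mu\vdash n}f^{(n-1,n-1)/\mu}\,\chi^{\mu}$, so its Frobenius characteristic is $g_n:=\sum_{\mu}f^{(n-1,n-1)/\mu}s_\mu$ — a Schur-positive symmetric function supported on shapes with at most two rows, with $\varepsilon(g_n)=\sum_\mu f^{(n-1,n-1)/\mu}f^{\mu}=f^{(n-1,n-1)}=C_{n-1}$. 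Using the Jacobi--Trudi identity $s_{(n-1,n-1)}=h_{n-1}^{2}-h_nh_{n-2}$ together with the fact that $\sum_{\mu}f^{\lambda/\mu}s_\mu(\mathbf x)$ equals the coefficient of $y_1\cdots y_{n-2}$ in $s_\lambda(\mathbf x;y_1,\dots,y_{n-2})$, a short computation shows $g_n(x_1,x_2)=x_1x_2(x_1+x_2)^{n-2}$ as well.

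\textbf{Comparison and conclusion.} By Theorem~\ref{m1g} the function $\Q(T_{n,n-1})$ is Schur-positive; I would write $\Q(T_{n,n-1})=P+R$, where $P$ collects the Schur terms on shapes with at most two rows and $R$ the remaining (Schur-positive) part. A symmetric function supported on $\le 2$-row shapes is determined by its image in two variables, so the computations above force $P=g_n$. Applying $\varepsilon$ and using $\varepsilon(\Q(T_{n,n-1}))=|T_{n,n-1}|=C_{n-1}$ (Theorem~\ref{thm:Catalan1}) together with $\varepsilon(g_n)=C_{n-1}$, one gets $\varepsilon(R)=0$, hence $R=0$, and therefore $\Q(T_{n,n-1})=g_n=\ch\!\big(\chi^{(n-1,n-1)}\downarrow_{\symm_n}^{\symm_{2n-2}}\big)$. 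The main obstacle is the combinatorial core: recognizing that only the partitions with a single consecutive singleton block survive the two-variable specialization and that these are exactly the caterpillars (whence the binomial count $\binom{n-2}{j-1}$), and then the bookkeeping that upgrades ``agreement in two variables'' to full equality by playing Schur-positivity (Theorem~\ref{m1g}) against the matching fundamental-basis term counts $C_{n-1}$; the branching-rule identity and the two-variable evaluation of $g_n$ are routine symmetric-function calculations.
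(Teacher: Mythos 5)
Your argument is correct, but it takes a genuinely different route from the paper's. The paper proves the identity by matching the refined statistics directly: combining Lemma~\ref{lem:Des class cardinality} with Theorem~\ref{thm:Catalan} it shows that $|\{p\in T_{n,n-1}: J\subseteq\Des(p)\}|=C_{n-|J|-1}$ for sparse $J$ (and $0$ otherwise), and via the Dyck-path/peak bijection (Lemma~\ref{lem:SYT222}) that $|\{T\in\SYT((n-1,n-1)): J\subseteq\Des(T)\}|$ is given by the very same formula; inclusion--exclusion then forces the two descent distributions, hence the two quasisymmetric functions, to coincide, and the $\SYT$ side is identified with $\ch\bigl(\chi^{(n-1,n-1)}\downarrow_{\symm_n}^{\symm_{2n-2}}\bigr)$ by Gessel's theorem and the branching rule. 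You instead work through the plane-binary-tree model (blocks indexed by internal nodes via least common ancestors, descents equal to cherry positions, which is legitimate given Corollary~\ref{cor:bipartite_trans} and the recursion in Lemma~\ref{t.number_partitions_per_hamiltonian_path}), compute both sides in two variables to be $x_1x_2(x_1+x_2)^{n-2}$ via the caterpillar count $\binom{n-2}{j-1}$, and then bootstrap from two variables to the full identity by playing the Schur-positivity of $\Q(T_{n,n-1})$ (Theorem~\ref{m1g}) against the matching total counts $\varepsilon(\Q(T_{n,n-1}))=|T_{n,n-1}|=C_{n-1}=\varepsilon(g_n)$ to kill the part supported on shapes with three or more rows. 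The trade-off: the paper's proof is more direct, needs no positivity input, and yields the sparse-subset refinement (and hence Theorem~\ref{m1g} for this case) as a byproduct; yours is shorter on the enumerative side but leans essentially on Theorem~\ref{m1g} and on the exact Catalan count as external inputs, and requires the (routine but unwritten) verification that $\sum_{\mu}f^{(n-1,n-1)/\mu}s_\mu(x_1,x_2)=x_1x_2(x_1+x_2)^{n-2}$. Both are complete proofs; I only suggest you spell out the tree bijection (it is implicit, not explicit, in the paper) and the two-variable evaluation of $g_n$ if you write this up.
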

	
	For the undirected case we have the following.
	
	\begin{theorem}\label{m3u}
		For every $n> 1$, 	
		\[
		\Q(G_{n,n-1}) 
		= \ch \left(\left( \sum_{r=0}^{n-1} a_r \chi^{(n-1+r,n-1-r)} \right) \downarrow_{\symm_n}^{\symm_{2n-2}} \right),
		\]
		where $a_{r}$ is the number of perfect matchings of $2r$ points on a line with no short chords.  
	\end{theorem}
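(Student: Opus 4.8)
The plan is to reduce the identity to the combinatorial structure of maximal Gallai partitions, and then to translate that structure into the language of characters of $\symm_{2n-2}$ restricted to $\symm_n$, bootstrapping from Theorem~\ref{m3d}. First I would recall, from Gallai's structure theorem (which also underlies Theorem~\ref{t.number_max_Gallai_partitions1}), that a maximal Gallai partition of $K_n$ is the same datum as a rooted binary tree $\tau$ with $n$ leaves labeled by $[n]$ and with unordered children: the block attached to an internal node $v$ consists of the edges $\{i,j\}$ with $i$ a leaf below one child of $v$ and $j$ a leaf below the other, and such a tree has exactly $n-1$ internal nodes. A block is a singleton exactly when $v$ is a \emph{cherry} (both children are leaves), so
\[
\Des(p)=\{\,i:\ \text{leaves }i\text{ and }i+1\text{ form a cherry of }\tau\,\},\qquad \Q(G_{n,n-1})=\sum_{\tau}\F_{\Des(\tau)}.
\]
Restricting this encoding to the plane binary trees whose leaves read $1,2,\dots,n$ from left to right recovers the maximal transitive partitions of $\overrightarrow K_n$; this is the model behind Theorem~\ref{m3d}, and it furnishes the $r=0$ case below.

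The heart of the proof is to split the set of trees $\tau$ into strata indexed by $r\in\{0,1,\dots,n-1\}$ --- with $r(\tau)=0$ for the transitively realizable plane trees --- so that within the $r$-th stratum the trees are in descent-preserving bijection with $a_r$ disjoint copies of $\SYT\big((n-1+r,\,n-1-r)\big)$, the descent set of a standard tableau of shape $(n-1+r,n-1-r)\vdash 2n-2$ being read inside $[n-1]$. A natural candidate for $r(\tau)$ is: planarize $\tau$ by putting, at each node, the subtree with smaller minimal leaf on the left; read off the resulting permutation of the leaves; and let $2r$ be the number of letters supporting its essential crossings. The \emph{no short chord} condition is meant to encode that a would-be crossing between two consecutive leaves is absorbed into a cherry and produces no genuine non-planarity --- which is why $a_1=0$ --- so that the size-$a_r$ index set of each stratum matches the perfect matchings of $2r$ points on a line having no short chord. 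For $r=0$, the required descent-preserving bijection with $\SYT\big((n-1,n-1)\big)$ is precisely Theorem~\ref{m3d} together with the branching identity
\[
\ch\!\big(\chi^{\lambda}\!\downarrow_{\symm_n}^{\symm_{2n-2}}\big)=\sum_{T\in\SYT(\lambda)}\F_{\Des(T)\cap[n-1]}\qquad(\lambda\vdash 2n-2),
\]
which follows from Gessel's formula $s_\lambda=\sum_{T\in\SYT(\lambda)}\F_{\Des(T)}$ and the fact that peeling the entries $>n$ off a standard tableau of shape $\lambda$ realizes iterated branching.

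To assemble: summing the stratification over $r$ and reading the branching identity backwards yields
\[
\Q(G_{n,n-1})=\sum_{r=0}^{n-1}a_r\sum_{T\in\SYT((n-1+r,n-1-r))}\F_{\Des(T)\cap[n-1]}=\ch\!\Big(\big(\sum_{r=0}^{n-1}a_r\chi^{(n-1+r,n-1-r)}\big)\!\downarrow_{\symm_n}^{\symm_{2n-2}}\Big),
\]
which is the assertion; symmetry and Schur-positivity are thereby reconfirmed, in agreement with Theorem~\ref{m1g}.

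I expect the stratification step --- pinning down the statistic $r(\tau)$ and proving that each stratum contributes exactly $a_r$ copies of a two-row shape with the correct descents --- to be the main obstacle. A cleaner route that avoids the explicit bijection is to compute the Schur expansion directly: since $s_\lambda=\sum_{T\in\SYT(\lambda)}\F_{\Des(T)}$ and the $\F_J$ are linearly independent, the theorem is equivalent to the statement that for every two-row $\nu=(\nu_1,\nu_2)\vdash n$ the coefficient of $s_\nu$ in $\sum_\tau\F_{\Des(\tau)}$ equals $\sum_{r} a_r\,f^{(n-1+r,\,n-1-r)/\nu}$ (with $f^{\mu/\nu}$ the number of standard Young tableaux of the skew shape $\mu/\nu$), and that no $s_\nu$ with $\nu$ of length $\ge 3$ occurs; this turns the problem into counting binary trees by cherry pattern, where an inclusion--exclusion over which consecutive pairs $\{i,i+1\}$ are forced to be cherries should reproduce the recursion defining $a_r$. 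Either way, the two-row shapes in the answer are the representation-theoretic shadow of the fact that each Gallai (respectively transitive) reduction step uses at most two colors.
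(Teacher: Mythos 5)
Your setup is sound: the identification of maximal Gallai partitions of $K_n$ with binary total partition trees (unordered children, $n$ labeled leaves), with singleton blocks corresponding to cherries, is exactly the encoding the paper uses in its bijective proof of Lemma~\ref{lem:Gallai-matchings}, and your branching identity is Corollary~\ref{quasi_restriction}. But the heart of your argument --- the stratification of trees by a statistic $r(\tau)$ so that the $r$-th stratum is in descent-preserving bijection with $a_r$ copies of $\SYT\bigl((n-1+r,n-1-r)\bigr)$ --- is only announced, not proved. You offer a ``natural candidate'' for $r(\tau)$ (planarize, read a permutation, count essential crossings) with no verification that the strata have cardinality $a_r\cdot f^{(n-1+r,n-1-r)}$ or that the descent sets match, and you yourself flag this as the main obstacle. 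The fallback route (comparing Schur coefficients via inclusion--exclusion over forced cherries) is likewise left as a ``should reproduce the recursion'' gesture. As written, the proposal does not establish the theorem.

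For comparison, the paper closes this gap without any stratification. It first shows (Lemma~\ref{lem11111}, via the contraction argument of Lemma~\ref{lem:Des class cardinality} and the count $(2n-3)!!$ of Theorem~\ref{t.number_max_Gallai_partitions}) that for every \emph{sparse} $J\subseteq[n-1]$ of size $k$ one has $|\{p\in G_{n,n-1}: J\subseteq\Des(p)\}|=(2n-2k-3)!!$, and zero for non-sparse $J$. Since the same fiber counts hold for perfect matchings of $2n-2$ points graded by short chords (Observation~\ref{obs:match}), the two set-valued distributions coincide, giving $\Q(G_{n,n-1})=\sum_{m\in M_{2n-2}}\F_{n,\Short(m)\cap[n-1]}$. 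The Schur expansion of the matching side is then supplied by an external result (Theorem~\ref{thm:marmor2}, Marmor), whose coefficients are precisely the numbers $a_r$ of matchings with no short chords; combining with Theorem~\ref{G1} and Corollary~\ref{quasi_restriction} yields the restriction formula. If you want to salvage your approach, the cleanest repair is to prove your inclusion--exclusion claim in the form of the sparse-fiber count above; once you know the fibers over sparse sets depend only on $|J|$ and equal $(2n-2|J|-3)!!$, you still need either Marmor's theorem or an independent derivation of the two-row Schur expansion to finish.
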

	
	\begin{remark}
		For the numbers $a_r$, see~\cite[A000806]{Sloane},~\cite{Marmor} and references therein. 
	\end{remark}
	
	It follows that the distribution of singleton blocks of edges of the type $\{(i,i+1)\}$ on maximal transitive edge partitions of the transitive tournament $\overrightarrow K_n$ 
	is equal to the distribution of the (standard) descent set on indecomposable $321$-avoiding permutations in the symmetric group $\symm_n$  (Theorem~\ref{thm:321} below).

	\medskip
	
	The rest of the paper is organized as follows. Gallai and transitive colorings of matroids and oriented matroids are studied in Section~\ref{sec:matroids}. This includes a tight upper bound on the maximal number of colors, 
	polynomiality, and several 
	interpretations of transitive 2-colorings. 
	In Section~\ref{sec:maximal} we count maximal colorings of directed and undirected complete graphs. 
	In Section~\ref{sec:Schur} we equip transitive and Gallai partitions of complete graphs with a natural descent map, determined by singleton blocks.    
	The resulting quasisymmetric functions 
	are shown to be symmetric and Schur-positive. 
	Section~\ref{sec:final} concludes the paper with a brief discussion of 
	maximal transitive partitions of Coxeter root systems and related algebras.

	\section{Gallai and transitive colorings of matroids}\label{sec:matroids}
	
	Let $M$ be a matroid on a ground set $E$ and let $C$ be a set of colors.
	Let $\EEE_M(C)$ be the set of Gallai colorings of $M$ with colors from $C$.   
	Consider first what happens for very short circuits.
	
	\begin{remark}\label{rem:must_be_loopless_intro}
		\begin{itemize}
			\item[(a)] If $\{e\}$ is a circuit of size $1$, then $e$ is called a {\em loop}. By Definition~\ref{def:Gallai}, in a Gallai coloring a loop can use only $0$ colors! This means that $\EEE_M(C) = \varnothing$ unless $M$ is loopless. We shall therefore always assume that the matroid $M$ is loopless. 
			
			\item[(b)] If $\{e_1, e_2\}$ is a circuit of size $2$, then the elements $e_1$ and $e_2$ are called {\em parallel}. In a Gallai 
			coloring of $M$, parallel elements always have the same color.    
		\end{itemize}
	\end{remark}
	
	\subsection{Maximal number of colors}
	
	By Remark~\ref{rem:must_be_loopless_intro}.1, a matroid has at least one Gallai coloring if and only if it is loopless.
	
	\begin{defn}\label{d.Gallai_max_num_colors}
		For a loopless matroid $M$, let $g(M)$
		be the maximal number of colors in a Gallai coloring of $M$.
	\end{defn}
	
	Clearly, $g(M) = 0$ if and only if $E = \varnothing$.
	
	\begin{theorem}\label{t.Gallai_max_eq_rank}
		For any loopless matroid $M$,
		\[
		g(M) = \rank(M). 
		\]
	\end{theorem}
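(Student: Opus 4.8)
The plan is to prove the two inequalities $g(M) \le \rank(M)$ and $g(M) \ge \rank(M)$ separately. For the upper bound, let $\ve\colon E \to [k]$ be a Gallai $k$-coloring using all $k$ colors, and pick for each color $c \in [k]$ an element $e_c \in E$ with $\ve(e_c) = c$. I claim the set $F := \{e_1, \dots, e_k\}$ is independent in $M$; since $|F| = k$ this forces $k \le \rank(M)$. Indeed, if $F$ were dependent it would contain a circuit $X \subseteq F$, and by construction all elements of $F$ have pairwise distinct colors, so $|\{\ve(e) : e \in X\}| = |X|$, contradicting the Gallai condition of Definition~\ref{def:Gallai}. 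This handles $g(M) \le \rank(M)$.

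For the lower bound I would exhibit a Gallai coloring with exactly $r := \rank(M)$ colors. Fix a basis $B = \{b_1, \dots, b_r\}$ of $M$. The natural candidate is to color each $b_i$ with color $i$, and to color every other element $e \in E \setminus B$ by a color drawn from its \emph{fundamental circuit} $C(e, B)$ — the unique circuit contained in $B \cup \{e\}$ — say $\ve(e) := \min\{i : b_i \in C(e,B)\}$, and $\ve(b_i) := i$. To verify this is a Gallai coloring, take an arbitrary circuit $X$ of $M$; I must show its elements do not receive $|X|$ distinct colors. Since $X$ is dependent and $B$ is independent, $X \not\subseteq B$, so $X$ contains at least one element of $E \setminus B$. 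The key point is a standard circuit-elimination / basis-exchange argument: if $X$ contained exactly one element $e \notin B$, then $X \setminus \{e\} \subseteq B$ would together with $e$ force $X = C(e,B)$; if $X$ contains two or more elements outside $B$, I need to argue two of them share a color, or that an element outside $B$ shares a color with one of the $b_i \in X$. The cleanest route is probably to show directly that the coloring has the property that $\ve(e)$ always equals $\ve(f)$ for some $f \ne e$ in any circuit through $e$: concretely, for $e \in X \setminus B$ with $\ve(e) = i$, we have $b_i \in C(e,B)$, and one shows via circuit elimination between $X$ and $C(e,B)$ (eliminating $e$) that $b_i$ or some other repeated color must already appear among $X \setminus \{e\}$. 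Alternatively — and this may be simpler — one proves the lower bound by induction on $|E|$: if $E = B$ there is nothing to prove (the identity coloring works since $B$ has no circuits), and otherwise contract or delete a non-basis element and lift a Gallai $r$-coloring of the smaller matroid.

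The main obstacle I anticipate is the verification that the proposed coloring in the lower bound genuinely avoids rainbow circuits for \emph{all} circuits $X$, not merely fundamental ones — circuits with several elements outside $B$ require a careful circuit-elimination argument to locate a repeated color, and getting the bookkeeping right (which element to eliminate, and why the resulting smaller dependent set still witnesses a color collision) is the delicate part. If the direct argument proves awkward, the inductive approach using matroid deletion/contraction, combined with the observation from Remark~\ref{rem:must_be_loopless_intro}(b) that parallel elements are forced to share a color, should make the lower bound go through while keeping the combinatorial overhead minimal. Everything else — the upper bound and the reduction to the loopless case — is routine given Definition~\ref{def:Gallai}.
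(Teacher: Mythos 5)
Your upper bound is correct and is exactly the paper's argument: a transversal of the colors is rainbow, hence circuit-free, hence independent. Your lower-bound \emph{construction} also coincides with the paper's: fix a basis $B$, color it bijectively by $[r]$, and give each $e \in E \setminus B$ the smallest color occurring in $X_e \setminus \{e\}$, where $X_e$ is the fundamental circuit of $e$ with respect to $B$.

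However, the verification that this coloring has no rainbow circuit --- which you correctly flag as the delicate part --- is genuinely missing from your proposal, and the hints you give point in a direction that does not quite work. Eliminating $e$ between $X$ and $C(e,B)$ destroys the very element whose color you are trying to track, and the alternative deletion/contraction induction is not obviously sound either (contracting a non-basis element changes the circuit structure, and it is not clear how to lift a Gallai $r$-coloring back while controlling the color of the removed element on all circuits through it). The paper closes this gap with a minimal-counterexample argument whose three key ingredients you would need to supply: (i) replace ``rainbow'' by the weaker property that the circuit has a \emph{unique} element of smallest color --- this is what survives the elimination step; (ii) among circuits with that property, minimize $|X \setminus B|$; (iii) if $|X\setminus B|=1$ the definition of $\ve$ gives an immediate contradiction, and if $|X \setminus B| \ge 2$, pick $f \in X \setminus B$ whose color is \emph{not} minimal in $X$, note that the unique minimal-color element $e$ of $X$ cannot lie in $X_f$ (every element of $X_f$ has color at least $\ve(f)$), and apply the strong elimination property to $X$ and $X_f$ --- eliminating $f$, not $e$ --- to obtain a circuit $X'$ containing $e$, still with a unique minimum color, but with $|X' \setminus B| < |X \setminus B|$, a contradiction. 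Without some version of this (or a genuinely completed alternative), the lower bound is only a plausible plan, not a proof.
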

	
	For the proof we need the following three properties of matroids. The first two are standard; see, e.g., \cite{Oxley}. The third property is a strengthening of a standard fact, and we therefore provide a proof.
	
	\begin{lemma}\label{t:unique_circuit}
		Let $B$ be a basis in a matroid.
		For each $e \in E \setminus B$ there exists a unique circuit containing $e$ and contained in $B \cup \{e\}$.    
	\end{lemma}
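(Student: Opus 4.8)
The plan is to establish existence and uniqueness separately, using only the standard circuit and basis axioms. For existence, I would argue that $B \cup \{e\}$ is dependent (since $B$ is a maximal independent set, adding $e \in E \setminus B$ destroys independence), so it contains a circuit $X$; moreover, since $B$ itself is independent, any circuit $X \subseteq B \cup \{e\}$ must actually contain $e$, because otherwise $X \subseteq B$ would be a dependent subset of an independent set. This gives the desired circuit.

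For uniqueness, I would suppose $X_1$ and $X_2$ are two distinct circuits, each containing $e$ and each contained in $B \cup \{e\}$. Then apply the circuit elimination axiom: since $e \in X_1 \cap X_2$ and $X_1 \ne X_2$, there is a circuit $X_3 \subseteq (X_1 \cup X_2) \setminus \{e\}$. But $(X_1 \cup X_2) \setminus \{e\} \subseteq B$, so $X_3$ is a dependent subset of the independent set $B$, a contradiction. Hence the circuit is unique.

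This is the standard ``fundamental circuit'' fact, so I expect no real obstacle; the only care needed is to invoke the circuit elimination axiom in its correct form and to note at each step that a subset of a basis is independent and therefore contains no circuit. Since the lemma statement here is the classical one (the ``strengthening'' the authors mention presumably refers to Lemma~\ref{t:unique_circuit} being reused in a sharper companion lemma later, not to this statement itself), the proof above should suffice essentially verbatim.
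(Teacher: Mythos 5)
Your proof is correct: both the existence argument (a basis is a maximal independent set, so $B\cup\{e\}$ is dependent and any circuit inside it must contain $e$) and the uniqueness argument via circuit elimination applied at $e$ are the standard ones. The paper itself gives no proof of this lemma, citing it as a standard fact from Oxley, so your argument is exactly what is implicitly relied upon.
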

	
	\begin{lemma}\label{t:rank_and_circuit}
		For any $S \subseteq E$ and $e \in S$, 
		$\rank(S) = \rank(S \setminus \{e\})$ if and only if $e$ belongs to a circuit contained in $S$.    
	\end{lemma}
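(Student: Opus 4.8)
The plan is to prove the two implications of the biconditional separately; both are short, and the only nontrivial input beyond the rank and closure axioms is Lemma~\ref{t:unique_circuit}.

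First I would treat the direction ``if $e$ lies in a circuit $C \subseteq S$, then $\rank(S) = \rank(S \setminus \{e\})$''. The key observation is that $e \in \operatorname{cl}(C \setminus \{e\})$: since $C$ is a circuit, $C \setminus \{e\}$ is independent while $C$ is dependent, so $\rank(C \setminus \{e\}) = \rank(C)$, i.e.\ adjoining $e$ to $C \setminus \{e\}$ does not raise the rank. By monotonicity of the closure operator and $C \setminus \{e\} \subseteq S \setminus \{e\}$ we obtain $e \in \operatorname{cl}(C \setminus \{e\}) \subseteq \operatorname{cl}(S \setminus \{e\})$, whence $\rank(S) = \rank\bigl((S \setminus \{e\}) \cup \{e\}\bigr) = \rank(S \setminus \{e\})$.

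For the converse, assume $\rank(S) = \rank(S \setminus \{e\})$. I would pick a maximal independent subset $B'$ of $S \setminus \{e\}$, so that $|B'| = \rank(S \setminus \{e\}) = \rank(S)$; hence $B'$ is also a maximal independent subset of $S$, i.e.\ a basis of the restricted matroid $M|_S$. Since $e \in S$ but $e \notin B'$, Lemma~\ref{t:unique_circuit} applied inside $M|_S$ yields a circuit $C$ of $M|_S$ with $e \in C \subseteq B' \cup \{e\} \subseteq S$. Because the circuits of $M|_S$ are precisely the circuits of $M$ contained in $S$, this $C$ is the required circuit.

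The statement is standard, so I do not anticipate a genuine obstacle. The two points that need care are: upgrading $B'$ from a basis of $M|_{S \setminus \{e\}}$ to a basis of $M|_S$, which is exactly where the rank hypothesis is used; and invoking Lemma~\ref{t:unique_circuit} for the restriction $M|_S$ rather than for $M$ itself, which is what guarantees that the circuit produced lies inside $S$.
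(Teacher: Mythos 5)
Your proof is correct. Note that the paper does not actually prove Lemma~\ref{t:rank_and_circuit}: it is listed among the "standard" matroid facts and cited to Oxley's book, so there is no in-paper argument to compare against. Your two-direction argument (closure monotonicity for the "if" direction; upgrading a basis of $M|_{S\setminus\{e\}}$ to a basis of $M|_S$ and invoking the fundamental-circuit lemma, Lemma~\ref{t:unique_circuit}, inside the restriction for the "only if" direction) is exactly the standard textbook proof, and both of the delicate points you flag are handled correctly.
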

	
	\begin{lemma}\label{t:circuit_intersection} {\rm (Strong elimination property)}
		Let $X_1$ and $X_2$ be two distinct circuits in a matroid, let $e \in X_1 \setminus X_2$ and let $f \in X_1 \cap X_2$. 
		Then $(X_1 \cup X_2) \setminus \{f\}$ contains a circuit which contains $e$.  
	\end{lemma}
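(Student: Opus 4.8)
The plan is to bypass any explicit construction of a circuit and instead reduce the claim to a single rank identity via the criterion of Lemma~\ref{t:rank_and_circuit}. Write $Z := X_1 \cup X_2$ and $Y := Z \setminus \{f\}$. Since $e \in X_1 \setminus X_2$ and $f \in X_1 \cap X_2$, we have $e \ne f$, hence $e \in Y$; so Lemma~\ref{t:rank_and_circuit}, applied to the set $S = Y$, tells us that the statement ``$Y$ contains a circuit through $e$'' is \emph{equivalent} to the rank equality
\[
\rank(Y) = \rank(Y \setminus \{e\}) = \rank\bigl( Z \setminus \{e,f\} \bigr).
\]
So it is enough to prove this one identity.

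To prove it, I would apply Lemma~\ref{t:rank_and_circuit} twice in the direction ``a circuit through $x$ contained in $S$ forces $\rank(S) = \rank(S \setminus \{x\})$'', and then invoke monotonicity of the rank function. First, $X_1$ is a circuit contained in $Z$ with $e \in X_1$, so $\rank(Z) = \rank(Z \setminus \{e\})$. Second --- and here is where the hypothesis $e \in X_1 \setminus X_2$, rather than just $e \in X_1$, is used --- the circuit $X_2$ avoids $e$ and contains $f$, so $X_2 \subseteq Z \setminus \{e\}$ is a circuit through $f$, giving $\rank(Z \setminus \{e\}) = \rank\bigl( Z \setminus \{e,f\} \bigr)$. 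Chaining these, $\rank(Z) = \rank(Z \setminus \{e\}) = \rank(Z \setminus \{e,f\})$. Since $Z \setminus \{e,f\} \subseteq Y \subseteq Z$, monotonicity squeezes $\rank(Y)$ between two equal quantities, so $\rank(Y) = \rank(Z \setminus \{e,f\})$ --- the identity we needed. Feeding this back into Lemma~\ref{t:rank_and_circuit} (now in the converse direction) yields a circuit contained in $Y = (X_1 \cup X_2) \setminus \{f\}$ that contains $e$, as desired.

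The genuinely essential step is the \emph{reformulation} in the first paragraph; after that the argument is a two-line squeeze, so there is no real obstacle. The classical route --- deriving strong circuit elimination from weak circuit elimination via a minimal-counterexample argument --- is more delicate, since one must track which prescribed element survives the elimination; the rank approach avoids this entirely, at the cost of having Lemma~\ref{t:rank_and_circuit} on hand (which it is). The only points that need a moment's checking are $e \ne f$ and $X_2 \subseteq Z \setminus \{e\}$, both immediate from $e \in X_1 \setminus X_2$ and $f \in X_1 \cap X_2$; note that these hypotheses already force $X_1 \ne X_2$, so the separately stated distinctness assumption is in fact redundant.
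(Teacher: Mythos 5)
Your proposal is correct and follows essentially the same route as the paper: both proofs reduce the claim to the rank identity $\rank((X_1\cup X_2)\setminus\{f\})=\rank((X_1\cup X_2)\setminus\{e,f\})$ via Lemma~\ref{t:rank_and_circuit}, using the circuits $X_1$ and $X_2$ to establish $\rank(Z)=\rank(Z\setminus\{e\})=\rank(Z\setminus\{e,f\})$. The only (cosmetic) difference is that the paper gets $\rank(Z\setminus\{f\})$ by a third direct application of Lemma~\ref{t:rank_and_circuit}, whereas you obtain it by a monotonicity squeeze; your side remark that the distinctness hypothesis is redundant is also correct.
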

	
	\begin{proof}[Proof of Lemma~\ref{t:circuit_intersection}]
		Consider the ranks of the sets 
		$X_1 \cup X_2$,
		$(X_1 \cup X_2) \setminus \{e\}$,
		$(X_1 \cup X_2) \setminus \{f\}$, and
		$(X_1 \cup X_2) \setminus \{e,f\}$.
		The element $e$ belongs to a circuit ($X_1$) contained in $X_1 \cup X_2$. Therefore, by Lemma~\ref{t:rank_and_circuit},
		\[
		\rank(X_1 \cup X_2) = \rank((X_1 \cup X_2) \setminus \{e\}).
		\]
		Similarly, the element $f$ belongs to a circuit ($X_1$ or $X_2$) contained in $X_1 \cup X_2$, and therefore
		\[
		\rank(X_1 \cup X_2) =
		\rank((X_1 \cup X_2) \setminus \{f\}).
		\]
		Finally, the circuit $X_2$ contains $f$ and is contained in $(X_1 \cup X_2) \setminus \{e\}$, so that
		\[
		\rank((X_1 \cup X_2) \setminus \{e\}) = \rank((X_1 \cup X_2) \setminus \{e,f\}).
		\]
		It follows that all four sets have the same rank, and in particular
		\[
		\rank((X_1 \cup X_2) \setminus \{f\}) = \rank((X_1 \cup X_2) \setminus \{e,f\}).
		\]
		By Lemma~\ref{t:rank_and_circuit}, this implies that $e$ belongs to a circuit contained in $(X_1 \cup X_2) \setminus \{f\}$, as claimed.
	\end{proof}
	
	\begin{proof}[Proof of Theorem~\ref{t.Gallai_max_eq_rank}]
		Let $M$ be a loopless matroid on a set $E$.
		We can assume that $\rank(M) \ge 1$, otherwise necessarily $E = \varnothing$, since $M$ is loopless, in which case clearly $g(M) = 0 = \rank(M)$. 
		By Observation~\ref{t:must_be_loopless_intro}, $g(M) \ge 1$.
		
		Let $C$ be a set (of colors) of size $g(M)$, and let $\ve: E \to C$ be a surjective Gallai coloring of $M$. 
		Pick a set $S \subseteq E$ of size $g(M)$ which is ``$\ve$-rainbow'', namely, all its elements are assigned distinct colors by $\ve$.
		Every subset of $S$ is clearly also $\ve$-rainbow and therefore, by Definition~\ref{def:Gallai}, $S$ does not contain a circuit. It follows that $S$ is an independent set in $M$, and in particular $g(M) = |S| \le \rank(M)$.
		
		In order to prove the opposite inequality $g(M) \ge \rank(M)$, we now construct a Gallai coloring of $M$ using exactly $r := \rank(M)$ colors.
		Indeed, let $B \subseteq E$ be a basis of $M$; 
		of course, $|B| = r$.
		Map $B$ bijectively onto the set of colors $C := [r]$. 
		By Lemma~\ref{t:unique_circuit}, for any $e \in E \setminus B$ there exists a unique circuit $\{e\} \subseteq X_e \subseteq B \cup \{e\}$.
		The set of colors is totally ordered: $ 1 < \ldots < r$; assign to $e$ the smallest color of an element of $X_e \setminus \{e\} \subseteq B$,
		noting that this set is nonempty since $M$ is loopless.
		The resulting coloring $\ve: E \to C$ is clearly surjective; we claim that it is also Gallai.
		
		We want to show that no circuit in $M$ is $\ve$-rainbow. Assume, on the contrary, that there is a rainbow circuit in $M$. 
		Let $X$ be a circuit with the following two properties;
		\begin{enumerate}
			\item 
			There is a unique element of $X$ having the smallest color (among the elements of $X$).
			\item 
			$|X \setminus B|$ is minimal, among the circuits having property (1).
		\end{enumerate}
		Note that property (1) is weaker than being rainbow, but it is exactly what we need for the forthcoming argument. In any case, the existence of a rainbow circuit implies the existence of $X$.
		
		Denote $k := |X \setminus B|$. Clearly $k \ne 0$, since the circuit $X$ is not independent. If $k = 1$ and $X \setminus B = \{e\}$, then $e \in E \setminus B$ and $X$ is the unique circuit containing $e$ and contained in $B \cup \{e\}$. By the definition of $\ve$, the color of $e$ is equal to the smallest color of an element of $X \setminus \{e\}$, contradicting property (1). Thus $k \ge 2$.
		
		By property (1), there is a unique element $e \in X$ having the smallest color. It may or may not belong to $B$, but since $k = |X \setminus B| \ge 2$ there is at least one other element $f \in X \setminus B$, and its color is not minimal in $X$. Let $X_f$ be the unique circuit containing $f$ and contained in $B \cup \{f\}$.
		
		Consider the circuits $X$ and $X_f$. Clearly $f \in X \cap X_f$ and $e \in X$. Also, $e \not\in X_f$ since all the elements of $X_f$ have colors larger or equal to the color of $f$, while $e$ has a strictly smaller color. Thus $e \in X \setminus X_f$, and by Lemma~\ref{t:circuit_intersection} there is a circuit $X'$ containing $e$ and contained in $(X \cup X_f) \setminus \{f\}$. This circuit has property (1), with $e$ as the unique element with smallest color; 
		and also $|X' \setminus B| < |X \setminus B|$, since $X_f \setminus \{f\} \subseteq B$ while $f \in X \setminus B$ and $f \not\in X' \setminus B$.
		This contradicts the choice of $X$ and shows that, indeed, no circuit is $\ve$-rainbow.
		Thus $\ve$ is a surjective Gallai coloring, completing the proof.
	\end{proof}
	
	\begin{corollary}\label{t.Gallai_max_graph}
		The maximal number of colors in a Gallai coloring of a graph $G$ on $n$ vertices with $c$ connected components is $n-c$.
	\end{corollary}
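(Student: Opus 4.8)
The plan is to deduce the corollary directly from Theorem~\ref{t.Gallai_max_eq_rank}, applied to the graphic matroid $M = M(G)$ on the edge set $E$ of $G$. First I would note that, as $G$ is a graph without self-loops, no single edge is a circuit of $M(G)$, so $M(G)$ is loopless (consistent with Observation~\ref{t:must_be_loopless_intro}) and Theorem~\ref{t.Gallai_max_eq_rank} is available. By the remark following Definition~\ref{def:Gallai}, Gallai colorings of $M(G)$ are exactly the edge-colorings of $G$ with no rainbow cycle; hence $g(M(G))$ is precisely the maximal number of colors in a Gallai coloring of $G$ in the sense of the corollary.

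Next I would use the classical description of the rank of a graphic matroid: a maximal independent subset of $E$ is the edge set of a spanning forest of $G$, and a spanning forest of a graph on $n$ vertices with $c$ connected components has exactly $n-c$ edges; therefore $\rank(M(G)) = n-c$. (Alternatively, this follows from Lemma~\ref{t:rank_and_circuit} together with the fact that the circuits of $M(G)$ are exactly the edge sets of cycles of $G$.) Combining this with Theorem~\ref{t.Gallai_max_eq_rank} yields
\[
g(M(G)) = \rank(M(G)) = n-c,
\]
which is the assertion.

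There is essentially no obstacle here: the only facts to verify are that $M(G)$ is loopless (immediate from the no-self-loops hypothesis) and that $\rank(M(G)) = n-c$ (the spanning-forest count), both standard. All the substance is contained in Theorem~\ref{t.Gallai_max_eq_rank}; in particular, the coloring realizing $n-c$ colors is the one constructed there, namely color the edges of a fixed spanning forest bijectively and assign to each remaining edge the least color appearing among the forest edges of its fundamental cycle.
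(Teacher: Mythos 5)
Your proposal is correct and is essentially identical to the paper's own proof: both deduce the corollary from Theorem~\ref{t.Gallai_max_eq_rank} applied to the graphic matroid, whose rank is the size $n-c$ of a spanning forest. The extra verifications you include (looplessness, the identification of Gallai colorings of $M(G)$ with rainbow-cycle-free edge-colorings) are implicit in the paper but add no new idea.
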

	
	\begin{proof}
		The rank of the graphic matroid corresponding to such a graph $G$, namely the number of edges in a spanning forest, is $n-c$.
	\end{proof}
	
	The proof 
	of Theorem~\ref{t.Gallai_max_eq_rank} 
	does not extend to oriented matroids. However, the corresponding statement does hold.
	
	Recall that, by Observation~\ref{t:must_be_acyclic_intro}, an oriented matroid has at least one transitive coloring if and only if it is acyclic.
	
	\begin{defn}\label{d.transitive_max_num_colors}
		For an acyclic oriented matroid $M$, let $t(M)$
		be the maximal number of colors in a transitive coloring of $M$.
	\end{defn}
	
	Clearly, $t(M) = 0$ if and only if $E = \varnothing$.
	
	\begin{theorem}\label{t:max_transitive_eq_rank}
		For any acyclic oriented matroid $M$,  
		\[
		t(M) = \rank(M).
		\]
	\end{theorem}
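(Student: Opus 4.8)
The plan is to obtain $t(M)\le\rank(M)$ for free from the Gallai case, and to prove $t(M)\ge\rank(M)$ by adapting the construction used for Theorem~\ref{t.Gallai_max_eq_rank}, now keeping track of orientations. For the upper bound, note that every transitive $k$-coloring $\ve$ of $M$ is in particular a Gallai $k$-coloring of the underlying matroid $\underline M$: given a circuit $\underline X$ of $\underline M$, orient it as a signed circuit $X=(X^+,X^-)$; transitivity produces a color lying in both $\ve(X^+)$ and $\ve(X^-)$, so
\[
\bigl|\{\ve(e):e\in\underline X\}\bigr|=|\ve(X^+)\cup\ve(X^-)|\le|X^+|+|X^-|-1=|\underline X|-1<|\underline X| .
\]
Hence $t(M)\le g(\underline M)=\rank(\underline M)=\rank(M)$ by Theorem~\ref{t.Gallai_max_eq_rank}.

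For the lower bound, put $r:=\rank(M)$. If $r=0$ then $E=\varnothing$ (acyclic matroids are loopless) and $t(M)=0$, so assume $r\ge1$. Fix a basis $B=\{b_1,\dots,b_r\}$ and set $\ve(b_i):=i$. For $e\in E\setminus B$ let $C(e)$ be the fundamental signed circuit of $e$ with respect to $B$, oriented so that $e\in C(e)^+$; then $C(e)^-\subseteq B$, and $C(e)^-\ne\varnothing$, for otherwise $C(e)$ would be a positive circuit, contradicting acyclicity. Define $\ve(e):=\min\{i:\ b_i\in C(e)^-\}$. Then $\ve$ is onto $[r]$, and $\ve(e)\in\ve(C(e)^+)\cap\ve(C(e)^-)$ for every fundamental circuit, so these cause no problem. (One may also want to choose $B$ and its labelling compatibly, so that for every $e\notin B$ the colors on $C(e)^-$ precede those on $C(e)^+\setminus\{e\}$, which reduces the rule to $\ve(e)=\min\ve(\underline{C(e)}\setminus\{e\})$; checking that such a labelling exists is an auxiliary point.)

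To verify transitivity, suppose it fails, and among all \emph{bad} signed circuits — those $X=(X^+,X^-)$ with $\ve(X^+)\cap\ve(X^-)=\varnothing$ — choose one minimizing $|\underline X\setminus B|$, breaking ties by a secondary parameter (e.g.\ the number of elements of $\underline X$ of least color, or the largest color occurring on $\underline X\setminus B$). As in the Gallai proof, $\underline X\not\subseteq B$ (a basis contains no circuit); and if $|\underline X\setminus B|=1$ then by Lemma~\ref{t:unique_circuit} $\underline X$ is a fundamental circuit, which is not bad; so $|\underline X\setminus B|\ge2$. Pick a suitable $f\in\underline X\setminus B$ and apply the signed strong circuit elimination axiom to $X$ and $\pm C(f)$, with the sign chosen so that $f$ occurs with opposite signs in the two, retaining a prescribed conforming element of $\underline X$; this yields a signed circuit $Z$ with $f\notin\underline Z$ and $Z^{\pm}\subseteq\bigl(X^{\pm}\cup(\pm C(f))^{\pm}\bigr)\setminus\{f\}$. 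Since $\underline{C(f)}\setminus\{f\}\subseteq B$, we get $\underline Z\setminus B\subsetneq\underline X\setminus B$, and the goal is to argue that $Z$ is again bad (or is no better under the secondary parameter), contradicting minimality.

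I expect this last step to be the main obstacle, and it is precisely why the Gallai argument does not transfer verbatim. In the unoriented case one can keep the whole fundamental circuit $\underline{C(f)}$ off the least color of $\underline X$, since $\ve(f)$ is by definition an upper bound for all colors on $\underline{C(f)}\setminus\{f\}$; but in the signed setting $\ve(f)=\min\ve(C(f)^-)$ controls only the negative part, while $C(f)^+\setminus\{f\}$ may contain basis elements of arbitrarily small color (there need not exist a basis whose nonnegative span contains every element — already false for four vectors forming a "square cone"). The delicate part is therefore to choose $f$ and the conflicting element to be eliminated — together with the compatible basis/labelling mentioned above — so that $Z$ still carries a witness of non-transitivity on the correct side; with that in place the minimal-counterexample contradiction closes and $\ve$ is a transitive coloring using exactly $r$ colors.
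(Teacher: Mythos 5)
Your upper bound is fine: the observation that a transitive coloring of $M$ is in particular a Gallai coloring of the underlying matroid, combined with Theorem~\ref{t.Gallai_max_eq_rank}, gives $t(M)\le\rank(M)$. (The paper proves this directly --- a rainbow set contains no circuit support, hence is independent --- but your reduction is equivalent and correct.)

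The lower bound, however, is not a proof: you set up the fundamental-circuit construction and the minimal-counterexample scheme, and then explicitly leave open the one step that carries all the content, namely that the circuit $Z$ produced by signed elimination is still bad. Your own diagnosis of why this is hard is accurate --- $\ve(f)=\min\ve(C(f)^-)$ controls only the negative part of the fundamental circuit, so after elimination the colors entering $Z$ from $C(f)^+\setminus\{f\}$ are uncontrolled and can destroy the witness of non-transitivity --- and the paper's authors say exactly this: immediately before Theorem~\ref{t:max_transitive_eq_rank} they remark that the proof of Theorem~\ref{t.Gallai_max_eq_rank} \emph{does not extend} to oriented matroids. The paper's actual construction is entirely different and avoids circuits in favor of cocircuits. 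Since $M$ is acyclic, every element lies in a positive cocircuit (Lemma~\ref{t:positive_cocircuit}); one chooses a positive cocircuit $Y_r$, colors its support with color $r$, deletes it (dropping the rank by one and preserving acyclicity), and recurses, producing a chain $E=E_r\supset E_{r-1}\supset\cdots\supset E_0=\varnothing$ with each layer $E_i\setminus E_{i-1}$ the support of a positive cocircuit $Y_i$ of $M|_{E_i}$, colored $i$. Transitivity is then immediate from circuit--cocircuit orthogonality (Lemma~\ref{t:orthogonality}): for any signed circuit $X$, take the unique $i$ with $\uX\subseteq E_i$ and $\uX\cap\uYi\ne\varnothing$; since $Y_i$ is positive and $X\perp Y_i$, the intersection contains elements of both $X^+$ and $X^-$, all of color $i$. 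If you want to complete your write-up, you should replace your lower-bound argument with this cocircuit-peeling construction (or find an independent way to close the elimination step, which the authors evidently did not see how to do either).
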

	
	For the proof, let us cite the following definition and basic results regarding orthogonality in an oriented matroid.
	
	\begin{defn}\label{d:orthogonality}{\rm \cite[inline definition before Proposition 3.4.1]{Bjorner}}
		Let $M$ be an oriented matroid on a set $E$.
		Two signed sets $X$, $Y$, with supports $\uX, \uY \subseteq E$, are said to be {\em orthogonal}, denoted by $X \perp Y$, if 
		either $\uX \cap \uY = \varnothing$ 
		or the restrictions of $X$ and $Y$ to their intersection are neither equal nor opposite, i.e., there exist $e, f \in \uX \cap \uY$ with signs satisfying $X(e)Y(e) = -X(f)Y(f)$.
	\end{defn}
	
	\begin{lemma}\label{t:orthogonality}{\rm \cite[Theorem 3.4.3]{Bjorner}}
		In an oriented matroid, if $X$ is a circuit and $Y$ is a cocircuit then $X \perp Y$.
	\end{lemma}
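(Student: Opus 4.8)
The plan is to give a complete linear-algebra proof in the representable case — which already covers the real vector configurations and Coxeter root systems of interest here — and then to explain how the abstract statement reduces to a Grassmann–Plücker sign identity. Suppose first that $M$ is represented by a family of vectors $(v_e)_{e\in E}$ in a vector space $V$ over an ordered field $\FF$. Every signed circuit $X$ arises from a nonzero $\lambda \in \FF^E$ with $\sum_e \lambda_e v_e = 0$ and inclusion-minimal support, via $X^+=\{e:\lambda_e>0\}$ and $X^-=\{e:\lambda_e<0\}$, while every signed cocircuit $Y$ arises from a functional $\phi\in V^*$ whose support is inclusion-minimal among the nonempty ones, via $Y^+=\{e:\phi(v_e)>0\}$ and $Y^-=\{e:\phi(v_e)<0\}$. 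Applying $\phi$ to the dependence defining $X$ yields
\[
\sum_{e\in E}\lambda_e\,\phi(v_e)=\phi\Bigl(\sum_{e\in E}\lambda_e v_e\Bigr)=0 .
\]
Since the $e$-th summand is nonzero exactly when $e\in\uX\cap\uY$, this says $\sum_{e\in\uX\cap\uY}\lambda_e\phi(v_e)=0$. If $\uX\cap\uY=\varnothing$ then $X\perp Y$ by definition; otherwise a finite sum of nonzero elements of $\FF$ can vanish only if they are not all of one sign, so there are $e,f\in\uX\cap\uY$ with $\lambda_e\phi(v_e)$ and $\lambda_f\phi(v_f)$ of opposite sign. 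As $\sign(\lambda_e\phi(v_e))=X(e)Y(e)$, this is exactly the condition $X(e)Y(e)=-X(f)Y(f)$ of Definition~\ref{d:orthogonality}.

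For an abstract oriented matroid I would argue from its chirotope $\chi$ of rank $r$, in which a cocircuit is recorded by an independent $(r-1)$-set $A$ through $Y(f)=\chi(A,f)$ and the signs of a circuit $X$ on a minimal dependent support are the corresponding alternating subdeterminants of $\chi$. Suppose, for contradiction, that $X\not\perp Y$; replacing $Y$ by $-Y$ if needed, we may assume $X$ and $Y$ agree in sign on $\uX\cap\uY\neq\varnothing$. In the underlying matroid a circuit and a cocircuit never meet in exactly one element, so $|\uX\cap\uY|\ge 2$; deleting the elements outside $\uX\cup\uY$ and contracting a common independent set then localizes two elements $e,f\in\uX\cap\uY$, and the three-term Grassmann–Plücker sign relation satisfied by $\chi$ forces $X(e)Y(e)$ and $X(f)Y(f)$ to disagree, contradicting the assumed agreement.

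The representable computation is entirely routine. The genuine obstacle is the abstract case: expressing circuit and cocircuit signs simultaneously through the chirotope and carrying out the minor reduction while tracking every sign, so that orthogonality emerges from a single Grassmann–Plücker identity. This sign-bookkeeping is precisely what makes the statement a substantive theorem rather than a formality in the abstract theory, and is why we quote it from~\cite{Bjorner}.
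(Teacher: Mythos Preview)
The paper does not prove this lemma at all; it is stated as a citation of \cite[Theorem 3.4.3]{Bjorner} and used as a black box in the proof of Theorem~\ref{t:max_transitive_eq_rank}. So there is no ``paper's own proof'' to compare against.

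Your representable argument is correct and standard: the vanishing of $\sum_e \lambda_e\,\phi(v_e)$ immediately forces a sign change on $\uX\cap\uY$, which is exactly orthogonality. This already suffices for the applications to Coxeter root systems and real hyperplane arrangements later in the paper (Theorem~\ref{thm:Orlik1}, Corollary~\ref{cor:2-Coxeter}).

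Your abstract sketch, however, is not a proof. The sentence ``deleting the elements outside $\uX\cup\uY$ and contracting a common independent set then localizes two elements $e,f\in\uX\cap\uY$, and the three-term Grassmann--Pl\"ucker sign relation \ldots\ forces $X(e)Y(e)$ and $X(f)Y(f)$ to disagree'' hides all the work: you have not specified which set is contracted, why $X$ and $Y$ survive as a circuit and a cocircuit after the minor operation, or which Grassmann--Pl\"ucker instance is being invoked and how it matches the circuit and cocircuit sign conventions. The actual proof in \cite{Bjorner} proceeds through the duality between signed circuits and signed cocircuits and the signed elimination axioms, and the sign-tracking is nontrivial --- as you yourself concede in your final paragraph. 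Since Theorem~\ref{t:max_transitive_eq_rank} is stated for \emph{abstract} acyclic oriented matroids, the representable computation alone does not cover what the paper needs; the citation is doing the real work, and your sketch does not replace it.
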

	
	\begin{lemma}\label{t:positive_cocircuit}{\rm \cite[Proposition 3.4.8]{Bjorner}}
		In an acyclic oriented matroid on a set $E$, every $e \in E$ is contained in a positive cocircuit.
	\end{lemma}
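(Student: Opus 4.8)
The plan is to read the statement as the existence half of the \emph{oriented-matroid Farkas lemma}: for a fixed element $e$, exactly one of the alternatives (i) there is a signed circuit $X$ with $e\in X^+$ and $X^-=\varnothing$, or (ii) there is a signed cocircuit $Y$ with $e\in Y^+$ and $Y^-=\varnothing$, holds. Since $M$ is acyclic it has no positive circuit whatsoever, so alternative (i) fails for every $e$; thus it suffices to prove the existence direction, namely that the failure of (i) forces (ii). First I would record, as a check on the hypothesis, why the two alternatives are genuinely exclusive: if a positive circuit $X$ and a positive cocircuit $Y$ both contained $e$, then $e\in\uX\cap\uY\ne\varnothing$, and on this intersection every sign product $X(f)Y(f)$ equals $+$, so the restrictions of $X$ and $Y$ are neither unequal nor opposite. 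This contradicts $X\perp Y$ from Lemma~\ref{t:orthogonality}, confirming that acyclicity is precisely what removes the obstruction to (ii).

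For the representable case, which is the paper's main concern, the existence of (ii) is transparent via the classical linear-algebraic Farkas lemma. Represent $M$ by vectors $\{v_f:f\in E\}$ in a real vector space. By Observation~\ref{t:must_be_acyclic_intro}, acyclicity means $0$ is not a positive combination of any subset, equivalently the $v_f$ lie in an open halfspace, so there is a linear functional $\psi$ with $\psi(v_f)>0$ for all $f$. Fix $e$ and consider the polyhedral cone $K:=\{\phi:\phi(v_f)\ge 0\ \forall f\}$, which is nonempty as it contains $\psi$. Working modulo the lineality of $K$ (i.e.\ inside the span of the $v_f$), $K$ is a pointed full-dimensional cone, and at least one of its extreme rays, say spanned by $\phi_0$, is strictly positive at $v_e$; otherwise every extreme ray, hence every element of $K$, would vanish at $v_e$, contradicting $\psi(v_e)>0$. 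The set $\{f:\phi_0(v_f)>0\}$ is the complement of the facet-hyperplane $\ker\phi_0$ and is, by the standard correspondence between extreme rays of $K$ and cocircuits, a cocircuit of $M$; it is positive by construction and contains $e$.

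For a general (not necessarily representable) oriented matroid the existence direction is the genuine content, and this is the step I expect to be the main obstacle. I would prove it by induction on $|E|$, deleting an element $f\ne e$: deletion preserves acyclicity and preserves ``no positive circuit through $e$'', so by induction $M\setminus f$ has a positive cocircuit $Y'$ through $e$, and the difficulty is to lift $Y'$ to a cocircuit of $M$ that is \emph{still} positive, since the lift may acquire a negative sign at $f$. Resolving this requires a strong cocircuit-elimination step---the oriented analogue of Lemma~\ref{t:circuit_intersection} applied in the dual---to combine $Y'$ with a cocircuit controlling the sign at $f$ and cancel it, acyclicity guaranteeing that the needed cocircuit has the correct orientation. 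Equivalently, and perhaps more cleanly, one can bypass the lifting through the covector route: show that acyclicity is equivalent to the all-positive sign vector $(+,\dots,+)$ being a covector (again via orthogonality against circuits), note that it is then a tope since its support is all of $E$, and invoke the fact that every covector is the conformal composition of the cocircuits below it. Each such cocircuit is forced to be positive and their supports cover $E$, so every $e$ lies in a positive cocircuit. The crux in either route is the same: producing, from acyclicity alone, enough positively oriented cocircuits, with the sign-bookkeeping at the eliminated element being the point that must be handled with care.
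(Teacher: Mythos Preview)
The paper does not prove this lemma at all: it is stated with a citation to \cite[Proposition 3.4.8]{Bjorner} and used as a black box in the proof of Theorem~\ref{t:max_transitive_eq_rank}. So there is no proof in the paper to compare your proposal against.

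As for the content of your proposal: your framing via the oriented-matroid Farkas alternative is exactly right, and the exclusivity check through Lemma~\ref{t:orthogonality} is clean. The representable argument is fine, though the assertion that ``at least one extreme ray of $K$ is strictly positive at $v_e$'' deserves a line of justification (e.g., $\psi$ is a nonnegative combination of extreme rays, and $\psi(v_e)>0$). For the general case your second route, via covectors, is the one that actually works with minimal pain and is essentially the proof in \cite{Bjorner}: acyclicity is equivalent to the all-$+$ sign vector being a covector (indeed a tope), and every covector is a conformal composition of cocircuits; conformality forces each of those cocircuits to be positive, and their supports cover $E$. Your first route (delete $f$, lift, repair the sign at $f$ by elimination) is workable but the ``repair'' step is exactly where the real content hides, and you have not said which cocircuit you eliminate against or why acyclicity guarantees it exists with the required sign at $f$; as written that step is a genuine gap. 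If you want a self-contained proof, I would commit to the covector argument and spell out the two ingredients it uses (the characterization of acyclicity by the all-$+$ tope, and the conformal decomposition of covectors into cocircuits), both of which are standard in \cite{Bjorner}.
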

	
	\begin{proof}[Proof of Theorem~\ref{t:max_transitive_eq_rank}]
		Let $M$ be an acyclic oriented matroid on a set $E$.
		Let $C$ be a set (of colors) of size $t(M)$, and let $\ve: E \to C$ be a surjective transitive coloring of $M$. 
		Pick a set $S \subseteq E$ of size $t(M)$ which is ``$\ve$-rainbow'', namely, all its elements are assigned distinct colors by $\ve$.    
		Every subset of $S$ is clearly also $\ve$-rainbow and therefore, by Definition~\ref{def:transitive}, $S$ does not contain (the support of) a circuit. It follows that $S$ is an independent set in $M$, and in particular $t(M) = |S| \le \rank(M)$.
		
		In order to prove the opposite inequality $t(M) \ge \rank(M)$, we now construct a transitive coloring of $M$ using exactly $r := \rank(M)$ colors. The construction is recursive, depending on $r$.
		Of course, $r = 0$ is possible (for an acyclic, and in particular loopless, oriented matroid) only for $E = \varnothing$, and then indeed we use no colors. Assuming $r \ge 1$, Denote $E_r := E$. By Lemma~\ref{t:positive_cocircuit}, each element of $E_r$ is contained in a positive cocircuit. Choose a positive cocircuit $Y_r$ in $E_r$, and denote $E_{r-1} := E_r \setminus \uYr$. Since $Y_r$ is a cocircuit, the restriction of $M$ to $E_{r-1}$ is acyclic of rank $r-1$.
		Continue in this fashion to define subsets $E = E_r \supset E_{r-1} \supset \ldots \supset E_0 = \varnothing$ such that $E_i \setminus E_{i-1}$ supports a positive cocircuit $Y_i$ in $E_i$ for each $1 \le i \le r$. Finally, color the elements of $Y_i$ by color $i$, for each $1 \le i \le r$.
		The resulting coloring $\ve: E \to [r]$ is clearly surjective, and we claim that it is also transitive.
		
		Indeed, let $X$ be a signed circuit in $M$. There is a unique index $1 \le i \le r$ such that $\uX \subseteq E_i$ but $\uX \not\subseteq E_{i-1}$. It follows that $\uX \cap \uYi = \uX \cap (E_i \setminus E_{i-1}) \ne \varnothing$. By Lemma~\ref{t:orthogonality}, $X \perp Y_i$, and therefore the restrictions of $X$ and $Y_i$ to their (nonempty) intersection are neither equal nor opposite. Since $Y_i$ is positive, it follows that there are $e, f \in \uX \cap \uYi$ such that $e \in X^+$ and $f \in X^-$. By the definition of the coloring, $\ve(e) = \ve(f) = i$ and therefore $\ve(X^+) \cap \ve(X^-) \ne \varnothing$. Thus $\ve$ is transitive.
	\end{proof}
	
	
	
	\subsection{Polynomiality}
	
	Let $M$ be an acyclic oriented 
	matroid $M$ on a nonempty finite ground set $E$.
	For a positive integer $k$ 
	denote 
	\[
	\EEE_M(k):=\{\ve:E\longrightarrow [k]:\ \ve\ \text{{\rm is a transitive coloring}}\} 
	\]
	the set of transitive $k$-colorings of $M$. 
	Recall Definition~\ref{def:transitive_partition} of a {\em transitive $k$-partition}.
	
	\begin{proposition}\label{t.polynomial}\cite[Conjecture 3.5]{BGL}
		For any 
		acyclic oriented 
		matroid $M$ on a nonempty finite ground set $E$ 
		there exists a polynomial $p_M(x) \in x\ZZ[x]$ --- the {\em transitivity polynomial} of $M$ --- such that, for any positive integer $k$ 
		\[
		|\EEE_M(k)| = p_M(k).
		\]
		Moreover, there exist nonnegative integers $a_{M,j}$ such that
		\[
		p_M(x) = \sum_{j \ge 1} a_{M,j} (x)_j
		\]
		where  $(x)_j := x(x-1) \cdots (x-j+1)$ and $a_{M,j}$ is the number of transitive $j$-partitions of $M$.
	\end{proposition}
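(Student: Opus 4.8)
The plan is to exhibit an explicit bijection between transitive $k$-colorings of $M$ and pairs consisting of a transitive $j$-partition of $M$, for some $j\ge 1$, together with an injection of its $j$ blocks into the palette $[k]$. Once this is in place, grouping the colorings by the value of $j$ and counting injections produces the formula $|\EEE_M(k)|=\sum_{j\ge 1}a_{M,j}(k)_j$, and polynomiality is then immediate from the fact that this is a finite $\ZZ$-linear combination of the polynomials $(x)_j$.

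First I would spell out the two directions. Given a transitive $j$-partition $B_1,\dots,B_j$ of $M$ and an injection $\iota\colon\{B_1,\dots,B_j\}\hookrightarrow[k]$, define $\ve\colon E\to[k]$ by $\ve(e):=\iota(B_i)$ for $e\in B_i$. If $X=(X^+,X^-)$ is a signed circuit, Definition~\ref{def:transitive_partition} gives an index $i$ with $X^+\cap B_i\ne\varnothing$ and $X^-\cap B_i\ne\varnothing$; choosing $e\in X^+\cap B_i$ and $f\in X^-\cap B_i$ we get $\ve(e)=\ve(f)=\iota(B_i)$, so $\ve(X^+)\cap\ve(X^-)\ne\varnothing$ and $\ve$ is transitive. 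Conversely, given a transitive $k$-coloring $\ve$, let $j:=|\ve(E)|$ be the number of colors actually used and let $B_1,\dots,B_j$ be the nonempty fibers of $\ve$; for any signed circuit $X$, transitivity of $\ve$ yields a color $c\in\ve(X^+)\cap\ve(X^-)$, and the fiber $\ve^{-1}(c)$ then meets both $X^+$ and $X^-$, so the fibers of $\ve$ form a transitive $j$-partition. Recording alongside it the induced injection $\ve^{-1}(c)\mapsto c$ of blocks into $[k]$ recovers $\ve$, and these two constructions are mutually inverse.

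Next I would carry out the count. For fixed $j$ there are exactly $(k)_j=k(k-1)\cdots(k-j+1)$ injections of a $j$-element set into $[k]$, so the bijection gives $|\EEE_M(k)|=\sum_{j\ge 1}a_{M,j}(k)_j$, where $a_{M,j}$ is the number of transitive $j$-partitions of $M$. The sum is finite: a partition of the finite set $E$ has at most $|E|$ nonempty blocks (so $a_{M,j}=0$ for $j>|E|$), and in fact $a_{M,j}=0$ already for $j>\rank(M)$, since a transitive $j$-partition gives a transitive coloring using exactly $j$ colors, contradicting Theorem~\ref{t:max_transitive_eq_rank}. Setting $p_M(x):=\sum_{j\ge 1}a_{M,j}(x)_j\in\ZZ[x]$, each factor $(x)_j$ with $j\ge 1$ is divisible by $x$, so $p_M(x)\in x\ZZ[x]$, the coefficients $a_{M,j}$ are nonnegative integers, and $|\EEE_M(k)|=p_M(k)$ for every positive integer $k$ (which also determines $p_M$ uniquely, two polynomials agreeing at infinitely many points being equal). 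Since $E\ne\varnothing$ and $M$ is acyclic, the constant coloring $\ve\equiv 1$ is transitive — acyclicity forces every circuit $X$ to have both $X^+\ne\varnothing$ and $X^-\ne\varnothing$ — so $a_{M,1}\ge 1$ and $p_M\ne 0$.

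The argument is essentially bookkeeping and I do not anticipate a genuine obstacle; the only points requiring care are the decomposition of a possibly non-surjective transitive $k$-coloring according to the number of colors it actually uses, and the verification that the transitivity condition transfers correctly across both directions of the coloring--partition correspondence. The Gallai analogue, Proposition~\ref{G.polynomial}, follows by the same template, with ``signed circuit $X=(X^+,X^-)$'' replaced by ``circuit $X$'' throughout and the transitivity condition replaced by the Gallai condition that some block $B_i$ satisfies $|X\cap B_i|\ge 2$.
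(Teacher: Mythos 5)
Your proof is correct and follows essentially the same route as the paper's: both decompose the set of transitive $k$-colorings according to the fiber partition, observe that transitivity of a coloring depends only on that partition (and is equivalent to the partition being a transitive $j$-partition), and then count the $(k)_j$ ways to assign distinct colors to the $j$ blocks. Your added remarks on the finiteness of the sum and the nonvanishing of $p_M$ via the constant coloring are correct but not needed beyond what the paper records.
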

	
	\begin{proof}
		Each function $\ve: E \to [k]$ defines a partition $P_\ve$ of the set $E$, where $e_1, e_2 \in E$ belong to the same block if $\ve(e_1) = \ve(e_2)$.
		Let $\Pi(E)$ be the set of all partitions of $E$. 
		For each partition $P \in \Pi(E)$ and 
		positive integer $k$, define
		\[
		n_P(k) := |\{\ve \in \EEE_M(k) \,:\, P_\ve = P\}|.
		\]
		We shall prove that, for each partition $P$ of $E$ into $j$ nonempty parts, either
		\[
		n_P(k) = 0 \quad (\forall\, k)
		\]
		or
		\[
		n_P(k) = (k)_j \quad (\forall\, k).
		\]
		This will clearly complete the proof of the theorem, with the explicit expression 
		\begin{align*}
			a_{M,j} &= \text{number of partitions $P$ of $E$ into $j$ disjoint nonempty blocks such that } (\exists k) \, n_P(k) \ne 0\\
			&=\text{number of transitive $j$-partitions of $M$}.
		\end{align*}
		Indeed, the partition $P_\ve = P$ determines, for each $e_1, e_2 \in E$, whether or not $\ve(e_1) = \ve(e_2)$.
		Therefore it also determines, for each $(X^+,X^-) \in Circ(M)$, whether or not $\ve(X^+)\cap \ve(X^-)\ne \varnothing$. 
		It therefore determines whether or not $\ve \in \EEE_M(k)$.
		Thus, given a partition $P$ of $E$ into $j$ parts, if $n_P(k) \ne 0$ for some 
		positive integer $k$ 
		then there exists a function $\ve_j \in \EEE_M(k)$ with $P_{\ve_j} = P$, and consequently, for any 
		positive integer $j\le k$ 
		all functions $\ve : E \to C$ for some subset $C\subseteq [k]$ of order $j$,  
		with $P_\ve = P$, are in $\EEE_M(k)$; their number is clearly $(k)_j$. 
	\end{proof}
	
	
	\begin{example}\label{example:n!}
		Let $\overrightarrow K_n$ be the transitive tournament on $n$ vertices.
		The coefficients $a_k$ ($1 \le k \le n-1$) were computed in \cite[Section 3]{BGL} for $n \le 8$. 
		For $k=2$ and every $n\ge 2$ the following holds: 
		\[
		p_{\overrightarrow K_n}(2) = n!
		\]
		This follows from  a natural bijection between $\EEE_{\overrightarrow K_n}(2)$ and the symmetric group $\symm_n$: 
		for $\ve\in \EEE_M(2)$ let $\pi_\ve\in \symm_n$ the permutation which satisfies $\pi(i)>\pi(j)\Longleftrightarrow \ve(i,j)=1$ 
		for all $1\le i<j\le n$. To verify that this is a bijection, recall that a set of ordered pairs $J\subseteq \{(i,j):\ 1\le i<j\le n\}$ is an inversion set of a permutation in $\symm_n$ if and only if both $J$ and its complement are transitive, see e.g.~\cite{Grinberg}. This result will be generalized in Subsection~\ref{sec:2c}. 
	\end{example}
	


	Recall Definition~\ref{def:Gallai_partition} of a {\em Gallai $k$-partition}.
	
	\begin{proposition}\label{G.polynomial}		The number of Gallai $k$-colorings of a 
		a finite matroid $M$ is a polynomial in $k$. 
		Moreover, there exist nonnegative integers $b_{M,j}$ such that
		\[
		p_M(x) = \sum_{j \ge 1} b_{M,j} (x)_j
		\]
		where $(x)_j := x(x-1) \cdots (x-j+1)$ and $b_{M,j}$ is the number of Gallai $j$-partitions of $M$.
	\end{proposition}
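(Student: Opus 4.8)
The plan is to transcribe, almost verbatim, the proof of Proposition~\ref{t.polynomial}, replacing signed circuits by circuits and the transitivity condition by the Gallai condition. First I would dispose of the degenerate cases: if $M$ has a loop then $\EEE_M(k)=\varnothing$ for all $k$ by Remark~\ref{rem:must_be_loopless_intro}(a) and $M$ has no Gallai $j$-partition for any $j$, so both sides of the claimed identity vanish; and if $E=\varnothing$ the statement is vacuous. So we may assume $M$ is loopless and $E\ne\varnothing$.

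The one point that deserves a moment's attention is the observation that being a Gallai coloring depends only on the color classes. Precisely, for a function $\ve:E\to[k]$ let $P_\ve$ be the partition of $E$ whose blocks are the nonempty fibers of $\ve$. For a circuit $X$ of $M$, the inequality $|\{\ve(e):e\in X\}|<|X|$ holds if and only if two distinct elements of $X$ receive the same color, i.e.\ $|X\cap B|\ge 2$ for some block $B$ of $P_\ve$. Comparing with Definition~\ref{def:Gallai_partition}, this shows that $\ve\in\EEE_M(k)$ if and only if $P_\ve$ is a Gallai partition of $M$; in particular, membership in $\EEE_M(k)$ is a property of the partition $P_\ve$ alone, independent of $k$ and of which elements of $[k]$ are actually used.

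With this in hand the counting runs exactly as in Proposition~\ref{t.polynomial}. For each partition $P$ of $E$ into $j$ nonempty blocks set $n_P(k):=|\{\ve\in\EEE_M(k):P_\ve=P\}|$. By the previous paragraph, if $P$ is not a Gallai partition then $n_P(k)=0$ for every $k$; and if $P$ is a Gallai partition then every $\ve$ with $P_\ve=P$ lies in $\EEE_M(k)$, so $n_P(k)$ equals the number of injections from the $j$-element set of blocks of $P$ into $[k]$, namely $(k)_j$. Summing over all partitions of $E$ and grouping by the number of blocks,
\[
|\EEE_M(k)| = \sum_{P\in\Pi(E)} n_P(k) = \sum_{j\ge 1} b_{M,j}\,(k)_j,
\]
where $b_{M,j}$ is the number of Gallai $j$-partitions of $M$. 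Since this holds for all positive integers $k$ and the right-hand side is a polynomial in $k$ with nonnegative integer coefficients in the basis $\{(x)_j\}_{j\ge 1}$, we conclude $|\EEE_M(k)|=p_M(k)$ for $p_M(x):=\sum_{j\ge1}b_{M,j}(x)_j\in x\ZZ[x]$, as desired.

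I do not anticipate a genuine obstacle here: the argument is a direct adaptation of the transitive case, and the only nontrivial ingredient is the equivalence ``$\ve$ is Gallai $\iff$ $P_\ve$ is a Gallai partition,'' which is immediate once Definitions~\ref{def:Gallai} and~\ref{def:Gallai_partition} are unwound, together with the harmless bookkeeping around loops and the empty ground set.
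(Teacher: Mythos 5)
Your proof is correct and is exactly the argument the paper intends: the paper omits the proof of Proposition~\ref{G.polynomial}, stating only that it is "similar to the proof of Proposition~\ref{t.polynomial}," and your adaptation—reducing the Gallai condition on a circuit $X$ to the existence of a block meeting $X$ in at least two elements, then counting injections of blocks into $[k]$ to get $(k)_j$—is precisely that similar argument, with the degenerate loop/empty cases handled cleanly.
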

	
	Proof is similar to the proof of Proposition~\ref{t.polynomial} and is omitted. 
	
	
	
	
	
	\subsection{2-colorings}\label{sec:2c}
	
	Let $M$ be a loopless matroid on a set $E$. 
	Two distinct elements $e, f \in E$ are called {\em parallel} if $\{e,f\}$ is a circuit in $M$. Being parallel (or equal) is an equivalence relation on $E$, and the equivalence classes are called {\em parallel classes}.
	In a vector matroid, parallel vectors are (nonzero) scalar multiples of each other.
	In a graphic matroid, two edges are parallel if they have the same endpoints.
	
	\begin{proposition}\label{prop:2_Gallai}
		The number of Gallai 2-colorings of a loopless matroid $M$ is $2^p$, where $p$ is the number of parallel classes of elements of $E$. 
	\end{proposition}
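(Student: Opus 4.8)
The plan is to unwind Definition~\ref{def:Gallai} in the special case $k=2$ and observe that the only binding constraints come from $2$-element circuits. First I would note that, since $M$ is loopless, there are no circuits of size $1$; and for any circuit $X$ with $|X|\ge 3$ and any function $\ve:E\to[2]$ one has $|\{\ve(e):e\in X\}|\le 2<|X|$, so the Gallai inequality is automatic. Hence circuits of size $\ge 3$ impose no condition whatsoever. The circuits of size $2$ are precisely the parallel pairs $\{e,f\}$, and for such a pair the Gallai condition $|\{\ve(e),\ve(f)\}|<2$ says exactly that $\ve(e)=\ve(f)$.

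Next I would invoke the fact, recalled immediately before the statement, that ``being parallel or equal'' is an equivalence relation on $E$ whose classes (the parallel classes) number $p$. Combined with the previous step, this shows that a function $\ve:E\to[2]$ is a Gallai $2$-coloring if and only if it is constant on each parallel class; equivalently, Gallai $2$-colorings of $M$ are in bijection with arbitrary functions from the set of $p$ parallel classes to $[2]$. There are $2^p$ such functions, which is the asserted count (with the two constant colorings included, consistent with the fact that Definition~\ref{def:Gallai} does not require surjectivity).

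I do not expect a genuine obstacle here: the whole argument is a direct reading of the definition once one observes that two colors can never ``rainbow'' three or more elements. The only points to state with a little care are this vacuity for large circuits and the partition of $E$ into parallel classes; the latter is already taken as given in the text, but if one wanted to be self-contained its transitivity can be justified by applying the strong elimination property (Lemma~\ref{t:circuit_intersection}) to the two $2$-element circuits $\{e,f\}$ and $\{f,g\}$, using looplessness to rule out a smaller circuit inside $\{e,g\}$.
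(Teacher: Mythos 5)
Your proposal is correct and follows essentially the same route as the paper's proof: looplessness rules out size-$1$ circuits, size-$2$ circuits force parallel elements to share a color, larger circuits impose no condition with only two colors, and the count is $2^p$ over the parallel classes. The extra aside on verifying transitivity of the parallel relation via strong elimination is fine but not needed, since the paper takes the partition into parallel classes as given.
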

	
	\begin{proof}
		A 2-coloring of $M$ is Gallai if and only if the number of colors used to color each circuit is strictly smaller than the size of the circuit.
		Circuits of size $1$ do not exist, since the matroid is loopless.
		In circuits of size $2$, the two elements of the circuit are required to have the same color. This means that parallel elements of $E$ must have the same color.
		For circuits of size greater than $2$ there is no restriction on the coloring, since we have only two colors.
		Therefore a 2-coloring of $M$ is Gallai if and only if any two parallel elements of $E$ have the same color. 
	\end{proof}
	
	Recall that a graph is {\em simple} if it has no loops or parallel edges.
	
	\begin{corollary}\label{cor:2Gallai_simple graph}
		Any 2-coloring of a simple graph is Gallai. Hence, the number of Gallai 2-colorings of a simple graph with $e$ edges is $2^e$. 
	\end{corollary}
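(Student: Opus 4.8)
The plan is to derive this directly from Proposition~\ref{prop:2_Gallai}, using the defining property of simple graphs. First I would recall that the graphic matroid of a graph $G$ has as its circuits precisely the edge sets of the (simple) cycles of $G$, and that $\{e,f\}$ is a circuit of size $2$ exactly when $e$ and $f$ are parallel edges, i.e.\ edges sharing both endpoints. Since $G$ is simple, it has no loops and no parallel edges, so every circuit of its graphic matroid has size at least $3$, and every parallel class is a singleton. Hence the number $p$ of parallel classes equals the number $e$ of edges.

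Next I would invoke Proposition~\ref{prop:2_Gallai}: the number of Gallai $2$-colorings of a loopless matroid $M$ is $2^p$, where $p$ is the number of parallel classes. Applying this to the graphic matroid of $G$ and substituting $p = e$ gives that the number of Gallai $2$-colorings is $2^e$. Since the total number of functions $\ve : E \to [2]$ is also $2^e$, it follows that \emph{every} $2$-coloring of $G$ is Gallai.

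Alternatively — and this makes the ``any $2$-coloring'' assertion transparent without quoting the proposition — one can argue directly from Definition~\ref{def:Gallai}: for any circuit $X$ in the graphic matroid of a simple graph we have $|X| \ge 3$, while a function into a $2$-element set satisfies $|\{\ve(e) : e \in X\}| \le 2 < 3 \le |X|$, so the Gallai condition is automatically met. There is no real obstacle here; the only point requiring (minimal) care is the standard translation between simpleness of $G$ and the absence of circuits of size $\le 2$ in its graphic matroid, which is exactly the content of Remark~\ref{rem:must_be_loopless_intro}.
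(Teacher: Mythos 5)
Your proposal is correct and matches the paper's (implicit) argument: the corollary is stated as an immediate consequence of Proposition~\ref{prop:2_Gallai}, using exactly the observation that a simple graph has no loops or parallel edges, so each parallel class is a singleton and $p=e$. Your alternative direct argument from Definition~\ref{def:Gallai} is also sound and is essentially the reasoning already contained in the paper's proof of Proposition~\ref{prop:2_Gallai} (circuits of size greater than $2$ impose no restriction when only two colors are available).
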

	
	Let $M$ be an oriented matroid on a set $E$. Define an equivalence relation on $E$ by: $e \sim f$ if either $e = f$ or $\{e,f\}$ is contained in a circuit of $M$. The equivalence classes of this relation are the {\em connected components} of $M$. Note that $\{e\}$ is a connected component of size $1$ if and only if $e$ is either a {\em loop} (forming a circuit of size $1$) or an {\em isthmus} (not contained in any circuit). 
	
	Let $M$ be an oriented matroid on a set $E$, and let $A \subseteq E$ be an arbitrary subset. For any signed set $X$ with support $\uX \subseteq E$, let ${}_{-A}X$ be the signed set obtained from $X$ by reversing the signs of all the elements of $A \cap \uX$. The set $\{{}_{-A}X \,:\, X \text{ is a circuit in } M\}$ is the set of circuits of an oriented matroid, denoted ${}_{-A} M$. A {\em reorientation} of $M$ is any of the oriented matroids ${}_{-A}M$, for $A \subseteq E$.
	
	
	\begin{theorem}\label{thm:acyclic_2_colorings}
		Let $M$ be an acyclic oriented matroid on a nonempty set $E$.
		The number of transitive 2-colorings of $M$ is equal to $2^c$ times the number of acyclic reorientations of $M$,
		where $c$ is the number of connected components of $M$. 
	\end{theorem}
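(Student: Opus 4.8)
The plan is to translate the statement into a counting problem about subsets of $E$, using the obvious correspondence between two-colorings and subsets, and then to match transitive colorings with acyclic reorientations. Concretely, I would identify each function $\ve\colon E\to\{1,2\}$ with the subset $A:=\ve^{-1}(2)\subseteq E$; this is a bijection between the $2^{|E|}$ two-colorings of $M$ and the $2^{|E|}$ subsets of $E$. The key claim is:
\[
\ve\ \text{is a transitive } 2\text{-coloring}\ \iff\ {}_{-A}M\ \text{is acyclic}.
\]
Granting this, the number of transitive $2$-colorings of $M$ equals $|\{A\subseteq E:\ {}_{-A}M\ \text{is acyclic}\}|$, and the task reduces to counting those subsets.

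To prove the key claim, fix a signed circuit $X=(X^+,X^-)$ of $M$. Since $M$ is acyclic, both $X^+$ and $X^-$ are nonempty, so — with only two colors available — the condition $\ve(X^+)\cap\ve(X^-)=\varnothing$ is equivalent to saying that $\ve$ is constant on $X^+$, constant on $X^-$, and takes distinct values there. Unwinding the definition of the reoriented signed circuit ${}_{-A}X$, one checks that this happens for $X$ precisely when ${}_{-A}X$ equals $(\uX,\varnothing)$ (the case $\ve\equiv 1$ on $X^+$) or $(\varnothing,\uX)$ (the case $\ve\equiv 2$ on $X^+$); that is, precisely when ${}_{-A}X$ is a positive or a negative circuit of ${}_{-A}M$. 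Since the set of signed circuits of an oriented matroid is closed under negation, ${}_{-A}M$ has a positive circuit if and only if it has a negative circuit, so $\ve$ fails to be transitive exactly when ${}_{-A}M$ is not acyclic. This establishes the claim.

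It remains to count the subsets $A$ with ${}_{-A}M$ acyclic in terms of reorientations. Reorienting on $A$ and then on $A'$ reorients on the symmetric difference $A\triangle A'$, and reorientation is an involution, so ${}_{-A}M={}_{-A'}M$ if and only if ${}_{-(A\triangle A')}M=M$. I would then show that $\{B\subseteq E:\ {}_{-B}M=M\}$ is exactly the set of unions of connected components of $M$, a subgroup of order $2^c$ of $(2^E,\triangle)$: if $B$ contains exactly one of two elements $e,f$ that lie on a common circuit $X$, then ${}_{-B}X$ has support $\uX$ but restricts to $\{e,f\}$ in a way that agrees with neither $X$ nor $-X$, hence ${}_{-B}X$ is not a circuit of $M$ and ${}_{-B}M\ne M$; iterating along a chain of circuits realizing the connected-component relation forces $B$ to meet each component in $\varnothing$ or in the whole component, while conversely any union of components works because every circuit has connected support. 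Therefore every fiber of $A\mapsto {}_{-A}M$ has size $2^c$; since acyclicity depends only on the reorientation ${}_{-A}M$, the subsets $A$ with ${}_{-A}M$ acyclic form a union of fibers, one per acyclic reorientation, and their number is $2^c$ times the number of acyclic reorientations of $M$.

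The step I expect to be most delicate is the sign bookkeeping in the key claim: one must verify that the two colorings that are ``bad'' for a fixed circuit $X$ correspond precisely to the two extreme reorientations $(\uX,\varnothing)$ and $(\varnothing,\uX)$ of $X$, and must invoke the acyclicity of $M$ exactly to guarantee $X^+,X^-\ne\varnothing$ — without which the two-color dichotomy breaks down. The fiber-counting step is essentially the classical fact that the reorientation class of an oriented matroid has cardinality $2^{|E|-c}$, so there one can either cite the standard result or include the short argument sketched above.
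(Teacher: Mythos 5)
Your proposal is correct and follows essentially the same route as the paper: the same identification of $2$-colorings with subsets $A\subseteq E$, the same key equivalence ``$\ve$ transitive $\iff{}_{-A}M$ acyclic'' via the positivity/negativity of ${}_{-A}X$ for each circuit $X$ (using acyclicity of $M$ to get $X^+,X^-\ne\varnothing$), and the same $2^c{:}1$ fiber count. The only cosmetic difference is that you compute the fibers via the subgroup $\{B:{}_{-B}M=M\}$ of $(2^E,\triangle)$, whereas the paper compares ${}_{-A}M$ and ${}_{-B}M$ directly through the sets $(A\cap B)\cup(\bar A\cap\bar B)$ and its complement; both amount to the same observation that a circuit's support lies in a single connected component.
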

	
	\begin{proof}
		There is an obvious bijection between subsets of $E$ and 2-colorings of $E$ with colors from $\{0,1\}$: for any $A \subseteq E$, the characteristic function $\chi_A: E \to \{0,1\}$, with $\chi_A(e) = 1 \iff e \in A$, is a 2-coloring of $E$.
		Similarly, there is a natural mapping from subsets $A \subseteq E$ to reorientations ${}_{-A}M$ of $M$, but it is not a bijection. 
		For example, ${}_{-E}M = {}_{-\varnothing}M = M$, since reversing the signs of all the elements in every circuit of an oriented matroid yields the same oriented matroid. Similarly, ${}_{-A}M = M$ if $A \subseteq E$ is a connected component of $M$, since the support of each circuit is contained in a unique connected component of $M$.
		
		We claim that, for any $A, B \subseteq E$:
		${}_{-A}M = {}_{-B}M$ if and only if, for each connected component $C$ of $M$, $C \cap A$ is either $C \cap B$ or $C \cap (E \setminus B)$. 
		Indeed,  ${}_{-A}M = {}_{-B}M$ if and only if, for every circuit $X$ of $M$, ${}_{-A}X$ is either ${}_{-B}X$ or ${}_{-B}(-X)$. 
		Denote 
		$\bar{A} := E \setminus A$,
		$\bar{B} := E \setminus B$,
		$S_1 := (A \cap B) \cup (\bar{A} \cap \bar{B})$ and
		$S_2 := (A \cap \bar{B}) \cup (\bar{A} \cap B)$.
		Thus $S_2$ is the symmetric difference of $A$ and $B$, and $S_1 = E \setminus S_2$. 
		Clearly ${}_{-A}X = {}_{-B}X$ if and only if $X \subseteq S_1$, and ${}_{-A}X = {}_{-B}(-X)$ if and only if $X \subseteq S_2$. 
		Thus, by the definition of connected components in an oriented matroid, ${}_{-A}X = {}_{-B}(\pm X)$ for all the circuits $X$ in $M$ if and only if each connected component $C$ of $M$ is contained in either $S_1$ or $S_2$. Finally, $C \subseteq S_1$ is equivalent to $C \cap A = C \cap B$, whereas $C \subseteq S_2$ is equivalent to $C \cap A = C \cap \bar{B}$.
		
		Denoting by $c$ the number of connected components of $M$, it follows that there is a $2^c:1$ map from subsets (or 2-colorings) of $E$ to reoerientations of $M$.
		
		Let us now consider {\em acyclic} reorientations. We claim that, for any $A \subseteq E$, the reorientation ${}_{-A}M$ is acyclic if and only if the 2-coloring $\chi_A$ is transitive. 
		Rephrased contrapositively, it suffices to show that, for any circuit $X$ in $M$, the circuit ${}_{-A}X$ in ${}_{-A}M$ is either positive or negative if and only if $\chi_A(X^+) \cap \chi_A(X^-) = \varnothing$.
		Note that $X^+ \ne \varnothing$ and $X^- \ne \varnothing$, since $M$ is originally acyclic.
		Indeed, the circuit ${}_{-A}X$ is positive if and only if $X^+ \subseteq E \setminus A$ while $X^- \subseteq A$, and this is equivalent to $\chi_A(X^+) = \{0\}$ and $\chi_A(X^-) = \{1\}$. Similarly, the circuit ${}_{-A}X$ is negative if and only if $\chi_A(X^+) = \{1\}$ and $\chi_A(X^-) = \{0\}$.
		
		We conclude that there is a $2^c:1$ map from transitive 2-colorings of $E$ to acyclic reoerientations of $M$. This completes the proof.
	\end{proof}
	
	%
	
	\begin{remark}\label{rem:acyclic_2_colorings}
		The factor $2^c$, which appears in the above result for abstract oriented matroids, disappears (as we shall soon see) when the matroid is represented (say, over $\RR$), and in particular when it corresponds to a directed graph. 
		This is because 
		multiplying by $-1$ all the vectors
		in a connected component of 
		a represented oriented matroid
		(or reversing the directions of all the edges in a connected component of a directed graph)
		yields a different set of vectors
		(and a different graph), 
		but with the same oriented matroid.  
	\end{remark}
	
	For the next result we need a fundamental fact about linear inequalities, which is a consequence of Farkas' lemma. It is sometimes called Gordan's lemma~\cite[\S 1.4]{Border}.
	
	
	\begin{lemma}\label{t:Gordan}
		Let $A$ be a matrix over $\RR$.
		Then exactly one of the following claims is true.
		\begin{enumerate}
			\item[(a)] 
			There exists a (column) vector $x$ such that $x^t A > 0$.
			\item[(b)]
			There exists a (column) vector $y$ such that $y \ge 0$, $y \ne 0$ and $A y = 0$.
		\end{enumerate}
		All vector-to-zero inequalities here $(>,\ge)$ are component-wise.
	\end{lemma}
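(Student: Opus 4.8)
The plan is to split the dichotomy into its two halves: (a) and (b) cannot both hold, and at least one of them does. The first half is immediate. If $x^t A>0$ and $y\ge 0$, $y\ne 0$, $Ay=0$ held simultaneously, then writing $A=[a_1\mid\cdots\mid a_n]$ in columns we would get $0=x^t(Ay)=\sum_i (x^ta_i)\,y_i$; but each $x^ta_i>0$, each $y_i\ge 0$, and at least one $y_i>0$, forcing the sum to be strictly positive, a contradiction. So at most one of (a), (b) holds.

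For the substantive half I would assume (b) fails and deduce (a). The key observation is that (b) fails exactly when $0$ has no expression as a nonnegative, not-all-zero combination of the columns $a_1,\dots,a_n$ of $A$; normalizing the coefficients to sum to $1$ turns this into the statement $0\notin K$, where $K:=\operatorname{conv}\{a_1,\dots,a_n\}\subseteq\RR^m$ is compact, convex and (assuming $A$ has at least one column) nonempty. Then the strict separation theorem for a point and a disjoint compact convex set yields a vector $x\in\RR^m$ and a scalar $\alpha>0$ with $x^tv\ge\alpha$ for every $v\in K$; in particular $x^ta_i\ge\alpha>0$ for all $i$, i.e.\ $x^tA>0$, which is precisely (a).

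The only real input is the strict separation theorem --- equivalently Farkas' lemma, from which Gordan's statement follows by a standard reformulation --- so I do not expect a genuine obstacle. The remaining steps are the bookkeeping of translating (a) and (b) into statements about the convex hull of the columns of $A$, together with the harmless observation that the degenerate case of an empty column set (where (a) holds vacuously and (b) fails) does not occur in the matroid applications and may simply be excluded.
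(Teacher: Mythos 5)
Your argument is correct, but note that the paper does not actually prove this lemma: it is stated as a known fact (Gordan's lemma, a consequence of Farkas' lemma) with a citation to Border's lecture notes, and no proof is given in the text. So there is nothing in the paper to compare against step by step; what you have supplied is a genuine proof of the cited result. Your two halves are both sound: the mutual exclusivity of (a) and (b) follows from the sign computation $0 = x^t(Ay) = \sum_i (x^t a_i)y_i$, and the existence half correctly reduces ``(b) fails'' to $0 \notin \operatorname{conv}\{a_1,\dots,a_n\}$ (the normalization $\sum_i y_i = 1$ is legitimate since $y \ge 0$, $y \ne 0$ forces $\sum_i y_i > 0$) and then invokes strict separation of a point from a compact convex set. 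One tiny remark: in the degenerate case of a matrix with no columns, (a) holds vacuously and (b) fails, so the dichotomy is still true and nothing needs to be excluded; your aside about ruling that case out is harmless but unnecessary. Your route via the separating hyperplane theorem is the standard one and is essentially equivalent to deriving Gordan from Farkas, which is exactly the provenance the paper attributes to the lemma.
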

	
	For acyclic oriented matroids represented over $\RR$ the following holds.
	
	\begin{theorem}\label{thm:Orlik1} 
		For any acyclic oriented matroid $M$ represented over $\RR$, 
		the number of transitive $2$-colorings of $M$ is equal to the number of chambers in the dual hyperplane arrangement. 
	\end{theorem}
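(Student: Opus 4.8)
The plan is to read a $2$-coloring of $E$ as a subset $A\subseteq E$ (the preimage of one color) and to chain together three bijections: transitive $2$-colorings of $M$ $\leftrightarrow$ subsets $A$ whose associated reorientation of $M$ is acyclic $\leftrightarrow$ realizable sign vectors of the dual arrangement $\leftrightarrow$ chambers of the dual arrangement. Fix a vector representation $E=\{v_e\}_{e\in E}$ of $M$ inside a real vector space $V$, and let the \emph{dual arrangement} be the central arrangement $\{H_e\}_{e\in E}$ in $V^*$ with $H_e=\{\phi\in V^*:\phi(v_e)=0\}$; its chambers are the connected components of $V^*\setminus\bigcup_e H_e$.

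First I would recall from the proof of Theorem~\ref{thm:acyclic_2_colorings} that, under the identification of $A\subseteq E$ with the $2$-coloring $\chi_A$, the coloring $\chi_A$ is transitive if and only if the reorientation ${}_{-A}M$ is acyclic; for a represented matroid, ${}_{-A}M$ is represented by the vector configuration $\{(-1)^{[e\in A]}v_e\}_{e\in E}$. Next, by Observation~\ref{t:must_be_acyclic_intro} together with Gordan's Lemma~\ref{t:Gordan} (applied to the matrix whose columns are the vectors $(-1)^{[e\in A]}v_e$), this configuration is acyclic if and only if there is a functional $\phi\in V^*$ with $\phi(v_e)>0$ for all $e\notin A$ and $\phi(v_e)<0$ for all $e\in A$. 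Any such $\phi$ automatically lies in the complement $V^*\setminus\bigcup_e H_e$, so $\chi_A$ is transitive exactly when the sign vector $\sigma_A\in\{+,-\}^E$ given by $\sigma_A(e)=-\iff e\in A$ is \emph{realizable}, i.e.\ coincides with $(\operatorname{sgn}\phi(v_e))_{e\in E}$ for some $\phi$ off the arrangement.

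Finally I would invoke the standard dictionary for central arrangements. For each $\sigma\in\{+,-\}^E$ the locus $R_\sigma:=\{\phi\in V^*:\operatorname{sgn}\phi(v_e)=\sigma(e)\ \forall e\}$ is an intersection of open half-spaces, hence convex and connected; so whenever $R_\sigma$ is nonempty it lies in, and in fact equals, a single chamber, while conversely the sign vector is constant on each chamber. Thus $\sigma\mapsto R_\sigma$ is a bijection from realizable sign vectors to chambers, and composing with $A\mapsto\sigma_A$ yields a bijection between transitive $2$-colorings of $M$ and chambers of the dual arrangement, as claimed. (If $E$ fails to span $V$ one replaces $V$ by $W:=\operatorname{span}E$; the arrangement in $V^*$ fibers over the essential arrangement in $W^*$ with connected fiber $W^\perp$, so the chamber count is unchanged.)

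I do not anticipate a serious obstacle, since the statement is largely a repackaging of the preceding results; the point that requires care is the bookkeeping behind Remark~\ref{rem:acyclic_2_colorings}, namely why the factor $2^c$ of Theorem~\ref{thm:acyclic_2_colorings} is absent here. This is handled automatically by counting colorings (equivalently, subsets $A$) rather than reorientations up to isomorphism: two distinct subsets $A\ne A'$ yielding isomorphic abstract oriented matroids ${}_{-A}M\cong{}_{-A'}M$ still produce distinct sign vectors $\sigma_A\ne\sigma_{A'}$, hence distinct chambers, so no collapsing occurs on the chamber side. A secondary routine check is that repeated or parallel vectors (making some $H_e$ coincide) cause no difficulty, as distinct chambers still carry distinct sign vectors in $\{+,-\}^E$.
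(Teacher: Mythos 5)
Your proposal is correct and follows essentially the same route as the paper: both identify a $2$-coloring with a sign vector, use the equivalence (from the proof of Theorem~\ref{thm:acyclic_2_colorings}) between transitivity of the coloring and acyclicity of the corresponding reorientation, and then apply Gordan's Lemma~\ref{t:Gordan} to show that this acyclicity is equivalent to the nonemptiness of the associated open region of the dual arrangement, which is exactly a chamber. Your extra remarks on the non-spanning case and on parallel vectors are harmless refinements of the same argument.
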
	
	
	\begin{proof} 
		Let $M$ be an acyclic oriented matroid, represented by a finite (multi)set of vectors $E\subset \RR^n$. 
		Note that $0 \not\in E$, since $M$ is acyclic and therefore loopless.
		View the elements of $\RR^n$ as column vectors.
		Given a 2-coloring $\ve: E \to \{-1,1\}$, 
		define 
		\[
		C_\ve
		:= \bigcap_{e \in E} \{x \in \RR^n: \sign (x^t e) = \ve(e)\} .
		\]
		This is a (possibly empty) open chamber in the dual hyperplane arrangement, and all chambers are of this form. We first want to show that $C_\ve$ is nonempty if and only if $\ve$ is transitive.
		
		Let $M_\ve$ be the reorientation of $M$ corresponding to $\ve$, as in the proof of Theorem~\ref{thm:acyclic_2_colorings}.
		Let $A_\ve$ be the matrix with columns $\ve(e)e$, for $e \in E$.
		Then, by definition,
		\[
		C_\ve = \{x \in \RR^n \,:\, x^t A_\ve > 0\} ,
		\]
		while a positive circuit in $M_\ve$ corresponds to a vector $y \ge 0$, $y \ne 0$ with inclusion-minimal support such that $A_\ve y = 0$.
		Therefore, by Lemma~\ref{t:Gordan}, $C_\ve \ne \varnothing$ if and only if $M_\ve$ has no positive circuit, i.e., $M_\ve$ is acyclic.
		By the proof of Theorem~\ref{thm:acyclic_2_colorings}, this happens if and only if $\ve$ is transitive.
		
		It follows that the map $\varphi$, from the set of all transitive 2-colorings of $M$ to the set of all (nonempty) open chambers in the dual hyperplane arrangement, defined by $\varphi(\ve) := C_\ve$, is surjective.
		It is also clearly injective, since $\ve$ can be recovered from $C_\ve$:
		for any $e \in E$, $\ve(e)$ is the sign of $x^t e$ for at least one, equivalently all, of the vectors $x \in C_\ve$. 
		This completes the proof.  
	\end{proof}
	
	\begin{remark}
		Theorem~\ref{thm:Orlik1} 
		is related to a well-known theorem of 
		Orlik and Terao~\cite{OT}; see a brief discussion in Section~\ref{sec:OT}.
	\end{remark}
	
	\begin{corollary}
		For an acyclic oriented matroid $M$ 
		in an $n$-dimensional real vector space, 
		the number of transitive $2$-colorings of $M$ satisfies 
		\[
		|\EEE_M(2)|=(-1)^n \chi_{\mathcal A(M)}(-1),
		\]
		where $\chi_{\mathcal A(M)}$ is the characteristic polynomial of the hyperplane arrangement dual to $M$. 
	\end{corollary}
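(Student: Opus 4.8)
The plan is to obtain the corollary at once by combining Theorem~\ref{thm:Orlik1} with Zaslavsky's region-counting formula, so the argument is essentially a one-line deduction once the relevant objects are identified. First I would recall the setup of Theorem~\ref{thm:Orlik1}: since $M$ is acyclic it is in particular loopless, hence $0\notin E$, and the dual arrangement $\mathcal A(M)=\{H_e: e\in E\}$ with $H_e:=\{x\in\RR^n: x^t e=0\}$ is a genuine (central) arrangement of hyperplanes in $\RR^n$. Its chambers are by definition the connected components of $\RR^n\setminus\bigcup_{e\in E}H_e$, and these coincide with the nonempty open cells $C_\ve$ constructed in the proof of Theorem~\ref{thm:Orlik1}.

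Next I would invoke Theorem~\ref{thm:Orlik1} directly: the map $\ve\mapsto C_\ve$ is a bijection from $\EEE_M(2)$ onto the set of chambers of $\mathcal A(M)$, so $|\EEE_M(2)|$ equals the number of chambers of $\mathcal A(M)$. Finally I would apply the classical theorem of Zaslavsky (see, e.g., \cite{OT}): for any finite real hyperplane arrangement $\mathcal A$ in $\RR^n$ the number of chambers equals $(-1)^n\chi_{\mathcal A}(-1)$, where $\chi_{\mathcal A}$ is the characteristic polynomial. Applying this with $\mathcal A=\mathcal A(M)$ gives $|\EEE_M(2)|=(-1)^n\chi_{\mathcal A(M)}(-1)$, as desired.

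There is essentially no obstacle here; the only points needing (minimal) care are that ``chamber'' in Theorem~\ref{thm:Orlik1} is the same notion as ``region'' in Zaslavsky's theorem (open connected components of the complement of the union of the hyperplanes), and that the ambient dimension $n$ is the same in both statements, namely the dimension of the space over which $M$ is represented. One may additionally note that $\mathcal A(M)$ is central, so that $\chi_{\mathcal A(M)}(t)$ is divisible by $t^{n-\rank(M)}$ and the formula is insensitive to replacing $\RR^n$ by the span of $E$; this remark is, however, not needed for the identity itself.
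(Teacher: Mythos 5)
Your proposal is correct and follows exactly the paper's own argument: combine Theorem~\ref{thm:Orlik1} (transitive $2$-colorings biject with chambers of the dual arrangement) with Zaslavsky's formula $(-1)^n\chi_{\mathcal A}(-1)$ for the number of chambers. The only cosmetic difference is the reference for Zaslavsky's theorem (the paper cites \cite[Theorem 2.5]{Stanley_H} rather than \cite{OT}), and your extra remarks on centrality are harmless but unnecessary.
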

	
	\begin{proof}
		Let $\mathcal A$  be an hyperplane arrangement in $\RR^n$. 
		By Zaslavsky's Theorem~\cite[Theorem 2.5]{Stanley_H},     
		the number of chambers in $\mathcal A$ is equal to 
		$(-1)^n \chi_{\mathcal A}(-1)$. Theorem~\ref{thm:Orlik1} 
		completes the proof. 
	\end{proof}
	
	\begin{corollary}\label{cor:2-Coxeter} 
		For any finite Coxeter group $W$, 
		the number of transitive $2$-colorings of the set $\Phi^+(W)$ of positive roots 
		is equal to $|W|$. 
	\end{corollary}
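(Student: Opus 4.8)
The plan is to realize $\Phi^+(W)$ as an oriented matroid represented over $\RR$ and invoke Theorem~\ref{thm:Orlik1}. Work in the standard reflection representation of $W$ on a real vector space $V$ of dimension $n := \rank(W)$, so that $\Phi^+(W)$ is a finite set of nonzero vectors in $V$. First I would verify that the oriented matroid represented by $\Phi^+(W)$ is acyclic: pick a linear functional $f$ on $V$ that is positive on every simple root (for instance, a suitable sum of fundamental coweights); since every positive root is a nonnegative combination of simple roots, $f(\alpha) > 0$ for all $\alpha \in \Phi^+(W)$, so $0$ is not in the convex hull of $\Phi^+(W)$. By Observation~\ref{t:must_be_acyclic_intro}, the oriented matroid is acyclic, and hence Theorem~\ref{thm:Orlik1} applies.

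It remains to identify the dual hyperplane arrangement and count its chambers. By the construction in the proof of Theorem~\ref{thm:Orlik1}, this arrangement consists of the hyperplanes $\{x \in V : \langle x, \alpha \rangle = 0\}$ as $\alpha$ ranges over $\Phi^+(W)$; these are precisely the reflecting hyperplanes $H_{\alpha}$ of the reflections $s_{\alpha} \in W$, so the dual arrangement is the Coxeter arrangement $\mathcal A(W)$. (For a non-reduced root system several positive roots may share a hyperplane, but this affects neither the arrangement nor its chamber count, and Theorem~\ref{thm:Orlik1} is stated for multisets.) Since $W$ acts simply transitively on the chambers of $\mathcal A(W)$ --- a classical fact --- the number of chambers is $|W|$, and Theorem~\ref{thm:Orlik1} then yields that the number of transitive $2$-colorings of $\Phi^+(W)$ equals $|W|$.

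There is no genuine obstacle here: the corollary is essentially a dictionary translation of Theorem~\ref{thm:Orlik1}. The only steps requiring a little care are checking acyclicity (so that Theorem~\ref{thm:Orlik1} is applicable) and matching the dual arrangement with the Coxeter arrangement, including the harmless bookkeeping for non-reduced root systems.
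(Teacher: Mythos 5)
Your proposal is correct and follows the same route as the paper: apply Theorem~\ref{thm:Orlik1} to the acyclic oriented matroid represented by $\Phi^+(W)$ and identify the dual arrangement as the reflection arrangement, whose chambers are counted by $|W|$. The paper's proof is just a two-line version of this (citing Bj\"orner--Brenti for the chamber count); your explicit verification of acyclicity and the remark on non-reduced systems are sound but not a different argument.
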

	
	\begin{proof}
		By Theorem~\ref{thm:Orlik1}, the number of transitive $2$-colorings of $\Phi^+(W)$ is equal to the number of chambers in 
		the reflection arrangement associated to $W$, which, in turn, is equal to the number of elements in $W$~\cite[p.~123]{Bjorner-Brenti}.
	\end{proof}
	
	
	
	
	\begin{corollary}\label{cor:acyclic_orientations}
		The number of transitive 2-colorings of an acyclic directed graph $\overrightarrow G$ is equal to the number of acyclic reorientations of $G$. 
	\end{corollary}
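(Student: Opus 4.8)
The plan is to realize $\overrightarrow G$ as an oriented matroid represented over $\RR$ and then obtain the count from Theorem~\ref{thm:Orlik1}, after identifying the relevant hyperplane arrangement as the graphic arrangement of the underlying graph.

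First I would fix the representation: writing $V=\{1,\dots,n\}$ for the vertex set of $\overrightarrow G$ and sending an edge directed from $i$ to $j$ to the vector $e_i-e_j\in\RR^n$, the oriented matroid of $\overrightarrow G$ is precisely the one represented by this finite (multi)set of vectors, and it is acyclic because, by Definition~\ref{def:transitive}, a positive circuit of this represented matroid is exactly a directed cycle of $\overrightarrow G$ (a minimal positive linear dependence among the vectors $e_i-e_j$ is a circulation supported on a directed cycle). The dual hyperplane arrangement is then $\{\,\{x\in\RR^n : x_i=x_j\}:\ \{i,j\}\text{ an edge of }G\,\}$, i.e.\ the graphic arrangement of the underlying undirected graph $G$, so by Theorem~\ref{thm:Orlik1} the number $|\EEE_{\overrightarrow G}(2)|$ of transitive $2$-colorings equals the number of chambers of this arrangement.

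Second, I would invoke the classical theorem of Stanley \cite{Stanley_H} that the number of regions of the graphic arrangement of $G$ equals the number of acyclic orientations of $G$. Finally, acyclic reorientations of the directed graph $\overrightarrow G$ in the sense of Theorem~\ref{thm:acyclic_2_colorings} --- those obtained by reversing a subset of the edges --- are in obvious bijection with acyclic orientations of $G$: reverse the set of edges on which $\overrightarrow G$ and the target orientation disagree. This is exactly the phenomenon flagged in Remark~\ref{rem:acyclic_2_colorings}: distinct edge-reversals of a genuine digraph yield distinct digraphs, so the factor $2^c$ that appears in Theorem~\ref{thm:acyclic_2_colorings} for abstract oriented matroids is absent here. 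Chaining the three equalities gives the corollary.

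I do not anticipate a real obstacle; the only point requiring care is the vocabulary matching in the last step --- acyclicity of the represented oriented matroid versus acyclicity of the digraph, and ``acyclic reorientation of $\overrightarrow G$'' versus ``acyclic orientation of $G$''. An alternative that stays entirely within this section is to bypass Theorem~\ref{thm:Orlik1}: in the proof of Theorem~\ref{thm:acyclic_2_colorings} the map $A\mapsto {}_{-A}M$ is $2^c:1$, and whether ${}_{-A}M$ is acyclic depends only on ${}_{-A}M$ and hence is constant on its fibers; therefore $|\EEE_{\overrightarrow G}(2)| = 2^c\cdot\#\{\text{acyclic reorientations of }M\text{ as oriented matroids}\} = \#\{A\subseteq E:\ {}_{-A}\overrightarrow G\text{ acyclic}\}$, which is the number of acyclic orientations of $G$.
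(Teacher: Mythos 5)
Your argument is correct, but your main route is genuinely different from the paper's. The paper's proof of Corollary~\ref{cor:acyclic_orientations} is exactly your closing alternative: combine Theorem~\ref{thm:acyclic_2_colorings} with Remark~\ref{rem:acyclic_2_colorings}, i.e.\ observe that the $2^c\!:\!1$ map $A\mapsto{}_{-A}M$ sends the transitive $2$-colorings onto the acyclic matroid reorientations, and that for a genuine digraph the $2^c$ subsets $A$ in each fiber produce $2^c$ \emph{distinct} digraphs, so counting digraph reorientations rather than matroid reorientations absorbs the factor. Your primary route instead detours through Theorem~\ref{thm:Orlik1}: represent the edges by the vectors $e_i-e_j$, identify the dual arrangement as the graphic arrangement $\{x_i=x_j\}$, and invoke the classical regions-equal-acyclic-orientations theorem. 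This is valid --- the identification of positive circuits with directed cycles and of the dual arrangement with the graphic arrangement are both standard and correctly stated --- but it imports an external result (regions of the graphic arrangement $=$ acyclic orientations) that the paper only uses \emph{after} this corollary, to deduce the $(-1)^nf_G(-1)$ formula; running your chain in the paper's order would make that later corollary circular, whereas as a standalone argument it is fine. What the paper's route buys is self-containedness and generality (it works for abstract acyclic oriented matroids, at the cost of the $2^c$); what your route buys is a conceptual explanation of \emph{why} the answer is an evaluation of the chromatic polynomial, via chambers and Zaslavsky. The one point you rightly flag --- matching ``acyclic reorientation of $\overrightarrow G$'' with ``acyclic orientation of $G$'' and matroid acyclicity with digraph acyclicity --- is handled correctly in your write-up.
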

	
	\begin{proof}
		Combine Theorem~\ref{thm:acyclic_2_colorings} with Remark~\ref{rem:acyclic_2_colorings}.
	\end{proof}
	
	
	
	Using a well-known result of Stanley, this corollary may be reformulated as follows.
	
	\begin{corollary} 
		For any acyclic directed graph $\overrightarrow G$ of order $n$, 
		the number of transitive $2$-colorings of $\overrightarrow G$ is equal to $(-1)^n f_G(-1)$, where $f_G(x)$ is the chromatic polynomial of the underlying undirected graph $G$.
	\end{corollary}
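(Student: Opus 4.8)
The plan is to read this off directly from Corollary~\ref{cor:acyclic_orientations} together with Stanley's classical reciprocity theorem for the chromatic polynomial, which is precisely the ``well-known result of Stanley'' alluded to above.

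First I would recall that, by Corollary~\ref{cor:acyclic_orientations}, the number of transitive $2$-colorings of $\overrightarrow G$ equals the number of acyclic reorientations of $G$. A reorientation is obtained by fixing a subset $A$ of the edge set and reversing exactly the edges of $A$; since every edge of $\overrightarrow G$ has a fixed (unordered) pair of endpoints, distinct subsets $A$ yield distinct directed graphs, and as $A$ ranges over all subsets of the edge set the reorientations range over \emph{all} orientations of the underlying undirected graph $G$. Hence the number of acyclic reorientations of $\overrightarrow G$ is exactly the number of acyclic orientations of $G$.

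Next I would invoke Stanley's theorem: for a graph $G$ on $n$ vertices with chromatic polynomial $f_G$, the number of acyclic orientations of $G$ equals $(-1)^n f_G(-1)$. Chaining the two equalities yields
\[
|\EEE_{\overrightarrow G}(2)| = \#\{\text{acyclic orientations of } G\} = (-1)^n f_G(-1),
\]
which is the assertion.

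There is essentially no obstacle here; the statement is a one-line deduction. The only points deserving a line of care are (i) verifying that the reorientations counted in Corollary~\ref{cor:acyclic_orientations} are genuinely distinct directed graphs, so that they biject with orientations of $G$ (immediate, as above), and (ii) observing that if $G$ has parallel edges then an acyclic orientation must orient any two parallel edges consistently (an inconsistent pair is a $2$-cycle), so the count of acyclic orientations is unchanged by deleting parallel edges, and $f_G$ is likewise insensitive to parallel edges. (Loops cannot occur, since $\overrightarrow G$ is acyclic.) With these remarks the proof is complete.
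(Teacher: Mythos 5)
Your proposal is correct and follows exactly the paper's own argument: combine Corollary~\ref{cor:acyclic_orientations} with Stanley's theorem that the number of acyclic orientations of $G$ is $(-1)^n f_G(-1)$. The extra remarks on identifying reorientations with orientations and on parallel edges are fine but not needed beyond what the cited corollary already provides.
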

	
	\begin{proof}
		By~\cite[Corollary 1.3]{Stanley_DM_73}, the number of acyclic orientations of an undirected graph $G$ is equal to $(-1)^n f_G(-1)$, where $f_G(x)$ is the chromatic polynomial of $G$.  Combining this with Corollary~\ref{cor:acyclic_orientations} completes the proof. 
	\end{proof}
	
	\begin{remark} 
		The number of transitive 2-colorings of an acyclic directed graph depends on the underlying undirected graph, but not on the orientation. 
		This is a unique phenomenon for $k=2$ colors. 
		For example, the number of maximal transitive colorings of an $n$-cycle with $0< m<n$ clockwise edges is $m(n-m)(n-1)!$,  
		thus depends on $m$.
	\end{remark} 
	

	\section{Enumeration of maximal partitions of complete graphs}\label{sec:maximal}

	\subsection{
		Gallai partitions of the complete graph}
	
	Let $K_n = (V,E)$ be the (undirected) {\em complete graph} on $n$ vertices. 
	Thus $V = [n] := \{1, \ldots, n\}$ 
	and $E = \{\{i,j\} \,:\, i,j \in V,\, i < j\}$.

	
	It is well-known that the graphic matroid of any graph is representable over any field, and that the rank of the graphic matroid of $K_n$ is $n-1$; the bases are exactly the spanning trees of $K_n$.
	Theorem~\ref{t.Gallai_max_eq_rank} thus implies the following result. 
	
	\begin{corollary}\label{cor:max_complete}
		For the complete graph $K_n$, the maximal number of colors in a Gallai coloring is
		\[
		g(K_n) = n-1.
		\]
	\end{corollary}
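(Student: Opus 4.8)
The plan is to derive this as an immediate consequence of Theorem~\ref{t.Gallai_max_eq_rank}, which identifies $g(M)$ with $\rank(M)$ for every loopless matroid $M$; the only thing that needs checking is that the relevant matroid is loopless and that its rank equals $n-1$. First I would observe that the complete graph $K_n$ is simple, so its graphic matroid has no circuit of size $1$ (a loop in the matroid would correspond to a loop-edge in the graph) and is therefore loopless, so Theorem~\ref{t.Gallai_max_eq_rank} applies and $g(K_n) = \rank(K_n)$, where I abbreviate the graphic matroid of $K_n$ by $K_n$.

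Next I would recall the standard description of the graphic matroid: its independent sets are the acyclic edge subsets (forests) and its bases are the spanning forests, which for the connected graph $K_n$ are exactly the spanning trees. Since every spanning tree on $n$ vertices has $n-1$ edges, we get $\rank(K_n) = n-1$. Combining this with the previous paragraph yields $g(K_n) = n-1$.

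There is essentially no obstacle here: both the looplessness of the graphic matroid of a simple graph and the rank formula are textbook facts (see, e.g., \cite{Oxley}), and the substantive work has already been done in the proof of Theorem~\ref{t.Gallai_max_eq_rank}. Alternatively, I would note that the statement is just the special case $c=1$ of Corollary~\ref{t.Gallai_max_graph}, so one could simply quote that corollary; I would present whichever phrasing is cleaner in context, but in any case the proof is a one-line reduction to results already established above.
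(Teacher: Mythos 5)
Your proposal is correct and matches the paper's own derivation: the paper likewise deduces Corollary~\ref{cor:max_complete} by noting that the graphic matroid of $K_n$ has rank $n-1$ (its bases being the spanning trees) and invoking Theorem~\ref{t.Gallai_max_eq_rank}, exactly as you do. Your alternative observation that this is the case $c=1$ of Corollary~\ref{t.Gallai_max_graph} is also consistent with the paper, since that corollary is proved the same way.
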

	
	Corollary~\ref{cor:max_complete} was proved in~\cite[Appendix]{ESS}; 
	see the discussion preceding~\cite[Theorem JL]{Gouge_etal}.

	\begin{definition}
		A Gallai coloring of $K_n$ is called {\em maximal} if it uses the maximal possible number of colors, namely $n-1$.
	\end{definition}
	
	Recall the notion of {\em Gallai partition} from Definition~\ref{def:Gallai_partition}. 
	In the special case of complete graphs, this was  introduced in~\cite{KST92}.
	Each Gallai coloring of $K_n$ gives rise to a partition of the edge set $E$ into nonempty color sets. 

	\begin{definition}
		A Gallai partition is {\em maximal} if it has the maximal possible number of blocks, namely $n-1$.    
	\end{definition}
	
	The main result of the current subsection is the following.
	
	\begin{theorem}\label{t.number_max_Gallai_partitions}
		The number of maximal Gallai partitions of $K_n$ $(n \ge 2)$ is equal to $(2n-3)!!$.
	\end{theorem}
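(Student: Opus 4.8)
The plan is to find a canonical recursive decomposition of maximal Gallai partitions of $K_n$ and convert it into the stated count; write $f(n)$ for the number of such partitions. Call an unordered partition $[n] = V_1 \sqcup V_2$ into two nonempty parts a \emph{splitting} of a partition $p$ of $E(K_n)$ if the complete bipartite graph $K_{V_1,V_2}$ between $V_1$ and $V_2$ is contained in a single block of $p$. The first step is a counting lemma: if $p$ is a maximal Gallai partition of $K_n$ and $\{V_1,V_2\}$ is a splitting, then $p$ restricts to maximal Gallai partitions of $K_{V_1}$ and of $K_{V_2}$, the $V_1$--$V_2$ edges form a single block equal to $K_{V_1,V_2}$, and the blocks of $p$ are precisely those of $p|_{V_1}$, those of $p|_{V_2}$, and this extra block. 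Indeed, $p|_{V_i}$ is again a Gallai partition (a cycle inside $V_i$ is a cycle of $K_n$), hence has at most $|V_i|-1$ blocks by Corollary~\ref{cor:max_complete}; since exactly one block of $p$ meets $K_{V_1,V_2}$ and $p$ has $n-1 = (|V_1|-1)+(|V_2|-1)+1$ blocks in total, each block of $p$ must meet exactly one of the three edge-classes $\binom{V_1}{2}$, $\binom{V_2}{2}$, $K_{V_1,V_2}$, and all these bounds are equalities.

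The second step is existence: every maximal Gallai partition $p$ of $K_n$ with $n \ge 2$ admits a splitting. The case $n = 2$ is immediate. For $n \ge 3$, apply Gallai's structure theorem (see~\cite{Gallai}, \cite{GS04}) to a coloring realizing $p$ to obtain $[n] = W_1 \sqcup \cdots \sqcup W_m$ with $m \ge 2$ whose quotient coloring of $K_m$ uses at most two colors, with a single color between each pair of parts. Counting colors, $n-1 \le 2 + \sum_i (|W_i|-1) = 2 + (n-m)$, so $m \le 3$. If $m = 2$ then $\{W_1,W_2\}$ is a splitting; if $m = 3$ the quotient $K_3$ must use exactly two colors, one of which, by pigeonhole, appears on two of its three edges, and these two edges share a common part $W_j$, so that $\{W_j,\, [n]\setminus W_j\}$ is a splitting.

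The third step is uniqueness, which follows quickly from the counting lemma. If $\{V_1,V_2\}$ and $\{W_1,W_2\}$ are splittings of a maximal Gallai partition $p$, then all $W_1$--$W_2$ edges lie in one block $B$ of $p$, and by the lemma $B$ is either $K_{V_1,V_2}$ or a block contained in some $\binom{V_i}{2}$; the latter would force $[n] = W_1\cup W_2 \subseteq V_i$, which is impossible, so $B = K_{V_1,V_2}$. Then every $W_1$--$W_2$ edge is a $V_1$--$V_2$ edge, and fixing one vertex of $W_1$ shows $W_1$ and $W_2$ are contained in the two parts $V_1, V_2$, whence $\{W_1,W_2\} = \{V_1,V_2\}$. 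Consequently $p \mapsto (\{V_1,V_2\}, p|_{V_1}, p|_{V_2})$, where $\{V_1,V_2\}$ is the unique splitting of $p$, is a bijection from maximal Gallai partitions of $K_n$ ($n \ge 2$) onto triples consisting of a partition $[n] = V_1 \sqcup V_2$ into nonempty parts together with maximal Gallai partitions of $K_{V_1}$ and $K_{V_2}$; the inverse sends $(\{V_1,V_2\}, p_1, p_2)$ to $p_1 \sqcup p_2 \sqcup \{K_{V_1,V_2}\}$, which is Gallai because any cycle meeting both $V_1$ and $V_2$ contains at least two edges of $K_{V_1,V_2}$.

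Setting $f(1) := 1$, this bijection gives $f(n) = \tfrac12 \sum_{k=1}^{n-1} \binom{n}{k} f(k) f(n-k)$ for $n \ge 2$; equivalently, the exponential generating function $F(x) = \sum_{n \ge 1} f(n)\, x^n/n!$ satisfies $F = x + \tfrac12 F^2$, so $F = 1 - \sqrt{1-2x}$ and $f(n) = (2n-2)!/\bigl(2^{n-1}(n-1)!\bigr) = (2n-3)!!$. (Alternatively, one can verify the recursion directly: after substituting $(2k-3)!! = (2k-2)!/(2^{k-1}(k-1)!)$ it reduces to the Catalan convolution $\sum_{j=0}^{n-2} C_j C_{n-2-j} = C_{n-1}$.) I expect the existence step to be the main obstacle, as it is where Gallai's structure theorem must be combined with the color count to pin the number of parts down to at most three; once a splitting is known to exist, both the counting lemma and the uniqueness argument are short.
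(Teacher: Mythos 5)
Your proof is correct, but it takes a genuinely different route from both arguments in the paper. You count via the recursive ``splitting'' decomposition: every maximal Gallai partition has a unique unordered bipartition $\{V_1,V_2\}$ of the vertex set whose crossing edges form a single block, giving the convolution $f(n)=\tfrac12\sum_k\binom{n}{k}f(k)f(n-k)$ and the exponential generating function $1-\sqrt{1-2x}$. The paper instead gives (i) a double count of pairs (partition, rainbow hamiltonian path), combining Lemma~\ref{t.number_rainbow_hamiltonian_paths} (each maximal Gallai coloring has exactly $2^{n-1}$ rainbow hamiltonian paths) with Lemma~\ref{t.number_partitions_per_hamiltonian_path} (exactly $C_{n-1}$ maximal partitions make a fixed path rainbow), so that $2^{n-1}p_n=n!\,C_{n-1}$; and (ii) an extension argument, Lemma~\ref{lem:extend_Gallai_partition}, showing each maximal Gallai partition of $K_{n-1}$ extends in exactly $2n-3$ ways. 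Your key structural input --- existence and uniqueness of the bipartite block --- is exactly the paper's Lemma~\ref{lem:bipartite} (quoted from Gouge et al.), which you rederive from the classical Gallai structure theorem plus a color count forcing at most three parts; that derivation is sound, though you could have cited the lemma directly. Your decomposition is in fact the bijection $\psi$ with binary total partition trees that the paper introduces later, in the bijective proof of Lemma~\ref{lem:Gallai-matchings}, so your argument is anticipated there but not used for the count itself. What each approach buys: the paper's hamiltonian-path route produces the Catalan lemma that is reused verbatim for the tournament count in Theorem~\ref{thm:Catalan}, and the extension lemma gives a clean local $(2n-3)$-to-one picture; your recursion is shorter, self-contained modulo the structure theorem, and directly exposes the tree/perfect-matching bijection that the paper needs anyway for its Schur-positivity results.
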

	
	Theorem~\ref{t.number_max_Gallai_partitions} 
	will be given two distinct proofs, one using hamiltonian paths and the other using complete bipartite subgraphs. Both proofs consist of a sequence of lemmas, some of which are of independent interest.
	
	\begin{remark}
		Gouge et al.~\cite{Gouge_etal} count Gallai colorings of $K_n$ up to renaming the colors as well as the vertices. 
		Gallai partitions, as defined above, correspond to renaming only the colors. Renaming the vertices may result in a different partition. 
	\end{remark} 
	
	\begin{definition}
		Let $\ve$ be a maximal Gallai coloring of $K_n$.
		An {\em $\ve$-rainbow hamiltonian path} is a (directed) path of length $n-1$, visiting each vertex exactly once,
		whose edges are assigned $n-1$ different colors by $\ve$.
	\end{definition}
	
	\begin{lemma}\label{t.hamiltonian_path}
		Every maximal Gallai coloring of $K_n$ $(n \ge 2)$ has an $\ve$-rainbow hamiltonian path.
	\end{lemma}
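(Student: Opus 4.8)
The plan is to proceed by induction on $n$, using Gallai's structure theorem for Gallai colorings of complete graphs (\cite{Gallai}; see also \cite{GS04}): for every $n\ge 2$, the vertex set of $K_n$ admits a partition $V=V_1\sqcup\cdots\sqcup V_m$ into $m\ge 2$ nonempty \emph{blocks} such that all edges between any two blocks $V_i,V_j$ have a single color $a_{ij}$, and the induced coloring of the complete graph on $\{V_1,\dots,V_m\}$ uses at most two colors. The base case $n=1$ is vacuous (the rainbow hamiltonian path is the single vertex), and it subsumes the trivial case $n=2$.

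For the inductive step, let $\ve$ be a maximal Gallai coloring of $K_n$ with $n\ge 2$, fix a Gallai partition $V_1,\dots,V_m$ as above, put $n_i:=|V_i|$, and let $r_i$ be the number of colors used by the restriction $\ve|_{V_i}$. This restriction is a Gallai coloring of $K_{n_i}$, so $r_i\le n_i-1$ by Corollary~\ref{cor:max_complete}. Every color of $\ve$ is either used inside some block or is one of the $q$ between-block colors, where $q\le 2$ (and $q=1$ when $m=2$); hence $\ve$ uses at most $\sum_i r_i+q\le (n-m)+q$ colors. Since $\ve$ uses exactly $n-1$ colors and $(n-m)+q\le n-1$ in all cases, this forces $m\in\{2,3\}$ and, simultaneously, that $r_i=n_i-1$ for every $i$ (so each $\ve|_{V_i}$ is a maximal Gallai coloring of $K_{n_i}$), that the color sets used inside distinct blocks are pairwise disjoint, and that the between-block colors are distinct from all within-block colors; in particular, for $m=3$ the induced coloring of $K_3$ uses exactly two colors, hence is not monochromatic.

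By the induction hypothesis, each block carries an $\ve|_{V_i}$-rainbow hamiltonian path $P_i$ (the single vertex, when $n_i=1$), whose set of edge colors is precisely the set of $n_i-1$ colors used inside $V_i$. Choose an ordering $V_{i_1},\dots,V_{i_m}$ of the blocks so that, when $m=3$, the colors $a_{i_1i_2}$ and $a_{i_2i_3}$ differ --- possible since the induced coloring of $K_3$ is not monochromatic --- and an arbitrary ordering when $m=2$. Concatenate $P_{i_1},\dots,P_{i_m}$ into one path $P$ of $K_n$ by joining an endpoint of $P_{i_j}$ to an endpoint of $P_{i_{j+1}}$ with an arbitrary $K_n$-edge between $V_{i_j}$ and $V_{i_{j+1}}$, which has color $a_{i_ji_{j+1}}$. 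Then $P$ is a hamiltonian path, and its edge colors are the $\sum_i(n_i-1)=n-m$ within-block colors (pairwise distinct across blocks) together with the $m-1$ colors $a_{i_1i_2},\dots,a_{i_{m-1}i_m}$ of the joining edges; the latter are distinct from all within-block colors and are mutually distinct (only one of them when $m=2$, and exactly the condition imposed on the ordering when $m=3$). Hence $P$ uses $(n-m)+(m-1)=n-1$ distinct colors, i.e.\ it is an $\ve$-rainbow hamiltonian path, completing the induction.

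The only genuinely delicate points are the color-counting that pins down $m\le 3$ together with the maximality of each $\ve|_{V_i}$, and --- in the case $m=3$ --- the need to order the blocks so that the two joining edges do not repeat a between-block color; both are handled above, and the rest is bookkeeping. (One may also record the clean auxiliary fact that, since $\ve$ has no rainbow cycle, any selection of one edge per color class is an acyclic set of $n-1$ edges of $K_n$, hence a spanning tree; the lemma asserts that this spanning tree can be chosen to be a path.)
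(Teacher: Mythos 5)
Your proof is correct, but it takes a genuinely different route from the paper's. The paper argues locally and extremally: it takes a longest $\ve$-rainbow path, extends it (using maximality of the coloring) to a rainbow spanning tree, locates a pendant edge of that tree hanging off the path, and uses the no-rainbow-triangle condition to insert the extra vertex into the path, contradicting the choice of a longest path. That argument is entirely self-contained, using only Definition~\ref{def:Gallai}. You instead invoke Gallai's structure theorem for rainbow-triangle-free colorings of complete graphs (\cite{Gallai}, \cite{GS04}) as a black box, and then let a color count do the work: since the reduced coloring on the $m\ge 2$ blocks uses $q\le 2$ colors and each block contributes at most $n_i-1$ colors, maximality forces $m\in\{2,3\}$, forces each block restriction to be maximal, and forces all the relevant color sets to be pairwise disjoint; induction plus a careful concatenation (ordering the three blocks in the $m=3$ case so the two joining colors differ, which is possible because the reduced $K_3$ is then necessarily $2$-colored) finishes the proof. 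Your counting is right, the edge cases ($n_i=1$, $m=3$) are handled, and the equality analysis is where the content lies. What your approach buys is structural information as a byproduct --- it essentially re-derives the decomposition underlying Lemma~\ref{lem:bipartite} --- at the cost of importing an external theorem; the paper's proof is more elementary and needs nothing beyond the definition. One point worth making explicit if you write this up: Gallai's theorem is stated for colorings with no rainbow \emph{triangle}, whereas Definition~\ref{def:Gallai} forbids rainbow cycles of all lengths; for $K_n$ (a chordal graph) these coincide, which is what licenses the application.
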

	
	\begin{proof}
		Assume that the longest $\ve$-rainbow path $P \subseteq E$ is of length $k-1$, namely visits $k$ vertices. If $k = n$, then $P$ is an $\ve$-rainbow hamiltonian path, and we are done. 
		Assume that $k < n$.
		
		Using the assumption that the coloring $\ve$ is maximal, extend $P$ to an $\ve$-rainbow set $T \subseteq E$ of size $n-1$ by adding edges of the missing colors. The set $T$ 
		contains no cycle, 
		since it is $\ve$-rainbow and $\ve$ is Gallai. Having size $n-1$, it is therefore a spanning tree of $K_n$; in particular, it is connected. Therefore there exists an edge $e \in T \setminus P$ which has precisely one vertex in common with the set of vertices of $P$. Denote the other vertex of $e$ by $v$, and the vertices of $P$, in order, by $v_1, \ldots, v_k$, starting from one of the endpoints of the path $P$. 
		
		The edge $e$ connects $v$ with one of $v_1, \ldots, v_k$ and has a {\em new} color, namely a color different from those of the edges of $P$. 
		Let $1 \le i \le k$ be the smallest integer such that the edge $\{v,v_i\}$ has a new color. 
		If $i = 1$ then $v, v_1, \ldots, v_k$ is the sequence of vertices of an $\ve$-rainbow path, contradicting the maximality of $k$. 
		Otherwise $i \ge 2$, and the color of $\{v,v_{i-1}\}$ is not new. Looking at the triangle $\{v,v_{i-1}\}, \{v_{i-1},v_i\}, \{v, v_i\}$, the color of $\{v,v_i\}$, which is new, is necessarily different from the colors of $\{v,v_{i-1}\}$ and of $\{v_{i-1},v_i\}$. Since $\ve$ is Gallai, the colors of $\{v,v_{i-1}\}$ and of $\{v_{i-1},v_i\}$ must be equal, and therefore $v_1, \ldots, v_{i-1}, v, v_i, \ldots, v_k$ is the sequence of vertices of an $\ve$-rainbow path, again contradicting the maximality of $k$.
		This completes the proof.
	\end{proof}
	
	\begin{lemma}\label{t.limited_color}
		Let $\ve$ be a maximal Gallai coloring of $K_n$, and let $v_1, \ldots, v_n$ be the sequence of vertices of an $\ve$-rainbow hamiltonian path. 
		Denote $c_i := \ve(\{v_i,v_{i+1}\})$ $(1 \le i \le n-1)$. Then:
		\begin{itemize}
			\item[(a)]
			For any $1 \le i < j \le n$,
			\[
			\ve(\{v_i,v_j\}) \in \{c_i, \ldots, c_{j-1}\} .
			\]
			\item[(b)]
			If $\ve(\{v_1,v_n\}) = c_k$ then,
			for any $1 \le i \le k$ and $k+1 \le j \le n$,
			\[
			\ve(\{v_i,v_j\}) = c_k .
			\]
		\end{itemize}
	\end{lemma}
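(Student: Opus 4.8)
The plan is to prove both statements by induction, leveraging Lemma \ref{t.hamiltonian_path} together with the triangle (rainbow-free) condition repeatedly. For part (a), I would induct on the ``gap'' $j - i$. The base case $j - i = 1$ is the definition of $c_i$. For the inductive step, consider $i < j$ with $j - i \ge 2$ and look at the triangle on vertices $v_i, v_{i+1}, v_j$. By the induction hypothesis applied to the pair $(i+1, j)$, the color $\ve(\{v_{i+1}, v_j\})$ lies in $\{c_{i+1}, \ldots, c_{j-1}\}$, and of course $\ve(\{v_i, v_{i+1}\}) = c_i$. Since $\ve$ is Gallai, this triangle is not rainbow, so $\ve(\{v_i, v_j\})$ must coincide with one of the other two edge colors; in either case it lands in $\{c_i, c_{i+1}, \ldots, c_{j-1}\}$, as desired. (One should keep in mind that the $c_i$ need not be distinct; the argument only uses set membership, so this causes no trouble.)

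For part (b), assume $\ve(\{v_1, v_n\}) = c_k$ and fix $1 \le i \le k$, $k+1 \le j \le n$; the goal is $\ve(\{v_i, v_j\}) = c_k$. First I would establish the ``extreme'' case $i = 1$, $j = n$, which is the hypothesis, and then bootstrap inward. A clean way is to first show $\ve(\{v_1, v_j\}) = c_k$ for all $k+1 \le j \le n$ by a downward induction on $j$: looking at the triangle $v_1, v_j, v_n$, by part (a) we have $\ve(\{v_j, v_n\}) \in \{c_j, \ldots, c_{n-1}\}$, a set of colors all with index $\ge j > k$, hence not equal to $c_k$ unless forced; combining with the rainbow-free condition and the fact that $\ve(\{v_1, v_n\}) = c_k$ forces $\ve(\{v_1, v_j\}) = c_k$, \emph{provided} we know $c_k$ does not already appear among $\{c_j, \ldots, c_{n-1}\}$. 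This last point is exactly where the hamiltonian path being \emph{rainbow} is essential: the colors $c_1, \ldots, c_{n-1}$ are all distinct, so $c_k \notin \{c_j, \ldots, c_{n-1}\}$ whenever $j > k$. With $\ve(\{v_1, v_j\}) = c_k$ in hand for all such $j$, one runs the symmetric argument on the other end: fix $j$ with $k+1 \le j \le n$ and induct on $i$ from $1$ up to $k$, using the triangle $v_1, v_i, v_j$ and part (a) applied to $(1, i)$, together with the distinctness of the $c_\ell$'s, to propagate the value $c_k$ to $\ve(\{v_i, v_j\})$.

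The main obstacle — and the only subtle point — is making sure at each triangle step that the rainbow-free condition actually \emph{forces} the conclusion rather than leaving two possibilities. In part (a) both possibilities are acceptable, so there is nothing to do. In part (b) one genuinely needs to rule out the ``wrong'' coincidence, and the lever for that is precisely that all the path colors $c_1, \ldots, c_{n-1}$ are pairwise distinct (the path is $\ve$-rainbow and $\ve$ uses exactly $n-1$ colors), so a color $c_k$ with small index $k$ cannot reappear as some $c_\ell$ with $\ell > k$. I would state this distinctness explicitly at the start of the proof as the key bookkeeping fact, and then the triangle arguments become routine. No other results beyond Lemma \ref{t.hamiltonian_path}, Definition \ref{def:Gallai}, and part (a) are needed for part (b).
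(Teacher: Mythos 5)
Your proof is essentially correct, but it follows a genuinely different route from the paper's. You work entirely with triangles: part (a) by induction on the gap $j-i$ using the triangle $v_i,v_{i+1},v_j$, and part (b) by propagating the value $c_k$ through triangles anchored at $v_1$ and $v_n$. The paper instead invokes the Gallai condition on a single \emph{long} cycle in each part: for (a) the cycle $v_i,v_{i+1},\ldots,v_j,v_i$, whose path-edges already carry $j-i$ distinct colors, so the chord must repeat one of them; for (b) the cycle $v_1,\ldots,v_i,v_j,\ldots,v_n,v_1$, whose edge colors are $c_1,\ldots,c_{i-1},c,c_j,\ldots,c_{n-1},c_k$ with the $c_\ell$ pairwise distinct and $c\in\{c_i,\ldots,c_{j-1}\}$ by (a), so the only possible repetition is $c=c_k$. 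The paper's argument is shorter (no induction, and (b) is a one-cycle computation), while yours only uses the no-rainbow-\emph{triangle} consequence of the Gallai hypothesis, which is formally weaker input; for $K_n$ the two are equivalent, so both are legitimate.

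One point in your write-up is actually wrong and, taken at face value, would break part (a): the parenthetical claim that ``the $c_i$ need not be distinct; the argument only uses set membership, so this causes no trouble.'' The colors $c_1,\ldots,c_{n-1}$ \emph{are} pairwise distinct, because the path is $\ve$-rainbow by hypothesis --- and your triangle step in (a) needs this. The non-rainbow condition on the triangle $v_i,v_{i+1},v_j$ forces $\ve(\{v_i,v_j\})$ to coincide with one of the other two edge colors \emph{only if} those two colors differ; if $c_i$ could equal $\ve(\{v_{i+1},v_j\})\in\{c_{i+1},\ldots,c_{j-1}\}$, the triangle would already be non-rainbow and the chord's color would be unconstrained. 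So the distinctness you correctly highlight as the key bookkeeping fact for part (b) is equally indispensable in part (a); you should state it once at the outset and delete the parenthetical. With that correction the proof is complete.
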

	
	\begin{proof}
		{(a)}
		Fix $ 1\le i < j \le n$.
		If $j-i = 1$ then, by definition, $\ve(\{v_i,v_{i+1}\}) = c_i$. 
		Assume that $j-i \ge 2$.
		Since $\ve$ is Gallai, it assigns the same color to at least two of the edges in the cycle $\{v_i,v_{i+1}\}$, $\ldots$, $\{v_{j-1},v_j\}$, $\{v_j,v_i\}$. 
		The edges $\{v_i,v_{i+1}\},$ $\ldots,$ $\{v_{j-1},v_j\}$ are assigned distinct colors, since they belong to an $\ve$-rainbow path.
		Therefore $\{v_j,v_i\}$ has the same color as one of the other edges, namely $\ve(\{v_i,v_j\}) \in \{c_i, \ldots, c_{j-1}\}$.
		
		{(b)}
		Fix $1 \le i \le k$ and $k+1 \le j \le n$, and denote $c := \ve(\{v_i,v_j\})$.
		Consider the cycle $\{v_1,v_2\}$, $\ldots$, $\{v_{i-1},v_i\}$, $\{v_i,v_j\}$, $\{v_j,v_{j+1}\}$, $\ldots$, $\{v_{n-1},v_n\}$, $\{v_n,v_1\}$. The colors assigned to the edges are, respectively, $c_1,\ldots,c_{i-1},c,c_j,\ldots,c_{n-1},c_k$.
		By (a) above, $c \in \{c_i,\ldots,c_{j-1}\}$.
		Since $\ve$ is Gallai, at least two of the edges in the cycle are assigned the same color. It follows that $c_k$ is equal to one of the other colors, but since $i \le k \le j-1$ the only option is $c_k = c$. Thus $\ve(\{v_i,v_j\}) = c_k$, as claimed.
	\end{proof}
	
	\begin{definition}
		Let $\ve: E \to C$ be a coloring of the edge set $E$ of $K_n$. 
		A color $c \in C$ is called {\em singleton} if there is a unique edge with that color.
	\end{definition}
	
	\begin{lemma}\label{t.singleton}
		Every maximal Gallai coloring of $K_n$ $(n \ge 2)$ has a singleton color.
	\end{lemma}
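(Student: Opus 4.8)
The plan is to prove the statement by induction on $n$, using the rainbow hamiltonian path of Lemma~\ref{t.hamiltonian_path} and the color restrictions of Lemma~\ref{t.limited_color} as the main tools. The base case $n=2$ is trivial: $K_2$ has a single edge, so its unique color is a singleton. For the inductive step, fix $n \ge 3$ and a maximal Gallai coloring $\ve$ of $K_n$. Choose, via Lemma~\ref{t.hamiltonian_path}, an $\ve$-rainbow hamiltonian path $v_1, \ldots, v_n$ and set $c_i := \ve(\{v_i, v_{i+1}\})$, so that $c_1, \ldots, c_{n-1}$ are exactly the $n-1$ colors used by $\ve$. Because all $n-1$ colors already occur on the path, $\ve(\{v_1,v_n\}) = c_k$ for some $1 \le k \le n-1$ (this is also the case $i=1$, $j=n$ of Lemma~\ref{t.limited_color}(a)). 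Put $L := \{v_1, \ldots, v_k\}$ and $R := \{v_{k+1}, \ldots, v_n\}$. Then Lemma~\ref{t.limited_color}(b) says every edge between $L$ and $R$ has color $c_k$, while Lemma~\ref{t.limited_color}(a) forces every edge within $L$ to have a color in $\{c_1, \ldots, c_{k-1}\}$ and every edge within $R$ to have a color in $\{c_{k+1}, \ldots, c_{n-1}\}$; these three color sets are pairwise disjoint.

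Since $|L| + |R| = n \ge 3$, at least one of $L$, $R$ has two or more vertices. Reversing the path $v_1,\dots,v_n$ replaces $k$ by $n-k$ and interchanges the roles of $L$ and $R$, so we may assume $|L| = k \ge 2$. The restriction of $\ve$ to the complete graph on the vertex set $L$ is again a Gallai coloring (a restriction of a Gallai coloring is one), it uses only colors from $\{c_1, \ldots, c_{k-1}\}$, and the $\ve$-rainbow subpath $v_1, \ldots, v_k$ witnesses that it uses all $k-1$ of them; hence it is a maximal Gallai coloring of $K_k$, with $2 \le k < n$. By the inductive hypothesis it has a singleton color $c_m$ (so $m \le k-1$), borne by a single edge inside $L$. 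This $c_m$ cannot reappear among the $L$--$R$ edges (all colored $c_k \ne c_m$) or among the edges inside $R$ (colored from $\{c_{k+1}, \ldots, c_{n-1}\}$, which does not contain $c_m$), so $c_m$ is a singleton color of the whole coloring $\ve$, completing the induction.

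I expect the only delicate point to be the bookkeeping in the last step: one must confirm that the restriction to $L$ genuinely attains the maximal number $k-1$ of colors (this is precisely where the $\ve$-rainbow subpath is used), and one must dispose of the degenerate cases $k=1$ and $k=n-1$ by passing to the half with at least two vertices via path reversal. Beyond that, the proof is a direct assembly of Lemmas~\ref{t.hamiltonian_path} and~\ref{t.limited_color}; no new combinatorial input is needed.
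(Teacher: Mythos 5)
Your proof is correct and follows essentially the same route as the paper's: induction on $n$, extraction of a rainbow hamiltonian path via Lemma~\ref{t.hamiltonian_path}, the split at $k$ given by Lemma~\ref{t.limited_color}, and the observation that the restriction to whichever side has at least two vertices is again maximal Gallai, so its singleton color (from the inductive hypothesis) survives as a singleton of the whole coloring. The only cosmetic difference is that you reduce to the case $k \ge 2$ by reversing the path, whereas the paper treats the two sides symmetrically; the substance is identical.
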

	
	\begin{proof}
		By induction on $n$.
		The claim clearly holds for $n = 2$.
		Let $n > 2$, and assume that the claim holds for $K_m$ for all $2 \le m < n$.
		Let $\ve$ be a maximal Gallai coloring of $K_n$, and let $v_1, \ldots, v_n$ be the sequence of vertices of an $\ve$-rainbow hamiltonian path, which exists according to Lemma~\ref{t.hamiltonian_path}. 
		Denote $c_i := \ve(\{v_i,v_{i+1}\})$ $(1 \le i \le n-1)$ and assume, following Lemma~\ref{t.limited_color}(a), that $v(\{v_1,v_n\}) = c_k$.
		
		By Lemma~\ref{t.limited_color}(b), 
		$\ve(\{v_i,v_j\}) = c_k$ for any $1 \le i \le k$ and $k+1 \le j \le n$. 
		By Lemma~\ref{t.limited_color}(a), 
		$\ve(\{v_i,v_j\})\in \{c_1,\ldots,c_{k-1}\}$ for any $1 \le i < j \le k$, and 
		$\ve(\{v_i,v_j\})\in \{c_{k+1},\ldots,c_{n-1}\}$ for any $k+1 \le i < j \le n$.
		Since $n > 2$, at least one of $k$ and $n-k$ is larger than $1$.
		If $k \ge 2$ then the restriction of $\ve$ to the complete graph $K_k$ on the vertices $v_1,\ldots,v_k$ is a Gallai coloring that uses exactly the colors $c_1,\ldots,c_{k-1}$, and is therefore maximal. Since $k \le n-1$, the induction hypothesis implies that this restriction has a singleton color. This color is not used outside $K_k$, and is therefore a singleton color of $\ve$, as required.
		Similarly, if $n-k \ge 2$ then the restriction of $\ve$ to the complete graph $K_{n-k}$ on the vertices $v_{k+1},\ldots,v_n$ is maximal Gallai, and has a singleton color which is also singleton for $\ve$ itself.
		This completes the proof.
	\end{proof}
	
	Lemma~\ref{t.limited_color}(b) also implies the following well-known result, which will be used in 
	the proof of Lemma~\ref{lem:extend_Gallai_partition}
	and in a bijective proof of Lemma~\ref{lem:Gallai-matchings}.
	
	\begin{lemma}\label{lem:bipartite}\cite[Corollary 2.5]{Gouge_etal} 
		Any maximal Gallai coloring of $K_n$ $(n \ge 2)$ has a unique color $c$ such that the edges colored by $c$ span a complete bipartite graph on $n$ vertices.
		This is the only color that ``touches'' every vertex of $K_n$. 
		On each of the two parts, the induced coloring is also maximal Gallai.
	\end{lemma}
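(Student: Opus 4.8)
The plan is to read everything off Lemma~\ref{t.limited_color}, which is precisely the mechanism that forces a color class to be complete bipartite. First I would fix a maximal Gallai coloring $\ve$ of $K_n$ and, by Lemma~\ref{t.hamiltonian_path}, choose an $\ve$-rainbow hamiltonian path $v_1,\ldots,v_n$, writing $c_i:=\ve(\{v_i,v_{i+1}\})$. Since $\ve$ is maximal it uses exactly $n-1$ colors (Corollary~\ref{cor:max_complete}), and the $n-1$ path-edges already carry $n-1$ distinct colors, so $\{c_1,\ldots,c_{n-1}\}$ is the whole palette. By Lemma~\ref{t.limited_color}(a) we may write $\ve(\{v_1,v_n\})=c_k$ for some $1\le k\le n-1$; set $U:=\{v_1,\ldots,v_k\}$ and $W:=\{v_{k+1},\ldots,v_n\}$, both nonempty.

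Next I would extract the structure. By Lemma~\ref{t.limited_color}(b), every edge with one endpoint in $U$ and the other in $W$ is colored $c_k$; by Lemma~\ref{t.limited_color}(a), every edge inside $U$ has a color in $\{c_1,\ldots,c_{k-1}\}$ and every edge inside $W$ a color in $\{c_{k+1},\ldots,c_{n-1}\}$, so in particular no edge inside $U$ or inside $W$ is colored $c_k$. Hence the $c_k$-colored edges form exactly the complete bipartite graph with parts $U$ and $W$, which covers all $n$ vertices, and so $c_k$ touches every vertex. For uniqueness I would note that any color $c\ne c_k$ lies in $\{c_1,\ldots,c_{k-1}\}$ or in $\{c_{k+1},\ldots,c_{n-1}\}$: in the first case all $c$-edges lie inside $U$, so $c$ avoids every vertex of the nonempty set $W$, and symmetrically in the second case. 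Thus $c_k$ is the only color touching every vertex, hence also the only color whose edge set spans a complete bipartite graph on all $n$ vertices (any such color necessarily touches every vertex), which settles existence and uniqueness simultaneously; and since the $c_k$-edges form the connected graph $K_{U,W}$, the two parts $U,W$ are intrinsically determined by $c_k$ and do not depend on the path chosen.

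Finally I would check maximality of the restrictions. The restriction of $\ve$ to the induced complete subgraph on $U$ is Gallai, since any cycle lying in $U$ is a cycle of $K_n$ and already repeats a color; by Lemma~\ref{t.limited_color}(a) its palette is contained in $\{c_1,\ldots,c_{k-1}\}$, and it contains all of $c_1,\ldots,c_{k-1}$ because the sub-path $v_1,\ldots,v_k$ is rainbow, so it uses exactly $k-1=|U|-1$ colors and is maximal; the argument for $W$ is identical.

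I do not expect a genuine obstacle here: the whole statement is a bookkeeping consequence of Lemma~\ref{t.limited_color}. The only points that need a moment's care are that $U$ and $W$ are both nonempty (which is exactly why $k$ ranges over $1,\ldots,n-1$ rather than $0,\ldots,n$), and the observation that ``spans a complete bipartite graph on all $n$ vertices'' and ``touches every vertex'' pick out the same unique color, which is what lets the two uniqueness assertions collapse into one.
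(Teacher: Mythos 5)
Your proof is correct, and it follows exactly the route the paper indicates: the paper does not write out a proof (it cites \cite[Corollary 2.5]{Gouge_etal}), but it explicitly says the lemma follows from Lemma~\ref{t.limited_color}(b), and your argument is precisely that derivation, combining Lemma~\ref{t.hamiltonian_path} with both parts of Lemma~\ref{t.limited_color} to identify the color class $c_k$ as $K_{U,W}$ and to check uniqueness and maximality of the restrictions. The edge cases ($k=1$ or $k=n-1$, where one side is a single vertex) are handled correctly by your counting of $|U|-1$ and $|W|-1$ colors.
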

	
	Surprisingly, the number of $\ve$-rainbow hamiltonian paths is independent of $\ve$.
	
	\begin{lemma}\label{t.number_rainbow_hamiltonian_paths}
		Every maximal Gallai coloring of $K_n$ $(n \ge 2)$ has exactly $2^{n-1}$ (directed) rainbow hamiltonian paths.
	\end{lemma}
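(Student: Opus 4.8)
The plan is to prove the statement by strong induction on $n$, exploiting the complete bipartite decomposition provided by Lemma~\ref{lem:bipartite}. For a maximal Gallai coloring $\ve$ of $K_n$ write $h(\ve)$ for the number of directed $\ve$-rainbow hamiltonian paths; the goal is to show $h(\ve) = 2^{n-1}$ for every such $\ve$, with the (harmless) convention $h = 1$ in the degenerate case $n = 1$.

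First I would fix $\ve$ on $K_n$ with $n \ge 2$ and apply Lemma~\ref{lem:bipartite} to obtain the distinguished color $c$, together with the bipartition $V(K_n) = A \sqcup B$ into the two sides of the complete bipartite graph spanned by the $c$-colored edges; say $|A| = a$, $|B| = b$, so $a, b \ge 1$ and $a + b = n$. Since the $c$-colored edges are exactly the edges between $A$ and $B$, none of them lies inside $K_A$ or $K_B$; combined with the fact (again Lemma~\ref{lem:bipartite}) that the restrictions of $\ve$ to $K_A$ and to $K_B$ are themselves maximal Gallai, this shows that the set of colors used on $K_A$, the set used on $K_B$, and $\{c\}$ are pairwise disjoint of sizes $a-1$, $b-1$ and $1$, so that they partition the set of $n-1$ colors of $\ve$.

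Next I would determine the shape of an arbitrary directed $\ve$-rainbow hamiltonian path $P = (w_1, \dots, w_n)$. Having $n-1$ edges of $n-1$ distinct colors, $P$ uses every color exactly once; in particular it contains exactly one $c$-colored edge, and since the $c$-colored edges are precisely those joining $A$ and $B$, there is exactly one index $i$ with $w_i$ and $w_{i+1}$ on opposite sides of the bipartition. Were some $w_j$ with $j < i$ on the opposite side from $w_i$, a second such index would appear among $j, \dots, i-1$; hence $w_1, \dots, w_i$ all lie on one side and $w_{i+1}, \dots, w_n$ all on the other, and counting vertices forces these two blocks to be exactly $A$ and exactly $B$ (in one order or the other). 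Thus $P$ is a directed hamiltonian path on $K_A$, followed by the forced crossing edge $w_iw_{i+1}$, followed by a directed hamiltonian path on $K_B$; and by the disjointness of the three color classes, $P$ is $\ve$-rainbow precisely when each of the two sub-paths is rainbow for the induced coloring. Conversely, gluing any directed rainbow hamiltonian path on $K_A$, the forced $c$-edge, and any directed rainbow hamiltonian path on $K_B$ yields a directed $\ve$-rainbow hamiltonian path starting in $A$, and the two operations are mutually inverse.

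From this I get the recursion $h(\ve) = 2\, h(\ve|_{K_A})\, h(\ve|_{K_B})$, the factor $2$ recording whether $P$ starts on the $A$-side or the $B$-side (both occur, since $a, b \ge 1$). Since $a, b < n$ and $\ve|_{K_A}, \ve|_{K_B}$ are maximal Gallai, the induction hypothesis gives $h(\ve|_{K_A}) = 2^{a-1}$ and $h(\ve|_{K_B}) = 2^{b-1}$, so $h(\ve) = 2 \cdot 2^{a-1} \cdot 2^{b-1} = 2^{a+b-1} = 2^{n-1}$, which closes the induction (the base case $n = 1$ being the convention $h = 1$). The one point that needs genuine care is the claim that a rainbow hamiltonian path crosses the bipartition exactly once and therefore splits into two consecutive blocks; granting that, the rest is routine bookkeeping.
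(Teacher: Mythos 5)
Your proof is correct, but it takes a genuinely different route from the paper's. The paper argues by induction using the \emph{singleton color}: it invokes Lemma~\ref{t.singleton} to find a color used on a unique edge $\{v_k,v_{k+1}\}$, shows (via the no-rainbow-triangle condition) that contracting that edge yields a well-defined maximal Gallai coloring of $K_{n-1}$, observes that every rainbow hamiltonian path must use the singleton edge and hence descends to a rainbow hamiltonian path of the contraction, and that each such path lifts in exactly two ways; this gives the doubling recursion $h_n = 2h_{n-1}$. You instead split along the distinguished bipartite color class of Lemma~\ref{lem:bipartite}, prove that a rainbow hamiltonian path crosses the bipartition exactly once (because it uses each of the $n-1$ colors exactly once and the crossing edges are precisely the $c$-colored ones) and therefore decomposes as an $A$-path, the forced crossing edge, and a $B$-path, yielding the multiplicative recursion $h(\ve) = 2\,h(\ve|_{K_A})\,h(\ve|_{K_B}) = 2\cdot 2^{a-1}\cdot 2^{b-1}$. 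Your argument is self-contained given Lemma~\ref{lem:bipartite} and does not need the singleton-color lemma at all; it also exposes the block structure of rainbow hamiltonian paths, which is essentially the same decomposition the paper exploits later in the proof of Lemma~\ref{t.number_partitions_per_hamiltonian_path}. The paper's contraction argument, by contrast, gives finer local information (the colors of edges incident to the two endpoints of a singleton-colored edge agree pairwise), which is reused elsewhere. The one delicate point in your write-up, the single-crossing claim, is argued correctly: the disjointness of the three color palettes follows from maximality by a counting argument, and the uniqueness of the $c$-colored edge on the path forces the two monochromatic-side blocks to be consecutive and to exhaust $A$ and $B$.
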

	
	\begin{proof}
		By induction on $n$.
		The claim clearly holds for $n = 2$: $K_2$ has two directed hamiltonian paths.
		Let $n > 2$, and assume that the claim holds for $n-1$.
		Let $\ve$ be a maximal Gallai coloring of $K_n$,
		let  $v_1, \ldots, v_n$ be the sequence of vertices of an $\ve$-rainbow hamiltonian path, 
		and denote $c_i := \ve(\{v_i,v_{i+1}\})$ $(1 \le i \le n-1)$.
		Assume that $c_k$ is a singleton color of $\ve$; its existence is guaranteed by Lemma~\ref{t.singleton}.
		
		Let $1 \le i \le k-1$, and consider the triangle $\{v_i,v_k\}, \{v_k,v_{k+1}\}, \{v_i, v_{k+1}\}$. The color $\ve(\{v_k,v_{k+1}\})$ $= c_k$ is singleton, and is therefore distinct from $\ve(\{v_i,v_k\})$ and $\ve(\{v_i,v_{k+1}\})$. The coloring $\ve$ is Gallai, and therefore $\ve(\{v_i,v_k\}) = \ve(\{v_i,v_{k+1}\})$. We can similarly show that $\ve(\{v_k,v_j\}) = \ve(\{v_{k+1},v_j\})$ for any $k+2 \le j \le n$. If we contract the edge $\{v_k,v_{k+1}\}$ to a single vertex, the coloring $\ve$ therefore induces a well-defined coloring $\ve'$ of the resulting graph $K_{n-1}$. This is clearly a maximal Gallai coloring, which does not use the color $c_k$.
		By the induction hypothesis, $K_{n-1}$ has $2^{n-2}$ (directed) $\ve'$-rainbow hamiltonian paths.
		
		Every $\ve$-rainbow hamiltonian path in $K_n$ must contain the edge $\{v_k,v_{k+1}\}$, which has a singleton color; it therefore restricts to a $\ve'$-rainbow hamiltonian path in $K_{n-1}$.
		Conversely, every $\ve'$-rainbow hamiltonian path can be extended to an $\ve$-rainbow hamiltonian path by blowing the vertex $v_k = v_{k+1}$ to an edge, and this can be done in exactly two ways: the vertex $v_k = v_{k+1}$ cuts the path in $K_{n-1}$ into two sub-paths, each having this vertex as an end-point, and there is a choice which of them to connect (in $K_n$) to $v_k$, while connecting the other one to $v_{k+1}$. 
		Note that at least one of the two sub-paths contains more than one vertex, since $n > 2$.
		It follows that there are $2^{n-1}$ $\ve$-rainbow hamiltonian paths in $K_n$, as claimed.
	\end{proof}
	
	We are now nearly at a position to give two proofs of Theorem~\ref{t.number_max_Gallai_partitions}.
	In fact, each proof will require only one additional lemma.
	
	\begin{lemma}\label{t.number_partitions_per_hamiltonian_path}
		The number of maximal Gallai partitions of $K_n$ $(n \ge 2)$ for which a given hamiltonian path is rainbow, is the Catalan number $C_{n-1} = \frac{1}{n} \binom{2n-2}{n-1}$.
	\end{lemma}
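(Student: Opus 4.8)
The plan is to set up a bijection between maximal Gallai partitions of $K_n$ for which the fixed hamiltonian path $v_1,\dots,v_n$ is rainbow and a Catalan family — I would aim for binary trees on $n-1$ leaves, or equivalently triangulations of a convex $(n+1)$-gon, or non-crossing partitions. The structural input comes from Lemma~\ref{t.limited_color} and especially Lemma~\ref{lem:bipartite}: in any maximal Gallai coloring there is a distinguished color $c_k$ (where $\ve(\{v_1,v_n\})=c_k$) whose edges span a complete bipartite graph, cutting the vertex set into $\{v_1,\dots,v_k\}$ and $\{v_{k+1},\dots,v_n\}$, with the induced colorings on each part again maximal Gallai and using disjoint color sets $\{c_1,\dots,c_{k-1}\}$ and $\{c_{k+1},\dots,c_{n-1}\}$ respectively. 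Crucially, since the path is rainbow with consecutive edges colored $c_1,\dots,c_{n-1}$ in order, the split point $k$ is forced to be an interior index, the two blocks inherit rainbow hamiltonian sub-paths $v_1,\dots,v_k$ and $v_{k+1},\dots,v_n$, and the color sets are \emph{intervals} of the linearly ordered set $\{c_1<\dots<c_{n-1}\}$. This is exactly the recursive structure of a binary tree: a node labeled by the interval $[i,j]$ (of path-edge indices) splits at some $i\le k<j$ into $[i,k]$ and $[k+1,j]$.

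The steps I would carry out: (1) Given a maximal Gallai partition with the fixed path rainbow, read off the split color $c_k$ from $\ve(\{v_1,v_n\})$ and recurse on the two halves; argue by induction on $n$ that this is well-defined and that the data recorded (the nested sequence of split points) is precisely a full binary tree with $n-1$ leaves, the leaves corresponding to the $n-1$ path edges in order. (2) Conversely, given such a tree, define a coloring: each internal node of the tree, governing an interval $[i,j]$ of path-edge indices, corresponds to a color, and a chord $\{v_a,v_b\}$ with $a\le i<\dots$ — more precisely, by Lemma~\ref{t.limited_color}(a) the color of $\{v_a,v_b\}$ must lie in $\{c_a,\dots,c_{b-1}\}$, and I would define it to be the color of the unique internal node whose interval is the smallest one containing $[a,b-1]$ (equivalently, the least common ancestor in the tree of the leaves $a$ and $b-1$). (3) Verify this coloring is Gallai — no rainbow triangle: for $a<b<d$ the three chords $\{v_a,v_b\},\{v_b,v_d\},\{v_a,v_d\}$ get colors equal to the LCAs of $\{a,b-1\}$, $\{b,d-1\}$, $\{a,d-1\}$; since $\{a,d-1\}\supseteq\{a,b-1\}$ and $\{a,d-1\}\supseteq\{b-1,b,d-1\}$, the LCA of the whole triple equals one of the two smaller LCAs, so two of the three colors coincide. (4) Check the two maps are mutually inverse and that the path $v_1,\dots,v_n$ is indeed rainbow for the constructed coloring (path edge $i$ is a leaf, with its own singleton-at-that-level color). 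Finally invoke that full binary trees with $n-1$ leaves are counted by $C_{n-1}$.

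The main obstacle I expect is step (3) together with the well-definedness half of (1): proving that the recursive decomposition via Lemma~\ref{lem:bipartite} genuinely terminates in a \emph{unique} binary tree and that no information is lost — i.e., that the interior colors of each half, as intervals, cannot ``interleave'' across the bipartite split. This is where Lemma~\ref{t.limited_color}(b) does the work: it pins down that every cross chord gets exactly the split color, so the two sides are colored completely independently and the color intervals are genuinely nested rather than merely comparable. A secondary subtlety is bookkeeping the identification ``color $\leftrightarrow$ internal tree node $\leftrightarrow$ path-index interval'' consistently, since a maximal partition has exactly $n-1$ blocks and a binary tree on $n-1$ leaves has exactly $n-2$ internal nodes plus $n-1$ leaves — one must be careful that the $n-1$ colors correspond to the $n-1$ leaf-edges of the path (each path edge being the unique representative of its color along the path) and the LCA description then covers all chords; I would make this precise by induction rather than by a direct global argument.
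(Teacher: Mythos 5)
Your structural core is the same as the paper's: fix the path with edge colors $c_1,\dots,c_{n-1}$, use Lemma~\ref{t.limited_color}(b) to see that if $\ve(\{v_1,v_n\})=c_k$ then every cross edge of the bipartition $\{v_1,\dots,v_k\}\mid\{v_{k+1},\dots,v_n\}$ is colored $c_k$, and recurse independently on the two sides. The paper stops there: writing $a_n$ for the count, this decomposition immediately gives $a_n=\sum_{k=1}^{n-1}a_k a_{n-k}$ with $a_1=1$, hence $a_n=C_{n-1}$. Your plan to upgrade the recurrence to an explicit bijection with a Catalan family, together with the LCA rule for reconstructing the coloring, is a legitimate (if longer) alternative, and is essentially the germ of the tree bijection the paper develops later in Section~\ref{sec:bijective}.

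However, the tree model you name is off by one, and your splitting rule does not match the decomposition. Full binary trees with $n-1$ leaves are counted by $C_{n-2}$, not $C_{n-1}$; already for $n=3$ there is one such tree but two colorings. The discrepancy comes from the rule ``$[i,j]$ splits into $[i,k]$ and $[k+1,j]$'', in which no index is consumed at the internal node. In the actual decomposition the split index $k$ \emph{is} consumed: the path edge $\{v_k,v_{k+1}\}$ is itself a cross edge of the bipartition, so $c_k$ is the cross color, and the two sides carry only the path edges $1,\dots,k-1$ and $k+1,\dots,n-1$ (either of which may be empty, e.g.\ when $k=n-1$ --- a split your rule cannot represent, which is exactly the missing $C_{n-2}$-versus-$C_{n-1}$ case). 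The correct Catalan object is a binary tree whose $n-1$ nodes are the path-edge indices in symmetric (binary-search-tree) order, equivalently a full binary tree with $n$ leaves labelled by the vertices whose $n-1$ internal nodes carry the colors. With that fix your LCA rule becomes $\ve(\{v_a,v_b\})=c_m$, where $m$ is the minimum-depth node of the tree lying in $[a,b-1]$, and your step (3) argument (this node lies in $[a,b-1]$ or in $[b,d-1]$, forcing two of the three colors of any triangle to agree) goes through. One further small point: Definition~\ref{def:Gallai} forbids rainbow cycles of every length, not only rainbow triangles; for $K_n$ the two conditions are equivalent by chordality (the undirected analogue of Lemma~\ref{obs:trans}), but your verification should note this.
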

	
	\begin{proof}
		Let $v_1, \ldots, v_n$ be the sequence of vertices of some hamiltonian path in $K_n$, and fix a sequence of distinct colors $c_1,\ldots,c_{n-1}$. Maximal Gallai partitions for which this specific path is rainbow correspond bijectively to maximal Gallai colorings $\ve$ for which $\ve(\{v_i,v_{i+1}\}) = c_i$ $(1 \le i \le n-1)$.
		Let $a_n$ be the number of such colorings.
		
		Let $\ve$ be a maximal Gallai coloring of $K_n$ for which this hamiltonisn path is rainbow, and assume that $\ve(\{v_1,v_n\}) = c_k$, for some $1 \le k \le n-1$.
		By Lemma~\ref{t.limited_color}(b), $\ve(\{v_i,v_j\}) = c_k$ whenever $1 \le i \le k$ and $k+1 \le j \le n$.
		Also, by Lemma~\ref{t.limited_color}(a), the restriction of $\ve$ to the complete graph $K_k$ on the vertices $v_1, \ldots, v_k$ uses only the colors $c_1, \ldots, c_{k-1}$, and is maximal Gallai with the obvious rainbow hamiltonian path.
		Similarly for the restriction of $\ve$ to the complete graph $K_{n-k}$ on the vertices $v_{k+1}, \ldots, v_n$, using the colors $c_{k+1}, \ldots, c_{n-1}$. 
		It follows that 
		\[
		a_n = \sum_{k=1}^{n-1} a_k a_{n-k}
		\qquad (n \ge 2).
		\]
		This recurrence, together with the initial value $a_1 = 1$, show that $a_n = C_{n-1}$ for all $n \ge 2$, as claimed.
	\end{proof}
	
	\begin{remark}
		A result closely related to Lemma~\ref{t.number_partitions_per_hamiltonian_path} 
		is proved in~\cite[Corollary~2.7]{Gouge_etal}. 
	\end{remark}
	
	
	\begin{proof}[First proof of Theorem~\ref{t.number_max_Gallai_partitions}.]
		Let $p_n$ be the number of maximal Gallai partitions of $K_n$ $(n \ge 2)$. 
		Consider the pairs $(\pi, P)$, where $\pi$ is a maximal Gallai partition of $K_n$ and $P$ is a (directed) $\pi$-rainbow hamiltonian path.
		By Lemma~\ref{t.number_rainbow_hamiltonian_paths}, the number of such pairs is $2^{n-1} p_n$.
		On the other hand, the total number of (directed) hamiltonian paths in $K_n$ is $n!$. Therefore, by Lemma~\ref{t.number_partitions_per_hamiltonian_path}, the number of such pairs is $n! \cdot C_{n-1} = \frac{(2n-2)!}{(n-1)!}$.
		It follows that
		\[
		p_n 
		= \frac{(2n-2)!}{2^{n-1}(n-1)!} 
		= \frac{1 \cdot 2 \cdots (2n-3) \cdot (2n-2)}{2 \cdot 4 \cdots (2n-2)}
		= (2n-3)!! ,
		\]
		as claimed.
	\end{proof}

	
	\begin{lemma}\label{lem:extend_Gallai_partition}
		For $n \ge 2$, fix a complete subgraph $K_{n-1}$ of $K_n$. 
		Then there are exactly $2n-3$ maximal Gallai partitions of $K_n$ which extend any specified maximal Gallai partition of $K_{n-1}$.
	\end{lemma}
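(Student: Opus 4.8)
The plan is to convert each maximal Gallai partition of $K_n$ that extends the fixed partition $\pi$ of $K_{n-1}$ into an intrinsic combinatorial datum on $K_{n-1}$, and then count such data by induction via the bipartite decomposition of Lemma~\ref{lem:bipartite}. Concretely, I would place the fixed $K_{n-1}$ on the vertex set $[n-1]$, let $n$ be the extra vertex, and write $s_i:=\{i,n\}$ for the $n-1$ new (``spoke'') edges. Let $\widehat\pi$ be a maximal Gallai partition of $K_n$ restricting to $\pi$ on $E(K_{n-1})$. Each block of $\widehat\pi$ meets $E(K_{n-1})$ either in a block of $\pi$ or in $\varnothing$; since $\widehat\pi$ has $n-1$ blocks while $\pi$ has $n-2 = g(K_{n-1})$ blocks (Corollary~\ref{cor:max_complete}), exactly one block $N^{\ast}$ of $\widehat\pi$ is disjoint from $E(K_{n-1})$, so $\varnothing \neq N^{\ast} \subseteq \{s_1,\dots,s_{n-1}\}$, and each of the remaining $n-2$ blocks is a block of $\pi$ together with some (possibly empty) set of spokes.

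Next I would reduce to counting ``good'' subsets of $[n-1]$. Put $N := \{i \in [n-1] : s_i \in N^{\ast}\} \neq \varnothing$. Fixing $i_0 \in N$ and looking, for each $j \notin N$, at the triangle $\{i_0,j,n\}$ — whose spoke $s_{i_0}$ lies in $N^{\ast}$ and whose spoke $s_j$ does not — the Gallai condition forces $s_j$ into the $\pi$-block of $\{i_0,j\}$. Since the colour of $s_j$ cannot depend on the choice of $i_0 \in N$, this shows that the $\pi$-block of $\{i_0,j\}$ is the same for all $i_0 \in N$; call a nonempty $N$ with this property \emph{good}. It also recovers $\widehat\pi$ from $N$. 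Conversely, from any good $N$ I would build $\widehat\pi$ by declaring $\{s_i : i \in N\}$ a block and adjoining each remaining $s_j$ to the $\pi$-block of $\{i_0,j\}$ for a fixed $i_0 \in N$ (well defined by goodness); this has $n-1$ nonempty blocks and restricts to $\pi$, and a short case check on the new triangles $\{i,j,n\}$ — $i,j \in N$ (colours new, new, old); exactly one of $i,j$ in $N$ (colours new, $d$, $d$); neither in $N$ (this is literally the triangle $\{i_0,i,j\}$ of $K_{n-1}$) — shows none is rainbow. Hence extensions of $\pi$ correspond bijectively to good subsets of $[n-1]$.

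It then suffices to show that for every maximal Gallai coloring of $K_m$ the number of good subsets is $2m-1$; taking $m = n-1$ gives the asserted $2n-3$. I would induct on $m$. For $m=1$ the unique nonempty subset is vacuously good. For $m \geq 2$, apply Lemma~\ref{lem:bipartite}: there is a colour $c$ whose edges span a complete bipartite graph with parts $A \sqcup B = [m]$, the induced colorings on $A$ and on $B$ are again maximal Gallai, and comparing colour counts, $(|A|-1)+(|B|-1)+1 = m-1$, shows that the colour class internal to $A$, that internal to $B$, and $\{c\}$ are pairwise disjoint. If a good $N$ met both $A$ and $B$ without being all of $[m]$, pick $j \notin N$, say $j \in A$; then the $\pi$-block of $\{i,j\}$ would be $c$ for $i \in N \cap B$ but internal to $A$ for $i \in N \cap A$ — impossible. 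Hence a good $N$ lies in $A$, lies in $B$, or equals $[m]$; moreover a subset of $A$ is good for $K_m$ iff it is good for the induced coloring of $A$ (the requirement coming from $j \in B$ is automatic, every $A$-to-$B$ edge having colour $c$), and likewise for $B$. By induction the good sets number $(2|A|-1) + (2|B|-1) + 1 = 2m-1$.

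The one genuinely delicate step is the inductive one — forcing a good set that meets both parts $A$ and $B$ to exhaust $[m]$ — which hinges on the disjointness of the three colour classes supplied by maximality; the remainder is the triangle bookkeeping and the routine boundary cases ($N$ a singleton, $N=A$, $N=B$, $N=[m]$). As a byproduct the lemma yields $p_n = (2n-3)\,p_{n-1}$ with $p_2 = 1$, which re-proves Theorem~\ref{t.number_max_Gallai_partitions}.
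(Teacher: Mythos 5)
Your proof is correct, and it is essentially the paper's argument in different clothing: both reduce the count to the bipartite decomposition of Lemma~\ref{lem:bipartite} and run the same induction with the same three-way split and recursion $1+(2|A|-1)+(2|B|-1)=2m-1$. Your encoding of an extension by the ``good'' subset $N$ (the spokes forming the unique new block) replaces the paper's case analysis on the base colour of the extended colouring ($N=[m]$ is their ``new base colour'' case, $N\subseteq A$ or $N\subseteq B$ their ``new vertex joins a side'' cases), and has the minor virtue of making the bijection and the rainbow-triangle verification explicit where the paper leaves them implicit.
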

	
	\begin{proof}
		The proof is by induction on $n$.
		The claim is obvious for $n = 2$.
		Let $n > 2$, and assume that the claim holds for any $1 \le k \le n-1$.
		Fix a complete subgraph $K_{n-1}$ of $K_n$ and a maximal Gallai partition of $K_{n-1}$. Fixing colors for each of the $n-2$ blocks of the partition, as well as one additional color to be used outside $K_{n-1}$, defines a bijection between maximal Gallai partitions and maximal Gallai colorings, of both $K_n$ and $K_{n-1}$. For convenience we shall use, from now on, the language of colorings.
		
		Let $\ve$ be a maximal Gallai coloring of $K_{n-1}$.
		By Lemma~\ref{lem:bipartite}, there is a unique color $c$ such that the edges colored by $c$ in $\ve$ span a complete bipartite graph on $n$ vertices; call $c$ the {\em base color} of $\ve$.
		
		Let $\ve'$ be a maximal Gallai coloring of $K_n$ which extends $\ve$. It has all the old colors of $\ve$, plus one additional new color.
		We claim that the base color of $\ve'$ is either this new color, or the same as the base color of $\ve$. 
		Indeed, assume that the base color $c'$ of $\ve'$ is an old color which is not the base color $c$ of $\ve$.
		By Lemma~\ref{lem:bipartite}, since $c' \ne c$, there is at least one vertex $v$ of $K_{n-1}$ which the color $c'$ doesn't touch (in $K_{n-1}$). Also, since $c'$ is an old color, there is at least one old edge $e$ with this color. The two endpoints of $e$ are on distinct sides of the complete bipartite graph on $n$ vertices colored by $c'$; therefore one of them is not on the same side as $v$. The edge connecting this vertex to $v$ is therefore also colored $c'$, contradicting the choice of $v$. Therefore, indeed, $c'$ is either $c$, the base color of $K_{n-1}$, or the new color.
		
		If the base color $c'$ is the new color, then all the old vertices are on one side of the bipartite graph that $c'$ defines, and the new vertex constitutes the other side. It follows that all the new edges are colored $c'$. This indeed yields a (unique) maximal Gallai partition of $K_n$ extending the old one.
		
		On the other hand, if $c' = c$ then the new vertex of $K_n$ joins one of the two (nonempty) sides of the complete bipartite subgraph of the old $K_{n-1}$. Assume that the sizes of these sides are $k$ and $n-k-1$ $(1 \le k \le n-2)$. If the new vertex joins the side of size $k$, then all the edges connecting it to the other side are colored $c$. The (old) coloring of the complete subgraph $K_k$ is maximal Gallai, by Lemma~\ref{lem:bipartite}, and so is the (new) coloring of $K_{k+1}$ (on this side plus the new vertex). By the induction hypothesis, there are $2k-1$ ways to obtain such a new coloring of $K_{k+1}$. A similar argument holds if the new vertex belongs to the other side of $K_{n-1}$, yielding $2(n-k-1)-1$ extensions.
		
		It is easy to see that the above extended colorings yield distinct maximal Gallai partitions. Their number is 
		$1 + (2k-1) + (2(n-k-1)-1) = 2n-3$, as claimed.
	\end{proof}
	
	\begin{proof}[Second proof of Theorem~\ref{t.number_max_Gallai_partitions}]
		Since there is a unique (empty) maximal Gallai partition of $K_1$, the claim follows immediately from Lemma~\ref{lem:extend_Gallai_partition}, by induction on $n$. 
	\end{proof}

	\subsection{Transitive partitions of the tournament}
	
	Let ${\overrightarrow K_n} = (V,E)$ be 
	the {\em transitive tournament} on $n$ vertices. 
	This is a directed graph, with set of vertices $V = \{v_1, \ldots, v_n\}$ and set of directed edges $E = \{(v_i,v_j) \,:\, 1 \le i < j \le n\}$.
	This directed graph has no loops, and has a unique directed edge between any two distinct vertices, pointing from the vertex with smaller index to the vertex with a larger index.
	
	Recall 
	Theorem~\ref{t:max_transitive_eq_rank}. 
	The rank of the oriented graphic matroid of ${\overrightarrow K_n}$ is $n-1$, with (signed) bases corresponding to spanning trees of the underlying complete graph.
	
	\begin{corollary}\label{cor:tour_max}
		For the transitive tournament ${\overrightarrow K_n}$, the maximal number of colors in a transitive coloring is
		\[
		t({\overrightarrow K_n}) = n-1.
		\]
	\end{corollary}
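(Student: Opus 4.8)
The plan is to deduce this immediately from Theorem~\ref{t:max_transitive_eq_rank}, which asserts $t(M) = \rank(M)$ for every acyclic oriented matroid $M$. So the only two things I need to verify are (i) that the oriented matroid $M$ associated to $\overrightarrow K_n$ is in fact acyclic, and (ii) that its rank equals $n-1$.

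For (i), recall that a positive circuit of the oriented graphic matroid of a directed graph corresponds precisely to a directed cycle in that graph. Since $\overrightarrow K_n$ has edge set $\{(v_i,v_j) : i<j\}$, any closed walk would have to both increase and decrease indices, which is impossible; hence $\overrightarrow K_n$ has no directed cycle, so $M$ has no positive circuit and is acyclic. (Equivalently, one can invoke Observation~\ref{t:must_be_acyclic_intro}, noting that any realization of $\overrightarrow K_n$ as vectors arising from an acyclic orientation has $0$ outside the convex hull of the edge-vectors.) In particular $M$ admits a transitive coloring.

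For (ii), the circuits of $M$ are exactly the (signed) edge sets of cycles in the underlying undirected complete graph $K_n$, so the underlying unoriented matroid of $M$ is the graphic matroid of $K_n$; its bases are the spanning trees of $K_n$, each with $n-1$ edges, whence $\rank(M) = n-1$. Applying Theorem~\ref{t:max_transitive_eq_rank} gives $t(\overrightarrow K_n) = \rank(M) = n-1$.

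There is essentially no obstacle here: the statement is a direct specialization of the general rank formula, and the two verifications above are routine facts about graphic (oriented) matroids. The only mild care needed is to phrase the acyclicity check in terms of positive circuits rather than in terms of directed cycles, so that it lines up with the hypothesis of Theorem~\ref{t:max_transitive_eq_rank}; one could alternatively give the explicit transitive $(n-1)$-coloring produced by the recursive construction in the proof of that theorem (choosing the positive cocircuits to be the stars of successive sink vertices), but invoking the theorem is cleaner.
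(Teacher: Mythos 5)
Your proposal is correct and follows exactly the paper's route: the paper derives this corollary by invoking Theorem~\ref{t:max_transitive_eq_rank} together with the observation that the oriented graphic matroid of $\overrightarrow K_n$ has rank $n-1$ (bases being spanning trees of the underlying complete graph). Your additional explicit check that $\overrightarrow K_n$ is acyclic is a harmless elaboration of a hypothesis the paper leaves implicit.
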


	The following lemma is the transitive ananlogue of
	~\cite[Proposition 1.1]{Gouge_etal}. 
	
	\begin{lemma}\label{obs:trans}
		For a coloring $\ve $ of an acyclic directed graph, whose underlying graph is chordal, 
		the following are equivalent: 
		\begin{itemize}
			\item[(a)] 
			$\ve$ is transitive.
			\item[(b)] 
			For every triangle with directed edges 
			$(u,v)$, $(v,w)$ and $(u,w)$,
			\[
			\ve(u,w) \in 
			\{\ve(u,v),\ve(v,w)\} . 
			\]    
		\end{itemize}    
	\end{lemma}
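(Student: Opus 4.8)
The plan is to prove both implications: $(a)\Rightarrow(b)$ is immediate, and $(b)\Rightarrow(a)$ goes by induction on cycle length. For $(a)\Rightarrow(b)$, note that in an acyclic digraph any triangle has a unique source $u$ and a unique sink $w$, so its directed edges are $(u,v),(v,w),(u,w)$; its underlying $3$-cycle, traversed as $u\to v\to w\to u$, is a signed circuit $X$ with $X^+=\{(u,v),(v,w)\}$ and $X^-=\{(u,w)\}$, and $\ve(X^+)\cap\ve(X^-)\ne\varnothing$ says exactly that $\ve(u,w)\in\{\ve(u,v),\ve(v,w)\}$.

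For the converse I would first recall that the signed circuits of the oriented graphic matroid of the digraph are precisely the cycles of the underlying graph (simple, per the usual convention for chordal graphs) equipped with a choice of traversal direction, an edge lying in $X^+$ or in $X^-$ according as its orientation agrees or disagrees with the traversal; since $X$ and $-X$ impose the same condition, traversals may be fixed freely. Given a signed circuit $X$ with underlying cycle $Z$ of length $\ell$, I want $\ve(X^+)\cap\ve(X^-)\ne\varnothing$, and I would induct on $\ell$. The base case $\ell=3$ is exactly condition $(b)$, by the computation above read in reverse.

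For $\ell\ge 4$, chordality supplies a chord $e$ of $Z$, which splits $Z$ into two cycles $Z_1,Z_2$ with $Z_1\cap Z_2=\{e\}$, $Z_1\cup Z_2=Z\cup\{e\}$, both of length strictly less than $\ell$. Writing $P_i:=Z_i\setminus\{e\}$ for the two edge-disjoint paths of $Z$ joining the endpoints of $e$ (so $Z=P_1\cup P_2$ as edge sets), I would choose the traversals of $Z_1$ and $Z_2$ so that every edge of $P_i$ is traversed exactly as in the fixed traversal of $Z$; it then follows automatically that $e$ is traversed in opposite directions in $Z_1$ and in $Z_2$. Call the resulting signed circuits $X_1,X_2$. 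By the induction hypothesis, for each $i$ there are edges $g_i\in X_i^+$, $h_i\in X_i^-$ with $\ve(g_i)=\ve(h_i)$. If for some $i$ both $g_i$ and $h_i$ lie in $P_i$, then — since edges of $P_i$ keep their side on passing from $X_i$ to $X$ — these two edges already witness $\ve(X^+)\cap\ve(X^-)\ne\varnothing$. Otherwise, for each $i$ one of $g_i,h_i$ is the chord $e$, so there is an edge $f_i\in P_i$ with $\ve(f_i)=\ve(e)$ lying on the side of $X_i$ opposite to $e$. Then $f_1$ and $f_2$ are distinct (they lie on edge-disjoint paths) and share the color $\ve(e)$; since $e$ lies on opposite sides of $X_1$ and of $X_2$, and $f_i$ lies opposite $e$ in $X_i$, and $f_1,f_2$ retain their sides in $X$, the edges $f_1$ and $f_2$ lie on opposite sides of $X$. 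This closes the induction.

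The step I expect to be the crux is this last one: one must set up the two sub-traversals so that the shared chord is traversed oppositely in them, and it is precisely this that forces the two monochromatic edges $f_1,f_2$ onto opposite sides of $Z$. (The Gallai analogue of Gouge et al.\ carries out the same split-at-a-chord induction but needs only that $f_1\ne f_2$ share a color, with no sign constraint. The simplicity convention is also used, since a double edge would be a circuit of size $2$ forcing its two parallel edges to be monochromatic — a requirement that condition $(b)$, speaking only of triangles, cannot detect.)
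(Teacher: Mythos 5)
Your proof is correct and follows essentially the same route as the paper's: both reduce $(b)\Rightarrow(a)$ to a split of a longer cycle at a chord (supplied by chordality) into two shorter cycles, with the key observation that if each shorter cycle's monochromatic oppositely-oriented pair involves the chord $e$, then the two partners of $e$ lie on opposite sides of the original cycle. The only differences are presentational — the paper argues by minimal counterexample where you run a direct induction, and you make explicit the traversal/sign bookkeeping (that $e$ is traversed oppositely in the two sub-cycles) which the paper leaves implicit.
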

	
	\begin{proof}
		Assume that (a) holds, namely that $\ve$ is transitive. Thus each cycle contains two edges of the same color but opposite orientations. In a triangle with directed edges $(u,v)$, $(v,w)$ and $(u,w)$, 
		the edge $(v,w)$ has orientation opposite to that of the other two. Therefore, by transitivity, the color of $(v,w)$ is equal to the color of (at least) one of the others.  
		This is exactly (b).
		
		In the other direction, assume that (b) holds. If $\ve$ is not transitive, there is a cycle with no two edges of the same color but opposite orientations. Consider such a cycle $c$, of minimal length $\ell$. Because of (b) and the assumption that the directed graph is acyclic, necessarily $\ell \ge 4$. The underlying undirected graph is assumed to be chordal; thus the cycle $c$ has a chord $e$. 
		Edges of $c$, together with $e$, form two cycles, $c_1$ and $c_2$, each of length at least $3$ but less than $\ell$. By minimality of $\ell$, in each of the two cycles there are two edges of the same color and opposite orientations. By the assumption on $c$, one of these edges must always be $e$.
		Let $e_1$ ($e_2$) be an edge in $c_1$ ($c_2$) with the same color as $e$ but opposite orientation. Then $e_1,e_2 \in c$ have the same color and opposite orientations, contradicting the choice of $c$. This proves that (a) holds.
	\end{proof}
	
	\begin{lemma}\label{lem:complete-Gallai-transitive}
		Maximal transitive colorings of $\overrightarrow K_n$ correspond bijectively to 
		the maximal Gallai colorings of $K_n$ for which the path $\{v_1,v_2\},\ldots,\{v_{n-1},v_n\}$ is rainbow.
	\end{lemma}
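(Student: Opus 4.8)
The plan is to observe first that the edge set $\{(v_i,v_j):1\le i<j\le n\}$ of $\overrightarrow K_n$ is in canonical bijection with the edge set $\{\{v_i,v_j\}:1\le i<j\le n\}$ of $K_n$, so a coloring is literally the same datum for both graphs; under this identification I want to prove that a coloring $\ve$ is a maximal transitive coloring of $\overrightarrow K_n$ exactly when it is a maximal Gallai coloring of $K_n$ for which the path $\{v_1,v_2\},\dots,\{v_{n-1},v_n\}$ (the ``natural path'') is rainbow. Throughout I write $c_i:=\ve(\{v_i,v_{i+1}\})$.

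For the direction ``maximal transitive $\Rightarrow$ maximal Gallai with natural path rainbow'': if $\ve$ is transitive then by Example~\ref{example:tournament} (equivalently by Lemma~\ref{obs:trans}, since $K_n$ is chordal) we have $\ve(v_i,v_k)\in\{\ve(v_i,v_j),\ve(v_j,v_k)\}$ for all $i<j<k$; taking $j=i+1$ and inducting on $k-i$ gives $\ve(v_i,v_k)\in\{c_i,\dots,c_{k-1}\}$, so $\ve$ uses only the colors $c_1,\dots,c_{n-1}$, hence at most $n-1$ of them, with equality precisely when $c_1,\dots,c_{n-1}$ are pairwise distinct, i.e.\ when the natural path is rainbow. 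Since $t(\overrightarrow K_n)=n-1$ by Corollary~\ref{cor:tour_max}, a maximal transitive coloring uses exactly $n-1$ colors and therefore makes the natural path rainbow; and since a transitive coloring of an oriented matroid is in particular a Gallai coloring of the underlying matroid and $g(K_n)=n-1$ by Corollary~\ref{cor:max_complete}, it is then a maximal Gallai coloring of $K_n$.

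For the reverse direction I would take a maximal Gallai coloring $\ve$ of $K_n$ whose natural path is rainbow and show it satisfies the ``transitive triangle'' condition $\ve(\{v_i,v_k\})\in\{\ve(\{v_i,v_j\}),\ve(\{v_j,v_k\})\}$ for all $i<j<k$ (in the triangle on $v_i,v_j,v_k$ the vertex $v_i$ is the source and $v_k$ the sink, so $\{v_i,v_k\}$ plays the role of the edge $(u,w)$ in Lemma~\ref{obs:trans}); by Lemma~\ref{obs:trans} this makes $\ve$ transitive, and by the first direction it is then maximal transitive. I would prove the triangle condition by induction on $n$ (vacuous for $n\le2$): writing $\ve(\{v_1,v_n\})=c_m$ (possible since, by maximality and rainbowness, $\ve$ uses exactly $c_1,\dots,c_{n-1}$), Lemma~\ref{t.limited_color}(b) gives $\ve(\{v_a,v_b\})=c_m$ whenever $a\le m<b$, while Lemma~\ref{t.limited_color}(a) shows the restrictions of $\ve$ to $\{v_1,\dots,v_m\}$ and to $\{v_{m+1},\dots,v_n\}$ are maximal Gallai colorings of $K_m$ and $K_{n-m}$ with rainbow paths $v_1,\dots,v_m$ and $v_{m+1},\dots,v_n$. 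For $i<j<k$: if $k\le m$ or $i\ge m+1$ the relation follows from the induction hypothesis on the relevant half; otherwise $i\le m<k$ so $\ve(\{v_i,v_k\})=c_m$, and either $j\le m$ (whence $\ve(\{v_j,v_k\})=c_m$) or $j\ge m+1$ (whence $\ve(\{v_i,v_j\})=c_m$), so in all cases $\ve(\{v_i,v_k\})\in\{\ve(\{v_i,v_j\}),\ve(\{v_j,v_k\})\}$.

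The hard part is this reverse direction --- extracting the full ``transitive triangle'' condition from just the Gallai property together with rainbowness of the natural path. The key is Lemma~\ref{t.limited_color}: part (b) disposes of triangles straddling the pivot index $m$ (where $\ve(\{v_1,v_n\})=c_m$), and part (a) identifies the two halves as genuinely smaller maximal Gallai colorings so that the induction can run. A minor point to be careful about is the bookkeeping that matches the unoriented edge $\{v_i,v_k\}$ with the source-to-sink edge $(u,w)$ when invoking Lemma~\ref{obs:trans}.
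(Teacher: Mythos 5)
Your proof is correct, and the forward direction matches the paper's (the paper argues directly from the cycle $(v_i,v_{i+1}),\ldots,(v_{j-1},v_j),(v_i,v_j)$ that $\ve(v_i,v_j)\in\{c_i,\ldots,c_{j-1}\}$, which is equivalent to your induction on the triangle condition). Where you genuinely diverge is the reverse direction. You run an induction on $n$, splitting $K_n$ at the pivot $m$ with $\ve(\{v_1,v_n\})=c_m$ and using Lemma~\ref{t.limited_color}(b) for straddling triangles and the induction hypothesis for the two halves. The paper instead gets the triangle condition in one step, with no induction and no use of part (b): by Lemma~\ref{t.limited_color}(a), $\tilde\ve(v_i,v_j)\in\{c_i,\ldots,c_{j-1}\}$ and $\tilde\ve(v_j,v_k)\in\{c_j,\ldots,c_{k-1}\}$, and since the natural path is rainbow these two color sets are disjoint, so $\tilde\ve(v_i,v_j)\ne\tilde\ve(v_j,v_k)$; the Gallai (no rainbow triangle) condition on $v_i,v_j,v_k$ then forces $\tilde\ve(\{v_i,v_k\})$ to equal one of the other two colors, which is exactly the hypothesis of Lemma~\ref{obs:trans}. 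Both arguments are valid and rest on the same two lemmas; the paper's is shorter and avoids re-verifying that the restrictions to the two halves are maximal Gallai with rainbow subpaths (a fact you do establish correctly, and which the paper needs elsewhere anyway, in Lemma~\ref{t.number_partitions_per_hamiltonian_path}), while yours makes the recursive block structure of these colorings explicit. If you want to streamline, replace your induction by the disjointness observation.
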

	
	\begin{proof}
		Clearly, every transitive coloring of a directed graph yields a Gallai coloring of the underlying undirected graph. Also, in a maximal transitive coloring $\ve$ of $\overrightarrow K_n$, the path $(v_1,v_2),\ldots,(v_{n-1},v_n)$ is rainbow. 
		Indeed, 
		in the cycle $(v_i,v_{i+1}),\ldots,(v_{j-1},v_j),(v_i,v_j)$, the edge $(v_i,v_j)$ has an opposite orientation to all other edges.  Hence, by transitivity,  
		\[
		\ve(v_i,v_j) \in 
		\{\ve(v_i,v_{i+1}),\ldots,\ve(v_{j-1},v_j)\}
		\subseteq \{\ve(v_1,v_2),\ldots,\ve(v_{n-1},v_n)\} 
		\qquad (\forall\, i<j).
		\]
		Maximality means that the number of colors used is $n-1$, hence $|\{\ve(v_1,v_2),\ldots,\ve(v_{n-1},v_n)\}|=n-1$. Thus the path $(v_1,v_2),\ldots,(v_{n-1},v_n)$ is rainbow. 
		
		To complete the proof we need to show that every Gallai coloring $\tilde\ve$ of $K_n$ for which the path $\{v_1,v_2\}$$,$\ldots$,$$\{v_{n-1},v_n\}$ is rainbow
		yields a transitive coloring of $\overrightarrow K_n$. 
		By Lemma~\ref{t.limited_color}(a), 
		\[
		\tilde\ve(v_i,v_j)\in\{\tilde\ve(v_i,v_{i+1}),\dots,\tilde\ve(v_{j-1},v_j)\} \qquad(\forall\, i<j).
		\]
		Hence for every $i<j<k$, $\tilde\ve(v_i,v_j)\ne \tilde\ve(v_j,v_k)$. It follows that
		\[
		\tilde\ve(v_i,v_k)\in \{\tilde\ve(v_i,v_j),\tilde\ve(v_j,v_k)\}\qquad (\forall\, i<j<k).
		\]
		By Lemma~\ref{obs:trans}, this condition implies transitivity. 
	\end{proof}
	
	The following theorem is the directed analogue of Lemma~\ref{t.number_partitions_per_hamiltonian_path}.
	
	\begin{theorem}\label{thm:Catalan}
		The number of maximal transitive partitions of ${\overrightarrow K_n}$ $(n\ge 2)$ is equal to the Catalan number $C_{n-1}:=\frac{1}{n}\binom{2n-2}{n-1}$.
	\end{theorem}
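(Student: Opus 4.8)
The plan is to deduce the statement directly from the two structural results already established. First I would note that forgetting orientations identifies the edge set of $\overrightarrow K_n$ canonically with the edge set of $K_n$; under this identification a partition of the edges of $\overrightarrow K_n$ into $n-1$ nonempty blocks is literally the same object as a partition of the edges of $K_n$ into $n-1$ nonempty blocks. By Definitions~\ref{def:transitive_partition} and~\ref{def:Gallai_partition}, whether such a partition is a transitive partition of $\overrightarrow K_n$ (respectively a Gallai partition of $K_n$) depends only on the partition, and a \emph{maximal} one has exactly $n-1$ blocks by Corollary~\ref{cor:tour_max} (respectively Corollary~\ref{cor:max_complete}).

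Next I would bring in Lemma~\ref{lem:complete-Gallai-transitive}. Choosing any edge-coloring $\ve$ realizing a given partition, that lemma says $\ve$ is a maximal transitive coloring of $\overrightarrow K_n$ exactly when $\ve$, viewed on the undirected graph, is a maximal Gallai coloring of $K_n$ for which the hamiltonian path $\{v_1,v_2\},\ldots,\{v_{n-1},v_n\}$ is rainbow. Since the bijection of Lemma~\ref{lem:complete-Gallai-transitive} leaves the color classes (as sets of edges) untouched — it only records/forgets orientations — it commutes with renaming colors, and therefore descends to a bijection between maximal transitive partitions of $\overrightarrow K_n$ and maximal Gallai partitions of $K_n$ for which the path $v_1 v_2\cdots v_n$ is rainbow. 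Finally, applying Lemma~\ref{t.number_partitions_per_hamiltonian_path} with the given hamiltonian path taken to be $v_1,v_2,\ldots,v_n$ shows that the latter count equals $C_{n-1}=\frac1n\binom{2n-2}{n-1}$, which is therefore the size of $T_{n,n-1}$, proving the theorem.

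The only point needing genuine care is the claim that the coloring-level bijection of Lemma~\ref{lem:complete-Gallai-transitive} descends to partitions; this is routine once one observes it does not alter color classes. If a self-contained argument is preferred, one can instead mimic the proof of Lemma~\ref{t.number_partitions_per_hamiltonian_path}: writing $b_n$ for the number of maximal transitive partitions of $\overrightarrow K_n$ and realizing any maximal transitive coloring by $\ve$ with $c_i:=\ve(v_i,v_{i+1})$ (these $n-1$ colors being distinct, since the path is rainbow), Lemma~\ref{obs:trans} (complete graphs are chordal) together with Lemma~\ref{t.limited_color}(b) forces $\ve(v_i,v_j)=c_k$ for all $i\le k<j$ when $\ve(v_1,v_n)=c_k$, and the restrictions of $\ve$ to $\{v_1,\ldots,v_k\}$ and to $\{v_{k+1},\ldots,v_n\}$ are again maximal transitive; this yields the Catalan recurrence $b_n=\sum_{k=1}^{n-1} b_k b_{n-k}$ with $b_1=1$, whence $b_n=C_{n-1}$. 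Either way I expect no real obstacle beyond bookkeeping, since all the structural work was carried out in the preceding lemmas.
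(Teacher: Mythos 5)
Your proposal is correct and follows essentially the same route as the paper, which also proves this theorem by combining Lemma~\ref{lem:complete-Gallai-transitive} with Lemma~\ref{t.number_partitions_per_hamiltonian_path}. The only addition is that you spell out why the coloring-level correspondence descends to partitions, a point the paper leaves implicit.
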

	
	\begin{proof}
		Combine Lemma~\ref{t.number_partitions_per_hamiltonian_path} with 
		Lemma~\ref{lem:complete-Gallai-transitive}. 
	\end{proof}
	


	%
	
	
	
	\begin{corollary}\label{cor:bipartite_trans}
		Let $n \ge 2$.
		\begin{itemize} 
			\item[(a)] 
			For every maximal transitive coloring of $\overrightarrow K_n$ there exists a unique color $c$, 
			for which the edges colored by $c$ span a complete bipartite graph on $n$ vertices.
			\item[(b)] 
			There exists $1\le k<n$, such that the sides of this bipartite graph are 
			$\{v_i:\ 1\le i\le k\}$ and $\{v_i:\ k<i\le n\}$.
		\end{itemize}
	\end{corollary}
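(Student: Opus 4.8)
The plan is to reduce both claims to the undirected results already in hand. By Lemma~\ref{lem:complete-Gallai-transitive}, a maximal transitive coloring $\ve$ of $\overrightarrow K_n$ corresponds to a maximal Gallai coloring $\tilde\ve$ of the underlying complete graph $K_n$ for which the path $\{v_1,v_2\},\dots,\{v_{n-1},v_n\}$ is $\tilde\ve$-rainbow; in particular this path is an $\tilde\ve$-rainbow hamiltonian path in the sense of Lemma~\ref{t.hamiltonian_path}, so Lemmas~\ref{t.limited_color} and~\ref{lem:bipartite} apply to $\tilde\ve$ with the vertices taken in their natural order $v_1,\dots,v_n$, and the color classes of $\ve$ and $\tilde\ve$ coincide. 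Part~(a) is then immediate: Lemma~\ref{lem:bipartite} supplies a unique color $c$ whose edges span a complete bipartite graph on all $n$ vertices, namely the unique color that touches every vertex.

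For part~(b) I would identify $c$ explicitly. Write $c_i := \tilde\ve(\{v_i,v_{i+1}\})$ for $1\le i\le n-1$; these are pairwise distinct since the path is rainbow. Applying Lemma~\ref{t.limited_color}(a) with $i=1$, $j=n$ gives $\tilde\ve(\{v_1,v_n\})=c_k$ for a (unique) index $k$ with $1\le k\le n-1$. By Lemma~\ref{t.limited_color}(b), every edge $\{v_i,v_j\}$ with $1\le i\le k<k+1\le j\le n$ is colored $c_k$, so the color $c_k$ carries every edge of the complete bipartite graph between the initial segment $\{v_1,\dots,v_k\}$ and the final segment $\{v_{k+1},\dots,v_n\}$. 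Conversely, Lemma~\ref{t.limited_color}(a) forces every edge inside $\{v_1,\dots,v_k\}$ to have a color in $\{c_1,\dots,c_{k-1}\}$ and every edge inside $\{v_{k+1},\dots,v_n\}$ to have a color in $\{c_{k+1},\dots,c_{n-1}\}$; since $c_1,\dots,c_{n-1}$ are distinct, neither kind of edge is colored $c_k$. Hence the edges colored $c_k$ are \emph{exactly} the edges of the complete bipartite graph with sides $\{v_1,\dots,v_k\}$ and $\{v_{k+1},\dots,v_n\}$, and because $1\le k\le n-1$ both sides are nonempty and this graph spans all $n$ vertices. By the uniqueness in Lemma~\ref{lem:bipartite}, $c_k$ must be the distinguished color $c$ of part~(a), which is precisely assertion~(b).

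The whole argument is a bookkeeping combination of Lemmas~\ref{lem:complete-Gallai-transitive}, \ref{t.limited_color} and~\ref{lem:bipartite}, so I do not expect a genuine obstacle. The one point that needs care is the disjointness of the three color classes $\{c_1,\dots,c_{k-1}\}$, $\{c_k\}$ and $\{c_{k+1},\dots,c_{n-1}\}$, which rests entirely on the rainbow-path hypothesis transported by Lemma~\ref{lem:complete-Gallai-transitive}; once that is noted, identifying the base color with the initial/final split is automatic, and the edge case $n=2$ (giving $k=1$ and singleton sides) causes no trouble.
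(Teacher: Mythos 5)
Your proof is correct and follows essentially the same route as the paper: the paper's own proof of part~(b) simply says to combine Lemma~\ref{lem:complete-Gallai-transitive} with the proof of Lemma~\ref{t.number_partitions_per_hamiltonian_path}, and the latter contains exactly the argument you spell out (locating $k$ via $\tilde\ve(\{v_1,v_n\})=c_k$ and invoking Lemma~\ref{t.limited_color}(a),(b) to pin down the color classes). Your write-up merely makes the combination explicit, including the useful observation that the disjointness of $\{c_1,\dots,c_{k-1}\}$, $\{c_k\}$, $\{c_{k+1},\dots,c_{n-1}\}$ is what forces the $c_k$-class to be precisely the bipartite graph.
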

	
	\begin{proof}
		{(a)}    
		Combine Lemma~\ref{lem:bipartite} with     Lemma~\ref{lem:complete-Gallai-transitive}. 
		
		{(b)}  
		Combine the proof of  Lemma~\ref{t.number_partitions_per_hamiltonian_path} with Lemma~\ref{lem:complete-Gallai-transitive}. 
	\end{proof}
	
	
	
	
	
	We further prove the following refinement.
	
	\medskip
	
	Denote by $T_{n,n-1}$  the set of all maximal transitive partitions of the transitive tournament ${\overrightarrow K_n}$.
	
	\begin{definition}
		\begin{itemize}
			\item[(a)] 
			Consider a maximal transitive partition of ${\overrightarrow K_n}$ $p\in T_{n,n-1}$. 
			A directed edge $(v_i,v_j)$, $i<j$, is a 
			{\em minimal} edge in $p$ if every edge $(v_a,v_b)$, $a<b$, in the block of 
			$(v_i,v_j)$ satisfies $i\le a$. 
			\item[(b)] 
			Let $\edgeright (p)$ be the number of minimal edges in $p$.
		\end{itemize}
	\end{definition}
	
	Carlitz and Riordan~\cite{CR} defined a $q$-Catalan number $C_n(q)$ using the recursion
	\[
	C_{n+1}(q)  
	:=  \sum\limits_{k=0}^n q^{(k+1)(n-k)} C_k(q) C_{n-k}(q)\qquad(n\ge 0)
	\]
	with $C_0(q) := 1$.
	
	\begin{proposition}\label{prop:q-analog}
		For every $n > 1$
		\[
		\sum\limits_{p\in T_{n,n-1}} q^{\edgeright(p)}
		= q^{\binom{n}{2}} C_{n-1}(q^{-1}).
		\]
	\end{proposition}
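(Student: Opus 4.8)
The plan is to pass to a complementary statistic, reduce the generating function to a recursion using the recursive block structure of maximal transitive partitions, and then match that recursion with the Carlitz--Riordan recursion for $C_{n-1}(q)$. Since $\overrightarrow K_n$ has $\binom n2$ edges, I would set
\[
f_n(q) := \sum_{p \in T_{n,n-1}} q^{\binom n2 - \edgeright(p)},
\]
where $\binom n2 - \edgeright(p)$ counts the edges of $\overrightarrow K_n$ that are \emph{not} minimal in their block, with the convention $f_1(q) := 1$ (the empty partition of $\overrightarrow K_1$ contributes $q^0$). Then $\sum_{p \in T_{n,n-1}} q^{\edgeright(p)} = q^{\binom n2} f_n(q^{-1})$, so it suffices to prove $f_n(q) = C_{n-1}(q)$ for all $n \ge 1$.

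First I would record the recursive structure. By Lemma~\ref{lem:complete-Gallai-transitive} together with Corollary~\ref{cor:bipartite_trans}, each $p \in T_{n,n-1}$ ($n \ge 2$) determines a unique $1 \le k \le n-1$ so that one block $B_0$ of $p$ is exactly the set of $k(n-k)$ crossing edges $(v_i,v_j)$ with $i \le k < j$, while the remaining blocks split into those supported on $\{v_1,\dots,v_k\}$ --- forming a maximal transitive partition $p_L \in T_{k,k-1}$ of $\overrightarrow K_k$ --- and those supported on $\{v_{k+1},\dots,v_n\}$ --- forming, after decreasing all indices by $k$, a maximal transitive partition $p_R \in T_{n-k,n-k-1}$ of $\overrightarrow K_{n-k}$; conversely every triple $(k,p_L,p_R)$ comes from a unique such $p$. (This is the decomposition underlying the recurrence $a_n = \sum_{k=1}^{n-1} a_k a_{n-k}$ in the proof of Lemma~\ref{t.number_partitions_per_hamiltonian_path}.) The key point about the statistic is that ``being non-minimal in its block'' is insensitive to whether one views a block inside $\overrightarrow K_n$, inside $\overrightarrow K_k$ (same left endpoints), or inside $\overrightarrow K_{n-k}$ (left endpoints shifted by $-k$, an order-preserving operation); and inside $B_0$ the minimum left endpoint is $1$, attained by the $n-k$ edges $(v_1,v_j)$, so $B_0$ has $(k-1)(n-k)$ non-minimal edges. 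Hence $\binom n2 - \edgeright(p) = \bigl(\binom k2 - \edgeright(p_L)\bigr) + \bigl(\binom{n-k}2 - \edgeright(p_R)\bigr) + (k-1)(n-k)$, and summing $q$ to this power over all $p$ yields, for $n \ge 2$,
\[
f_n(q) = \sum_{k=1}^{n-1} q^{(k-1)(n-k)}\, f_k(q)\, f_{n-k}(q).
\]

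Finally I would prove $f_n = C_{n-1}$ by induction on $n$: $f_1 = 1 = C_0$, and for $n \ge 2$, substituting $f_k = C_{k-1}$ and then setting $j = k-1$ rewrites the recursion as $f_n(q) = \sum_{j=0}^{n-2} q^{j(n-1-j)} C_j(q)\, C_{n-2-j}(q)$. Reindexing this sum by $j \mapsto n-2-j$ and using $C_j C_{n-2-j} = C_{n-2-j} C_j$ turns it into $\sum_{j=0}^{n-2} q^{(j+1)(n-2-j)} C_j(q)\, C_{n-2-j}(q)$, which is precisely the Carlitz--Riordan expansion of $C_{n-1}(q)$ (their recursion with the parameter equal to $n-2$). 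Thus $f_n = C_{n-1}$, and the identity $\sum_{p \in T_{n,n-1}} q^{\edgeright(p)} = q^{\binom n2} C_{n-1}(q^{-1})$ follows.

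The hard part will be pinning down the recursive block decomposition of a maximal transitive partition precisely enough to conclude that it is a genuine bijection $T_{n,n-1} \leftrightarrow \bigsqcup_{k=1}^{n-1} T_{k,k-1} \times T_{n-k,n-k-1}$ --- in particular, that the ``base color'' block is exactly the set of crossing edges and that the non-base blocks live strictly on one side or the other; this rests on Corollary~\ref{cor:bipartite_trans} and on the recursive structure already established in the proof of Lemma~\ref{t.number_partitions_per_hamiltonian_path}. Once that is in hand, the statistic bookkeeping is routine, and the only remaining subtlety --- the apparent mismatch between the exponent $(k-1)(n-k)$ coming from the combinatorics and the exponent $(j+1)(n-2-j)$ in the Carlitz--Riordan recursion --- dissolves upon reindexing, using the $k \leftrightarrow n-k$ symmetry of the summand.
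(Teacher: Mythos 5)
Your proposal is correct and follows essentially the same route as the paper's proof: both pass to the complementary statistic counting non-minimal edges, use Corollary~\ref{cor:bipartite_trans} to split a maximal transitive partition into the crossing block (contributing $(k-1)(n-k)$ non-minimal edges) plus independent maximal transitive partitions of $\overrightarrow K_k$ and $\overrightarrow K_{n-k}$, and then match the resulting recursion with the Carlitz--Riordan recursion after a reindexing. The only differences are cosmetic (you reindex via $j=k-1$ and $j\mapsto n-2-j$ where the paper substitutes $k=n-t$ directly, and you spell out the bijectivity of the decomposition a bit more explicitly).
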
 
	
	\begin{proof}
		Denote the number of non-minimal edges in a transitive partition $p$ by non-$\edgeright(p)$.
		It suffices to prove that for every $n\ge 2$
		\[
		\sum\limits_{p\in T_{n,n-1}} q^{\text{non}-\edgeright(p)}=C_{n-1}(q).
		\]
		The proof is by induction on $n$.
		For $n=2$ there are one edge in $\overrightarrow K_2$ and statement clearly holds.
		
		Assume that the statement is correct for all $k\le n$. 
		Consider a transitive partition of $\overrightarrow K_{n+1}$. 
		By Corollary~\ref{cor:bipartite_trans}, there exists a unique $1\le t<n$, such that 
		the edges in the block containing $(v_1,v_{n+1})$ 
		are $\{(v_i,v_j):\ i\le t <j\}$. 
		Thus there are $(t-1)(n+1-t)$ non-minimal edges in this block. All other blocks are either in the tournament $\overrightarrow K_t$ spanned by the first $t$ vertices, or in the tournament $\overrightarrow K_{n+1-t}$ spanned by the last $n+1-t$ vertices. 
		By the induction hypothesis, 
		\[
		\sum\limits_{p\in T_{n+1,n}} q^{\text{non}-\edgeright(p)}
		= \sum\limits_{t=1}^n q^{(t-1)(n+1-t)} C_{t-1}(q)C_{n-t}(q).
		\]
		Letting $k:=n-t$ the RHS is equal to 
		\[
		\sum\limits_{k=0}^{n-1} q^{(n-1-k)(k+1)} C_{n-1-k}(q)C_{k}(q)
		= C_{n}(q). 
		\]
	\end{proof}

	\section{Schur-positivity}\label{sec:Schur}
	
	Recall from Section~\ref{sec:Descents_Schur} 
	the definition of a descent set map on transitive and Gallai  partitions and the resulting quasisymmetric generating functions $\Q(T_{n,k})$ and $\Q(G_{n,k})$.  
	In this section we prove 
	Theorems~\ref{m1g},~\ref{m3d} and~\ref{m3u}. In Subsection~\ref{sec:Schur1}, it is shown that for every positive integers $n$ and $k$, both quasisymmetric functions $\Q(T_{n,k})$ and $\Q(G_{n,k})$ 
	are  symmetric and Schur-positive. In Subsection~\ref{sec:Schur2}, the symmetric group characters corresponding to 
	$\Q(T_{n,n-1})$ and $\Q(G_{n,n-1})$ are 
	explicitely described.  A bijection which relates maximal Gallai partitions to perfect matchings is 
	described in Subsection~\ref{sec:bijective}. In Subsection~\ref{sec:Schur4}, it is shown that the distribution of the descent set on 
	transitive colorings of the tournament $\overrightarrow K_n$ 
	is equal to its distribution on indecomposable 321-avoiding permutations in the symmetric group $\symm_n$.


	\subsection{Proof of Theorem~\ref{m1g}}\label{sec:Schur1}

	\begin{definition} A subset $J\subseteq [n-1]$ is {\em sparse} if it does 
		not contain any consecutive pair of elements. 
	\end{definition}
	
	\begin{observation}\label{obs:sparse1}
		For every transitive (or Gallai) partition $p$ of the complete directed (or undirected) graph, $\Des(p)$ is sparse.
	\end{observation}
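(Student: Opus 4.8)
The plan is to argue by contradiction. Suppose $\Des(p)$ contains a consecutive pair $\{i,i+1\}$ for some $1\le i\le n-2$. By the definition of the descent set this means that the edge $a:=(i,i+1)$ and the edge $b:=(i+1,i+2)$ (or $\{i,i+1\}$ and $\{i+1,i+2\}$ in the undirected case) each form a singleton block of $p$; in particular they lie in two distinct blocks, each of size $1$. Put $c:=(i,i+2)$ (resp.\ $\{i,i+2\}$), so that $\{a,b,c\}$ is the edge set of the triangle on the vertices $i,i+1,i+2$, which is a circuit of size $3$ in the (oriented) graphic matroid of $K_n$ (resp.\ $\overrightarrow K_n$).

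The one real point is the following elementary observation, valid uniformly in both cases: the defining property of a Gallai (resp.\ transitive) partition, applied to this triangle circuit, forces some block of $p$ to contain at least two of the three edges $a,b,c$. For a Gallai partition this is immediate from Definition~\ref{def:Gallai_partition}, which gives a block $B_j$ with $|\{a,b,c\}\cap B_j|\ge 2$. For a transitive partition, one first notes that the signed circuit $X=(X^+,X^-)$ supported on $\{a,b,c\}$ has $X^+$ and $X^-$ both nonempty (because $\overrightarrow K_n$ is acyclic) and disjoint with union $\{a,b,c\}$; then Definition~\ref{def:transitive_partition} provides a block $B_j$ meeting both $X^+$ and $X^-$, hence containing at least two of $a,b,c$.

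To finish, I would observe that this contradicts the singleton hypothesis: since $a$ and $b$ are each alone in their block, neither shares a block with any other edge, so the block of $c$ is the only block that could possibly contain two triangle edges --- and it contains exactly one of $a,b,c$. Hence no block of $p$ contains two of $a,b,c$, contradicting the previous paragraph, and therefore $\Des(p)$ has no two consecutive elements, i.e.\ it is sparse. There is essentially no obstacle in this argument; the only small care needed is to invoke acyclicity in the transitive case so that both $X^+$ and $X^-$ are nonempty. One could alternatively route the same reasoning through the ``no rainbow triangle'' description of Gallai colorings and its transitive analogue from Example~\ref{example:tournament}, but the partition axioms give the cleanest path.
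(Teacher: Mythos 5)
Your proof is correct and follows essentially the same route as the paper: assume $i,i+1\in\Des(p)$, look at the triangle on the vertices $i,i+1,i+2$, and derive a contradiction because its three edges would lie in three distinct blocks (a rainbow triangle). The paper phrases the contradiction in the rainbow-triangle language while you invoke the partition axioms of Definitions~\ref{def:Gallai_partition} and~\ref{def:transitive_partition} directly (with the welcome extra care about $X^+,X^-$ both being nonempty), but the underlying argument is identical.
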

	
	\begin{proof}
		If $\Des(p)$ is not sparse then there exists an $i$ such that $i,i+1 \in \Des(p)$. By the definition of the descent set, it follows that the edges $(i,i+1)$ and $(i+1,i+2)$ form singleton blocks. Thus $(i,i+1)$, $(i,i+2)$ 
		and $(i,i+2)$ 
		belong to three different blocks, 
		and therefore form a rainbow triangle.
		This contradicts the assumption of $p$ being a transitive (or Gallai) partition.
	\end{proof}
	
	Denote 
	$g(n,k) := |G_{n,k}|$ 
	and $t(n,k) := |T_{n,k}|$.
	
	\begin{lemma}\label{lem:Des class cardinality}
		For every $n>k\ge 1$ and a sparse subset 
		$\varnothing \ne J\subseteq [n-1]$ 
		\[
		|\{p\in G_{n,k} \,:\, J \subseteq \Des(p) \}|
		= g(n-|J|,k-|J|) 
		\]
		and
		\[ 
		|\{p\in T_{n,k} \,:\, J \subseteq \Des(p) \}|
		= t(n-|J|,k-|J|). 
		\]
	\end{lemma}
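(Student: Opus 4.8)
The plan is to prove both identities simultaneously by exhibiting an explicit bijection between the set of partitions in $G_{n,k}$ (respectively $T_{n,k}$) whose descent set contains a fixed sparse subset $J$ and the set $G_{n-|J|,k-|J|}$ (respectively $T_{n-|J|,k-|J|}$). The guiding idea is that each element $i\in J$ forces the edge $(i,i+1)$ to be a singleton block, and by the structure results already established (Lemma~\ref{t.limited_color}, Lemma~\ref{lem:bipartite}, Corollary~\ref{cor:bipartite_trans}) a singleton edge of this consecutive type behaves like a contractible edge: contracting it reduces $n$ by one and $k$ by one while preserving the Gallai (resp.\ transitive) property, and this operation can be performed independently over all $i\in J$ because $J$ is sparse, so the edges $(i,i+1)$ for $i\in J$ are vertex-disjoint.

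First I would treat the Gallai case. Given $p\in G_{n,k}$ with $J\subseteq\Des(p)$, list the elements of $J$ and contract, one at a time, each edge $(i,i+1)$ for $i\in J$, identifying the vertices $i$ and $i+1$. Since $J$ is sparse these contractions commute and involve disjoint pairs of vertices, so the result is a well-defined complete graph $K_{n-|J|}$ on the quotient vertex set, which I relabel $1,\dots,n-|J|$ preserving order. The key point, already used in the proof of Lemma~\ref{t.number_rainbow_hamiltonian_paths}, is that contracting an edge bearing a singleton color induces a \emph{well-defined} edge coloring of the contracted graph: for each vertex $w$ adjacent to the contracted pair $\{i,i+1\}$, the triangle on $\{w,i,i+1\}$ together with the Gallai condition and the singleton-ness of the color of $(i,i+1)$ forces $\ve(w,i)=\ve(w,i+1)$. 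One checks that the contracted coloring is again Gallai (any circuit in the quotient lifts to a circuit in the original, possibly using the singleton color, and no new rainbow circuit can appear), that it uses exactly $k-|J|$ colors (exactly the $|J|$ singleton colors of the edges $(i,i+1)$, $i\in J$, disappear), and hence the contracted partition lies in $G_{n-|J|,k-|J|}$. Conversely, given a partition in $G_{n-|J|,k-|J|}$ one reverses the process: "blow up" each of the $|J|$ prescribed vertices back into an edge, assigning a fresh color to each new edge; the sparseness of $J$ guarantees the blow-ups are at distinct vertices and commute, and the resulting coloring of $K_n$ is Gallai (Lemma~\ref{t.limited_color}(b)-type arguments show no rainbow triangle is created) with $J\subseteq\Des(p)$. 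These two maps are mutually inverse, giving the first identity.

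For the transitive case I would invoke Lemma~\ref{lem:complete-Gallai-transitive}, which identifies transitive partitions of $\overrightarrow{K_n}$ with Gallai partitions of $K_n$ for which a fixed Hamiltonian path is rainbow — but in fact it is cleaner to repeat the same contraction/blow-up argument directly in the directed setting, using Lemma~\ref{obs:trans} (the chordal/triangle criterion for transitivity): the underlying graph of $\overrightarrow{K_n}$ is complete, hence chordal, so transitivity is detected on triangles, and the same triangle argument shows that contracting a consecutive singleton edge $(i,i+1)$ yields a well-defined coloring of $\overrightarrow{K_{n-|J|}}$ which remains transitive; the inverse blow-up again works verbatim. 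The main obstacle I anticipate is the bookkeeping needed to verify that iterated contraction over all of $J$ is genuinely order-independent and that, in the blow-up direction, assigning a brand-new color to each reinstated edge cannot accidentally create a rainbow triangle — this requires checking triangles straddling a blown-up vertex, which is where the hypothesis that $J$ is sparse (so no two reinstated edges share a vertex) and the already-proved structural lemmas do the real work. Everything else is routine once the single-contraction step is shown to preserve the relevant property and the color count.
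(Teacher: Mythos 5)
Your proposal is correct and follows essentially the same route as the paper: the paper's proof also contracts each singleton edge $(i,i+1)$, uses the no-rainbow-triangle condition to see that $(i,j)$ and $(i+1,j)$ lie in the same block (so contraction is well defined), obtains a bijection with Gallai $(k-1)$-partitions of $K_{n-1}$, and then inducts on $|J|$ (which is your simultaneous-contraction argument packaged as an induction), leaving the transitive case as "similar." Your write-up is, if anything, more explicit about the inverse blow-up map and the directed case than the paper's.
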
	
	
	\begin{proof}
		We prove the lemma for Gallai partitions.  
		The proof for transitive partitions is similar.  
		
		Let $p$ be a Gallai partition. 
		For every $i\in \Des(p)$, $(i,i+1)$ is a singleton block. 
		Since $p$ is a Gallai partition, it contains no rainbow triangle. Hence, for every $j\ne i, i+1$ the edges $(i,j)$ and $(i+1,j)$ belong to the same block. 
		It follows that the set of Gallai $k$-partitions of $K_n$ with a descent at $i$ is in bijection with the set of Gallai $(k-1)$-partitions of $K_n/(i,i+1)$ (edge contraction), which is isomorphic to $K_{n-1}$. 
		This proves the lemma for $|J| = 1$. Proceed by induction on the size of $J$.
	\end{proof}
	
	The following is a weak version of a new criterion 
	of Marmor 
	for Schur-positivity.  
	
	\begin{lemma}\cite[Theorem 1.8]{Marmor}\label{lem:Marmor1}
		Let $A$ be a set equipped with a descent set map, and assume that for every $a \in A$, $\Des(a)$ is sparse. 
		If for every sparse $J \subseteq [n-1]$, the cardinality of the set $\{a \in A \,:\, J \subseteq \Des(a)\}$ depends only on the size of $J$, then $\Q(A)$ is symmetric and Schur-positive. 	
	\end{lemma}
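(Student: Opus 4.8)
The plan is to transfer $\Q(A)$ to a symmetric function by inclusion–exclusion, prove symmetry by passing to the monomial basis, and then compute the Schur expansion exactly; Schur–positivity then becomes manifest.

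\emph{Reduction.} For $J\subseteq[n-1]$ put $b_J:=|\{a\in A:\Des(a)=J\}|$, so that $\Q(A)=\sum_J b_J\F_J$, and $c_J:=\sum_{K\supseteq J}b_K=|\{a\in A:J\subseteq\Des(a)\}|$. By M\"obius inversion on the Boolean lattice $2^{[n-1]}$, $b_J=\sum_{K\supseteq J}(-1)^{|K|-|J|}c_K$. Since every $\Des(a)$ is sparse, $c_K=0$ unless $K$ is sparse, while $c_K=\gamma_{|K|}$ for sparse $K$ by hypothesis; substituting and regrouping gives
\[
\Q(A)=\sum_{j\ge 0}\gamma_j\,\Psi_{n,j},\qquad
\Psi_{n,j}:=\sum_{\substack{J\subseteq[n-1]\ \mathrm{sparse}\\ |J|=j}}\;\sum_{I\subseteq J}(-1)^{|J|-|I|}\F_I ,
\]
so $\Q(A)$ is a nonnegative combination of the $A$-independent functions $\Psi_{n,j}$.

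\emph{Symmetry.} Writing $\F_I=\sum_{T\supseteq I}M_T$ in the monomial quasisymmetric basis ($M_T$ indexed by $T\subseteq[n-1]$) and interchanging the order of summation, the inner alternating sum collapses to $\sum_{I\subseteq J}(-1)^{|J|-|I|}\F_I=(-1)^{|J|}\sum_{T\cap J=\varnothing}M_T$, hence $\Psi_{n,j}=(-1)^j\sum_T N_j(T)M_T$ where $N_j(T)$ is the number of sparse $j$-subsets of $[n-1]$ disjoint from $T$. If $T$ codes the composition $\alpha=(\alpha_1,\dots,\alpha_r)$ of $n$, then $[n-1]\setminus T$ splits into runs of lengths $\alpha_1-1,\dots,\alpha_r-1$, consecutive runs being separated by a single element of $T$, so that elements of distinct runs are never adjacent in $[n-1]$; a sparse set disjoint from $T$ is thus a sparse set chosen independently inside each run, giving $N_j(T)=\sum_{j_1+\dots+j_r=j}\prod_i\binom{\alpha_i-j_i}{j_i}$, which is symmetric in the $\alpha_i$ and so depends only on the underlying partition. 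Since $\sum_T M_T$ over all $T$ coding a fixed partition $\lambda$ equals the monomial symmetric function $m_\lambda$, we obtain $\Psi_{n,j}=(-1)^j\sum_\lambda N_j(\lambda)m_\lambda$, a symmetric function; therefore $\Q(A)$ is symmetric.

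\emph{Schur expansion.} Write $\Q(A)=\sum_\lambda q_\lambda s_\lambda$. For a partition $\lambda$ with conjugate $\lambda'$, the column-superstandard tableau of shape $\lambda$ has descent set $[n-1]\setminus S(\lambda')$, where $S(\lambda')$ is the set of partial sums of the parts of $\lambda'$; I would invoke the standard dominance-triangularity fact that $|\{Q\in\SYT(\mu):\Des(Q)=[n-1]\setminus S(\lambda')\}|$ equals $1$ for $\mu=\lambda$ and is $0$ unless $\mu\trianglelefteq\lambda$. Processing partitions upward in dominance, compare the coefficient of $\F_{[n-1]\setminus S(\lambda')}$ in the Schur and fundamental expansions of $\Q(A)$. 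If $\lambda$ has at least three rows, then $1$ and $2$ both lie in the first column of the column-superstandard tableau, so $[n-1]\setminus S(\lambda')$ contains the consecutive pair $\{1,2\}$ and is non-sparse, whence $b_{[n-1]\setminus S(\lambda')}=0$; on the Schur side the terms with $\mu$ of at most two rows vanish because every $\SYT$ of a $\le 2$-row shape has a sparse descent set (a descent at $i$ forces $i+1$ into row $2$, barring a descent at $i+1$), while the terms with $\mu\triangleleft\lambda$ of three or more rows vanish by induction, so $q_\lambda=0$. For $\lambda=(n-j,j)$ one computes $[n-1]\setminus S(\lambda')=\{1,3,5,\dots,2j-1\}=:J_j$, which is sparse; the $\mu$ of $\ge 3$ rows contribute $0$ by the previous case, and the two-row $\mu=(n-i,i)\triangleleft(n-j,j)$, i.e.\ $i>j$, contribute $0$ by a direct check that column-strictness is incompatible with $\Des(Q)=J_j$ when $i>j$. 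Hence $q_{(n-j,j)}=b_{J_j}$, i.e.
\[
\Q(A)=\sum_{j\ge 0}\bigl|\{a\in A:\Des(a)=\{1,3,5,\dots,2j-1\}\}\bigr|\,s_{(n-j,j)}
=\ch\Bigl(\bigoplus_{j\ge 0}b_{J_j}\,\chi^{(n-j,j)}\Bigr),
\]
which is plainly symmetric and Schur-positive.

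\emph{Where the work lies.} The delicate step is the Schur expansion: one cannot obtain nonnegativity from the monotonicity $\gamma_0\ge\gamma_1\ge\cdots$ alone, since the cone of admissible $(\gamma_j)$ is not spanned by the data of single objects. What forces the Schur coefficients to be honest cardinalities $b_{J_j}$ is the interplay of (i) the sparsity of all the $\Des(a)$, (ii) the dominance-triangularity of descent-set statistics of standard Young tableaux, and (iii) the elementary but slightly fiddly vanishing of $|\{Q\in\SYT((n-i,i)):\Des(Q)=J_j\}|$ for $i>j$; correctly assembling these three ingredients is the main obstacle.
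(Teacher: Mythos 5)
Your proof is correct. Note, however, that the paper does not prove this lemma at all: it is imported verbatim as \cite[Theorem 1.8]{Marmor} (described as ``a weak version of a new criterion of Marmor''), so there is no in-paper argument to compare against. What you have written is a complete, self-contained proof of the cited statement, and its logical architecture is sound: the hypothesis (fiber sizes over sparse $J$ depend only on $|J|$) is used \emph{only} to establish symmetry, via M\"obius inversion and the collapse $\sum_{I\subseteq J}(-1)^{|J|-|I|}\F_I=(-1)^{|J|}\sum_{T\cap J=\varnothing}M_T$ together with the observation that the number of sparse $j$-subsets avoiding $T$ depends only on the partition underlying the composition coded by $T$; Schur-positivity is then deduced from symmetry plus sparsity of all descent sets alone, which is a clean and correct separation. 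Your explicit conclusion $\Q(A)=\sum_j b_{J_j}\,s_{(n-j,j)}$ with $J_j=\{1,3,\dots,2j-1\}$ is consistent with how the paper uses the lemma (e.g.\ with Lemma~\ref{lem:SYT222} and Theorem~\ref{thm:marmor2}).

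Two steps are compressed and deserve a sentence each if this were to be written out. First, the dominance-triangularity fact: the count $|\{Q\in\SYT(\mu):\Des(Q)\supseteq[n-1]\setminus S(\lambda')\}|$ equals $K_{\mu'\lambda'}$ by transposing ($\Des(Q^t)=[n-1]\setminus\Des(Q)$) and standardization, which gives exactly the vanishing unless $\mu\trianglelefteq\lambda$ and the value $1$ at $\mu=\lambda$; the ``$=$'' version you need follows since the unique tableau realizing the $\supseteq$-count at $\mu=\lambda$ is the column-superstandard one. Second, the vanishing of $|\{Q\in\SYT((n-i,i)):\Des(Q)=J_j\}|$ for $i>j$: the descent set $J_j$ forces row $2$ to equal $\{2,4,\dots,2j-2\}\cup\{2j,2j+1,\dots,i+j\}$, and then column $j+1$ violates column-strictness because the $(j+1)$-st entry of row $1$ is $i+j+1>2j+1$. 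Both checks go through, so there is no gap; I am only flagging that they are the load-bearing details.
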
	
	
	\begin{proof}[Proof of Theorem~\ref{m1g}.]
		Combine Lemma~\ref{lem:Marmor1}  with Lemma~\ref{lem:Des class cardinality}. 
	\end{proof}

	\subsection{Proofs of Theorems~\ref{m3d} and~\ref{m3u}}\label{sec:Schur2}
	
	
	We begin with some necessary background. 
	A {\em partition} of a positive integer $n$ is a weakly decreasing sequence $\lambda=(\lambda_1,\ldots,\lambda_t)$ of positive integers whose sum is $n$. We denote $\lambda \vdash n$.
	For a partition $\lambda\vdash n$ let 
	$\SYT(\lambda)$ be the set
	of standard Young tableaux of shape $\lambda$. We use the
	English convention, according to which row indices increase from 
	top to bottom. See~\cite[p. 312]{EC2} for definition and examples.
	
	Recall the descent set of a standard Young tableau $T$ of size $n$
	\[
	\Des(T):=\{i\in [n-1]:\ i+1\ \text{ appears in a lower row than }\ i\}.
	\]
	
	Let $s_\lambda$ be the Schur function indexed by the partition $\lambda$. 
	The following key theorem is due to Gessel.
	
	\begin{theorem}{\rm \cite[Theorem 7.19.7]{EC2}}\label{G1} 
		For every integer parition $\lambda \vdash n$,
		\[
		\Q({\SYT(\lambda)})=s_{\lambda}. 
		\]
	\end{theorem}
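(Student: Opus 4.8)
The plan is to prove Theorem~\ref{G1} by the classical \emph{standardization} argument, which decomposes the set of semistandard Young tableaux of shape $\lambda$ into fibers indexed by standard ones. I recall the monomial description of the Schur function: $s_\lambda=\sum_T \mathbf{x}^T$, where $T$ ranges over the semistandard Young tableaux (SSYT) of shape $\lambda$ --- fillings of the diagram of $\lambda$ with positive integers, weakly increasing along each row and strictly increasing down each column --- and $\mathbf{x}^T:=\prod_{i\ge 1}x_i^{m_i(T)}$, with $m_i(T)$ the number of cells of $T$ containing $i$. So the identity to prove is $\sum_{T}\mathbf{x}^T=\sum_{S\in\SYT(\lambda)}\F_{\Des(S)}(\mathbf{x})$, i.e.\ $s_\lambda = \Q(\SYT(\lambda))$.

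Next I introduce the standardization map $\operatorname{std}$ from the SSYT of shape $\lambda$ and size $n$ to $\SYT(\lambda)$. Given such a $T$, process its entries in increasing order of value; the cells carrying a fixed value $i$ form a horizontal strip (no two share a column), and one assigns them the next block of consecutive labels so that a cell in a lower row receives the smaller label (equivalently, reading the equal entries upward through the rows, and left to right within a row, produces increasing labels). A short check shows that $\operatorname{std}(T)$ is a genuine standard Young tableau of shape $\lambda$.

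The heart of the argument is the description of the fibers of $\operatorname{std}$. Fix $S\in\SYT(\lambda)$. I claim the SSYT $T$ with $\operatorname{std}(T)=S$ are exactly those obtained as follows: choose a weakly increasing sequence of positive integers $b_1\le b_2\le\cdots\le b_n$ with $b_m<b_{m+1}$ whenever $m\in\Des(S)$, and place $b_m$ in the cell of $S$ containing the label $m$. For the ``if'' direction one verifies semistandardness: rows are weakly increasing because $b$ is; for columns, if a cell with $S$-label $p$ lies directly above one with $S$-label $q$, then $p<q$, and chaining through the values $p,p+1,\dots,q-1$ --- using that $S$ is standard while $q$ sits one row lower than $p$ --- some $m\in\{p,\dots,q-1\}$ must be a descent of $S$, whence $b_p\le b_m<b_{m+1}\le b_q$; one also checks that $\operatorname{std}$ of the new filling recovers $S$. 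For the ``only if'' direction: if $m\in\Des(S)$ then the label $m+1$ of $S$ lies strictly below the label $m$, so a filling with $b_m=b_{m+1}$ would be standardized with these two cells in the wrong relative order, contradicting $\operatorname{std}(T)=S$; and a relabeling that fails to be weakly increasing along $1,2,\dots,n$ fails to reproduce $S$ for analogous reasons.

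Finally I combine. For each fixed $S$ the fiber description gives
\[
\sum_{T:\operatorname{std}(T)=S}\mathbf{x}^T
=\sum_{\substack{1\le b_1\le\cdots\le b_n\\ b_m<b_{m+1}\ \text{if}\ m\in\Des(S)}}x_{b_1}\cdots x_{b_n}
=\F_{\Des(S)}(\mathbf{x}),
\]
directly from the definition of the fundamental quasisymmetric function. Since $\operatorname{std}$ partitions the SSYT of shape $\lambda$ into these fibers, summing over $S\in\SYT(\lambda)$ yields $s_\lambda=\sum_{S\in\SYT(\lambda)}\F_{\Des(S)}(\mathbf{x})=\Q(\SYT(\lambda))$. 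The main obstacle is exactly the fiber lemma of the previous paragraph: showing that ``semistandard with standardization $S$'' is equivalent to ``weakly increasing with a strict jump precisely at each descent of $S$''. This is the point where the monotonicity of the row positions of consecutive values in a standard Young tableau --- the defining feature of the descent set --- is used in both directions.
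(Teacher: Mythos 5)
Your argument is correct, but note that the paper does not prove this statement at all: Theorem~\ref{G1} is quoted as a known result of Gessel, cited from \cite[Theorem 7.19.7]{EC2}. Your standardization proof --- decomposing the semistandard tableaux of shape $\lambda$ into fibers over $\SYT(\lambda)$ and identifying each fiber's generating function with $\F_{\Des(S)}$ via the ``strict jump exactly at descents'' lemma --- is precisely the standard textbook argument for that cited theorem, so there is nothing to compare beyond observing that you have supplied the proof the paper outsources to the reference.
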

	
	
	There is a dictionary relating symmetric functions to 
	class functions on 
	the symmetric group. 
	The irreducible characters of $\symm_n$ are indexed by partitions $\lambda \vdash n$ and denoted $\chi^\lambda$. 
	The {\em Frobenius characteristic map} $\ch$ from class functions on $\symm_n$ to symmetric functions is defined by $\ch(\chi^{\lambda}) = s_{\lambda}$, and extended by linearity.
	Theorem~\ref{G1} may then be restated as follows:
	\[
	\ch(\chi^\lambda) 
	= \sum_{T \in SYT(\lambda)} \F_{n,\Des(T)}.
	\]
	A combinatorial rule for the restriction of irreducible $\symm_n$-characters was given by Young~\cite[Theorem 9.2]{James}:
	
	\begin{theorem}\label{BranchingRule} (The Branching Rule)
		For $\lambda\vdash n$
		\[
		\chi^\lambda \downarrow_{\symm_{n-1}}^{\symm_n} 
		= \sum_{\substack{\mu \vdash n-1 \\ |\lambda/\mu| = 1}} \chi^\mu.
		\]
	\end{theorem}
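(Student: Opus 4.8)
The plan is to deduce Theorem~\ref{BranchingRule} (the Branching Rule) from the arithmetic of the Frobenius characteristic map, using that $\ch$ is a ring isometry from $\bigoplus_{m\ge 0}R(\symm_m)$ (class functions on symmetric groups, with the induction product) onto the ring of symmetric functions, sending $\chi^\mu$ to $s_\mu$; in particular this encodes the identity $\ch(\chi^\lambda)=s_\lambda$ recorded after Theorem~\ref{G1}.

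\emph{Step 1: translate restriction into a symmetric-function statement.} For $\mu\vdash n-1$, one has $\chi^\mu\uparrow_{\symm_{n-1}}^{\symm_n}=(\chi^\mu\otimes\one_{\symm_1})\uparrow_{\symm_{n-1}\times\symm_1}^{\symm_n}$, so compatibility of $\ch$ with the induction product gives $\ch\bigl(\chi^\mu\uparrow_{\symm_{n-1}}^{\symm_n}\bigr)=s_\mu s_1$. Combining Frobenius reciprocity with the fact that $\ch$ is an isometry yields, for all $\lambda\vdash n$ and $\mu\vdash n-1$,
\[
\bigl\langle \chi^\lambda\downarrow_{\symm_{n-1}}^{\symm_n},\,\chi^\mu\bigr\rangle_{\symm_{n-1}} \;=\; \bigl\langle \chi^\lambda,\,\chi^\mu\uparrow_{\symm_{n-1}}^{\symm_n}\bigr\rangle_{\symm_n} \;=\; \langle s_\lambda,\,s_1 s_\mu\rangle .
\]

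\emph{Step 2: apply the Pieri rule.} Since $s_1 s_\mu=\sum_\nu s_\nu$ over partitions $\nu$ obtained from $\mu$ by adding one cell, and the Schur functions are orthonormal, $\langle s_\lambda,\,s_1 s_\mu\rangle$ equals $1$ when $\lambda$ is obtained from $\mu$ by adding a single cell, i.e.\ $|\lambda/\mu|=1$, and $0$ otherwise. As $\chi^\lambda\downarrow_{\symm_{n-1}}^{\symm_n}$ is a genuine character, it is the sum of the $\chi^\mu$ with these multiplicities, which is exactly $\sum_{\mu\vdash n-1,\,|\lambda/\mu|=1}\chi^\mu$.

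\emph{Where the work lies.} There is no deep obstacle --- the result is classical (Young) --- but the argument imports two facts: that $\ch$ intertwines induction with multiplication (hence restriction with the adjoint skewing operator), and the Pieri rule $s_1 s_\mu=\sum_\nu s_\nu$. A self-contained alternative, not using symmetric functions, is Young's original module-theoretic proof (see \cite[Ch.~9]{James}): realize $\chi^\lambda$ on the Specht module $S^\lambda$ with its standard-polytabloid basis, list the removable corners $c_1,\dots,c_r$ of $\lambda$ by increasing row, and filter $S^\lambda\downarrow_{\symm_{n-1}}$ by the $\symm_{n-1}$-submodules $V_i$ spanned by the polytabloids $e_t$ in which the entry $n$ lies in a row weakly below that of $c_i$. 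The genuinely technical step there is to identify each successive quotient $V_i/V_{i+1}$, as an $\symm_{n-1}$-module, with the Specht module $S^{\lambda\setminus c_i}$, which requires controlling the $\symm_{n-1}$-action on polytabloids modulo the lower stages of the filtration.
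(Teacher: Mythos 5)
The paper does not prove this statement at all: Theorem~\ref{BranchingRule} is quoted as a classical result of Young with a citation to \cite[Theorem 9.2]{James}, so there is no in-paper argument to compare against. Your proof is correct and is the standard one: the identification of restriction with the adjoint of multiplication by $s_1$ under the Frobenius characteristic, combined with the Pieri rule and orthonormality of Schur functions, does yield exactly $\chi^\lambda \downarrow_{\symm_{n-1}}^{\symm_n} = \sum_{\mu} \chi^\mu$ over $\mu$ with $|\lambda/\mu|=1$. Your closing remark correctly locates where the real content lies (the isometry/ring properties of $\ch$ and the Pieri rule, or alternatively the Specht-module filtration in James, which is essentially the proof the paper's citation points to).
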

	
	Viewing tableaux of shape $\mu$ as tableaux of shape $\lambda$ with the entry $n$ ``forgotten'', the Branching Rule may be restated as
	\[
	\ch(\chi^\lambda \downarrow_{\symm_{n-1}}^{\symm_n}) = \sum_{T \in SYT(\lambda)} \F_{n-1,\Des(T) \cap [n-2]}.
	\]
	Iteration immediately gives the following.
	
	\begin{corollary}\label{quasi_restriction}
		For every $\lambda \vdash n$ and $m\le n$
		\[
		\ch(\chi^\lambda \downarrow_{\symm_m}^{\symm_n}) = \sum_{T \in \SYT(\lambda)} \F_{m, \Des(T) \cap [m-1]}.
		\]
	\end{corollary}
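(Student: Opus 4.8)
The plan is to carry out exactly the ``iteration'' alluded to in the sentence preceding the statement, i.e.\ to combine Gessel's Theorem~\ref{G1} with a bookkeeping of iterated restrictions via the Branching Rule. First I would unwind Theorem~\ref{BranchingRule}: applying it $n-m$ times gives
\[
\chi^\lambda \downarrow_{\symm_m}^{\symm_n} = \sum_{\mu \vdash m} c_{\lambda,\mu}\, \chi^\mu,
\]
where $c_{\lambda,\mu}$ is the number of saturated chains $\mu = \mu^{(m)} \subset \mu^{(m+1)} \subset \cdots \subset \mu^{(n)} = \lambda$ in Young's lattice. Such a chain is the same data as a standard filling of the skew shape $\lambda/\mu$ by the entries $m+1,\dots,n$; equivalently, $c_{\lambda,\mu}$ counts the standard Young tableaux $T$ of shape $\lambda$ whose cells occupied by $1,\dots,m$ form exactly the subshape $\mu$. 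For such a $T$ write $T_{\le m}$ for the tableau obtained by erasing all entries greater than $m$; then $T_{\le m}\in\SYT(\mu)$.

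The one elementary point that needs a line of justification is the compatibility of descents with this truncation: for $i\le m-1$, membership of $i$ in $\Des(T)$ depends only on the positions of $i$ and $i+1$, both of which survive in $T_{\le m}$, so $\Des(T_{\le m}) = \Des(T)\cap[m-1]$. Since moreover $T$ is reconstructed uniquely from $\mu=\shape(T_{\le m})$, the tableau $T_{\le m}$, and the standard filling of $\lambda/\mu$ by $m+1,\dots,n$, this yields a bijection between $\SYT(\lambda)$ and the disjoint union, over $\mu\vdash m$, of $\SYT(\mu)$ times the set of those $c_{\lambda,\mu}$ fillings.

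Finally I would assemble the pieces. Applying $\ch$ to the restriction and invoking Theorem~\ref{G1} for each $\chi^\mu$,
\[
\ch\!\left( \chi^\lambda \downarrow_{\symm_m}^{\symm_n} \right)
= \sum_{\mu \vdash m} c_{\lambda,\mu}\, s_\mu
= \sum_{\mu \vdash m} c_{\lambda,\mu} \sum_{S \in \SYT(\mu)} \F_{m,\Des(S)} ,
\]
and re-indexing the right-hand double sum by the bijection above --- each pair consisting of a tableau $S\in\SYT(\mu)$ and a filling of $\lambda/\mu$ becoming a $T\in\SYT(\lambda)$ with $\Des(S) = \Des(T)\cap[m-1]$ --- turns this into $\sum_{T\in\SYT(\lambda)} \F_{m,\Des(T)\cap[m-1]}$, which is the assertion.

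I do not expect a genuine obstacle here; the only thing to watch is the descent-truncation identity and the bijection packaging it, everything else being a direct unwinding of the Branching Rule and Gessel's theorem. An equally short alternative is downward induction on $m$: the base case $m=n$ is Theorem~\ref{G1} restated, and the inductive step applies the one-step restated Branching Rule $\ch(\chi^\mu \downarrow_{\symm_{m-1}}^{\symm_m}) = \sum_{S\in\SYT(\mu)} \F_{m-1,\Des(S)\cap[m-2]}$ term-by-term to the expansion furnished by the inductive hypothesis, using $(\Des(T)\cap[m-1])\cap[m-2] = \Des(T)\cap[m-2]$.
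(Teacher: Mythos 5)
Your proof is correct and takes essentially the same route as the paper, which merely restates the Branching Rule as $\ch(\chi^\lambda \downarrow_{\symm_{n-1}}^{\symm_n}) = \sum_{T \in \SYT(\lambda)} \F_{n-1,\Des(T) \cap [n-2]}$ (``forgetting'' the largest entry) and declares that iteration gives the corollary; your downward induction on $m$ is exactly that iteration, and the descent-truncation identity $\Des(T_{\le m})=\Des(T)\cap[m-1]$ is precisely the point worth isolating. (One harmless slip in a parenthetical: $c_{\lambda,\mu}$ counts the standard fillings of $\lambda/\mu$, not the tableaux $T\in\SYT(\lambda)$ with $\shape(T_{\le m})=\mu$ --- the latter number is $|\SYT(\mu)|\cdot c_{\lambda,\mu}$ --- but your bijection and the final re-indexing use the correct count, so nothing is affected.)
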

	
	The following lemma is folklore. 
	
	\begin{lemma}\label{lem:SYT222}
		Let $J \subseteq [2n-3]$ be a subset of size $m$. Then 
		\[
		|\{T \in \SYT((n-1,n-1)) \,:\, J \subseteq \Des(T)\}| 
		= C_{n-m-1}
		\]
		(the Catalan number) if 
		$J$ is sparse, and is zero otherwise.
	\end{lemma}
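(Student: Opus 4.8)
The plan is to pass to the standard combinatorial model of two--row tableaux. Encode a tableau $T\in\SYT((n-1,n-1))$ by the word $w(T)=w_1w_2\cdots w_{2n-2}$ over the alphabet $\{U,D\}$, with $w_i=U$ if the entry $i$ lies in the first row of $T$ and $w_i=D$ if it lies in the second. Since each row has $n-1$ cells, $w(T)$ has $n-1$ letters $U$ and $n-1$ letters $D$, and column--strictness of $T$ says exactly that every prefix of $w(T)$ contains at least as many $U$'s as $D$'s, i.e.\ $w(T)$ is a Dyck word of semilength $n-1$. This is the classical bijection of $\SYT((n-1,n-1))$ onto the set of such Dyck words, which has $C_{n-1}$ elements. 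Under it, for $i\in[2n-3]$ the condition ``$i+1$ lies in a lower row than $i$'' becomes ``$w_i=U$ and $w_{i+1}=D$'', so $\Des(T)=\{\,i\in[2n-3]:w_iw_{i+1}=UD\,\}$. Hence the quantity to be computed is the number of Dyck words $w$ of semilength $n-1$ with $w_jw_{j+1}=UD$ for every $j\in J$.

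If $J$ is not sparse, pick $j$ with $j,j+1\in J$: the constraint at $j$ forces $w_{j+1}=D$ while the constraint at $j+1$ forces $w_{j+1}=U$, a contradiction, so there is no such $w$ and the count is $0$. Assume from now on that $J=\{j_1<\cdots<j_m\}$ is sparse, so the $2m$ indices $j_1,j_1+1,\dots,j_m,j_m+1$ are pairwise distinct; call these positions \emph{frozen} and the remaining $2(n-1-m)$ positions \emph{free}. Let $\Phi$ send an admissible word $w$ (Dyck of semilength $n-1$ with $w_{j_\ell}w_{j_\ell+1}=UD$ for all $\ell$) to the word obtained by deleting the frozen positions and reading the free ones in order; this word has $n-1-m$ letters $U$ and $n-1-m$ letters $D$. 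Let $\Psi$ go the other way, inserting $U$ at each $j_\ell$ and $D$ at each $j_\ell+1$ and placing the letters of its argument, in order, in the free positions. Plainly $\Phi$ and $\Psi$ are mutually inverse, so it remains only to check that $\Phi$ lands among Dyck words of semilength $n-1-m$ and that $\Psi$, applied to such a word, lands among admissible words.

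Both checks follow from one observation about prefixes. Take $w=\Psi(w')$, or $w$ admissible with $w'=\Phi(w)$, and fix $t$. If $t\notin J$, then for each $\ell$ the positions $j_\ell,j_\ell+1$ are either both $\le t$ or both $>t$, so the frozen positions that are $\le t$ split into complete pairs, each contributing one $U$ and one $D$; hence the excess $\#U-\#D$ over $w_1\cdots w_t$ equals the excess over the prefix of $w'$ consisting of the free positions $\le t$. If instead $t=j_\ell\in J$, then $w_t=U$ and $t-1\notin J$ by sparseness, so the excess over $w_1\cdots w_t$ is one more than the excess over $w_1\cdots w_{t-1}$, which by the previous case is an excess of a prefix of $w'$. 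Now, if $w$ is admissible, every prefix of $w'$ arises, for a suitable free (hence non-$J$) value of $t$, as the free part of $w_1\cdots w_t$, and its excess equals the excess of $w_1\cdots w_t$, which is $\ge 0$; so $w'=\Phi(w)$ is a Dyck word of semilength $n-1-m$. Conversely, if $w'$ is a Dyck word of semilength $n-1-m$, then for $t\notin J$ the excess over $w_1\cdots w_t$ is an excess of a prefix of $w'$, hence $\ge 0$, and for $t=j_\ell$ it is such an excess plus $1$, hence $\ge 0$; since $w=\Psi(w')$ has total length $2n-2$ with $n-1$ letters of each kind and satisfies $w_{j_\ell}w_{j_\ell+1}=UD$ by construction, it is admissible. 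Therefore $\Phi$ is a bijection and the count is $C_{n-1-m}=C_{n-m-1}$.

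The only point that needs care is the prefix bookkeeping in the last paragraph: one must invoke sparseness precisely to guarantee that the frozen positions enter any prefix only as complete $UD$--pairs, except for a single trailing $U$ when the prefix ends at an element of $J$ (whose predecessor is then automatically not in $J$). Everything else is the routine dictionary between two--row standard Young tableaux and Dyck words, and the ``zero otherwise'' clause is immediate.
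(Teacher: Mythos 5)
Your proof is correct and follows essentially the same route as the paper's: encode two-row tableaux as Dyck words of semilength $n-1$, identify descents with peaks ($UD$ factors), and biject the admissible words with Dyck words of semilength $n-1-m$ by deleting/reinserting the $2m$ frozen peak steps, using sparseness to make the frozen positions disjoint. The only difference is that you spell out the prefix bookkeeping that the paper's proof leaves implicit.
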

	
	\begin{proof}
		First, the descent set of a standard Young table of two row shape has no consecutive entries. Thus   
		\[
		|\{T \in \SYT((n-1,n-1)) \,:\, J \subseteq \Des(T)\}| 
		= 0
		\]
		if $J$ is not sparse.
		
		To prove the statement for sparse subsets, recall that a Dyck path of length $2n$ is a series of steps from $(0,0)$ to $(2n,0)$, starting at the origin $t_0:=(0,0)$, 
		where the $i$-th step is either 
		$t_i:=(1,1)+t_{i-1}$ (upper step) or $t_i:=(1,-1)+t_{i-1}$ (lower step),  
		provided that $t_i$ is not below the $x$-axis. 
		Denote by ${\mathcal D}_{2n}$ the set of Dyck paths of length $2n$, and by 
		\[
		{\rm Peak}(d)
		:= \{i \,:\, t_i \text{ is an upper step and } t_{i+1} \text{ is a lower step}\}
		\]
		the set of peaks of $d\in {\mathcal D}_{2n}$.
		
		Recall the 
		bijection from $\SYT(n-1,n-1)$ to Dyck path from $(0,0)$ to $(2n-2,0)$, determined as follows: 
		the $i$-th step is upper 
		if $i$ is in the first row of $T$ and 
		lower if $i$ is in the second row. 
		Let $J \subseteq [2n-3]$ be a subset of order $m$. Assume that $J$ is a subset the peak set of a given Dyck path. 
		Deleting the $j$-th and $j+1$-st steps for every $j\in J$ yields a Dyck path of length $2n-2-2m$, while  
		re-adding these steps recovers the original Dyck path. 
		It follows that the set of Dyck paths of length $2n-2$ with peak set containing $J$ 
		is in bijection with Dyck paths of length $2n-2-2m$, whose number is $C_{n-m-1}$. We conclude that 
		\[
		|\{T\in \SYT(n-1,n-1) \,:\, J \subseteq \Des(T)\}|
		= |\{d \in {\mathcal D}_{2n-2} \,:\, J \subseteq {\rm Peak}(d)\}|
		= |{\mathcal D}_{2n-2-2m}|
		= C_{n-m-1}.
		\]
	\end{proof}
	
	
	\begin{lemma}\label{lem1111}
		Let $J \subseteq [n-1]$ be a subset of size $m$. Then 
		\[
		|\{p \in T_{n,n-1} \,:\, J \subseteq \Des(p)\}| 
		= C_{n-m-1}
		\]
		if 
		$J$ is sparse, and is zero otherwise.
	\end{lemma}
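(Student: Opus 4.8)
The plan is to reduce the statement entirely to results already in hand: Observation~\ref{obs:sparse1} disposes of the non-sparse case, and the combination of Lemma~\ref{lem:Des class cardinality} with Theorem~\ref{thm:Catalan} handles the sparse case by an induction that has, in effect, already been done.

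First I would deal with the case in which $J$ is not sparse. By Observation~\ref{obs:sparse1}, for every $p \in T_{n,n-1}$ the set $\Des(p)$ is sparse; hence a non-sparse $J$ cannot be contained in $\Des(p)$ for any such $p$, and the count is $0$, as claimed. Next, suppose $J$ is sparse with $|J| = m$. If $m = 0$ the asserted count is $|T_{n,n-1}| = C_{n-1}$, which is precisely Theorem~\ref{thm:Catalan}. If $m \ge 1$, then $J$ is a nonempty sparse subset of $[n-1]$, so Lemma~\ref{lem:Des class cardinality} applies with $k = n-1$ and gives
\[
|\{p \in T_{n,n-1} \,:\, J \subseteq \Des(p)\}| = t(n-m,\, n-1-m).
\]
By definition $t(n-m, n-1-m) = |T_{n-m, n-1-m}|$ counts the transitive $(n-1-m)$-partitions of $\overrightarrow K_{n-m}$; since the oriented graphic matroid of $\overrightarrow K_{n-m}$ has rank $(n-m)-1 = n-1-m$ (the remark preceding Corollary~\ref{cor:tour_max}), these are exactly the \emph{maximal} transitive partitions of $\overrightarrow K_{n-m}$. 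Applying Theorem~\ref{thm:Catalan} again (to $\overrightarrow K_{n-m}$ in place of $\overrightarrow K_n$) yields $C_{(n-m)-1} = C_{n-m-1}$, completing the argument.

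There is essentially no deep obstacle here; the only points requiring care are bookkeeping. One must check the hypotheses of Lemma~\ref{lem:Des class cardinality} ($n > n-1 \ge 1$, and $J \ne \varnothing$, which is why $m=0$ is split off) and make sure the recursion bottoms out correctly: when $n - m = 1$ one has $\overrightarrow K_1$ with its single empty maximal transitive partition, matching $C_0 = 1$, so one may either invoke Theorem~\ref{thm:Catalan} only for $n-m \ge 2$ and treat $n-m=1$ by hand, or observe that a sparse $J \subseteq [n-1]$ has $|J| \le \lceil (n-1)/2\rceil$, so $n - m \ge 1$ always and the degenerate case $n-m=1$ forces $J$ to be a single element that indeed cuts off a $K_1$.

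As an alternative route, one could avoid Lemma~\ref{lem:Des class cardinality} altogether and argue directly in the spirit of Lemma~\ref{lem:SYT222}: use Corollary~\ref{cor:bipartite_trans} to peel off the block of $(v_1,v_n)$, which splits $\overrightarrow K_n$ into $\overrightarrow K_t$ and $\overrightarrow K_{n-t}$, and translate "$i$ is a descent" into "the step-pair at position $i$ is a peak" under the bijection between maximal transitive partitions and Dyck paths underlying Theorem~\ref{thm:Catalan}; deleting the $m$ peak step-pairs indexed by $J$ then gives a Dyck path of length $2(n-1-m)$, so the count is $C_{n-m-1}$. The reduction via Lemma~\ref{lem:Des class cardinality} is shorter, so I would present that one and perhaps mention the Dyck-path picture as motivation for the appearance of the Catalan number.
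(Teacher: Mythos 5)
Your proof is correct and follows the same route as the paper's: Observation~\ref{obs:sparse1} for the non-sparse case, and Lemma~\ref{lem:Des class cardinality} combined with Theorem~\ref{thm:Catalan} for the sparse case. Your extra care with the boundary cases $m=0$ and $n-m=1$ is a welcome tightening of what the paper leaves implicit, but the argument is essentially identical.
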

	
	\begin{proof}
		Combining Lemma~\ref{lem:Des class cardinality} with Theorem~\ref{thm:Catalan} we obtain that if $J$ is sparse then 
		\[
		|\{p \in T_{n,n-1} \,:\, J \subseteq \Des(p)\}|
		= t_{n-m,n-m-1}
		= C_{n-m-1}.
		\]
		If $J$ is not sparse then, by Observation~\ref{obs:sparse1}, there are no transitive partitions of $\overrightarrow K_n$ with descent set $J$, completing the proof.   
	\end{proof}
	
	
	\begin{proof}[Proof of Theorem~\ref{m3d}]
		Combining Lemma~\ref{lem1111} with Lemma~\ref{lem:SYT222} one obtains
		\[
		|\{p \in T_{n,n-1} \,:\, J \subseteq \Des(p)\}| 
		= |\{T \in \SYT(n-1,n-1) \,:\, J \subseteq \Des(T)\}| 
		\qquad (\forall\, J \subseteq [n-1]). 
		\]
		Hence
		\[
		\sum\limits_{p \in T_{n,n-1}} {\bf x}^{\Des(p)}
		= \sum\limits_{T \in \SYT(n-1,n-1)} {\bf x}^{\Des(T) \cap [n-1]} ,
		\]
		where 
		${\bf x}^J := \prod_{i \in J} x_i$. 
		Equivalently
		\[
		\Q(T_{n,n-1})
		= \sum\limits_{p \in T_{n,n-1}} \F_{\Des(p)}
		= \sum\limits_{T \in \SYT(n-1,n-1)} \F_{\Des(T) \cap [n-1]}.
		\]
		By 
		Theorem~\ref{G1} 
		together with Corollary~\ref{quasi_restriction}, the RHS is equal to $\ch (\chi^{n-1,n-1} \downarrow^{\symm_{2n-2}}_{\symm_n})$, completing the proof. 
	\end{proof}
	
	A similar proof implies 
	the undirected analogue.
	
	\begin{lemma}\label{lem11111}
		Let $J \subseteq [n-1]$ be a subset of size $k$. Then 
		\[
		|\{p \in G_{n,n-1} \,:\, J \subseteq \Des(p)\}|
		= (2n-2k-3)!!
		\]
		if 
		$J$ is sparse, and is zero otherwise.
	\end{lemma}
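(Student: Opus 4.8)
The plan is to mirror the proof of Lemma~\ref{lem1111} almost verbatim, replacing the transitive ingredients by their Gallai counterparts: Lemma~\ref{lem:Des class cardinality} (whose Gallai half applies directly here) together with Theorem~\ref{t.number_max_Gallai_partitions} (the double-factorial count of maximal Gallai partitions), in place of Theorem~\ref{thm:Catalan}.

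First I would dispose of the non-sparse case. By Observation~\ref{obs:sparse1}, every Gallai partition $p$ of $K_n$ has a sparse descent set, and a subset of a sparse set is sparse; hence if $J$ is not sparse there is no $p\in G_{n,n-1}$ with $J\subseteq\Des(p)$, so the count is $0$. For sparse $J$ of size $k$, the boundary value $k=0$ is immediate: the set in question is all of $G_{n,n-1}$, which by Theorem~\ref{t.number_max_Gallai_partitions} has cardinality $(2n-3)!!=(2n-2k-3)!!$. For $1\le k\le n-1$ I would invoke Lemma~\ref{lem:Des class cardinality} with the number of colors set equal to $n-1$ (the hypothesis $n>n-1\ge 1$ holds since $n>1$, and $J\ne\varnothing$), obtaining
\[
|\{p\in G_{n,n-1}\,:\,J\subseteq\Des(p)\}|
= g(n-k,\,(n-1)-k)
= |G_{n-k,\,(n-k)-1}|,
\]
i.e.\ the number of maximal Gallai partitions of $K_{n-k}$. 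Applying Theorem~\ref{t.number_max_Gallai_partitions} once more gives this quantity as $(2(n-k)-3)!!=(2n-2k-3)!!$, which is exactly the claimed formula. The only edge case is $n-k=1$, where $K_1$ carries the single empty maximal Gallai partition; this is consistent with the convention $(-1)!!=1$ and so needs no separate treatment.

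I do not expect a genuine obstacle here: both inputs are already established in the excerpt, so the work is purely bookkeeping — checking the hypotheses of Lemma~\ref{lem:Des class cardinality} (note in particular the notational clash, since in that lemma ``$k$'' denotes the number of colors, which is $n-1$ here, whereas in the present statement ``$k$'' denotes $|J|$), handling the boundary values $k=0$ and $n-k=1$, and observing that the induction on $|J|$ built into Lemma~\ref{lem:Des class cardinality} already does the iterative work. Everything else is a direct substitution into the argument for Lemma~\ref{lem1111}.
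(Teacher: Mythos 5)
Your proposal is correct and follows exactly the paper's own route: the paper proves this lemma by repeating the argument for Lemma~\ref{lem1111}, i.e.\ combining Lemma~\ref{lem:Des class cardinality} with Theorem~\ref{t.number_max_Gallai_partitions} for sparse $J$ and invoking Observation~\ref{obs:sparse1} for non-sparse $J$. Your extra attention to the boundary cases $k=0$ and $n-k=1$ and to the notational clash is careful bookkeeping the paper leaves implicit, but it does not change the argument.
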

	
	\begin{proof} 
		The proof is the same as the proof of Lemma~\ref{lem1111}, with Theorem~\ref{thm:Catalan} replaced by Theorem~\ref{t.number_max_Gallai_partitions}.
	\end{proof}
	
	Denote by $M_{2n}$ the set of perfect matchings
	of $2n$ points on a line, labeled by $1,\dots,2n$. 
	For $m \in M_{2n}$ define the {\em Short match} set 
	\[
	\Short(m):=\{i:\ (i,i+1) \in m\}.
	\]
	
	\begin{observation}\label{obs:match} 
		For every subset $J \subseteq [2n-1]$ of size $k$,
		\[
		|\{m \in M_{2n} \,:\, J \subseteq \Short(m)\}|
		= (2n-2k-1)!!
		\]
		if $J$ is sparse, and is zero otherwise.
	\end{observation}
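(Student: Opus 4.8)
The plan is to handle the non-sparse and sparse cases separately, in parallel with the analogous statements Lemma~\ref{lem:SYT222} and Lemma~\ref{lem11111}.

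First I would dispose of the non-sparse case. Suppose $J$ is not sparse, so there is an index $i$ with $i,i+1\in J$. If $m\in M_{2n}$ satisfied $J\subseteq\Short(m)$, then $m$ would contain both $(i,i+1)$ and $(i+1,i+2)$; but in a perfect matching the point $i+1$ lies in exactly one pair, so it cannot be matched both to $i$ and to $i+2$. Hence no such $m$ exists, and $|\{m\in M_{2n}\,:\,J\subseteq\Short(m)\}|=0$.

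Next I would treat the sparse case. Write $J=\{j_1<j_2<\dots<j_k\}\subseteq[2n-1]$ with $j_{\ell+1}\ge j_\ell+2$ for all $\ell$. A matching $m$ satisfies $J\subseteq\Short(m)$ exactly when it contains all $k$ pairs $\{j_\ell,j_\ell+1\}$. Sparseness of $J$ guarantees these $k$ pairs are pairwise disjoint, so together they cover exactly $2k$ of the $2n$ points; let $R$ denote the set of the remaining $2n-2k$ points. Choosing such an $m$ is equivalent to choosing, freely and with no further constraint, a perfect matching of $R$ --- note that we require only the inclusion $J\subseteq\Short(m)$, not equality, so the points of $R$ may themselves form additional short matches. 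Since the number of perfect matchings of a set of $2n-2k$ points is $(2n-2k-1)!!$ (with the usual convention $(-1)!!=1$ in the degenerate case $k=n$), this is precisely the claimed count.

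There is essentially no obstacle here; the argument is elementary. The only point that warrants care is the direction of the inclusion $J\subseteq\Short(m)$: because we are counting matchings whose short-match set \emph{contains} $J$, the residual points on $R$ range over \emph{all} $(2n-2k-1)!!$ perfect matchings with no exclusion, exactly as in Lemma~\ref{lem:SYT222}, where the deleted up-down step pair is re-added freely to the smaller Dyck path.
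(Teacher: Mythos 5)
Your proof is correct, and the paper in fact states this as an Observation with no proof at all, so your argument supplies exactly the elementary reasoning the authors left implicit: consecutive indices in $J$ would force the point $i+1$ into two pairs, and for sparse $J$ the $k$ forced disjoint pairs leave $2n-2k$ points to be matched freely in $(2n-2k-1)!!$ ways. Your care about the inclusion $J\subseteq\Short(m)$ (rather than equality) and the degenerate case $k=n$ is exactly right.
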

	
	We deduce the following.
	
	\begin{lemma}\label{lem:Gallai-matchings}
		For every $n \ge 2$
		\[
		\Q(G_{n,n-1})
		= \sum\limits_{p\in G_{n,n-1}} \F_{n,\Des(p)}
		= \sum\limits_{m\in M_{2n-2}} \F_{n,\Short(m) \cap [n-1]}.
		\]    
	\end{lemma}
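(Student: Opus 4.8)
The plan is to reduce the claimed identity to a comparison of ``superset counts'' of the two indexing families, exploiting the fact that the fundamental quasisymmetric functions $\{\F_{n,J}:J\subseteq[n-1]\}$ are linearly independent. Concretely, the equality
\[
\sum_{p\in G_{n,n-1}}\F_{n,\Des(p)}=\sum_{m\in M_{2n-2}}\F_{n,\Short(m)\cap[n-1]}
\]
holds if and only if the two multisets $\{\Des(p):p\in G_{n,n-1}\}$ and $\{\Short(m)\cap[n-1]:m\in M_{2n-2}\}$ of subsets of $[n-1]$ coincide. By inclusion--exclusion over subsets of $[n-1]$, this is in turn equivalent to proving, for every $J\subseteq[n-1]$, that
\[
|\{p\in G_{n,n-1}:J\subseteq\Des(p)\}|=|\{m\in M_{2n-2}:J\subseteq\Short(m)\cap[n-1]\}|.
\]

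First I would simplify the right-hand side: since $J\subseteq[n-1]$ already, the condition $J\subseteq\Short(m)\cap[n-1]$ is equivalent to $J\subseteq\Short(m)$, so the right-hand side equals $|\{m\in M_{2n-2}:J\subseteq\Short(m)\}|$. Next I would invoke the two enumerative results already in hand. Writing $k:=|J|$: by Lemma~\ref{lem11111} the left-hand side equals $(2n-2k-3)!!$ when $J$ is sparse and is $0$ otherwise. For the right-hand side I would apply Observation~\ref{obs:match} with $2n$ replaced by $2n-2$ (the hypothesis $J\subseteq[2n-3]$ holds because $[n-1]\subseteq[2n-3]$ for $n\ge2$); it gives $(2(n-1)-2k-1)!!=(2n-2k-3)!!$ when $J$ is sparse and $0$ otherwise. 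Since the two quantities agree for every $J\subseteq[n-1]$, the identity follows.

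I do not expect a genuine obstacle, since all the substantive combinatorics has been packaged into Lemma~\ref{lem11111} (which rests on Theorem~\ref{t.number_max_Gallai_partitions} and Lemma~\ref{lem:Des class cardinality}) and Observation~\ref{obs:match}. The only points that need care are bookkeeping: matching the parameters correctly in Observation~\ref{obs:match} (the relevant matchings are on $2n-2$ points, not $2n$), and checking that intersecting $\Short(m)$ with $[n-1]$ leaves superset counts indexed by $J\subseteq[n-1]$ unchanged. An alternative, more structural route would be to produce an explicit bijection $G_{n,n-1}\to M_{2n-2}$ sending $\Des(p)$ to $\Short(m)\cap[n-1]$ --- this is the bijection developed in Subsection~\ref{sec:bijective} --- but the superset-count argument above is shorter and fully self-contained given the preceding results.
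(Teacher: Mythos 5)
Your proposal is correct and follows essentially the same route as the paper: the paper's proof of this lemma is a one-line comparison of Observation~\ref{obs:match} (applied to $M_{2n-2}$) with Lemma~\ref{lem11111}, which is exactly your superset-count argument with the inclusion--exclusion and linear-independence steps left implicit. Your version just spells out the bookkeeping (the parameter shift $2n\mapsto 2n-2$ and the harmlessness of intersecting with $[n-1]$) that the paper takes for granted.
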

	
	\begin{proof}
		Comparing Observation~\ref{obs:match} with Lemma~\ref{lem11111}, 
		one deduces that the descent set distribution on $G_{n,n-1}$ is equal to the distribution of short matches in $[n-1]$ on perfect matching in $M_{2n-2}$, implying the claim of the lemma. 
	\end{proof}
	
	The following theorem is due to Marmor.
	
	\begin{theorem}\label{thm:marmor2}\cite[Theorem 1.6]{Marmor}
		The set $M_{2n}$ is symmetric and Schur-positive with respect to $\Short$.  Furthermore, its Schur expansion is given by the following formula:
		\[
		\sum\limits_{m\in M_{2n}} \F_{2n,\Short(f)}        
		= \sum_{k=0}^n |\{ m\in M_{2n-2k} \,:\, \Short(m) = \varnothing\}| s_{2n-k,k}.
		\]
	\end{theorem}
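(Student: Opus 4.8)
\emph{Proof plan.} Write $a_j:=|\{m'\in M_{2j}:\Short(m')=\varnothing\}|$, the number of perfect matchings of $2j$ points on a line with no short chords; the asserted formula is $\Q(M_{2n})=\sum_{k=0}^{n}a_{n-k}\,s_{2n-k,k}$, equivalently $\Q(M_{2n})=\sum_{j=0}^{n}a_j\,s_{n+j,\,n-j}$ after setting $k=n-j$. The symmetry and Schur-positivity are nearly immediate: $\Short(m)$ is always sparse, since if $i,i+1\in\Short(m)$ the arcs $(i,i+1)$ and $(i+1,i+2)$ of $m$ would share the point $i+1$; and by Observation~\ref{obs:match} the quantity $|\{m\in M_{2n}:J\subseteq\Short(m)\}|=(2n-2|J|-1)!!$ depends only on $|J|$ for sparse $J$, so Lemma~\ref{lem:Marmor1} applies. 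Write $\Q(M_{2n})=\sum_\lambda c_\lambda s_\lambda$ with $c_\lambda\ge 0$. Only two-row shapes occur: the coefficient of $\F_J$ equals $|\{m:\Short(m)=J\}|$, which vanishes for non-sparse $J$, while it also equals the nonnegative sum $\sum_\lambda c_\lambda\,|\{T\in\SYT(\lambda):\Des(T)=J\}|$; if some $\lambda$ with $c_\lambda>0$ had three or more rows, placing $1,2,3$ in the first three cells of its first column and completing to a standard tableau $T_0$ would yield a positive contribution at the non-sparse set $J=\Des(T_0)\supseteq\{1,2\}$, a contradiction. So $\Q(M_{2n})=\sum_{k=0}^{n}c_k\,s_{2n-k,k}$, and it remains to identify $c_k=a_{n-k}$.

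For this I would use a \emph{core decomposition} of matchings: given $m\in M_{2n}$, repeatedly contract a short chord $(i,i+1)$ --- delete its two points and relabel the rest order-preservingly --- until none remains. The rewriting terminates (the number of points strictly decreases) and is locally confluent (two distinct short chords are disjoint, and contracting one leaves the other a short chord, after which contracting both in either order gives the same matching), so by Newman's lemma each $m$ has a well-defined normal form $\mathrm{nf}(m)$, a \emph{core} --- a matching with no short chord. By definition the number of cores on $2j$ points is $a_j$.

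The crux is that, for each fixed core $m_0$ on $2j$ points, the fibre $\{m\in M_{2n}:\mathrm{nf}(m)=m_0\}$ is in bijection with $\SYT(n+j,n-j)$ in a way that sends $\Short$ to $\Des$. I would record, for each of the $2n$ points of $m$, whether it is the right endpoint of a \emph{fresh} arc (one contracted during the reduction). A fresh arc crosses no arc of $m$ --- were $(a,b)$ fresh and crossing $(c,d)$ with $a<c<b<d$, the point $c$ could never be deleted before $(a,b)$, so $(a,b)$ could never become short --- hence the fresh arcs form a noncrossing partial matching that also does not cross the copy of $m_0$. Reading the point-labels from left to right then yields a lattice path with $n+j$ non-fresh-right steps and $n-j$ fresh-right steps that stays weakly above the axis, and one checks that its peak set is precisely $\Short(m)$ (an innermost fresh arc is forced to be a short chord $(i,i+1)$). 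This is the two-row analogue of the Dyck-path encoding used in the proof of Lemma~\ref{lem:SYT222}, which likewise gives the bijection $\SYT(n+j,n-j)\leftrightarrow\{\text{such paths}\}$ with $\Des\leftrightarrow\mathrm{Peak}$. Summing $\F_{\Short(m)}$ over a fibre, then over the $a_j$ cores of each size $2j$ and over $j$, and invoking Gessel's Theorem~\ref{G1}, gives
\[
\Q(M_{2n})=\sum_{j=0}^{n}a_j\,s_{n+j,\,n-j}=\sum_{k=0}^{n}a_{n-k}\,s_{2n-k,k},
\]
as desired.

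The main obstacle is exactly this fibre statement. Termination and confluence of short-chord contraction, and the no-crossing property of fresh arcs, are all routine; the delicate part is converting that no-crossing property into an honest $\Short$-preserving bijection onto $\SYT(n+j,n-j)$ --- in particular, re-assembling the non-fresh points into the prescribed core $m_0$ while keeping the peak $\leftrightarrow$ short-chord dictionary intact (and, along the way, seeing that the fibre size depends only on $j$). If a transparent bijection proves awkward, a fallback is to verify only numerical identities: since $\Q(M_{2n})=\sum_k c_k s_{2n-k,k}$, comparing the ``$J\subseteq\cdot$'' counts on both sides gives, via Lemma~\ref{lem:SYT222}-type ballot-number computations, a unitriangular linear system for the $c_k$ (the equation indexed by a sparse $J$ of size $m$ carries coefficient $1$ on $c_m$ and $0$ on every $c_{m'}$ with $m'<m$), whose unique solution is $c_k=a_{n-k}$ precisely when $(2r-1)!!=\sum_{j=0}^{r}a_j\bigl(\binom{2r}{r-j}-\binom{2r}{r-j-1}\bigr)$ for all $r$ --- and this last identity is just the stratification of $M_{2r}$ by core size from the second step, together with the (unrefined) fibre count $\binom{2r}{r-j}-\binom{2r}{r-j-1}=|\SYT(r+j,r-j)|$.
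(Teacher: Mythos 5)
First, a caveat on the comparison itself: the paper does not prove this statement --- it imports it verbatim as \cite[Theorem 1.6]{Marmor} --- so there is no in-paper proof to measure yours against. Judged on its own terms, your architecture is sound and every individual claim you make is true: sparsity of $\Short$, Observation~\ref{obs:match} and Lemma~\ref{lem:Marmor1} do yield symmetry and Schur-positivity; the ``nonnegative coefficients plus vanishing at non-sparse $J$'' argument correctly confines the expansion to two-row shapes; and short-chord contraction is terminating and confluent, with fresh arcs noncrossing for exactly the reason you give.

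The genuine gap is the one you flag yourself, and neither of your two routes closes it. In the main route you assert that recording the fresh right endpoints produces a ballot path with peak set $\Short(m)$ and that this gives, for each core $m_0$ on $2j$ points, a bijection of the fibre with $\SYT(n+j,n-j)$; the forward map and the peak identification are fine, but injectivity and surjectivity are never established --- you must show that $m$ is reconstructible from the pair $(m_0,\text{path})$ and that every ballot path occurs. The missing lemma is a characterization of the fibre: $m$ reduces to $m_0$ if and only if the non-core points form even-length gaps between consecutive core points, each carrying a noncrossing perfect matching (since every fresh arc's interior must consist entirely of fresh points matched among themselves); reconstruction is then the standard ``match each down-step to the most recent unmatched up-step'' pairing, the leftover up-steps carrying $m_0$. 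This is routine but it is the entire content of the theorem beyond Lemma~\ref{lem:Marmor1}. The fallback does not escape it: your unitriangularity analysis is correct (the diagonal entry is $1$ because a sparse $J$ of size $m$ forces row two of a $(2n-m,m)$-tableau to be $\{d+1:d\in J\}$, which is ballot by sparsity), but your only justification of the closing identity $(2r-1)!!=\sum_j a_j\bigl(\binom{2r}{r-j}-\binom{2r}{r-j-1}\bigr)$ is ``the stratification by core size together with the unrefined fibre count'' --- which is precisely the unproved claim again, merely stripped of its $\Short$-refinement. That reduction from a descent-preserving bijection to a cardinality statement is a genuine simplification, but it is not yet a proof. (The cardinality can in fact be verified independently of the bijection: by the gap characterization the fibre over a core of size $2j$ has $[x^{n-j}]C(x)^{2j+1}$ elements, $C$ the Catalan generating function, and $[x^{n-j}]C(x)^{2j+1}=\tfrac{2j+1}{2n+1}\binom{2n+1}{n-j}=\binom{2n}{n-j}-\binom{2n}{n-j-1}=|\SYT(n+j,n-j)|$; supplying this, or the reconstruction argument, would complete your proof.)
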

	
	\begin{proof}[Proof of Theorem~\ref{m3u}.]  
		Combining Lemma~\ref{lem:Gallai-matchings} with Theorem~\ref{thm:marmor2}, Theorem~\ref{G1} and Corollary~\ref{quasi_restriction}, 
		we obtain  
		\begin{align*}
			\ch(\Q(G_{n,n-1})) 
			&= \ch\left(\sum\limits_{m \in M_{2n-2}} \F_{n,\Short(f) \cap [n-1]}\right) \\
			&= \sum_{k=0}^{n-1} |\{ m \in M_{2n-2-2k} \,:\, \Short(m) = \varnothing\}| \, \chi^{2n-2-k,k} \downarrow^{\symm_{2n-2}}_{\symm_n}. 
		\end{align*}
	\end{proof}

	\subsection{A bijection from maximal Gallai partitions to perfect matchings}
	\label{sec:bijective}
	
	
	\begin{proof}[A bijective proof of Lemma~\ref{lem:Gallai-matchings}]
		We describe a bijection 
		\[
		\varphi: G_{n,n-1} \to M_{2n-2}
		\]
		from the set $G_{n,n-1}$ of maximal Gallai partitions of $K_n$ to the set $M_{2n-2}$ of perfect matchings of $2n-2$ points labeled by $1,\dots,2n-2$, under which
		\[
		\Des(p)
		= {\rm Short}(\varphi(p)) \cap [n-1]
		\qquad (\forall\, p \in G_{n,n-1}).
		\]
		
		A {\em  binary total partition tree of $[n]$} is a rooted complete  binary tree with $n$ leaves whose vertices are labeled by subsets of $[n]$, as follows:  
		the leaves are labeled by all distinct singletons, 
		and every internal vertex (father) is labeled by the disjoint union of the sets labeling its two sons. These trees are studied in~\cite[\S 5.2]{EC2}.  
		Denote the set of binary total partition trees of $[n]$ by ${\rm BTPT}_n$.
		
		Define a bijection 
		\[
		\psi: G_{n,n-1} \to {\rm BTPT}_n
		\]
		from maximal Gallai partitions  
		to binary total partition trees of $[n]$, as follows.
		By Lemma~\ref{lem:bipartite}, translated from the language of Gallai colorings to the language of Gallai partitions,
		in any maximal Gallai partition of $K_n$ $(n \ge 2)$ there is a unique block such that the edges in the block span a complete bipartite graph on $n$ vertices, and the induced partition on the edges in each side of this bipartite graph is also maximal Gallai.
		Label the root of the tree by the set $[n]$.
		Label the two sons of the root by the two sides of the bipartition of $[n]$ corresponding to the bipartite graph.
		Continue labeling the sons of any labeled father, by induction; see Figures~\ref{fig:O1} and~\ref{fig:O2}. 
		
		The map $\psi$ is a bijection, since the Gallai partition $p$ can be recovered from the labeling of the tree $\psi(p)$, as follows.  
		For every edge $e=(v_i,v_j)$ in $K_n$ there exists a unique  pair of brothers (two sons with common father), such that $i$ belong to one of the brothers and $j$ to the other.  This pair is called the  separating pair of $e$. 
		Two edges 
		belong to the same block  if and only if they have the same  separating pair of brothers.
		
		A bijection 
		\[
		\phi: {\rm BTPT}_n \to M_{2n-2}  
		\]
		from binary total partition trees of $[n]$ to perfect matchings of $2n-2$ points is described in \cite[Example 5.2.6]{EC2}:  
		Let $T \in {\rm BTPT}_n$. 
		First inductively relabel the inner vertices (that is, vertices which are not leaves) excluding the root as follows.  
		If labels $1,\dots,m$ have been used then label by $m+1$ the vertex $v$ satisfying the following condition: 
		among all unlabeled vertices with both sons labeled 
		the vertex $v$ has a son with minimal labeling.
		We get a complete binary tree $\hat T$ whose vertices (excluding the root) are labeled by $\{1,\dots,2n-2\}$ . The  matched pairs in the perfect matching $\phi(T)$ are the pairs of brothers in $\hat T$. For an example see Figure~\ref{fig:O3}.
		
		Finally, let 
		\[
		\varphi := \phi \circ \psi. 
		\]
		The map $\varphi$ is a bijection, since both $\psi$ and $\phi$ are. 
		Also, the pair $(i,i+1)$ 
		is a (short) match in $\varphi(p)$ if and only if $i$ and $i+1$ are brothers in $\widehat {\phi(p)}$. 
		For $1 \le i<n$ this happens if and only if $i$ and $i+1$ are leaves and brothers in $\phi(p)$.  Then the father of the leaves labeled by $i$ and $i+1$ is labeled by $\{i,i+1\}$ in $\phi(p)$. 
		This is equivalent to the edge $(i,i+1)$ being a singleton block in $p$, namely, $i \in \Des(p)$. 
	\end{proof}
	
	
	\begin{example} 
		See Figures~\ref{fig:O1},~\ref{fig:O2} and~\ref{fig:O3}.
		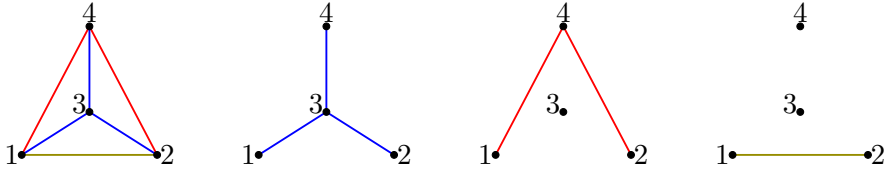
\begin{figure}[htb]
			\begin{center}
				\begin{tikzpicture}[scale=0.45]
				
				\draw[line width=0.25mm, red]			
				(-14,0)--(-12,3.8);
				\draw[line width=0.25mm, blue] (-10,0)--(-12,1.27)--(-14,0);
				\draw[line width=0.25mm, blue](-12,1.27)--(-12,3.8); 
				\draw[line width=0.25mm, red] (-12,3.8)--(-10,0); 
				\draw[line width=0.25mm, olive] (-10,0)--(-14,0);
				\draw (-12,4.2) node {$4$};
				\draw (-12.3,1.5) node {$3$};
				\draw (-9.7,0) node {$2$};
				\draw (-14.3,0) node {$1$};
				\draw[fill] (-14,0) circle (.1);
				\draw[fill] (-10,0) circle (.1);
				\draw[fill] (-12,3.8) circle (.1);
				\draw[fill] (-12,1.27) circle (.1);

				\draw[line width=0.25mm, blue] (-3,0)--(-5,1.27)--(-7,0);
				\draw[line width=0.25mm, blue](-5,1.27)--(-5,3.8); 
				\draw (-5,4.2) node {$4$};
				\draw (-5.3,1.5) node {$3$};
				\draw (-2.7,0) node {$2$};
				\draw (-7.3,0) node {$1$};
				\draw[fill] (-7,0) circle (.1);
				\draw[fill] (-3,0) circle (.1);
				\draw[fill] (-5,3.8) circle (.1);
				\draw[fill] (-5,1.27) circle (.1);

				\draw[line width=0.25mm, red]			(4,0)--(2,3.8);
				\draw[line width=0.25mm, red] (2,3.8)--(0,0); 
				\draw (2,4.2) node {$4$};
				\draw (1.7,1.5) node {$3$};
				\draw (-0.3,0) node {$1$};
				\draw (4.3,0) node {$2$};
				\draw[fill] (4,0) circle (.1);
				\draw[fill] (0,0) circle (.1);
				\draw[fill] (2,3.8) circle (.1);
				\draw[fill] (2,1.27) circle (.1);
				
				\draw[line width=0.25mm, olive] (7,0)--(11,0);
				\draw (9,4.2) node {$4$};
				\draw (8.7,1.5) node {$3$};
				\draw (6.7,0) node {$1$};
				\draw (11.3,0) node {$2$};
				\draw[fill] (11,0) circle (.1);
				\draw[fill] (7,0) circle (.1);
				\draw[fill] (9,3.8) circle (.1);
				\draw[fill] (9,1.27) circle (.1);									
				\end{tikzpicture}
			\end{center}
			\caption{A maximal Gallai coloring and its bipartitions}\label{fig:O1}
		\end{figure}
		
		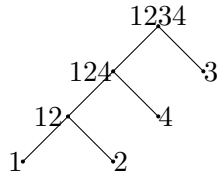
\begin{figure}[htb]
			\begin{center}
				\begin{tikzpicture}[scale=0.2]
				\draw[fill] (9,9) circle (.1); 	
				\draw (9,9.7) node {$1234$};
				\draw[fill] (6,6) circle (.1); 	
				\draw (4.5,6) node {$124$};
				\draw[fill] (12,6) circle (.1); 	
				\draw (12.5,6) node {$3$};
				\draw[fill] (3,3) circle (.1); 	
				\draw (1.7,3) node {$12$};
				\draw[fill] (9,3) circle (.1); 	
				\draw (9.5,3) node {$4$};
				\draw[fill] (0,0) circle (.1);
				\draw (-0.5,0) node {$1$};	 	
				\draw[fill] (6,0) circle (.1);
				\draw (6.5,0) node {$2$};									
				\draw (0,0)--(3,3)--(6,6)--(9,9)--(12,6);
				\draw (6,6)--(9,3);
				\draw (6,0)--(3,3);
				
				\end{tikzpicture}
			\end{center}
			\caption{The corresponding binary partition tree:}\label{fig:O2}
		\end{figure}
		
		\begin{figure}[htb]
			\begin{center}
				\begin{tikzpicture}[scale=0.2]
				\draw[fill] (9,9) circle (.1); 	
				\draw (9,9.7) node {};
				\draw[fill] (6,6) circle (.1); 	
				\draw (4.5,6) node {};
				\draw[fill] (12,6) circle (.1); 	
				\draw (12.5,6) node {$3$};
				\draw[fill] (3,3) circle (.1); 	
				\draw (1.7,3) node {};
				\draw[fill] (9,3) circle (.1); 	
				\draw (9.5,3) node {$4$};
				\draw[fill] (0,0) circle (.1);
				\draw (-0.5,0) node {$1$};	 	
				\draw[fill] (6,0) circle (.1);
				\draw (6.5,0) node {$2$};
				\draw (4.5,6) node {$6$};
				\draw (1.7,3) node {$5$};
				
				\draw (0,0)--(3,3)--(6,6)--(9,9)--(12,6);
				\draw (6,6)--(9,3);
				\draw (6,0)--(3,3);
				
				\end{tikzpicture}
			\end{center}
			\caption{The relabeld tree.\\
				The resulting perfect matching is $(1,2),(4,5),(3,6)$.}\label{fig:O3}
		\end{figure}
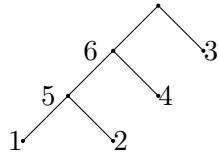  
	\end{example}

	
	
	
	
	

	\subsection{Indecomposable $321$-avoiding permutations}\label{sec:Schur4}
	A permutation $\pi$ in the symmetric group $\symm_n$ is {\em indecomposable} if 
	there is no $1 \le r < n$, 
	for which $\pi(i)<\pi(j)$ for all $i\le r< j$. 
	
	\begin{example} 
		The permutation $\pi = [31254] \in \symm_5$ is decomposable, since for 
		$r=3$, 
		$\pi(i) < \pi(j)$ for every $i \le 3<j$. 
		This may be viewed as a non-trivial principle block decomposition of the corresponding permutation matrix.  
		The permutation $\sigma = [43152]$ is indecompsable; indeed, there is no non-trivial principle block decomposition of its corresponding permutation matrix.	
		
		\[		
		\pi
		= \begin{pmatrix}
		0 & 1 & 0 & 0 & 0\\
		0 & 0 & 1 & 0 & 0\\
		1 & 0 & 0 & 0 & 0\\
		0 & 0 & 0 & 0 & 1\\
		0 & 0 & 0 & 1 & 0
		\end{pmatrix}\ \ , \ \  
		\sigma
		= \begin{pmatrix}
		0 & 0 & 1 & 0 & 0\\
		0 & 0 & 0 & 0 & 1\\
		0 & 1 & 0 & 0 & 0\\
		1 & 0 & 0 & 0 & 0\\
		0 & 0 & 0 & 1 & 0
		\end{pmatrix}
		\]		
	\end{example}
	

	
	
	Denote by {$\symm^*_n(321)$} the set of indecomposable permutations in $\symm_n$ with no decreasing subsequence of length 3.
	Recall the descent set of a permutation $\pi$ in the symmetric group $\symm_n$,
	\[
	\Des(\pi)
	:= \{i \,:\, \pi(i) > \pi(i+1)\}.
	\]
	
	\begin{theorem}\label{thm:321}
		For every $n \ge 2$,
		\[
		\sum\limits_{p \in T_{n,n-1}} {\bf x}^{\Des(p)} 
		= \sum\limits_{\pi \in \symm_n^*(321)}{\bf x}^{\Des(\pi)},
		\]
		where ${\bf x}^J := \prod\limits_{j \in J} x_j$.
		Equivalently, 
		\[
		\Q(T_{n,n-1}) = \Q(\symm_n^*(321)). 
		\]
	\end{theorem}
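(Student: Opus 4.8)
The claim is an equality of two quasisymmetric functions, each of the form $\sum\F_{\Des}$, so (the $\F_J$ being linearly independent) it is equivalent to the equidistribution of the set-valued statistic $\Des$ on $T_{n,n-1}$ and on $\symm_n^*(321)$. A set-valued statistic is determined by its ``up-set counts'': by inclusion--exclusion on the Boolean lattice $2^{[n-1]}$, the polynomial $\sum_a\mathbf{x}^{\Des(a)}$ is determined by the numbers $|\{a:J\subseteq\Des(a)\}|$, $J\subseteq[n-1]$. By Lemma~\ref{lem1111}, for $T_{n,n-1}$ these numbers equal $C_{n-|J|-1}$ when $J$ is sparse and $0$ otherwise. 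Hence the whole theorem reduces to proving the same two facts for $\symm_n^*(321)$.

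The easy half is the vanishing together with the base count. If $i,i+1\in\Des(\pi)$ then $\pi(i)>\pi(i+1)>\pi(i+2)$ is a $321$ pattern, so $\Des(\pi)$ is sparse for every $\pi\in\symm_n^*(321)$, and $|\{\pi:J\subseteq\Des(\pi)\}|=0$ when $J$ is not sparse. The case $J=\varnothing$ is $|\symm_n^*(321)|=C_{n-1}$, which is classical: decomposing a $321$-avoiding permutation into indecomposable blocks gives $G=1/(1-F)$ for $G(x):=\sum_{n\ge0}C_nx^n$ and $F(x):=\sum_{n\ge1}|\symm_n^*(321)|\,x^n$, hence $F=xG$ and $G=1+xG^2$.

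The substance is the sparse case $\varnothing\ne J\subseteq[n-1]$, which I would handle by induction on $|J|$, in the style of Lemma~\ref{lem:Des class cardinality}. Fix $i\in J$. If $i\in\Des(\pi)$ for $\pi\in\symm_n^*(321)$, then $\pi(i)$ is a left-to-right maximum of $\pi$ (otherwise $\pi(a)>\pi(i)>\pi(i+1)$ for some $a<i$ gives a $321$ pattern) and, dually, $\pi(i+1)=\min\{\pi(i+1),\dots,\pi(n)\}$. Using these facts I would define a ``contraction at $i$'' that fuses positions $i$ and $i+1$ and discards one value, producing a permutation $\rho\in\symm_{n-1}^*(321)$ whose descent set is the image of $\Des(\pi)\setminus\{i\}$ under the order-preserving bijection $[n]\setminus\{i\}\to[n-1]$, and which can be uniquely reverted by re-creating a descent at $i$. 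This gives a bijection between $\{\pi\in\symm_n^*(321):J\subseteq\Des(\pi)\}$ and $\{\rho\in\symm_{n-1}^*(321):J'\subseteq\Des(\rho)\}$, where $J'$ is the relabelled $J\setminus\{i\}$; iterating yields $|\{\pi\in\symm_n^*(321):J\subseteq\Des(\pi)\}|=|\symm_{n-|J|}^*(321)|=C_{n-|J|-1}$, matching $T_{n,n-1}$ and completing the proof.

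The main obstacle is making the contraction at $i$ both well defined and reversible on the nose: a naive removal of position $i$ or of position $i+1$ can introduce an initial block (for instance, deleting the leading entry of $[3142]$ produces the decomposable $[132]$), so the fusion must be arranged so that no proper initial segment ever becomes $\{1,\dots,r\}$, and one must verify that precisely the descents of $\pi$ other than the one at $i$ survive the relabelling. An alternative that bypasses the contraction is to construct a single bijection $\symm_n^*(321)\to\mathcal{D}_{2n-2}$ onto Dyck paths of semilength $n-1$ carrying $\Des(\pi)$ to $\mathrm{Peak}(d)\cap[n-1]$; combined with the peak/descent dictionary for $\SYT(n-1,n-1)$ in Lemma~\ref{lem:SYT222}, with Theorem~\ref{G1} and Corollary~\ref{quasi_restriction}, this identifies $\Q(\symm_n^*(321))$ with $\ch\bigl(\chi^{(n-1,n-1)}\downarrow_{\symm_n}^{\symm_{2n-2}}\bigr)$, which is $\Q(T_{n,n-1})$ by Theorem~\ref{m3d}. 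Either way, the combinatorial core is to match a descent statistic on indecomposable $321$-avoiding permutations with a Catalan family.
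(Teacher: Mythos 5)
Your overall strategy is sound and, if completed, would give a more self-contained proof than the paper's: the paper's entire proof of Theorem~\ref{thm:321} consists of quoting \cite[Theorem 1.2]{ABR}, which states $\Q(\symm_n^*(321)) = \ch\bigl(\chi^{(n-1,n-1)}\downarrow_{\symm_n}^{\symm_{2n-2}}\bigr)$, and comparing with Theorem~\ref{m3d}. Your reduction to up-set counts is correct (and is exactly how the paper proves Theorem~\ref{m3d}), your use of Lemma~\ref{lem1111} for the $T_{n,n-1}$ side matches the paper, and the two easy facts on the permutation side (sparseness of $\Des(\pi)$ for $\pi\in\symm_n^*(321)$, and $|\symm_n^*(321)|=C_{n-1}$ via $F=xG$) are correctly established.

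However, the crucial step --- that $|\{\pi\in\symm_n^*(321):J\subseteq\Des(\pi)\}|=C_{n-|J|-1}$ for sparse nonempty $J$ --- is not actually proved: the ``contraction at $i$'' on which your induction rests is never defined, and you yourself identify the obstacle without resolving it. The obstacle is real. Deleting position $i$ fails (your example: $[3142]$ with $i=1$ yields the decomposable $[132]$), but deleting position $i+1$ fails too: take $\pi=[231]\in\symm_3^*(321)$ with $i=2$; removing position $3$ gives $[12]$, which is decomposable, whereas the count forces the image to be $[21]$. So the fusion must sometimes keep the larger and sometimes the smaller of the two fused values, and making this choice canonical, indecomposability-preserving, descent-preserving, and invertible is precisely the missing combinatorial content. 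Your fallback route (a bijection $\symm_n^*(321)\to\mathcal{D}_{2n-2}$ sending $\Des$ to peaks) is likewise asserted rather than constructed, and is essentially a re-proof of the cited ABR theorem. Until one of these bijections is actually exhibited (or the count is established by some other means, e.g.\ a generating-function argument over sparse $J$), the proof is incomplete.
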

	
	\begin{proof}
		By~\cite[Theorem 1.2]{ABR},
		\[
		\Q(\symm_n^*(321))
		= \ch \left( \chi^{(n-1,n-1)} \downarrow_{\symm_n}^{\symm_{2n-2}} \right).
		\]
		Comparing this result with Theorem~\ref{m3d} gives
		\[
		\Q(T_{n,n-1})
		= \ch \left( \chi^{(n-1,n-1)} \downarrow_{\symm_n}^{\symm_{2n-2}} \right)=\Q(\symm_n^*(321)). 
		\]
	\end{proof}
	
	

	\section{Final remarks and open problems}\label{sec:final}
	
	\subsection{Maximal partitions of Coxeter root systems}
	
	Gallai and transitive colorings of abstract and vector matroids were discussed in Section~\ref{sec:matroids}. 
	Of special interest are the sets of positive roots of Coxeter systems.
	
	\begin{problem}
		Given a finite Coxeter group $W$, find the number of maximal transitive and Gallai partitions of the set $\Phi^+(W)$ of positive roots of $W$.
	\end{problem}
	
	For the dihedral group of order $2n$, $W=I_2(n)$,  the following holds.
	
	\begin{corollary} 
		For every integer $n > 1$:
		\begin{itemize}
			\item[(a)] 
			The number of maximal transitive partitions of $\Phi^+(I_2(n))$ is $n-1$.
			\item[(b)] 
			The number of maximal Gallai partitions of $\Phi^+(I_2(n))$ is $2^{n-1}-1$.    
		\end{itemize}
	\end{corollary}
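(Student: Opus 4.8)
The plan is to view $\Phi^+(I_2(n))$ as a rank-$2$ oriented matroid and to read off the number of maximal partitions from the number of $2$-colorings, which has already been computed in both the transitive and the Gallai settings. First I would set up the matroid: realize $\Phi^+ := \Phi^+(I_2(n))$ as the set of $n$ vectors in $\RR^2$ obtained by rotating a fixed root by successive multiples of $\pi/n$. For $n>1$ these span $\RR^2$, none of them is zero, and no two are proportional; hence the associated oriented matroid $M$ is loopless, has exactly $n$ parallel classes (all singletons), and is acyclic because $0$ lies outside the convex hull of $\Phi^+$ (Observation~\ref{t:must_be_acyclic_intro}). In particular $\rank(M)=2$. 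By Theorems~\ref{t.Gallai_max_eq_rank} and~\ref{t:max_transitive_eq_rank} the maximal number of blocks in a Gallai (resp.\ transitive) partition of $M$ is $\rank(M)=2$, so a maximal Gallai (resp.\ transitive) partition is exactly a Gallai (resp.\ transitive) $2$-partition.

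Next I would invoke the polynomial identities of Propositions~\ref{t.polynomial} and~\ref{G.polynomial}. Writing $a_j$ and $b_j$ for the numbers of transitive and Gallai $j$-partitions of $M$, these give, for every positive integer $k$,
\[
\#\{\text{transitive }k\text{-colorings of }M\} = \sum_{j\ge 1} a_j\,(k)_j,
\qquad
\#\{\text{Gallai }k\text{-colorings of }M\} = \sum_{j\ge 1} b_j\,(k)_j .
\]
Evaluating at $k=2$ and using that $(2)_j=0$ for $j\ge 3$, together with $a_1=b_1=1$ (the one-block partition is transitive, since $M$ is acyclic, and Gallai, since $M$ is loopless), I obtain
\[
\#\{\text{transitive }2\text{-colorings of }M\} = 2 + 2a_2,
\qquad
\#\{\text{Gallai }2\text{-colorings of }M\} = 2 + 2b_2 .
\]

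It then remains to substitute the two coloring counts, both of which are immediate citations. For (a), Corollary~\ref{cor:2-Coxeter} (itself a consequence of Theorem~\ref{thm:Orlik1}) gives $\#\{\text{transitive }2\text{-colorings of }M\} = |I_2(n)| = 2n$, hence $a_2 = n-1$. For (b), Proposition~\ref{prop:2_Gallai} gives $\#\{\text{Gallai }2\text{-colorings of }M\} = 2^p = 2^n$, since the number $p$ of parallel classes is $n$, hence $b_2 = 2^{n-1}-1$. Since $a_2$ and $b_2$ count precisely the maximal transitive and maximal Gallai partitions of $\Phi^+(I_2(n))$, this establishes the corollary.

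I expect no genuine difficulty of ideas here; the only thing to be careful about is bookkeeping: that ``maximal partition'' means exactly ``$2$-partition'' in this rank-$2$ situation, that the $(x)_j$-expansion collapses at $x=2$ to $2a_1+2a_2$ (equivalently, that partitions with more than $\rank(M)$ blocks never occur and the one-block partition is always admissible), and that the two $2$-coloring formulas are applied to the right data — the $n$ singleton parallel classes for the Gallai count, and the reflection arrangement of $I_2(n)$ for the transitive count.
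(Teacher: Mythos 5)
Your proof is correct and follows essentially the same route as the paper's: identify maximal partitions with $2$-partitions via the rank-$2$ computation, then convert the known $2$-coloring counts ($2n$ from Corollary~\ref{cor:2-Coxeter}, $2^n$ from Proposition~\ref{prop:2_Gallai}) into partition counts. The paper phrases the conversion as ``discard the two monochromatic colorings and forget the color names,'' while you route it through the $(x)_j$-expansion evaluated at $x=2$; these are the same arithmetic.
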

	
	\begin{proof} 
		{(a)} 
		By Theorem~\ref{t:max_transitive_eq_rank}, a maximal transitive coloring of $\Phi^+(I_2(n))$ is a 2-coloring. 
		By Corollary~\ref{cor:2-Coxeter}, 
		the number of transitive 2-colorings of $\Phi^+(I_2(n))$ is 
		$|I_2(n)|=2n$. The number of transitive 2-partitions is obtained by ignoring the two monochromatic colorings and forgetting the names of the colors. Thus the number of maximal transitive partitions is equal to $(2n-2)/2 = n-1$.     
		
		{(b)} 
		The proof is similar to the proof of (a), 
		with Theorem~\ref{t:max_transitive_eq_rank} replaced by Theorem~\ref{t.Gallai_max_eq_rank}, and Corollary~\ref{cor:2-Coxeter} by Proposition~\ref{prop:2_Gallai}. 
	\end{proof}
	
	For the symmetric group $\symm_n$, namely 
	the Coxeter group of type $A_{n-1}$, 
	Theorems~\ref{m3d} and~\ref{m3u} may be reformulated as follows.
	
	\begin{theorem} 
		For every integer $n > 1$:
		\begin{itemize}
			\item[(a)] 
			The number of maximal transitive partitions of $\Phi^+(A_{n-1})$ is the Catalan number $C_{n-1}$.
			\item[(b)] 
			The number of maximal Gallai partitions of $\Phi^+(A_{n-1})$ is $(2n-3)!!$.  
		\end{itemize}
	\end{theorem}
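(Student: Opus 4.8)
The plan is to reduce both statements to results already established for the complete graph and the transitive tournament, via the matroid-theoretic dictionary for the type $A$ root system. First I would recall that
\[
\Phi^+(A_{n-1}) = \{e_i - e_j : 1 \le i < j \le n\} \subset \RR^n,
\]
viewed as a finite set of vectors, is a representable oriented matroid over $\RR$. Identifying the vector $e_i-e_j$ (for $i<j$) with the directed edge $(v_i,v_j)$ of the transitive tournament $\overrightarrow K_n$ on vertices $v_1,\ldots,v_n$, I would check that the signed circuits of this oriented matroid are exactly the signed directed cycles of $\overrightarrow K_n$: a minimal $\RR$-linear dependence among roots corresponds to traversing a cycle in the underlying $K_n$, the edges traversed in the direction of the tournament orientation forming $X^+$ and the reversed ones forming $X^-$. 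In particular the underlying (unoriented) matroid is the graphic matroid of $K_n$, of rank $n-1$, and the oriented matroid is acyclic, reflecting the fact that $\overrightarrow K_n$ is acyclic.

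Next I would invoke Theorems~\ref{t:max_transitive_eq_rank} and~\ref{t.Gallai_max_eq_rank} to conclude that the maximal number of colors in a transitive (respectively, Gallai) coloring of $\Phi^+(A_{n-1})$ equals $\rank = n-1$, so a \emph{maximal} partition is precisely an $(n-1)$-partition. Unwinding Definitions~\ref{def:transitive_partition} and~\ref{def:Gallai_partition} through the identification above, a transitive $(n-1)$-partition of the oriented matroid $\Phi^+(A_{n-1})$ is the same datum as a maximal transitive partition of $\overrightarrow K_n$, and a maximal Gallai partition of $\Phi^+(A_{n-1})$ is the same datum as a maximal Gallai partition of $K_n$. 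Hence part (a) follows from Theorem~\ref{thm:Catalan} (the count $C_{n-1}$ for $\overrightarrow K_n$), and part (b) from Theorem~\ref{t.number_max_Gallai_partitions} (the count $(2n-3)!!$ for $K_n$).

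The one point requiring care — and the only possible obstacle — is verifying this circuit dictionary: that signed circuits of the vector configuration $\{e_i-e_j\}$ are exactly the signed directed cycles of $\overrightarrow K_n$ with the sign convention above, and not some larger dependence. This is the classical description of the root system of type $A$ and poses no real difficulty: any minimally dependent subset of $\{e_i-e_j\}$ must, reading off coordinates, have each coordinate $e_k$ appear with both signs (else the coefficient vector is forced to vanish there), which forces the support to be a cycle; conversely every cycle yields exactly one such dependence up to sign. With this in hand the theorem is immediate.
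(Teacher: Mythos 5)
Your proposal is correct and follows essentially the same route as the paper: the paper's proof simply observes that transitive (resp.\ Gallai) partitions of $\Phi^+(A_{n-1})$ may be identified with partitions of the edge set of $\overrightarrow{K}_n$ (resp.\ $K_n$) and then cites the earlier enumeration results, exactly as you do. Your extra verification of the signed-circuit dictionary between minimal linear dependences among the roots $e_i-e_j$ and directed cycles of $\overrightarrow{K}_n$ is a standard fact the paper leaves implicit, and your argument for it is sound.
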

	
	\begin{proof}
		Note that transitive (Gallai) partitions of the set of positive roots of type $A_{n-1}$,  
		\[
		\Phi^+ (A_{n-1})
		= \{e_i-e_j:\ 1\le i<j\le n\} ,
		\]
		may be interpreted as transitive (Gallai) partitions of the directed (undirected) complete graph on $n$ vertices. Theorems~\ref{m3d} and~\ref{m3u} complete the proof. 
	\end{proof}
	
	Regarding the Coxeter group of type $B_n$, we conjecture the following.
	
	\begin{conjecture}\label{conj:final1}
		The number of maximal transitive partitions of the set $\Phi^+(B_{n})$ of positive roots of type $B_n$ is
		\[
		C^B_n   
		:=  \sum_{k=0}^{n} \frac{3k+1}{n+k+1} \binom{2n-k}{n-2k}. 
		\] 
	\end{conjecture}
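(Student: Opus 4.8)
The plan is to lift the type-$A$ analysis of Section~\ref{sec:maximal} to type $B_n$. \emph{First, reduce to a combinatorial coloring problem.} Since $\Phi^+(B_n)$ is a Coxeter root system, Observation~\ref{obs:111} says that $\ve\colon\Phi^+(B_n)\to[k]$ is transitive exactly when $\ve(\beta)\in\{\ve(\gamma):\gamma\in\operatorname{supp}\beta\}$ for every positive root $\beta$, where $\operatorname{supp}\beta$ denotes the set of simple roots occurring in $\beta$. With the simple roots $\alpha_1=e_1-e_2,\dots,\alpha_{n-1}=e_{n-1}-e_n,\alpha_n=e_n$ one has
\[
e_i-e_j=\alpha_i+\dots+\alpha_{j-1},\qquad e_i=\alpha_i+\dots+\alpha_n,\qquad e_i+e_j=\alpha_i+\dots+\alpha_{j-1}+2\alpha_j+\dots+2\alpha_n,
\]
so each support is an ``interval'' $\{\alpha_i,\dots,\alpha_{j-1}\}$ or $\{\alpha_i,\dots,\alpha_n\}$. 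Since $\ve(\beta)$ always lies in $\{\ve(\alpha_1),\dots,\ve(\alpha_n)\}$, a maximal (hence $n$-colored, by Theorem~\ref{t:max_transitive_eq_rank}) transitive coloring must assign distinct colors to $\alpha_1,\dots,\alpha_n$; normalizing $\ve(\alpha_l)=l$, maximal transitive partitions of $\Phi^+(B_n)$ are identified with transitive colorings $\ve$ with $\ve(\alpha_l)=l$, each root $\beta$ then getting a color from the interval $\operatorname{supp}\beta$, subject only to the signed-circuit constraints.

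\emph{Next, set up a parabolic recursion.} Restricting a normalized coloring to the parabolic $\Phi^+(B_{n-1})=\langle\alpha_2,\dots,\alpha_n\rangle$ produces, after relabeling, a maximal transitive coloring of $\Phi^+(B_{n-1})$; hence
\[
C^B_n=\sum_{\ve'}\ \#\bigl\{\text{transitive extensions of }\ve'\text{ to the }2n-1\text{ roots }e_1,\ e_1\pm e_j\ (1<j\le n)\bigr\},
\]
the sum ranging over the $C^B_{n-1}$ maximal transitive colorings $\ve'$ of $\Phi^+(B_{n-1})$. The crux is to prove that this inner count depends on $\ve'$ only through a controlled statistic, by establishing the $B_n$ analogue of the ``base block'' decomposition of Lemmas~\ref{lem:bipartite}, \ref{lem:extend_Gallai_partition} and~\ref{t.number_partitions_per_hamiltonian_path}: one would show that a canonical block can be peeled off, introduce an auxiliary parameter $k$ recording how many blocks consist solely of long roots $e_i+e_j$ together with the root $e_n$, and obtain a two-variable recursion whose solution is precisely the summand $\tfrac{3k+1}{n+k+1}\binom{2n-k}{n-2k}$ --- equivalently, a functional equation for $\sum_n C^B_n x^n$ that one then verifies against the claimed sum via Lagrange inversion. (An appealing alternative is to recast the question, via a folding from a simply-laced type, as a count of flip-invariant maximal transitive structures, a setting in which formulas of the shape $C^B_n$ are familiar.) The case $B_2=I_2(4)$ already matches $C^B_2=3$ by the dihedral result above, and I would first verify the conjecture computationally for $n\le8$, in the spirit of Example~\ref{example:n!}.

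\emph{The main obstacle} is exactly this structure theory. In $\overrightarrow K_n$ every induced subproblem is again a tournament and the Catalan recursion falls out of a single complete-bipartite splitting (Lemma~\ref{lem:bipartite}); by contrast, the oriented matroid of $\Phi^+(B_n)$ has signed circuits of several genuinely different shapes --- short--short--short $(e_i-e_j)+(e_j-e_k)=e_i-e_k$, short--short--long $(e_i-e_k)+(e_j+e_k)=e_i+e_j$, short--$e$--$e$ $(e_i-e_j)+e_j=e_i$, the ``doubled'' circuit $(e_i+e_j)+(e_i-e_j)=2e_i$, and four-term circuits such as $(e_i-e_k)+(e_j-e_l)+(e_k+e_l)=e_i+e_j$ --- so it is not clear a priori that a maximal transitive partition decomposes along a single canonical block, nor how the long roots $e_i+e_j$ and the roots $e_i$ behave under contraction. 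Making this decomposition precise, and thereby pinning down the inner count above, is where essentially all of the work lies.
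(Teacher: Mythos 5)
First, be aware that the statement you were asked to prove is presented in the paper as a conjecture (Conjecture~\ref{conj:final1}): the authors give no proof and only report that it has been verified computationally for $n\le 6$. So there is no proof in the paper to compare against, and any complete argument here would be genuinely new.

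Your proposal, however, is also not a proof; it is a strategy outline whose central steps are explicitly left open. The reduction in your first paragraph is essentially sound, though note that Observation~\ref{obs:111} does not say that transitivity is equivalent to $\ve(\beta)\in\{\ve(\gamma):\gamma\in\operatorname{supp}\beta\}$: the condition in that observation quantifies over \emph{every} positive-span relation $u\in\spn_{\RR_{>0}}\{v_1,\dots,v_t\}$ among roots, of which the simple-root decomposition is only one instance, so your restated condition is necessary but not sufficient. (It is enough for the one conclusion you draw from it, namely that the $n$ simple roots must receive distinct colors in a maximal coloring.) The genuine gap is everything after that: the type-$B$ analogue of the canonical base-block decomposition of Lemma~\ref{lem:bipartite}, the claim that the number of extensions in your parabolic recursion depends on $\ve'$ only through a single statistic $k$, the identification of that statistic, the resulting two-variable recursion, and the verification that its solution is $\frac{3k+1}{n+k+1}\binom{2n-k}{n-2k}$ are all asserted as things ``one would show'' rather than shown. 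As you yourself acknowledge, this is where essentially all of the work lies, and your own list of the several inequivalent circuit types in $\Phi^+(B_n)$ illustrates exactly why the type-$A$ splitting argument does not transfer. In its current form the proposal establishes nothing beyond what the paper already records: the case $n=2$ via the dihedral computation, and numerical checks for small $n$.
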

	Conjecture~\ref{conj:final1} was checked for $n \le 6$.  
	
	\begin{remark}
		The type $B$ Catalan number $C^B_{n-1}$ from Conjecture~\ref{conj:final1} is equal to the number of 
		ordered pairs $(\alpha,\beta)$ of compositions of $n$ with the same number of parts, such that
		$\sum\limits_{i=1}^{r} \alpha_i \ne \sum\limits_{i=1}^{r} \beta_i$ $(\forall\, r<\# \text{\rm parts)}$; see~\cite[A081696]{Sloane} and~\cite[Theorem 1.1]{Wilf}. 
		Note that the number of maximal transitive partitions of type $A_{n-1}$ is equal to the (type $A$) Catalan number $C_{n-1}$, 
		which counts pairs $(\alpha,\beta)$ of compositions of $n$ with the  same number of parts, such that 
		$\sum\limits_{i=1}^{r} \alpha_i \ge 
		\sum\limits_{i=1}^{r} \beta_i$ $(\forall\, r)$; see~\cite{Reifegerste, Stanley_Catalan}. 
	\end{remark}
	
	

	\subsection{Quasisymmetric functions}  
	
	Let $P_k(G)$ be the set of Gallai (respectively, transitive) $k$-partitions of the edge set of a loopless undirected (respectively, acyclic directed) graph $G$.
	
	\begin{definition} 
		The {\em descent set} of a Gallai (respectively, transitive) $k$-partition $p$
		of the edge set of a loopless graph (respectively, an acyclic directed graph) $G$  on the set of vertices $\{1, \ldots, n\}$ is 
		\[
		\Des(p) 
		:= \{ i \,:\, \text{the edge } (i,i+1) \text{ forms a singleton block in } p\}.
		\]
		An undirected (directed) graph $G$ is {\em $k$-Gallai} (respectively, {\em $k$-transitive}) {\em Schur-positive} if the quasisymmetric function 
		\[
		\Q(P_k(G)):=
		\sum\limits_{p\in P_k(G)} \F_{\Des(p)}  
		\]
		is symmetric and Schur-positive.
	\end{definition}
	
	By Theorem~\ref{m1g}, for any positive integers  $n>k\ge 1$, the complete graph $K_n$ is $k$-Gallai Schur-positive and the transitive tournament $\overrightarrow K_n$ is $k$-transitive Schur-positive. 
	
	Another family of Schur-positive graphs consist of cycles. 
	Let $\overrightarrow{\mathcal C}_n$ be the acyclic directed cycle with vertex set $[n]$ and edge set $\{(i,i+1) \,:\, 1 \le i < n\} \sqcup \{(1,n)\}$, and let ${\mathcal C}_n$ be the underlying undirected cycle.
	
	\begin{proposition}\label{cor:cycles} 
		For any $n \ge 3$ and $k \ge 1$, 
		the undirected cycle ${\mathcal C}_n$ is $k$-Gallai Schur-positive, 
		and the acyclic directed cycle $\overrightarrow{\mathcal C}_n$ is $k$-transitive Schur-positive.
	\end{proposition}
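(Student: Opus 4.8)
I would begin by disposing of the degenerate parameter range and fixing notation. The cycle ${\mathcal C}_n$ has a \emph{unique} circuit, namely its whole edge set of size $n$, and $\overrightarrow{\mathcal C}_n$ has a unique signed circuit; consequently a partition of the $n$ edges into $k\ge n$ nonempty blocks can never be admissible, so for $k\ge n$ the set $P_k({\mathcal C}_n)$ (resp. $P_k(\overrightarrow{\mathcal C}_n)$) is empty, $\Q(\varnothing)=0$ is trivially symmetric and Schur-positive, and I may assume $1\le k\le n-1$. Write $e_i$ for the edge $\{i,i+1\}$ (resp. $(i,i+1)$), $1\le i\le n-1$, and $e_0$ for $\{1,n\}$ (resp. $(1,n)$). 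Since $k\le n-1$ forces some block of a $k$-partition of the $n$ edges to contain two edges, and that block then meets the unique circuit of ${\mathcal C}_n$ in at least two edges, \emph{every} partition of the edge set into $k$ nonempty blocks is a Gallai $k$-partition of ${\mathcal C}_n$. For $\overrightarrow{\mathcal C}_n$ the unique signed circuit is $X=(X^+,X^-)$ with $X^+=\{e_1,\dots,e_{n-1}\}$ and $X^-=\{e_0\}$, so a $k$-partition is transitive precisely when the block containing $e_0$ also contains some $e_i$, i.e. when $e_0$ is not a singleton block.

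Next I would compute the fibers of the descent map. Fix $J\subseteq[n-1]$. A partition $p$ satisfies $J\subseteq\Des(p)$ exactly when $e_i$ is a singleton block for every $i\in J$; deleting these $|J|$ forced singletons is a bijection onto the partitions of the remaining $n-|J|$ edges $\{e_0\}\cup\{e_i:i\in[n-1]\setminus J\}$ into $k-|J|$ nonempty blocks. For ${\mathcal C}_n$ no constraint survives (the Gallai condition remains automatic), whence
\[
\bigl|\{p\in P_k({\mathcal C}_n):J\subseteq\Des(p)\}\bigr| = S(n-|J|,\,k-|J|),
\]
the Stirling number of the second kind; for $\overrightarrow{\mathcal C}_n$ the only surviving constraint is that $e_0$ be non-isolated among the remaining edges, whence
\[
\bigl|\{p\in P_k(\overrightarrow{\mathcal C}_n):J\subseteq\Des(p)\}\bigr| = S(n-|J|,\,k-|J|) - S(n-|J|-1,\,k-|J|-1).
\]
In either case the size of the fiber depends on $J$ only through $|J|$.

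It remains to convert this uniformity into symmetry and Schur-positivity. Here the point requiring care is that, in contrast with the complete-graph case of Theorem~\ref{m1g}, descent sets of cycle partitions need \emph{not} be sparse --- for instance the Gallai $3$-partition $\{\{e_1\},\{e_2\},\{e_0,e_3\}\}$ of ${\mathcal C}_4$ has descent set $\{1,2\}$ --- so Lemma~\ref{lem:Marmor1} cannot be invoked verbatim. I would therefore prove and apply the following mild strengthening of that lemma: if $\bigl|\{a\in A:J\subseteq\Des(a)\}\bigr|$ depends only on $|J|$ for every $J\subseteq[n-1]$, then
\[
\Q(A)=\sum_{j=0}^{n-1} b_j\, s_{(n-j,\,1^{j})}, \qquad b_j:=\bigl|\{a\in A:\Des(a)=J\}\bigr| \ \text{ for any } J \text{ with } |J|=j,
\]
so $\Q(A)$ is symmetric and Schur-positive since $b_j\ge 0$. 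The proof is short: Möbius inversion on the Boolean lattice shows $b_J$ too depends only on $|J|$, so $\Q(A)=\sum_j b_j\sum_{|J|=j}\F_J$, and $\sum_{J\subseteq[n-1],\,|J|=j}\F_J=s_{(n-j,1^{j})}$ because the descent sets of the standard Young tableaux of hook shape $(n-j,1^{j})$ are exactly the $j$-subsets of $[n-1]$ (such a tableau is determined by the set of entries placed in its leg, and its descent set is that set shifted down by one), which is the $\lambda=(n-j,1^{j})$ instance of Gessel's Theorem~\ref{G1}. Applying this to $A=P_k({\mathcal C}_n)$ and $A=P_k(\overrightarrow{\mathcal C}_n)$ with the fiber counts above proves Proposition~\ref{cor:cycles}.

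The only genuine obstacle is the failure of sparseness, which blocks a direct appeal to Lemma~\ref{lem:Marmor1} and necessitates the strengthening above; everything else --- the description of admissible partitions of a cycle and the Stirling-number bookkeeping for the fibers --- is routine. As a by-product one obtains the explicit Schur expansions $\Q(P_k({\mathcal C}_n))=\sum_{j}b_j\,s_{(n-j,1^j)}$ and, likewise, for $\overrightarrow{\mathcal C}_n$, with $b_j$ an explicit alternating sum of Stirling numbers of the second kind.
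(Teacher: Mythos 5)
Your proof is correct and follows essentially the same route as the paper's: both establish that the fiber $|\{p : J\subseteq\Des(p)\}|$ depends only on $|J|$ (the paper via an edge-contraction bijection onto partitions of a smaller cycle, you via an explicit Stirling-number count after characterizing the admissible partitions), and both then conclude via $\sum_{|J|=j}\F_J = s_{(n-j,1^j)}$. Your explicit treatment of the failure of sparseness and the M\"obius-inversion step simply makes precise what the paper's proof leaves implicit.
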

	
	\begin{proof}
		We will prove the result for the directed cycle 
		$\overrightarrow {\mathcal C}_n$. The proof for ${\mathcal C}_n$ is similar.
		
		First, notice that, for any fixed $i \in [n-1]$, the set of transitive $k$-partitions of $\overrightarrow{\mathcal C}_n$ with $i\in \Des(p)$
		is in bijection with the set of transitive $(k-1)$-partitions of 
		$\overrightarrow{\mathcal C}_n/(i,i+1)$ (edge contraction), which is isomorphic to $\overrightarrow{\mathcal C}_{n-1}$. This implies that for every $n \ge 3$ and $J \subseteq [n-1]$,
		\[
		|\{p \in P_k(\overrightarrow{\mathcal C}_n) \,:\, J \subseteq \Des(p)\}|
		= |P_{k-|J|}(\overrightarrow{\mathcal C}_{n-|J|})|.
		\]
		Note that in this case, the descent set of $p$ is not necessarily sparse. Moreover, all subsets of the same cardinality have the same fiber size. 
		This implies that there exist nonnegative integers 
		$\{m_{n,k,j}\}_{j=0}^{n-1}$, such that
		\[
		\Q(P_k(\overrightarrow{\mathcal C}_n))
		:= \sum_{p\in P_k(\overrightarrow{\mathcal C}_n)} \F_{\Des(p)}
		= \sum_{j=0}^{n-1} m_{n,k,j}
		\sum_{\substack{J \subseteq [n-1] \\ |J|=j}} \F_{J}
		= \sum_{j=0}^{n-1} m_{n,k,j} s_{(n-j,1^j)}.
		\]
		The last equality follows from Theorem~\ref{G1}. 
	\end{proof}
	
	In view of Theorem~\ref{m1g} and Proposition~\ref{cor:cycles}, we pose the following problem.
	
	\begin{problem}
		Characterize the $k$-Gallai Schur-positive loopless graphs and $k$-transitive Schur-positive acyclic directed graphs.
	\end{problem}
	
	A more general problem is the following.
	
	\begin{problem}
		Let $M$ be a finite loopless matroid (or acyclic oriented matroid), and let $P_k(M)$ be the set of its Gallai (respectively, transitive) $k$-colorings. 
		For which 
		set-valued functions $\Des: P_k(M) \to 2^{[\rank(M)]}$ is the quasisymmetric function 
		\[
		\sum_{p \in P_k(M)} \F_{\Des(p)}  
		\]
		symmetric and Schur-positive?
	\end{problem}

	\subsection{Algebras and Hilbert series}
	
	
	In this subsection we introduce two families of algebras,  
	intimately related to transitive and Gallai colorings. 
	
	\begin{definition}\label{def:transitive_algebra}
		Let $k$ be a positive integer, and let $M$ be an oriented matroid on a finite set $E$, with set of signed circuits $\Gamma(M)$.  
		The {\em transitive $k$-algebra} of $M$, denoted ${\mathcal T}_{M,k}$, is the commutative algebra over $\CC$ generated by $\{x_e:\ e\in E\}$ subject to the relations  
		\[
		\prod_{e_1 \in X^+, e_2 \in X^-} (x_{e_1} -x_{e_2}) = 0 
		\qquad (\forall\, (X^+,X^-) \in \Gamma(M))
		\]
		and 
		\[
		x_e^k = 1 
		\qquad (\forall\, e \in E).
		\]
	\end{definition}
	
	\begin{definition}\label{def:Gallai_algebra}
		Let $k$ be a positive integer, and let $M$ be a matroid on a finite set $E$, with set of circuits $\Gamma(M)$.  
		Let $\, <\,$ be an arbitrary linear order on $E$. 
		The {\em Gallai $k$-algebra} of $M$, denoted ${\mathcal G}_{M,k}$, is the commutative algebra over $\CC$ generated by $\{x_e:\ e\in E\}$ subject to the relations:  
		\[
		\prod_{\substack{e_1,e_2 \in X \\ e_1 < e_2}} (x_{e_1} - x_{e_2}) = 0 
		\qquad (\forall\, X \in \Gamma(M))
		\]
		and 
		\[
		x_e^k=1 \qquad (\forall\, e\in E).
		\]
	\end{definition}
	
	\begin{theorem}\label{t.dimTG}
		Let $k$ be a positive integer. 
		Then, for any finite oriented matroid $M$,
		\[
		\dim {\mathcal T}_{M,k}
		= \# \{\text{transitive\ $k$-colorings of}\ M\}
		\]
		and, for any finite matroid $M$,
		\[
		\dim {\mathcal G}_{M,k}
		= \# \{\text{Gallai\ $k$-colorings of}\ M\}.
		\]
	\end{theorem}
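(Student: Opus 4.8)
The plan is to identify both algebras as quotients of the coordinate ring of $(\mu_k)^E$, where $\mu_k\subset\CC$ is the group of $k$-th roots of unity, and then observe that the circuit relations cut out exactly the colorings demanded by Definitions~\ref{def:transitive} and~\ref{def:Gallai}. First I would set $A:=\CC[x_e:e\in E]/\langle x_e^k-1:e\in E\rangle$. Since $x^k-1$ splits into $k$ distinct linear factors over $\CC$, the Chinese Remainder Theorem gives $\CC[x]/(x^k-1)\cong\CC^{\mu_k}$, and tensoring over $e\in E$ yields an isomorphism $A\xrightarrow{\sim}\CC^{\Phi_k}$, where $\Phi_k$ is the set of all maps $E\to\mu_k$, sending $x_e$ to the evaluation functional $\ve\mapsto\ve(e)$. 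Fixing a bijection $\mu_k\cong[k]$ identifies $\Phi_k$ with the set of all $k$-colorings $\ve:E\to[k]$. By Definitions~\ref{def:transitive_algebra} and~\ref{def:Gallai_algebra} we have ${\mathcal T}_{M,k}=A/J_{\mathrm{tr}}$ and ${\mathcal G}_{M,k}=A/J_{\mathrm{Ga}}$, where $J_{\mathrm{tr}}$ (resp.\ $J_{\mathrm{Ga}}$) is the ideal of $A$ generated by the images of $\prod_{e_1\in X^+,e_2\in X^-}(x_{e_1}-x_{e_2})$ over $(X^+,X^-)\in\Gamma(M)$ (resp.\ of $\prod_{e_1<e_2\in X}(x_{e_1}-x_{e_2})$ over $X\in\Gamma(M)$).

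Next I would use the structure theory of the semisimple algebra $A\cong\CC^{\Phi_k}$: it is a finite product of copies of $\CC$, every ideal is radical and equals $\{f:f|_Z=0\}$ for a unique subset $Z\subseteq\Phi_k$ (its vanishing locus, the set of points of $\Phi_k$ at which all generators vanish), and the corresponding quotient is $\cong\CC^Z$, of dimension $|Z|$. Thus it suffices to compute the vanishing loci $Z_{\mathrm{tr}}$ and $Z_{\mathrm{Ga}}$ of the two generating sets. For a coloring $\ve$ and a signed circuit $(X^+,X^-)$, the generator evaluates at $\ve$ to $\prod_{e_1\in X^+,e_2\in X^-}(\ve(e_1)-\ve(e_2))$, which vanishes precisely when there exist $e_1\in X^+$ and $e_2\in X^-$ with $\ve(e_1)=\ve(e_2)$, i.e.\ $\ve(X^+)\cap\ve(X^-)\ne\varnothing$; hence $Z_{\mathrm{tr}}$ is exactly the set of transitive $k$-colorings of $M$. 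Likewise $\prod_{e_1<e_2\in X}(x_{e_1}-x_{e_2})$ vanishes at $\ve$ iff two distinct elements of $X$ get the same color (the order $<$ being irrelevant to vanishing), i.e.\ iff $|\{\ve(e):e\in X\}|<|X|$; hence $Z_{\mathrm{Ga}}$ is exactly the set of Gallai $k$-colorings of $M$. Combining, $\dim{\mathcal T}_{M,k}=|Z_{\mathrm{tr}}|$ and $\dim{\mathcal G}_{M,k}=|Z_{\mathrm{Ga}}|$, which is the assertion.

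There is no serious obstacle here: the argument is a direct application of the fact that a finite-dimensional commutative $\CC$-algebra generated by elements satisfying $x_e^k=1$ is the coordinate ring of a finite subscheme of $(\mu_k)^E$, and that imposing polynomial relations just restricts to their common zero set. The only points requiring a word of care are (i) that $x^k-1$ has $k$ distinct roots over $\CC$, so $A$ is reduced and its ideals are all radical — this is where characteristic $0$ (or at least $k$ invertible) is used; and (ii) being explicit that the relations in Definitions~\ref{def:transitive_algebra} and~\ref{def:Gallai_algebra} are presented as \emph{generators} of the defining ideal, so that the common vanishing locus of these generators coincides with that of the whole ideal. Once these are noted, the dimension count is immediate, and in particular it shows the answer is independent of the auxiliary linear order $<$ chosen in the Gallai case.
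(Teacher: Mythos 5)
Your proposal is correct and follows essentially the same route as the paper: both identify the algebra with the ring of $\CC$-valued functions on the finite set of solutions of the defining relations inside $(\mu_k)^E$ and then match that set with the $k$-colorings, checking that each circuit generator vanishes at a point exactly when the corresponding coloring condition holds. The only difference is that you make explicit (via the Chinese Remainder Theorem and the semisimplicity of $\CC^{\Phi_k}$) the reducedness needed to conclude that the quotient really is the function ring on the vanishing locus, a step the paper passes over with ``Therefore ${\mathcal T}_{M,k}\cong \CC[V_k]$.''
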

	
	\begin{proof} 
		Consider the set $V_k$ of all families $\{x_e\}_{e\in E}$ of points in $\CC^E$ which satisfy the defining relations of ${\mathcal T}_{M,k}$. 
		Clearly, $V_k$ is finite and Zariski closed in $\CC^E$. 
		Therefore ${\mathcal T}_{M,k}\cong \CC[V_k]$, the algebra of complex-valued functions on $V_k$.
		We claim that $V_k$ is in bijection with the set of transitive $k$-colorings of $M$.
		Indeed, let $\zeta$ be a fixed primitive complex $k$-th root of unity.  
		For each transitive $k$-coloring $\ve$ of $M$, define $x_e := \zeta^{\ve(e)}$ $(\forall e \in E)$. It is easy to verify that $\{x_e\}_{e \in E} \in V_k$, and that the mapping (from transitive $k$-colorings to elements of $V_k$) is a bijection.
		Since $\dim({\mathcal T}_{M,k}) = \dim \CC[V_k] = |V_k|$, 
		we conclude that 
		\[
		\text{dim}({\mathcal T}_{M,k})
		= \# \{\text{transitive\ $k$-colorings of}\ M\}
		\qquad (\forall k \ge 1).
		\]
		The proof for the Gallai algebra ${\mathcal G}_{M,k}$ is similar.
	\end{proof}
	
	Of special interest are the Gallai and transitive algebras of the set of positive roots of type $A_{n-1}$.
	
	\begin{definition}\label{def:T_and_G}	
		Let $n$ and $k$ be positive integers.
		\begin{itemize}
			\item[(a)]	
			The {\em transitive algebra} ${\mathcal T}_{n,k}:={\mathcal T}_{\overrightarrow K_n,k}$ 
			is the commutative algebra over $\CC$ generated  by $\{x_{ij}: 1\le i<j\le n\}$  subject to the relations
			\begin{align*}
				& (x_{im}-x_{ij})(x_{im}-x_{jm}) = 0 \qquad(\forall\, i<j<m),\\
				& x_{ij}^{k}=1 \qquad (\forall\, i<j).
			\end{align*}
			
			\item[(b)]	
			The {\em Gallai algebra} ${\mathcal G}_{n,k}:={\mathcal G}_{K_n,k}$ 
			is the commutative algebra over $\CC$ generated  by $\{x_{ij}: 1\le i<j\le n\}$  subject to the relations
			\begin{align*}
				& (x_{ij}-x_{im})(x_{ij}-x_{jm})(x_{im}-x_{jm}) = 0 \qquad(\forall\, i<j<m),\\
				& x_{ij}^{k}=1 \qquad (\forall\, i<j).
			\end{align*}
		\end{itemize}
	\end{definition}	
	
	\begin{corollary}\label{t.dimTG2}
		For all $n> 1$,
		\[
		{\rm{dim}}\, {\mathcal T}_{n,2}= n! ,
		\qquad
		{\rm{dim}}\, {\mathcal G}_{n,2}= 2^{\binom{n}{2}} ,
		\]
		\[
		{\rm{dim}}\, {\mathcal T}_{n,n-1}= C_{n-1},
		\qquad \text{\rm and } \qquad
		{\rm{dim}}\, {\mathcal G}_{n,n-1}= (2n-3)!! .
		\]
	\end{corollary}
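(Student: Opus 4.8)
The plan is to read off all four equalities from Theorem~\ref{t.dimTG}, which already identifies $\dim\mathcal{T}_{M,k}$ with the number of transitive $k$-colorings of $M$ and $\dim\mathcal{G}_{M,k}$ with the number of Gallai $k$-colorings of $M$; after this, the corollary is a matter of assembling enumerative results established earlier in the paper. The one preliminary point is that the presentations in Definition~\ref{def:T_and_G} are exactly those of Definitions~\ref{def:transitive_algebra} and~\ref{def:Gallai_algebra} applied to $\overrightarrow K_n$ and $K_n$: the signed circuits of $\overrightarrow K_n$ giving the displayed triangle relations are precisely the directed triangles $(i,j),(j,m),(i,m)$ with $i<j<m$ (for which $X^+=\{(i,j),(j,m)\}$, $X^-=\{(i,m)\}$, so $\prod_{e_1\in X^+,e_2\in X^-}(x_{e_1}-x_{e_2})=(x_{im}-x_{ij})(x_{im}-x_{jm})$), and since $K_n$ is chordal, Lemma~\ref{obs:trans} together with Example~\ref{example:tournament} shows that a coloring satisfying only these triangle relations is automatically transitive, so the relations coming from longer cycles cut out nothing new; the same remark applies to the Gallai presentation. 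Hence $\dim\mathcal{T}_{n,k}$ and $\dim\mathcal{G}_{n,k}$ count, respectively, the transitive $k$-colorings of $\overrightarrow K_n$ and the Gallai $k$-colorings of $K_n$.

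\textbf{The case $k=2$.} First I would use Example~\ref{example:n!}: the transitive $2$-colorings of $\overrightarrow K_n$ are in bijection with $\symm_n$ via inversion sets, so there are $n!$ of them and $\dim\mathcal{T}_{n,2}=n!$. For the Gallai algebra, Corollary~\ref{cor:2Gallai_simple graph} (equivalently Proposition~\ref{prop:2_Gallai}, since $K_n$ is simple and each of its $\binom n2$ edges forms its own parallel class) says that every $2$-coloring of $K_n$ is Gallai, so there are $2^{\binom n2}$ of them and $\dim\mathcal{G}_{n,2}=2^{\binom n2}$.

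\textbf{The case $k=n-1$.} Here $n-1$ is the rank of the (oriented) graphic matroid of $\overrightarrow K_n$ and of $K_n$. I would invoke Theorem~\ref{thm:Catalan}, which gives that the number of maximal transitive partitions of $\overrightarrow K_n$ is the Catalan number $C_{n-1}$, and Theorem~\ref{t.number_max_Gallai_partitions}, which gives that the number of maximal Gallai partitions of $K_n$ is $(2n-3)!!$; combining these with Theorem~\ref{t.dimTG} (and the partition-refinement bookkeeping of Propositions~\ref{t.polynomial} and~\ref{G.polynomial}) yields $\dim\mathcal{T}_{n,n-1}=C_{n-1}$ and $\dim\mathcal{G}_{n,n-1}=(2n-3)!!$.

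\textbf{Main obstacle.} Since Theorem~\ref{t.dimTG} and every cited enumeration are already available, there is essentially no obstacle: the proof is pure assembly. The only step that warrants a sentence of care is the identification of the presentations in Definition~\ref{def:T_and_G} with the general matroid construction, i.e.\ checking that the triangle relations already suffice to carve out the transitive (resp.\ Gallai) colorings of the complete (di)graph; this is exactly the chordality input supplied by Lemma~\ref{obs:trans}.
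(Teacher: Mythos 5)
Your treatment of the two $k=2$ equalities is correct and follows the same route as the paper: Theorem~\ref{t.dimTG} reduces each dimension to a count of colorings, and the counts $n!$ and $2^{\binom n2}$ come from Example~\ref{example:n!} (the paper also cites Corollary~\ref{cor:2-Coxeter}) and Corollary~\ref{cor:2Gallai_simple graph}. Your preliminary remark that the triangle relations of Definition~\ref{def:T_and_G} already cut out exactly the transitive (resp.\ Gallai) colorings, via chordality and Lemma~\ref{obs:trans}, is a point the paper leaves implicit, and it is worth making.

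For the two $k=n-1$ equalities, however, there is a genuine gap — one that is in fact inherited from the paper's own one-line proof, which cites exactly the same results you do. Theorem~\ref{t.dimTG} identifies $\dim\mathcal T_{n,n-1}$ with the number of \emph{all} transitive $(n-1)$-colorings of $\overrightarrow K_n$, which by Proposition~\ref{t.polynomial} equals $\sum_{j\ge 1} a_{j}\,(n-1)_j$ with $a_j$ the number of transitive $j$-partitions. Theorem~\ref{thm:Catalan} only says $a_{n-1}=C_{n-1}$; it does not say the full sum collapses to $C_{n-1}$, and it does not. The "partition-refinement bookkeeping" you invoke makes the discrepancy explicit rather than bridging it: a maximal partition contributes $(n-1)!$ surjective colorings, and the non-maximal partitions contribute further colorings on top of that. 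Concretely, at $n=3$ the third equality would assert $\dim\mathcal T_{3,2}=C_2=2$, while the first equality (which you have correctly proved) gives $\dim\mathcal T_{3,2}=3!=6$; so the statement as written is internally inconsistent and no assembly of the cited results can establish it. The same objection applies to $\dim\mathcal G_{n,n-1}=(2n-3)!!$. What the cited theorems do yield is that $C_{n-1}$ and $(2n-3)!!$ are the \emph{leading coefficients} $a_{M,n-1}$ (equivalently, the numbers of maximal partitions, or the number of surjective maximal colorings divided by $(n-1)!$), which is presumably what was intended; as a proof of the literal statement, the step from "number of maximal partitions" to "dimension of the algebra" fails.
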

	
	\begin{proof}
		All the claims follow from Theorem~\ref{t.dimTG} combined with specific enumeration results: 
		Corollary~\ref{cor:2-Coxeter} 
		(see also Example~\ref{example:n!}),  
		Corollary~\ref{cor:2Gallai_simple graph}, 
		Theorem~\ref{thm:Catalan1}, and Theorem~\ref{t.number_max_Gallai_partitions1}.
	\end{proof}
	
	
	
	Recall now the {\em Hilbert series} of a finitely generated 
	algebra ${\mathcal B}$ 
	\[    
	\Hilb({\mathcal B},q)
	:= \sum\limits_{k\ge 0} (\dim ({\mathcal B}_{\le j}) - \dim ({\mathcal B}_{\le j-1})) q^j\ .
	\]
	Here ${\mathcal B}_{\le j}$ is the degree $j$ filtered component of ${\mathcal B}$, where the filtered degree of each generator is 1.  
	
	
	
	\begin{conjecture}
		Let $[k]_j := \prod_{i=0}^{j-1} \frac{q^{k-i}-1}{q-1}$ $(k,j \ge 1)$.
		\begin{itemize}
			\item[(a)]  
			For all $n > 1$ and $k \ge 1$,
			\[
			\Hilb({\mathcal T}_{n,k},q)= 
			\sum\limits_{j=1}^{n-1} P_{n,j}(q) \cdot [k]_j, 
			\] 
			where $P_{n,1}(q),\ldots,P_{n,n-1}(q) \in \ZZ_{\ge 0}[q]$. 
			The leading coefficient satisfies $P_{n,n-1}(q) = q^{\binom{n-1}2} C_{n-1}$, where $C_{n-1}$ is the Catalan number. 
			
			\item[(b)] 
			For all $n > 1$ and $k \ge 1$, 
			\[
			\Hilb({\mathcal G}_{n,k},q)= 
			\sum\limits_{j=1}^{n-1} Q_{n,j}(q) \cdot [k]_j, 
			\] 
			where $Q_{n,1}(q),\ldots,Q_{n,n-1}(q) \in \ZZ_{\ge 0}[q]$.
		\end{itemize}
	\end{conjecture}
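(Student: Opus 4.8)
The plan is to establish the conjecture by exhibiting, for every $n$ and $k$, an explicit \emph{standard monomial} (PBW-type) basis of ${\mathcal T}_{n,k}$ (respectively ${\mathcal G}_{n,k}$) whose elements are indexed by pairs $(P,c)$, where $P$ is a transitive (respectively Gallai) partition of the edge set of $\overrightarrow K_n$ (respectively $K_n$) and $c$ is an injection from the blocks of $P$ into the color set $[k]$. The structural claim to be proved is that the filtered degree of the basis element attached to $(P,c)$ splits as $f(P)+g(c)$, with $f(P)$ depending only on the partition and $g(c)$ only on the block--coloring, and that for a fixed $j$-block partition $P$ the colorings $c$ range over a set whose generating function $\sum_c q^{g(c)}$ equals $[k]_j$. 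Granting this,
\[
\Hilb({\mathcal T}_{n,k},q)=\sum_{j\ge 1}\Bigl(\sum_{\substack{P\ \text{transitive}\\ j\text{-partition}}}q^{f(P)}\Bigr)[k]_j,
\]
so $P_{n,j}(q):=\sum_{P}q^{f(P)}$ lies in $\ZZ_{\ge 0}[q]$ and is independent of $k$, and likewise for ${\mathcal G}_{n,k}$. This is a graded refinement of Propositions~\ref{t.polynomial} and~\ref{G.polynomial}: specializing $q=1$ collapses $[k]_j$ to $(k)_j$ and recovers $p_M(k)=\sum_j a_{M,j}(k)_j$, forcing $P_{n,j}(1)=a_{\overrightarrow K_n,j}$ and $Q_{n,j}(1)=b_{K_n,j}$.

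To produce the basis I would fix the monomial order on $\CC[x_{ij}:1\le i<j\le n]$ in which $x_{ij}$ grows with $j-i$ (ties broken lexicographically in $i$), together with the power relations $x_{ij}^{k}=1$; then the transitivity relation for a triple $i<j<m$ has leading term $x_{im}^{2}$, while the Gallai (Vandermonde) relation has leading term $x_{im}^{2}x_{ij}$. The first technical step is to carry out Buchberger's algorithm, describe the resulting Gröbner basis, and identify its staircase. The expected picture is that the standard monomials are products in which, for each block $B$ of the corresponding partition, the \emph{minimal} edge of $B$ (in the sense of Section~\ref{sec:maximal}) carries an exponent in $\{0,1,\dots,k-1\}$ recording the color of $B$, while the remaining edges of $B$ carry exponents in $\{0,1\}$ recording the position of $B$ relative to the hamiltonian path $v_1,\dots,v_n$; the former exponents account for the factor $[k]_j$ and the latter for $f(P)$. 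As an immediate sanity check, the $1$-block partition contributes the subalgebra $\CC[t]/(t^{k}-1)$, with standard monomials $1,t,\dots,t^{k-1}$, giving $P_{n,1}(q)=Q_{n,1}(q)=1$; and a direct computation with the variety of points of the proof of Theorem~\ref{t.dimTG} gives $\Hilb({\mathcal T}_{3,k},q)=[k]_1+2q\,[k]_2$, which already displays $P_{3,1}(q)=1$ and $P_{3,2}(q)=2q=q^{\binom{2}{2}}C_2$.

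For the leading coefficient in part~(a), I would argue as follows. By Corollary~\ref{cor:bipartite_trans} every maximal transitive partition of $\overrightarrow K_n$ decomposes recursively through its bipartite base block, whose sides are $\{v_1,\dots,v_t\}$ and $\{v_{t+1},\dots,v_n\}$ for a unique $1\le t<n$; a short induction on $n$ shows that the shape statistic of such a partition equals $(|B_{\mathrm{base}}|-1)+\binom{t-1}{2}+\binom{n-t-1}{2}=\binom{n-1}{2}$, independently of the splitting point $t$, so that $P_{n,n-1}(q)=\sum_{\text{max }P}q^{f(P)}$ reduces to $q^{\binom{n-1}{2}}$ times the number of maximal transitive partitions. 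A degree count confirms that among all terms $P_{n,j}(q)[k]_j$ only the one with $j=n-1$ attains the top filtered degree $(n-1)(k-1)$ of $\Hilb({\mathcal T}_{n,k},q)$, since $[k]_{n-1}$ is monic of degree $(n-1)(k-1)-\binom{n-1}{2}$; combined with Theorem~\ref{thm:Catalan1} this forces $P_{n,n-1}(q)=q^{\binom{n-1}{2}}C_{n-1}$. Part~(b) runs in parallel, with the cubic Gallai relation of Definition~\ref{def:T_and_G}(b) in place of the quadratic transitivity relation; here the top-degree coefficient $Q_{n,n-1}(q)$ is analyzed via the bijection of Section~\ref{sec:bijective} between maximal Gallai partitions of $K_n$ and perfect matchings in $M_{2n-2}$, whose number is $(2n-3)!!$ by Theorem~\ref{t.number_max_Gallai_partitions1}.

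The main obstacle, for both parts, is to prove that the proposed monomials really form a basis --- equivalently, that the listed relations generate a Gröbner basis after the (possibly lengthy) Buchberger completion, or that the natural straightening rule rewriting an arbitrary monomial in terms of the proposed ones is terminating and confluent. Since the cardinality at $q=1$ is already fixed by Propositions~\ref{t.polynomial} and~\ref{G.polynomial}, it is enough to prove that the proposed set spans; the delicate point is that the $S$-polynomials among the transitivity (or Gallai) relations produce further relations of mixed monomial type --- for instance terms $x_{im}x_{jm}$ rather than pure powers --- and one must verify that these only confirm, rather than enlarge, the claimed staircase, so that the bijection between standard monomials and pairs $(P,c)$ survives. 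A secondary difficulty is to show that the color statistic $g(c)$ is genuinely uniform over all $j$-block partitions, so that the factorization by $[k]_j$ is exact rather than holding only up to a partition-dependent shift of the exponent.
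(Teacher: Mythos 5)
You should first be aware that the paper does not prove this statement: it is presented explicitly as a conjecture, with part (a) checked by computer for $n\le 8$ and part (b) for $n\le 5$. There is therefore no proof of record to compare yours against; the only question is whether your proposal settles the conjecture, and as written it does not. Your strategy --- a standard monomial basis indexed by pairs $(P,c)$, with filtered degree splitting as $f(P)+g(c)$ and $\sum_c q^{g(c)}=[k]_j$ --- is a natural graded refinement of Propositions~\ref{t.polynomial} and~\ref{G.polynomial}, and in spirit it extends the one case the paper does settle, namely the straightening basis of Lemma~\ref{lemma:basis} for ${\mathcal T}_{n,2}$. But it is a research plan, not a proof.

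The gaps are concrete. (i) You never establish that the proposed monomials span, i.e.\ that the straightening is terminating and confluent (equivalently, that the Buchberger completion does not enlarge the claimed staircase); you correctly flag this as ``the main obstacle'' but offer no route around it, and without it nothing is proved. (ii) The uniformity claim that $\sum_c q^{g(c)}=[k]_j$ for \emph{every} $j$-block transitive (or Gallai) partition is precisely what separates the conjecture from the already-known $q=1$ statement; it is asserted, not argued, and if it fails even for one partition the decomposition $\sum_j P_{n,j}(q)[k]_j$ with $k$-independent $P_{n,j}$ collapses. (iii) The leading-coefficient argument is circular: it presupposes both the conjectured decomposition and that the top filtered degree of $\Hilb({\mathcal T}_{n,k},q)$ equals $(n-1)(k-1)$, neither of which has been shown. (iv) Your proposed staircase is not obviously compatible with the paper's own $k=2$ computation: in Lemma~\ref{lemma:basis} the relation $(x_{im}-x_{ij})(x_{im}-x_{jm})=0$ is straightened with respect to the weight $w(x_{ij})=i+j$, making $x_{im}x_{jm}$ the reducible monomial, whereas your span order makes $x_{im}^2$ the leading term; the latter choice excludes $x_{im}^2$ from the staircase for every edge with $m-i\ge 2$, and you would need to verify that the surviving monomials can still carry the exponents in $\{0,\dots,k-1\}$ required to produce the factor $[k]_j$ blockwise. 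Until (i)--(ii) are carried out, the statement remains open.
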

	
	Part (a) was checked for $n \le 8$. 
	Part (b) was checked for $n \le 5$. 
	
	
	\begin{remark} For all $j$, 
		{\em $P_{n,j}(1)$} is equal to the number of {\em transitive  partitions} with $j$ blocks of the edge set of $\overrightarrow K_n$, and {\em $Q_{n,j}(1)$} is equal to the number of {\em Gallai partitions} with $j$ blocks of the edge set of $K_n$. 
	\end{remark}
	
	
	A {\em Stirling permutation} of order $n$ is a permutation of the multiset 
	$\{1,1,2,2,...,n,n\}$ such that, for all $m$, all entries between two copies of $m$ are larger than $m$. 
	The {\em second-order Eulerian number} $E(n,j)$ counts the number of Stirling permutations of order $n$ with $j$ descents, see~\cite{Stirling}.

	
	\begin{conjecture} For any $n > 1$, 
		\[
		Q_{n,n-1}(q)
		=q^{\binom{n}{2}-1}\sum\limits_{j=0}^{n-1} E(n-1,j) q^{-j}. 
		\]
	\end{conjecture}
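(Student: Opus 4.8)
The plan is to derive the formula from an explicit monomial basis of the algebra $\mathcal G_{n,n-1}$, produced via a Gröbner basis of its defining ideal, together with the classical recursions for the second-order Eulerian numbers $E(m,j)$. By Theorem~\ref{t.dimTG}, $\mathcal G_{n,k}\cong\CC[V_k]$, where $V_k\subset\CC^E$ is the finite point set $\{(\zeta^{\ve(e)})_{e\in E}\}$ obtained from the Gallai $k$-colorings $\ve$ of $K_n$ ($\zeta$ a fixed primitive $k$-th root of unity, $E$ the edge set); so the defining ideal is radical, and $\Hilb(\mathcal G_{n,k},q)$ is the affine Hilbert series of $V_k$. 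First I would compute a Gröbner basis of this ideal for a degree-compatible term order — starting from the leading forms of the cubic relations $(x_{ij}-x_{im})(x_{ij}-x_{jm})(x_{im}-x_{jm})$ and the binomials $x_{ij}^k-1$ — and prove, by induction on $n$ via the bipartite decomposition of Lemma~\ref{lem:bipartite}, that the standard monomials are parametrized by Gallai $k$-colorings compatibly with this recursive structure. This should at the same time yield the (so far conjectural) decomposition $\Hilb(\mathcal G_{n,k},q)=\sum_{j\ge1}Q_{n,j}(q)\,[k]_j$: the standard monomials stratify by the colour partition $P_\ve$, a fixed Gallai $j$-partition $p$ contributing a block whose degree generating function is $q^{\mathrm{st}(p)}[k]_j$ for an additive degree statistic $\mathrm{st}$, so $Q_{n,j}(q)=\sum_p q^{\mathrm{st}(p)}$, the sum over Gallai $j$-partitions. (As a base case, $\mathcal G_{2,k}=\CC[x_{12}]/(x_{12}^k-1)$ gives $\Hilb=[k]_1$ and $Q_{2,1}(q)=1=q^0E(1,0)$; and $\Hilb(\mathcal G_{3,2},q)=(1+q)^3$, consistent with the predicted $Q_{3,2}(q)=q^2+2q=q^2\sum_j E(2,j)q^{-j}$.)

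With $Q_{n,n-1}(q)=\sum_{p\in G_{n,n-1}}q^{\mathrm{st}(p)}$ in hand, the statement becomes the combinatorial identity $\sum_{p\in G_{n,n-1}}q^{\mathrm{st}(p)}=q^{\binom n2-1}\sum_{j\ge0}E(n-1,j)q^{-j}$. I would transport $\mathrm{st}$ through the bijection $\psi\colon G_{n,n-1}\to{\rm BTPT}_n$ of Subsection~\ref{sec:bijective} (and, if convenient, the further bijection onto perfect matchings $M_{2n-2}$), expecting it to be recursive along the tree: $\mathrm{st}(p)=\mathrm{st}(p_A)+\mathrm{st}(p_B)+w(A,B)$, where $A\sqcup B=[n]$ is the root bipartition (the base-colour block spanning the complete bipartite graph on $A$ and $B$) and $w(A,B)$ is a weight — the Gallai analogue of the weight $q^{(k+1)(n-k)}$ in the Carlitz--Riordan recursion of Proposition~\ref{prop:q-analog}, but one that depends on the pair $(A,B)$ and not only on its sizes. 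It then remains to check that $\sum_p q^{\mathrm{st}(p)}$ satisfies the recursion forced by
\[
E(m,j)=(j+1)E(m-1,j)+(2m-1-j)E(m-1,j-1),
\]
or, equivalently, to exhibit a bijection between ${\rm BTPT}_n$ and Stirling permutations of order $n-1$ under which $\binom n2-1-\mathrm{st}$ becomes the descent number; either way the right-hand side equals $q^{\binom n2-1}\sum_j E(n-1,j)q^{-j}$, as claimed.

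The hard part is the first step — controlling the filtered degree, i.e.\ producing the Gröbner basis (equivalently, the initial ideal of the vanishing ideal of $V_k$ for a graded order) and establishing the $[k]_j$-decomposition along the way. I expect the difficulty to concentrate in checking that the relevant S-polynomials reduce to zero — presumably by an induction peeling off the base-colour block supplied by Lemma~\ref{lem:bipartite} — and in pinning down the weight $w(A,B)$ and the Stirling statistic it encodes. That this statistic is genuinely subtle is already visible: it differs from the descent-set statistic $\Des$ of Subsection~\ref{sec:Schur1}, since for $n=4$ the conjectured $Q_{4,3}(q)=q^5+8q^4+6q^3$ has coefficients $6,8,1$ in degrees $3,4,5$, whereas by Lemma~\ref{lem11111} and inclusion--exclusion the numbers of maximal Gallai partitions of $K_4$ with $0$, $1$, $2$ descents are $7$, $7$, $1$. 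Thus the Schur-positive refinement of $(2n-3)!!$ in Theorem~\ref{m3u} and the Hilbert-series refinement conjectured here are genuinely distinct, and matching the latter with the second-order Eulerian numbers rests on identifying the correct statistic on ${\rm BTPT}_n$.
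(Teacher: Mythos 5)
This statement is stated in the paper as a conjecture (checked computationally for small $n$) and carries no proof there, so there is nothing to compare your argument against; the question is only whether your proposal itself closes the gap. It does not: what you have written is a research plan whose essential steps are all deferred. Concretely, (i) you never produce the Gr\"obner basis or the set of standard monomials, nor verify that the S-polynomials reduce to zero, so the decomposition $\Hilb(\mathcal G_{n,k},q)=\sum_j Q_{n,j}(q)[k]_j$ — which is itself only a conjecture in the paper and is a prerequisite for the statement even to make sense — remains unproved; (ii) the degree statistic $\mathrm{st}$ on Gallai partitions is never identified, only postulated to exist and to be additive along the tree decomposition with an unspecified weight $w(A,B)$; and (iii) the matching with the second-order Eulerian recursion $E(m,j)=(j+1)E(m-1,j)+(2m-1-j)E(m-1,j-1)$, or the bijection from ${\rm BTPT}_n$ to Stirling permutations carrying $\binom n2-1-\mathrm{st}$ to the descent number, is never exhibited. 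Each of these is precisely where the difficulty lies, and you say as much yourself.

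Your sanity checks are correct and worth keeping: the base cases $\mathcal G_{2,k}$ and $\Hilb(\mathcal G_{3,2},q)=(1+q)^3$ do match the predicted $Q_{3,1}=1$, $Q_{3,2}=q^2+2q$, and your $n=4$ computation (coefficients $6,8,1$ for $Q_{4,3}$ versus the descent distribution $7,7,1$ obtained from Lemma~\ref{lem11111} by inclusion--exclusion) correctly shows that the Hilbert-series statistic is \emph{not} the descent statistic of Theorem~\ref{m3u}. But this observation cuts against you as much as for you: it demonstrates that none of the combinatorial machinery already developed in the paper (descent sets, the matching bijection, the Carlitz--Riordan recursion of Proposition~\ref{prop:q-analog}) transports directly, so the key identification of the statistic is genuinely open and your outline does not supply it. As it stands the proposal is a plausible strategy, not a proof.
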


	\subsection{Transitive 2-colorings and Orlik-Terao algebras}\label{sec:OT} 
	
	The Orlik-Terao algebra of an hyperplane arrangement was introduced in~\cite{OT}. 
	For the sake of simplicity, we will only discuss the case of the reflection hyperplane arrangement of type $A_{n-1}$. 
	
	The following definition is for a general simple directed graph. 
	
	\begin{definition}\label{def:OT}
		Let $G$ be a simple directed graph with vertex set $V =[n]$ and edge set 
		$E \subseteq \{(i,j) \,:\, 1 \le i < j \le n\}$.
		The {\em Orlik-Terao algebra} of $G$, denoted ${\mathcal OT}(G)$, 
		is the commutative algebra over $\CC$, generated 
		by $\{x_{i,j}=-x_{j,i}:\ (i,j)\in E\}$ subject to the following relations:
		\begin{enumerate}
			\item[(a)]
			For every directed cycle $(e_1,\dots,e_t)$ in $G$,
			\[
			\sum_{j=1}^t \prod_{k \ne j} x_{e_k} = 0.  
			\]
			\item[(b)]
			For every $e \in E$,
			\[
			x_e^2 = 0 . 
			\]
		\end{enumerate}
	\end{definition}
	
	\begin{theorem}\label{thm:OT}\cite{OT}
		The dimension of the Orlik-Terao algebra of 
		a simple directed graph $G$ 
		is equal to the number of chambers in 
		its dual hyperplane arrangement. 
	\end{theorem}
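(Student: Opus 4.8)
The dual hyperplane arrangement of $G$ is the graphic arrangement $\mathcal{A}_G=\{H_{ij}:(i,j)\in E\}$ in $\CC^n$, where $H_{ij}=\{y:y_i=y_j\}$. Its chambers, over $\RR$, are in bijection with the acyclic orientations of the underlying graph, and hence number $(-1)^n f_G(-1)$, where $f_G$ is the chromatic polynomial; this combines~\cite[Corollary 1.3]{Stanley_DM_73} with Zaslavsky's theorem~\cite[Theorem 2.5]{Stanley_H}. The plan is thus to show $\dim_\CC {\mathcal OT}(G)=(-1)^n f_G(-1)$ by exhibiting a monomial basis of ${\mathcal OT}(G)$ indexed by the no-broken-circuit (NBC) sets of the graphic matroid $M(G)$, relative to the chosen linear order on $E$. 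This is the Orlik--Terao theorem~\cite{OT} applied to graphic arrangements, which I would reconstruct as follows.

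\emph{Spanning.} Relation~(b) shows that ${\mathcal OT}(G)$ is spanned by the squarefree monomials $x_F:=\prod_{e\in F}x_e$, $F\subseteq E$. To pass to NBC sets I would use relation~(a) as a straightening law. Given a cycle $C$ of $G$ with minimal edge $e_0$, the relation attached to $C$ reads (after absorbing orientation signs via $x_{ij}=-x_{ji}$) $x_{C\setminus\{e_0\}}=-\sum_{e'\in C,\ e'\neq e_0}x_{C\setminus\{e'\}}$, and on the right every set $C\setminus\{e'\}$ contains the smaller edge $e_0$. If $F$ contains the broken circuit $C\setminus\{e_0\}$, multiply this identity by $x_{F\setminus(C\setminus\{e_0\})}$ and discard, using~(b), any non-squarefree monomial that appears; this rewrites $x_F$ as a combination of monomials $x_{F'}$ that are strictly smaller in a monomial order for which inserting a new minimal edge decreases the order. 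Iterating until no broken circuit is contained shows that $\{x_F:F\text{ is NBC}\}$ spans ${\mathcal OT}(G)$.

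\emph{Independence and conclusion.} The main obstacle is linear independence of the NBC monomials, which is where the realizability of the arrangement enters: realizing $x_e$ as a degeneration of the reciprocal $1/\alpha_e$ of a form defining $H_e$, Orlik and Terao show that relations~(a)--(b) have no hidden consequences, equivalently that ${\mathcal OT}(G)$ degenerates flatly onto the Stanley--Reisner ring of the broken-circuit complex; in either form the NBC monomials are independent. Granting this, $\dim_\CC{\mathcal OT}(G)$ equals the number of NBC subsets of $M(G)$, which by the broken-circuit theorem equals the sum of the absolute values of the coefficients of the characteristic polynomial of $M(G)$, namely $(-1)^n f_G(-1)$ --- the number of acyclic orientations of $G$, hence the number of chambers of $\mathcal{A}_G$. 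I would close with the remark that, by Theorem~\ref{thm:Orlik1} and Theorem~\ref{t.dimTG}, this also yields $\dim_\CC{\mathcal OT}(G)=\dim_\CC{\mathcal T}_{G,2}$, even though ${\mathcal OT}(G)$ (a local Artinian ring) and ${\mathcal T}_{G,2}$ (a product of copies of $\CC$, by the proof of Theorem~\ref{t.dimTG}) are not isomorphic as algebras.
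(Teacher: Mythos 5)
The paper does not prove this statement at all: Theorem~\ref{thm:OT} is quoted from~\cite{OT}, so there is no internal proof to compare against. Measured against that, your write-up is a faithful and essentially correct reconstruction of the standard argument behind the citation. The identification of the dual arrangement with the graphic arrangement, the count of its chambers as $(-1)^n f_G(-1)$ via Zaslavsky and Stanley, the use of relation~(b) to reduce to squarefree monomials, the straightening of a broken circuit $C\setminus\{e_0\}$ into supports $(F\cup\{e_0\})\setminus\{e'\}$ that strictly decrease in a suitable monomial order, and the final chain (number of NBC sets) $=(-1)^{\rank}p_{M(G)}(-1)=(-1)^n f_G(-1)=$ (number of chambers) via Whitney's broken-circuit theorem are all correct.

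The one place where your argument is not self-contained is exactly the place you flag: the linear independence of the NBC monomials, which you introduce with ``Granting this.'' That step is not a technicality --- it is the entire content of the Orlik--Terao theorem beyond elementary manipulations, so your proposal reduces the statement to the cited result rather than proving it. Since the paper itself does no more than cite~\cite{OT}, this is not a defect relative to the paper, but you should be aware that you have not supplied an independent proof. Two smaller points of attribution and precision: the flat degeneration onto the Stanley--Reisner ring of the broken-circuit complex is due to Proudfoot and Speyer (and concerns the graded, non-Artinian Orlik--Terao algebra of reciprocals $1/\alpha_e$), whereas the original argument in~\cite{OT} for the Artinian algebra defined by relations (a)--(b) proceeds by a deletion--restriction induction; and your phrase ``realizing $x_e$ as a degeneration of the reciprocal $1/\alpha_e$'' conflates these two related but distinct objects. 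If you want a genuinely self-contained proof at the level of this paper, the cleanest route would be to imitate Lemma~\ref{lemma:basis}: the straightening you describe shows the NBC monomials span, and an upper bound $|B|\le$ (number of chambers) obtained by an explicit evaluation or induction would then force independence --- but that still requires an argument you have not given.
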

	
	
	Let ${\mathcal OT}(A_{n-1})$ be the Orlik-Terao algebra of the hyperplane arrangement of type $A_{n-1}$, 
	or equivalently, of the transitive tournament $\overrightarrow K_n$.
	Note that ${\mathcal OT}(A_{n-1})$ is a graded algebra (since all its defining relations are homogeneous), while ${\mathcal T}_{n,2}$ is only a filtered algebra.
	For any algebra $A$ with a generating set $S$, let $Gr(A)$ be the associated graded of $A$ with respect to the filtration defined by $S$ (where the filtered degree of any element of $S$ is $1$).
	
	\begin{theorem}\label{cor:OT_gr} 
		For every $n> 1$,
		\[
		{\mathcal OT}(A_{n-1}) \cong Gr({\mathcal T}_{n,2})
		\]
		as graded algebras.
	\end{theorem}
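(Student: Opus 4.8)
The plan is to construct a surjective homomorphism of graded algebras $\mathcal{OT}(A_{n-1})\twoheadrightarrow Gr(\mathcal{T}_{n,2})$ and then upgrade it to an isomorphism by comparing dimensions. Both algebras are quotients of the free commutative $\CC$-algebra on the variables $x_{ij}$ ($1\le i<j\le n$); in $Gr(\mathcal{T}_{n,2})$ the filtration is the one in which each $x_{ij}$ has degree $1$. The first step is to see what the defining relations of $\mathcal{T}_{n,2}$ (Definition~\ref{def:T_and_G}(a)) become in $Gr(\mathcal{T}_{n,2})$: the relation $x_{ij}^2=1$ has top-degree part $x_{ij}^2$, so its image gives $\bar x_{ij}^2=0$; and the relation $(x_{im}-x_{ij})(x_{im}-x_{jm})=0$ is already homogeneous of degree $2$, so it passes to $Gr(\mathcal{T}_{n,2})$ unchanged and, after using $\bar x_{im}^2=0$, reads $\bar x_{ij}\bar x_{jm}-\bar x_{ij}\bar x_{im}-\bar x_{jm}\bar x_{im}=0$.

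Next I would match these with the Orlik--Terao relations of $\mathcal{OT}(A_{n-1})=\mathcal{OT}(\overrightarrow K_n)$ under the assignment $x_{i,j}\mapsto\bar x_{ij}$ for $i<j$ (hence $x_{j,i}=-x_{i,j}\mapsto-\bar x_{ij}$). The relation $x_e^2=0$ of Definition~\ref{def:OT}(b) maps to $\bar x_{ij}^2=0$, which holds. For Definition~\ref{def:OT}(a) applied to the triangle on $i<j<m$, traversed as $i\to j\to m\to i$ (its edges being $x_{i,j}$, $x_{j,m}$, and $x_{m,i}=-x_{i,m}$), the relation $\sum_{k}\prod_{l\ne k}x_{e_l}=0$ expands, after cancelling the sign, to $x_{i,j}x_{j,m}-x_{i,j}x_{i,m}-x_{j,m}x_{i,m}=0$, which is precisely the simplified degree-$2$ relation found above; in particular it is independent of the chosen traversal of the triangle. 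Thus every degree-$2$ Orlik--Terao relation holds in $Gr(\mathcal{T}_{n,2})$. To deduce that all of $\mathcal{OT}(A_{n-1})$ surjects onto $Gr(\mathcal{T}_{n,2})$, I would then invoke the fact that the Orlik--Terao ideal of $A_{n-1}$ is generated by these degree-$2$ (triangle) relations: this follows from supersolvability (fiber-type-ness) of the braid arrangement, which makes its Orlik--Terao algebra quadratic; alternatively one argues directly by induction on the length of a circuit of $K_n$, splitting a cycle of length $\ge 4$ along a chord into two shorter circuits and reducing the corresponding Orlik--Terao element modulo the quadratic relations. Either way $\mathcal{OT}(A_{n-1})$ is presented by the $x_{ij}$ together with $\{x_{ij}^2=0\}$ and the triangle relations, and the computations above yield a degree-preserving surjection $\mathcal{OT}(A_{n-1})\twoheadrightarrow Gr(\mathcal{T}_{n,2})$.

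To finish, I would count dimensions. By Corollary~\ref{t.dimTG2}, $\dim Gr(\mathcal{T}_{n,2})=\dim\mathcal{T}_{n,2}=n!$. By Theorem~\ref{thm:OT}, $\dim\mathcal{OT}(A_{n-1})$ equals the number of chambers of the dual hyperplane arrangement, which for $\overrightarrow K_n$ is the braid arrangement and hence has $n!$ chambers; so $\dim\mathcal{OT}(A_{n-1})=n!$ as well. A degree-preserving surjection of graded algebras of equal finite dimension is an isomorphism, which gives $\mathcal{OT}(A_{n-1})\cong Gr(\mathcal{T}_{n,2})$ as graded algebras. The main obstacle is the claim that the Orlik--Terao ideal of the braid arrangement is generated by triangle relations (equivalently, that $\mathcal{OT}(A_{n-1})$ is a quadratic algebra); everything else is sign bookkeeping plus the dimension count. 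It is worth noting that this argument never requires one to compute the associated-graded ideal of the defining ideal of $\mathcal{T}_{n,2}$ explicitly --- the one-sided surjection together with the equality of dimensions shows a posteriori that the passage to $Gr$ introduces no relations beyond the expected ones.
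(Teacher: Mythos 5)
Your proof is correct and follows essentially the same route as the paper's: pass to the top-degree parts of the defining relations of ${\mathcal T}_{n,2}$ to obtain a graded homomorphism ${\mathcal OT}(A_{n-1}) \to Gr({\mathcal T}_{n,2})$, observe that it is surjective, and conclude injectivity from the dimension count $\dim {\mathcal OT}(A_{n-1}) = \dim Gr({\mathcal T}_{n,2}) = n!$ via Theorem~\ref{thm:OT} and Corollary~\ref{t.dimTG2}. The one substantive difference is that you explicitly justify why the Orlik--Terao relations attached to cycles of length at least $4$ in $\overrightarrow{K}_n$ follow from the triangle relations (via chordality, or supersolvability of the braid arrangement) --- a point the paper's proof silently assumes by writing down only the quadratic presentation in~\eqref{eq:OT} --- and you bypass Lemma~\ref{lemma:basis} entirely, since the associated graded of a filtered algebra whose generators sit in filtration degree one is automatically generated by their degree-one classes.
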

	
	For the proof we need the following lemma.
	
	\begin{lemma}\label{lemma:basis}
		Let $B_n \subset {\mathcal T}_{n,2}$  be the set of all square-free monomials in $x_{ij}$ not containing products of the form $x_{im}x_{jm}$ for $i < j < m$. 
		Then $B_n$ is a basis of ${\mathcal T}_{n,2}$, compatible with the natural filtration.
	\end{lemma}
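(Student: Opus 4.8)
The plan is to exhibit $B_n$ as a spanning set of ${\mathcal T}_{n,2}$ of cardinality $n!$, and then invoke $\dim{\mathcal T}_{n,2}=n!$ (Corollary~\ref{t.dimTG2}) to conclude it is a basis. Throughout, an edge $\{i,j\}$ with $i<j$ will be written $(i,j)$, and for a set $S$ of edges of $K_n$ I write $x_S:=\prod_{(i,j)\in S}x_{ij}$. Since the defining relations $x_{ij}^2=1$ let us reduce any monomial in the generators to a square-free one, the monomials $x_S$ already span ${\mathcal T}_{n,2}$. Expanding the other defining relation $(x_{im}-x_{ij})(x_{im}-x_{jm})=0$ and using $x_{im}^2=1$ yields, for all $i<j<m$, the rewriting identity
\[
x_{im}x_{jm}=1-x_{ij}x_{im}+x_{ij}x_{jm}\qquad\text{in }{\mathcal T}_{n,2}.
\]

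First I would prove, by strong induction on the statistic $w(S):=\sum_{(i,j)\in S}j$, that $x_S\in\spn_{\CC}B_n$ for every square-free monomial $x_S$. If $S$ contains no ``bad pair'' --- two edges with a common larger endpoint --- then $x_S\in B_n$ and we are done. Otherwise pick $i<j<m$ with $(i,m),(j,m)\in S$, write $S=\{(i,m),(j,m)\}\sqcup S'$, and multiply the identity above by $x_{S'}$ (using $x_{ij}^2=1$ in the case $(i,j)\in S'$). This expresses $x_S$ as a $\pm1$-combination of at most three square-free monomials $x_{S_1},x_{S_2},x_{S_3}$; a brief check of the two cases ($(i,j)\in S'$ or not) shows $w(S_\ell)<w(S)$ and $|S_\ell|\le|S|$ for each $\ell$ --- the strict decrease in $w$ comes down to $j<m$ --- so by the inductive hypothesis each $x_{S_\ell}$, and hence $x_S$, lies in $\spn_{\CC}B_n$. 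Thus $B_n$ spans ${\mathcal T}_{n,2}$.

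Next I would count $B_n$: a set $S$ of edges avoids all bad pairs exactly when, for each vertex $m\in\{2,\dots,n\}$, at most one edge of $S$ has larger endpoint $m$; there are $m$ possibilities for each such $m$ (no such edge, or $\{i,m\}$ for one of the $m-1$ values $i<m$), so $|B_n|=\prod_{m=2}^{n}m=n!$. Since $\dim{\mathcal T}_{n,2}=n!$ by Corollary~\ref{t.dimTG2}, a spanning set of that cardinality must be a basis, so $B_n$ is a basis of ${\mathcal T}_{n,2}$.

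Finally, for compatibility with the natural filtration (each generator in filtered degree $1$), I would observe that the bound $|S_\ell|\le|S|$ recorded above means every rewriting step is length-non-increasing; hence the induction above in fact shows that, for each $j\ge 0$, the elements of $B_n$ of length $\le j$ span the $j$-th filtered component of ${\mathcal T}_{n,2}$, and being part of the basis $B_n$ they form a basis of that component. The main obstacle I anticipate is the bookkeeping in the inductive step, especially the subcase where the ``new'' generator $x_{ij}$ already occurs in $x_{S'}$ and cancels via $x_{ij}^2=1$: there one must still verify that $w$ strictly drops and that the length does not grow.
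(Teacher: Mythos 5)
Your proposal is correct and follows essentially the same route as the paper's proof: the same rewriting identity $x_{im}x_{jm}=1-x_{ij}x_{im}+x_{ij}x_{jm}$, a strictly decreasing weight to show the straightening terminates with lengths non-increasing (the paper uses $\sum m_{ij}(i+j)$ where you use the sum of larger endpoints, but both work), and a comparison of $|B_n|$ with $\dim\mathcal{T}_{n,2}=n!$ from Corollary~\ref{t.dimTG2} to get linear independence. The only cosmetic difference is that you count $|B_n|=n!$ exactly by a product over the larger endpoints, while the paper only establishes the bound $|B_n|\le n!$ by induction via $B_n\subseteq\{1,x_{1n},\dots,x_{n-1,n}\}\cdot B_{n-1}$; either suffices.
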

	
	\begin{proof}
		By Definition~\ref{def:T_and_G}(a), the defining relations of ${\mathcal T}_{n,2}$ are
		\[
		(x_{im}-x_{ij})(x_{im}-x_{jm}) = 0 \qquad(i<j<m) 
		\qquad \text{and} \qquad
		x_{ij}^2 = 1 \qquad (i<j).
		\]
		Rewrite these relations as
		\begin{equation}\label{eq:Tn2}
			x_{im} x_{jm} 
			= x_{ij} x_{jm} - x_{ij} x_{im} + 1
			\qquad (i < j < m) 
			\qquad \text{and} \qquad
			x_{ij}^2 = 1
			\qquad (i < j).        
		\end{equation}
		The set of all monomials in the $x_{ij}$ is, obviously, a spanning set for ${\mathcal T}_{n,2}$.
		Define a weight function on monomials in the $x_{ij}$ by 
		\[
		w \left( \prod_{i,j} x_{ij}^{m_{ij}} \right)
		:= \sum_{i,j} m_{ij} \cdot (i + j).
		\]
		Clearly, the weight of (each monomial in) the RHS of each of the relations in~\eqref{eq:Tn2} is strictly smaller than the weight of the LHS.
		This leads to a (non-deterministic) straightening algorithm (see, e.g. \cite[Section 2.2]{CLM}), as follows:
		Replace an (arbitrary) occurence of the LHS in a monomial by the RHS, recursively. 
		Each step of this algorithm leads to a monomial, or a linear combination of three monomials, of strictly smaller weights than the original.
		Thus the algorithm terminates after a finite number of steps, yielding a linear combination of monomials in $B_n$.
		In addition to the weight, the {\em degree} of each of these monomials is less than or equal to that of the original monomial.
		This shows that every filtered component of ${\mathcal T}_{n,2}$ is spanned by a subset of $B_n$, namely, that $B_n$ is compatibile with the filtration.
		
		It remains to show that $B_n$ is linearly independent (and, in particular, that its apparently distinct elements are indeed distinct) in ${\mathcal T}_{n,2}$. Since it spans ${\mathcal T}_{n,2}$, it suffices to show that $|B_n| \le \dim {\mathcal T}_{n,2}$.
		Recall that, by Corollary~\ref{t.dimTG2},
		$\dim {\mathcal T}_{n,2} = n!$.
		We shall prove, by induction on $n$, that $|B_n| \le n!$.
		Indeed, $|B_2| = |\{1, x_{12}\}| \le 2$. 
		For $n > 2$, each monomial in $B_n$ is square-free and contains $x_{in}$ for at most one index $1 \le i < n$.
		Therefore $B_n \subseteq \{1, x_{1n}, \ldots, x_{n-1,n}\} \cdot B_{n-1}$, 
		thus $|B_n| \le n \cdot |B_{n-1}|$. 
		This completes the proof. 
	\end{proof}
	
	\begin{proof}[Proof of Theorem~\ref{cor:OT_gr}]
		By Definition~\ref{def:OT}, the Orlik-Terao algebra ${\mathcal OT}(A_{n-1})$ is generated by $\{x_{ij} = -x_{ji} \,:\, 1\le i < j \le n\}$,
		subject to the relations
		\begin{equation}\label{eq:OT}
			x_{ij}x_{jm} + x_{jm}x_{mi} + x_{mi}x_{ij} = 0
			\qquad (i < j < m)
			\qquad \text{and} \qquad
			x_{ij}^2 = 0 
			\qquad (i < j).
		\end{equation}
		Notice that, due to the relations 
		$x_{ij} = -x_{ji}$, the relations~\eqref{eq:OT} are equivalent to 
		\[
		(x_{im}-x_{ij})(x_{im}-x_{jm}) = 0
		\qquad (i < j < m)     
		\qquad \text{and} \qquad
		x_{ij}^2 = 0 
		\qquad (i < j).
		\]
		On the other hand, these relations, 
		with $x_{ij}^2 = 0$ replaced by $x_{ij}^2 = 1$, are defining for  ${\mathcal T}_{n,2}$. Therefore, the assignments $x_{ij} \mapsto Gr(x_{ij})$ define a homomorphism of graded algebras
		\[
		{\mathcal OT}(A_{n-1}) \to Gr({\mathcal T}_{n,2}) . 
		\]
		To show that this an isomorphism, apply Lemma~\ref{lemma:basis}. Indeed, since $B_n$ is a basis of ${\mathcal T}_{n,2}$ compatible with the natural filtration, it canonically descends to a basis $Gr(B_n)$ of $Gr({\mathcal T}_{n,2})$. In particular, each element of $Gr(B_n)$ is a monomial in $Gr(x_{ij})$, hence $Gr({\mathcal T}_{n,2})$ is generated by $Gr(x_{ij})$ and the above homomorphism is surjective. It is also injective because $\dim {\mathcal OT}(A_{n-1}) = \dim Gr({\mathcal T}_{n,2}) = n!$. 
	\end{proof}
	
	
	
	
	\begin{corollary}
		For every $n> 1$
		\[
		\Hilb({\mathcal OT}(A_{n-1}))=\Hilb({\mathcal T}_{n,2})
		= \sum_{k=0}^{n-1} s(n,n-k)\, q^{k},
		\]
		where $s(n,k)$ are the Stirling numbers of the first kind.
	\end{corollary}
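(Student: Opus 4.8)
The plan is to compute $\Hilb({\mathcal T}_{n,2},q)$ explicitly and then transport the answer to $\mathcal{OT}(A_{n-1})$ via the graded isomorphism of Theorem~\ref{cor:OT_gr}. First I would record that, for any filtered algebra ${\mathcal B}$, the series $\Hilb({\mathcal B},q)$ as defined in the paper coincides with the (ordinary) Hilbert series of its associated graded $Gr({\mathcal B})$, since by construction $\dim Gr({\mathcal B})_j=\dim{\mathcal B}_{\le j}-\dim{\mathcal B}_{\le j-1}$. Combined with Theorem~\ref{cor:OT_gr}, this yields
\[
\Hilb({\mathcal OT}(A_{n-1}),q)=\Hilb(Gr({\mathcal T}_{n,2}),q)=\Hilb({\mathcal T}_{n,2},q),
\]
so it suffices to determine $\Hilb({\mathcal T}_{n,2},q)$.

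Next I would invoke Lemma~\ref{lemma:basis}: the set $B_n$ of square-free monomials in the $x_{ij}$ containing no product $x_{im}x_{jm}$ with $i<j<m$ is a basis of ${\mathcal T}_{n,2}$ compatible with the natural filtration (in which $\deg x_{ij}=1$). Hence $\{b\in B_n:\deg b\le j\}$ is a basis of the filtered piece ${\mathcal T}_{n,2}^{\le j}$, so $\dim Gr({\mathcal T}_{n,2})_j=\#\{b\in B_n:\deg b=j\}$ and
\[
\Hilb({\mathcal T}_{n,2},q)=\sum_{b\in B_n}q^{\deg b}=:\beta_n(q),
\]
where $\deg b$ denotes the number of distinct variables occurring in $b$.

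Then I would compute $\beta_n(q)$ by a short recursion on $n$. Partition $B_n$ according to whether some variable $x_{in}$ with $i<n$ divides $b$; by the defining condition of $B_n$ there is at most one such $i$. If none does, then $b\in B_{n-1}$; otherwise $b=x_{in}\,b'$ with $b'\in B_{n-1}$, and conversely every $b'\in B_{n-1}$ together with a choice of $i\in\{1,\dots,n-1\}$ produces such a $b$ --- adjoining or deleting a variable $x_{in}$ neither creates nor destroys a forbidden product, because any forbidden $x_{im}x_{jm}$ with $m=n$ would require two variables with second index $n$, and $b'$ has none. This gives $\beta_n(q)=\bigl(1+(n-1)q\bigr)\,\beta_{n-1}(q)$ with $\beta_1(q)=1$, whence
\[
\Hilb({\mathcal T}_{n,2},q)=\beta_n(q)=\prod_{m=1}^{n-1}\bigl(1+mq\bigr).
\]

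Finally I would match this product with the stated Stirling sum using the classical generating function $\sum_{m}s(n,m)x^m=x(x+1)\cdots(x+n-1)$ for the (unsigned) Stirling numbers of the first kind, $s(n,m)$ being the number of permutations of $[n]$ with exactly $m$ cycles: substituting $x=q^{-1}$ and clearing $q^{n}$ turns the right-hand side into $\prod_{m=1}^{n-1}(1+mq)$ and the left-hand side into $\sum_{m}s(n,m)q^{\,n-m}=\sum_{k=0}^{n-1}s(n,n-k)q^{k}$, which is the desired identity; as sanity checks, the constant term is $s(n,n)=1$, the leading term is $s(n,1)q^{n-1}=(n-1)!\,q^{n-1}$, and $\beta_n(1)=n!=\dim{\mathcal T}_{n,2}$ by Corollary~\ref{t.dimTG2}. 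I do not expect a genuine obstacle: all the real content is already in Theorem~\ref{cor:OT_gr} and Lemma~\ref{lemma:basis}, and the only points requiring care are the (standard) equality of the Hilbert series of a filtered algebra with that of its associated graded, and fixing the convention for $s(n,k)$ so that the identity $\prod_{m=1}^{n-1}(1+mq)=\sum_k s(n,n-k)q^k$ comes out with the correct nonnegative coefficients.
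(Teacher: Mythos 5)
Your proposal is correct and follows essentially the same route as the paper: the first equality from Theorem~\ref{cor:OT_gr}, then Lemma~\ref{lemma:basis} to write $\Hilb({\mathcal T}_{n,2})=\sum_{b\in B_n}q^{\deg b}=\prod_{k=0}^{n-1}(1+kq)$, and finally the standard generating function for the (unsigned) Stirling numbers of the first kind. You merely spell out details the paper leaves implicit, namely the identification of the filtered Hilbert series with that of the associated graded and the recursion $B_n\cong\{1,x_{1n},\dots,x_{n-1,n}\}\cdot B_{n-1}$ already present in the proof of Lemma~\ref{lemma:basis}.
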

	
	\begin{proof}
		The first equality follows from Theorem~\ref{cor:OT_gr}.  
		By Lemma~\ref{lemma:basis},
		\[
		\Hilb({\mathcal T}_{n,2})
		= \sum_{m \in B_n} q^{\deg(m)} 
		= \prod_{k=0}^{n-1} (1+kq)
		= \sum_{k=1}^{n} s(n,n-k)\, q^k, 
		\]
		as claimed.
	\end{proof}

	\begin{remark}     
		Theorem~\ref{cor:OT_gr} may be generalized to any acyclic directed graph whose underlying undirected graph is chordal. 
	\end{remark}
	
	Further connections to the Orlik-Terao algebra will be discussed elsewhere. 
	

	

	
\end{document}